\documentclass[12pt,final]{article}

%%%%%%%%%%%%%%%%%%%%%%%% Packages %%%%%%%%%%%%%%%%%%%%%%%%%%%%%%
\usepackage{amsmath}
\usepackage{amsthm}
\usepackage{amssymb}
\usepackage{amsfonts}
\usepackage{latexsym}               % LaTeX Symbol Font,amssymb
\usepackage{amsgen}
\usepackage[mathscr]{eucal}
\usepackage{mathrsfs}
\usepackage{enumerate}

\usepackage{mathptmx} % other option is txfonst
\usepackage{fancybox}
\usepackage{color}
\usepackage{graphicx}

\usepackage{showkeys}
\usepackage{bm}

%\usepackage{amsmath}
%\usepackage{amsthmc}
%\usepackage{amssymb}
%\usepackage{amsfonts}
%\usepackage{latexsym}               % LaTeX Symbol Font,amssymb
%\usepackage{amsgen}
%\usepackage[mathscr]{eucal}
%\usepackage{mathrsfs}
%%\usepackage[T1]{fontenc} % It is not not available with mathptmx
%%\usepackage{appendix}
%\usepackage{enumerate}
%
%%\usepackage{txfonts} % other option is mathptmx
%\usepackage{mathptmx} % other option is txfonts
%\usepackage{color}
%\usepackage{fancybox}
%\usepackage[dvips]{graphicx}
%
%\usepackage{showkeys}

%%%%%%%%%%%%%%%%%%%%%%%%%%%  layout  %%%%%%%%%%%%%%%%%%%%%%%%%%%%%%%%%%
\topmargin=0cm
\oddsidemargin=0truecm
\evensidemargin=0truecm
\textheight=20cm
\textwidth=16.5cm

%%%%%%%%%%%%%%%%%%%%%%%%%%%%%%%%%%%%%%%%%%%%%%%%%%%%%%%%%%%%%%%%%%%%%%

%% theorem environment
%%
\newtheorem{theorem}{Theorem}
\newtheorem{proposition}[theorem]{Proposition}
\newtheorem{lemma}[theorem]{Lemma}
\newtheorem{corollary}[theorem]{Corollary}

\theoremstyle{definition}

\newtheorem{remark}[theorem]{Remark}

%% for times font
%%
%\usepackage{mathptmx}
%\renewcommand{\baselinestretch}{1.1}

%% numbering of equations and theorems
%%
\numberwithin{equation}{section}
\numberwithin{theorem}{section}

%% pagestyle
%%
\pagestyle{myheadings}

%% Definition
%%
\def\R{\mathbb{R}}
\def\ep{\varepsilon}
\def\vp{\varphi}

\def\cale{\mathcal{E}}
\def\calp{\mathcal{P}}
\def\calh{\mathcal{H}}
\def\dels{\delta}
\def\ub{u_{\text{\rm b}}}
\def\wc{w_{\text{\rm c}}}

\def\rt{\tilde{\rho}}
\def\ut{\tilde{u}}

% differential operators
\def\div{\mathop{\rm div}}
\def\pd{\partial}

\def\dt{\partial_t}

\def\dtn#1#2{T_{#1,#2}}
%\def\ddd#1#2#3{\dx^{#1} \dtn{#2}{#3}}

%\def\ld#1{{D_t [#1]}}

% function space

\def\lpeasp#1#2{L^{#1}_{\text{\rm e},#2}}

\def\lteasp#1{\lpeasp{2}{#1}}

%norm
\def\lp#1#2{\| #2 \|_{L^{#1}}}
\def\lpo#1#2#3{\| #2 \|_{L^{#1}(#3)}}
\def\lt#1{\|#1\|}
\def\li#1{\lp{\infty}{#1}}

\def\ltea#1#2{\| #2 \|_{L^2_{\text{\rm e},#1}}}
\def\hs#1#2{\| #2 \|_{H^{#1}}}

\def\ho#1{\hs{1}{#1}}

% end of preamble 
%%%%%%%%%%%%%%%%%%%%%%%%%%%%%%%%%%%%%%%%%%%%%%%%%%%%%%%%%%%%%%%%%%%%%%

\begin{document}

%% difference between paragraph and equation.
%%
\abovedisplayskip=8pt plus 2pt minus 4pt
\belowdisplayskip=\abovedisplayskip

%%%%%%%%%%%%%%%%%%%%%%%%%%%%%%%%%%%%%%%%%%%%%%%%%%%%%%%%%%%%%%%%%%%%%%
% title, abstract, and etc. 

\thispagestyle{plain}

\title{Stationary flows for compressible viscous fluid \\
in a perturbed half-space} 

\author{%
{\large\sc Masahiro Suzuki${}^1$}
{\normalsize and}
{\large\sc Katherine Zhiyuan Zhang${}^2$}
}

\date{%
\normalsize
${}^1$%
Department of Computer Science and Engineering, 
Nagoya Institute of Technology,
\\
Gokiso-cho, Showa-ku, Nagoya, 466-8555, Japan
\\ [7pt]
${}^2$%
Department of Mathematics, Brown University, 
\\
Providence, RI 02912, USA
}

\maketitle

\begin{abstract}
We consider the compressible Navier--Stokes equation 
in a perturbed half-space with an outflow boundary condition as well as the supersonic condition. 
For a half-space, it has been known that a certain planar stationary solution exists 
and it is time-asymptotically stable.
The planar stationary solution is independent of the tangential directions 
and its velocities of the tangential directions are zero.
In this paper, we show the unique existence of stationary solutions for the perturbed half-space.
The feature of our work is that our stationary solution depends on all directions 
and has multidirectional flow.
Furthermore, we also prove the asymptotic stability of this stationary solution.
\end{abstract}

\begin{description}
\item[{\it Keywords:}]
compressible Navier--Stokes equation, stationary solution, unique existence, asymptotic stability, 
multidirectional flow

\item[{\it 2010 Mathematics Subject Classification:}]
35B35; %Stability
35B40; %Asymptotic behavior of solutions
%35M12; %Boundary value problems for equations of mixed type
76N10; %Existence, uniqueness, and regularity theory
76N15 %Gas dynamics, general
\end{description}

% end of title 
%%%%%%%%%%%%%%%%%%%%%%%%%%%%%%%%%%%%%%%%%%%%%%%%%%%%%%%%%%%%%%%%%%%%%%

\newpage

\section{Introduction}

We consider an asymptotic behavior of a solution to
the compressible Navier--Stokes equation
in a perturbed half-space 
$\Omega := \{ x \in \mathbb{R}^3 : x_1 > M(x_2, x_3), \ M \in H^9 (\mathbb{R}^2)\} $:
\begin{subequations}\label{nse}
\begin{gather}
\rho_t + \div (\rho u) = 0,
\label{nse1}
\\
\rho \{ u_t + (u \cdot \nabla) u \}
= 
\mu_1 \Delta u + (\mu_1 + \mu_2) \nabla (\div u) - \nabla p(\rho).
\label{nse2}
\end{gather}
In this equations, $t>0$ and
$x = (x_1, x_2, x_3) = (x_1, x') \in \Omega$ are the time and space variables, respectively.
The unknown functions $\rho = \rho(t,x)$ and 
$u=u(t,x) = (u_1,u_2,u_3)(t,x)$ stand for
fluid density and fluid velocity, respectively.
The function $p = p(\rho)$ means a pressure explicitly given by 
$p(\rho) := K \rho^\gamma$, where $K > 0$ and $\gamma \ge 1$
are constants.
The constants $\mu_1$ and $\mu_2$ are viscosity coefficients
satisfying $\mu_1 >0$ and $2 \mu_1 + 3 \mu_2 \ge 0$.
We put down an initial data
\begin{gather}
(\rho, u) (0,x)
=
(\rho_0, u_0)(x)
\label{ice}
\end{gather}
and an outflow boundary condition
\begin{equation}
u(t,M(x'),x') = \ub(x')=(u_{b1},u_{b2},u_{b3})(x'), \quad (\ub \cdot n)(x') \geq c >0,
\label{bce}
\end{equation}
\end{subequations}
where $c$ is a positive constant, and $n$ is the unit outer normal vector on $\partial \Omega$, which can be explicitly written as
\begin{equation}\label{nvector}
n(x')=(n_1,n_2,n_3)(x')
:=\left(\frac{-1}{\sqrt{1+|\nabla M|^2}},
\frac{\pd_{x_2}M}{\sqrt{1+|\nabla M|^2}},
\frac{\pd_{x_3}M}{\sqrt{1+|\nabla M|^2}} \right)(x').
\end{equation}

We assume that the end states of the initial data 
in a normal direction $x_1$ are:
\begin{equation}
\lim_{x_1 \to \infty}  \rho_0(x)
= \rho_+,
\quad
\lim_{x_1 \to \infty} u_0(x)
= (u_+,0,0),
\label{cd-ice}
\end{equation}
where $\rho_+$ and $u_+$ are constants.
It is also assumed that the initial density is uniformly positive:
\begin{equation}
\quad
\inf_{x \in \Omega} \rho_0 (x) > 0,
\quad
\rho_+ > 0.
\nonumber
\end{equation}
We will construct solutions of which the density is positive everywhere.
The outflow boundary condition $u_b\cdot n>0$ guarantees that no boundary condition is suitable for \eqref{nse1}.
The compatibility conditions are also necessary for the initial data $u_0$. 
We will mention clearly the conditions in Section \ref{sec2}.

%in stating our main results.

Furthermore, we assume that the Mach number at the end states 
satisfies the supersonic condition:
\begin{equation}\label{super1}
%c_+
%:= \sqrt{p'(\rho_+)}
%=
%\sqrt{\gamma K \rho_+^{\gamma-1}},
%\quad
%M_+
%:= 
\frac{|u_+|}{\sqrt{p'(\rho_+)}}>1.
\end{equation}

%Throughout this paper, we assume the supersonic condition:
%\begin{equation}\label{super1}
%M_+>1.
%\end{equation}

There have been many researches on the initial--boundary value problems of
the compressible Navier--Stokes equation.
We are interested with the long-time behavior of the solutions.
Matsumura and Nishida made the pioneering work \cite{m-n83},
where the initial--boundary value problems over an exterior domain 
and a half-space were studied.
They showed that the time-global solution exists and converges to 
the stationary solution as time tends to infinity by assuming that
the initial perturbation from the stationary solution belongs to $H^m$
and its $H^m$-norm is small enough.
%The similar result {\color{red}{?? \cite{}}} was also shown for the half-space.
Kagei and Kobayashi \cite{kagei05} gave a deeper analysis for the half-space
in the case that the stationary solution is a constant state.
They obtained an accurate convergence rate of the time global solutions toward the constant state
by assuming the initial perturbation belongs to $H^m\cap L^1$.
However, all these researches adopted the non-slip boundary condition and 
investigated only the case that the velocities of those stationary solutions are zero. 
It is of great interest to consider 
the case when the fluid is flowing in the stationary solutions. 
Valli \cite{Va83} investigated a situation that equation \eqref{nse2} 
have supplementarily an external forcing term, 
and then proved the unique existence and stability of stationary solution
over an bounded domain with the non-slip boundary condition.
The velocity of the stationary solution is nonzero but small,
since he assumed that the forcing term is sufficiently small.
Matsumura \cite{matu-01} gave the classification of 
the possible time-asymptotic states for a one-dimensional half-space problem 
with no external force  
and conjectured that one of time-asymptotic states for an {\it outflow
problem} is a stationary solution of which end state 
satisfying the {\it supersonic condition} \eqref{super1}. 

The outflow problem means an initial--boundary value problem
with an outflow boundary condition \eqref{bce}.
%The velocity of the stationary solution must be nonzero.
The asymptotic stability of the stationary solution (Matsumura's conjecture) was 
shown by Kawashima, Nishibata and Zhu in \cite{knz03}.
After this Nakamura, Nishibata and Yuge \cite{nny07} proved that 
the convergence rate toward the stationary solution is exponential 
by assuming that the initial perturbation belongs to some weighted Sobolev space.
For a three-dimensional half-space $\mathbb R^3_+$ i.e. the case $M\equiv 0$, 
Kagei and Kawashima \cite{kg06} showed that 
a planar stationary solution is time asymptotically stable,
where the planar solution is a special solution independent of tangential direction $x'$,
and its tangential velocities $(u_2,u_3)$ are zero.
It has been also known in \cite{nn09}
that the convergence rate is exponential if 
the initial perturbation decays exponentially fast at an infinite distance.

The main purpose of the present paper is to extend 
the results in \cite{kg06,nn09}, where the analysis were carried out on the half-space $\mathbb R^3_+$, 
to the case in which the domain $\Omega$ is a perturbed half-space with a curved boundary.
More precisely, we show the unique existence and asymptotic stability of 
stationary solution to \eqref{nse}.
The planar stationary solution studied in \cite{kg06,nn09} is
independent of tangential $x'$ and thus
satisfies a system of ODEs with respect to $x_1$.
The feature of our work is that our stationary solution
depends on all directions $x=(x_1,x')$ and has multidirectional flow.
Few mathematical results have been reported on nonlinear states
having multidirectional flows for compressible fluids.

\begin{flushleft}
\textbf{Acknowledgements. } M. S. was supported by JSPS KAKENHI Grant Number 18K03364. The authors would like to thank Professor Walter A. Strauss for all the support and helpful discussions.
%The author thanks her advisor, Walter Strauss for all the guidance, encouragement and patience. 
\end{flushleft}

\subsection{Notation} \label{ss-Notation}
We introduce notation used often in this paper.
Let $\pd_i := \frac{\pd}{\pd x_i}$ and $\dt := \frac{\pd}{\pd t}$.
The operators $\nabla := (\pd_1 ,\pd_2,\pd_3)$
and $\Delta := \sum_{i=1}^n \pd_i^2$ denote
standard gradient and Laplacian with 
respect to $x = (x_1,x_2,x_3)$.
We also define a standard divergence by
 $\div u := \nabla \cdot u := \sum_{i=1}^3 \pd_i u_i$.
The operator $\nabla_{x'} := (\pd_2,\pd_3)$ denotes 
tangential gradient with respect to $x'=(x_2,x_3)$.

For a non-negative integer $k$,
we denote by $\nabla^k$ and $\nabla_{x'}^k$ the totality of 
all $k$-th order derivatives with respect to $x$ and $x'$, respectively.
For a domain $\Sigma \subset \R^n$ and $1 \le p \le \infty$,
the space $L^p(\Sigma)$ denotes the standard Lebesgue space
equipped with the norm $\lpo{p}{\cdot}{\Sigma}$.
For a non-negative integer $m$, $H^m = H^m(\Sigma)$ denotes
the $m$-th order Sobolev space over $\Sigma$ in the $L^2$ sense
with the norm $\|\cdot\|_{H^m(\Sigma)}$.
For any $\beta \ge 0$, the space $\lteasp{\beta}(\Sigma)$ denotes
the exponentially weighted $L^2$ space in the normal direction defined by
$\lteasp{\beta}(\Sigma) := \{ u \in L^2(\Sigma)
  \, ; \, \ltea{\beta}{u} < \infty \}$ equipped with the norm %
\[
\ltea{\beta}{u} :=
\Bigl(
\int_{\Omega} e^{\beta x_1} |u(x)|^2 \, dx
\Bigr)^{1/2}.
\]
In the case  $\Sigma = \Omega$, 
the spaces $L^p(\Omega)$, $H^m(\Omega)$, and $\lteasp{\beta}(\Omega)$ 
are sometimes abbreviated by $L^p$, $H^m$, and $\lteasp{\beta}$, respectively. 
Note that $L^2 = H^0 = \lteasp{0}$, and we denote $\lt{\cdot} := \lp{2}{\cdot}$.

We also define the following solution spaces
\begin{align*}
X_m(0,T)
& :=
\{
(\vp,\psi) \in C([0,T] ; H^m)
\; ; \,
\nabla \vp \in L^2([0,T] ; H^{m-1}), \
\nabla \psi \in L^2([0,T] ; H^m)
\},
\\
%X^{\text{\rm a}}_\beta (0,T)
%& :=
%\{
%(\vp,\psi) \in X(0,T)
%\; ; \,
%(\vp,\psi) \in C([0,T] ; \ltaasp{\beta}), \
%\nabla \psi \in L^2([0,T] ; \ltaasp{\beta})
%\},
%\\
X^{\text{\rm e}}_{m,\beta} (0,T)
& :=
\{
(\vp,\psi) \in X_m(0,T)
\; ; \,
(\vp,\psi) \in C([0,T] ; \lteasp{\beta}), \
\nabla \psi \in L^2([0,T] ; \lteasp{\beta})
\},
\end{align*}
where $T>0$ and $\beta \ge 0$ are constants. Moreover, we define
\begin{equation*}
\mathcal{X}^{\text{\rm e}}_{m,\beta} (0,T)
=X^{\text{\rm e}}_{m-1,\beta} (0,T)
\cap L^\infty(0,T ; H^m(\Omega)).
%\{
%(\vp,\psi) \in L^\infty([0,T] ; H^m)
%\; ; \,
%\nabla \psi \in L^2([0,T] ; H^m)
%\}.
\end{equation*}
%
%For the solution space $X_m(0,T)$, 
%we use a norm $E(t)$ and a dissipative norm $D(t)$ by
%\begin{align*}
%E_{m,\beta}(t) &:=\|\Phi\|_{\lteasp{\beta}}^2+\|\Phi\|_{H^m}^2
%%+\tnr{\Phi}_{m}^2,
%\\
%D_{m,\beta}(t)
%&:=\left\{\begin{array}{ll}
%\beta\|\Phi\|_{\lteasp{\beta}}^2
%+ \|\nabla\psi\|_{\lteasp{\beta}}^2
%+ \lty{\vp(\tau,M(\cdot),\cdot)}^2
%+ \|{\frac{d}{dt}} {\vp} \|^2 & \text{if $m=0$},
%\\
%D_{0,\beta}(t)^2+\|(\nabla\Phi,\nabla^2\psi)\|_{H^{m-1}}^2
%+\tnr{\partial_t\psi}_{m-1}^2
%+\|{\frac{d}{dt}} {\vp} \|_{H^{m}}^2 & \text{if $m\geq 1$}
%\end{array}\right.
%\end{align*}
%where
%\begin{gather*}
%\tnr{u}_{m}^2 :=
%\sum_{i=0}^m \snr{u}_{i}^2,
%\quad
%\snr{u}_{m}^2 := 
%\sum_{k=0}^{[m/2]} \lt{\pd_t^k u}^2_{H^{m-2k}}.
%\end{gather*}
%We also use
%\[
%N_{\beta}(T):= \sup_{t\in[0,T]}E_{3,\beta}(t), \quad N(T):=N_{0}(T).
%\]

We use $c$ and $C$ to denote generic positive constants 
depending on $\mu_1$, $\mu_2$, $K$, $\gamma$, $\rho_+$, $u_+$, $\|M\|_{H^9(\mathbb R^2)}$ and $\alpha$
but independent of $t$, $\beta$, $\delta$, $\zeta$ and any further information of $\Omega$.
We note that the positive constants $\alpha$, $\beta$, $\delta$ and $\zeta$ 
will be given in the next subsection.
Let us also denote a generic positive constant depending additionally
on other parameters $a$, $b$, $\ldots$ by $C(a,\, b,\, \ldots)$, and a generic positive constant depending additionally
on some further information of $\Omega$ (other than $\|M\|_{H^9(\mathbb R^2)}$) by $C(\Omega)$.  
Furthermore, $A \lesssim B$ means $A \leq C B$ for the generic constant $C$ given above, and $A \lesssim_\Omega B$ means $A \leq C(\Omega) B$ for the generic constant $C(\Omega)$ given above.

\subsection{Main results}
Before mentioning our main results, we introduce a result in \cite{knz03}
which showed the unique existence of planar stationary solutions
$(\rt,\ut)(x_1)=(\rt,\ut_1,0,0)(x_1)$
over the half-space 
$\mathbb R_+^3:=\{x \in \mathbb R^3 \,;\, x_1>0\}$.
The planar stationary solution $(\rt(x_1), \ut_1(x_1))$ solve 
ordinary differential equations
\begin{subequations}
\label{ste}
\begin{gather}
(\rt \ut_1)_{x_1} = 0,
\label{ste1}
\\
(\rt \ut_1^2 + p(\rt))_{x_1} = \mu \ut_{1 x_1 x_1}
\label{ste2}
\end{gather}
with conditions
\begin{equation}
\ut_1 (0) = \tilde{u}_{b},
\quad
\lim_{x_1 \to \infty} (\rt(x_1) ,\ut_1(x_1)) = (\rho_+,u_+),
\quad
\inf_{x_1 \in \R_+} \rt(x_1) > 0.
\label{stbc}
\end{equation}
\end{subequations}
where $\mu$ is a positive constant defined by $\mu:=2\mu_1+\mu_2$.
The following quantity $\tilde{\delta}$ plays an important role in stability analysis.
We call it a boundary strength.
\[
\tilde{\delta} := |\tilde{u}_{b}-u_+|.
\]

\begin{proposition}[\cite{knz03}] \label{ex-st}
Let \eqref{super1} hold. There exists a positive constant $\wc<1$ such that
problem \eqref{ste} has a unique solution $(\rt,\ut_1)$ if and only if 
the following two conditions hold:
\begin{equation}
u_+ < 0
\ \; \text{and} \ \;
\tilde{u}_{b} < \wc u_+.
\label{cd-st}
\end{equation}
Moreover, there exist a positive constant $\alpha$
such that the stationary solution $(\rt,\ut_1)$ satisfies
\begin{equation}
|\pd_{x_1}^k (\rt(x_1) - \rho_+, \ut_1(x_1) - u_+)|
\lesssim \tilde{\dels} e^{- \alpha x_1}
 \ \; \text{for} \ \;
k = 0,1,2,\dots.
\label{stdc1}
\end{equation}
\end{proposition}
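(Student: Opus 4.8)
The plan is to reduce \eqref{ste} to a scalar first-order autonomous ODE and then run a phase-line analysis. Integrating \eqref{ste1} and using the end state in \eqref{stbc} gives $\tilde\rho\,\tilde u_1\equiv m$ with $m:=\rho_+u_+$. For the outflow problem one has $\tilde u_{b}<0$, and since $\tilde\rho=m/\tilde u_1$ has to stay positive, $\tilde u_1$ cannot vanish, so $\tilde u_1<0$ on all of $\R_+$; hence $m<0$ and $u_+=\lim_{x_1\to\infty}\tilde u_1\le0$, while \eqref{super1} rules out $u_+=0$. This already shows that $u_+<0$ is necessary. Substituting $\tilde\rho=m/\tilde u_1$ into \eqref{ste2}, integrating once over $(x_1,\infty)$ and using $\tilde u_{1x_1}\to0$ and $\tilde u_1\to u_+$ at infinity, one obtains
\[
\mu\,\tilde u_{1x_1}=F(\tilde u_1),\qquad
F(u):=m(u-u_+)+p(m/u)-p(\rho_+)\quad(u<0),
\]
together with $\tilde u_1(0)=\tilde u_{b}$ and the requirement that $\tilde u_1(x_1)\to u_+$ as $x_1\to\infty$.

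Next I would study $F$ on $(-\infty,0)$. One checks directly that $F(u_+)=0$, that $F(u)\to+\infty$ as $u\to0^-$ and as $u\to-\infty$, and that $F'(u)=(m/u^2)\bigl(u^2-p'(m/u)\bigr)$. Since $\gamma\ge1$, the map $u\mapsto u^2-p'(m/u)$ is strictly decreasing on $(-\infty,0)$, so $F$ has a single critical point, necessarily its global minimum. The supersonic condition \eqref{super1} yields $F'(u_+)=m\bigl(1-p'(\rho_+)/u_+^2\bigr)<0$, so $u_+$ lies strictly to the left of the minimizer; together with the boundary behavior this forces $F$ to have exactly two zeros in $(-\infty,0)$, namely $u_+$ and a second one $u_*\in(u_+,0)$, with $F>0$ on $(-\infty,u_+)\cup(u_*,0)$ and $F<0$ on $(u_+,u_*)$.

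With this picture, existence and uniqueness are elementary ODE theory. The initial-value problem $\mu\,\tilde u_{1x_1}=F(\tilde u_1)$, $\tilde u_1(0)=\tilde u_{b}$, has a unique maximal solution (as $F$ is smooth on $(-\infty,0)$), and being scalar and autonomous it is monotone. If $\tilde u_{b}<u_*$ it moves monotonically toward the equilibrium $u_+$ (increasing when $\tilde u_{b}<u_+$, decreasing when $u_+<\tilde u_{b}<u_*$, constant when $\tilde u_{b}=u_+$), hence is global, converges to $u_+$, and stays in a compact subinterval of $(-\infty,0)$, so that $\tilde\rho=m/\tilde u_1$ obeys $\inf_{\R_+}\tilde\rho>0$; this is a solution of \eqref{ste}. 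If $\tilde u_{b}\ge u_*$ the solution is either $\equiv u_*$ or increases monotonically toward $0^-$, hence cannot satisfy $\tilde u_1\to u_+$, so no admissible solution exists. Setting $\wc:=u_*/u_+$, which lies in $(0,1)$ because $u_+<u_*<0$, one concludes that \eqref{ste} is solvable exactly when \eqref{cd-st} holds, and uniqueness of $(\tilde\rho,\tilde u_1)$ is inherited from uniqueness for the ODE.

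Finally, for \eqref{stdc1} I would linearize at $u_+$: $F(u)=F'(u_+)(u-u_+)+O(|u-u_+|^2)$ with $F'(u_+)<0$. A Gronwall/comparison argument, using monotonicity to keep the trajectory within $O(\tilde\delta)$ of $u_+$, gives $|\tilde u_1-u_+|\lesssim\tilde\delta\,e^{-\alpha x_1}$ for a suitable $\alpha>0$. Differentiating $\mu\,\tilde u_{1x_1}=F(\tilde u_1)$ repeatedly transfers this decay to each $\partial_{x_1}^k\tilde u_1$, and the identity $\tilde\rho-\rho_+=m(u_+-\tilde u_1)/(u_+\tilde u_1)$ together with the chain rule transfers it to $\tilde\rho$ and its derivatives, giving \eqref{stdc1}. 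I expect the main obstacle to be the global phase-line bookkeeping in the second step — showing that $F$ has exactly two zeros and locating $u_*$ relative to $u_+$ — since that is precisely where the supersonic condition \eqref{super1} is used and where the normalized threshold $\tilde u_{b}<\wc u_+$ comes from.
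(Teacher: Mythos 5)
The paper does not actually prove this proposition---it is quoted from \cite{knz03}---and your reduction to the scalar autonomous equation $\mu\,\tilde u_{1x_1}=F(\tilde u_1)$ followed by the phase-line analysis of $F$ is precisely the standard argument of that reference, with the key computations ($F'(u)=(m/u^2)(u^2-p'(m/u))$, the sign of $F'(u_+)$ under \eqref{super1}, the exactly two zeros $u_+<u_*<0$, and $\wc=u_*/u_+$) carried out correctly. The one caveat is that the constant your Gronwall step produces in \eqref{stdc1} degenerates as $\tilde u_b$ approaches the threshold $\wc u_+$ (the trajectory lingers near the repelling zero $u_*$), so the estimate should be understood with constants depending on the distance to that threshold---harmless here, since the paper only invokes it in the regime of small $\tilde{\delta}$.
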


From now on we discuss our main results.
We first show the unique existence of stationary solutions 
$(\rho^s,u^s)=(\rho^s,u^s_1,u^s_2,u^s_3)$ over the domain $\Omega$
by regarding $(\rho^s,u^s)(x)$ as a perturbation of $(\rt,\ut)(\tilde{M}(x))$, where
\begin{equation}\label{tM1}
\tilde{M}(x):=x_1-M(x'). 
\end{equation}
The stationary solutions satisfy the equations
\begin{subequations}
\label{snse}
\begin{gather}
\div (\rho^s u^s) = 0,
\label{snse1}
\\
\rho^s \{ (u^s \cdot \nabla) u^s \}
= \mu_1 \Delta u^s + (\mu_1 + \mu_2) \nabla (\div u^s) - \nabla p(\rho^s)
\label{snse2}
\end{gather}
with conditions
\begin{gather}
u^s(t,M(x'),x') = \ub(M(x'),x'),
\\
\lim_{x_1 \to \infty}  \rho^s(x)
= \rho_+,
\quad
\lim_{x_1 \to \infty} u^s(x)
=  (u_+,0,0),
\\
\inf_{x\in\Omega}\rho^s(x)>0.
\end{gather}
\end{subequations}
To state the existence theorem, we use the notation
\[
\dels := \|\ub - (\tilde{u}_b,0,0)\|_{H^{13/2}(\partial \Omega)}+\tilde{\delta}.
\]
and an extension $U(x)$ of $u_b(x')-(\tilde{u}_b,0,0)$, which satisfies 
\begin{subequations}\label{ExBdry0}
\begin{gather}
U(M(x'),x')=u_b(x')-(\tilde{u}_b,0,0),
\label{ExBdry1} \\
U(x_1,x')=0 \quad \text{if $x_1 >M(x')+1$},
\label{ExBdry2} \\
\|U\|_{H^7(\Omega)} \lesssim  \delta.
\label{ExBdry3}
\end{gather}
\end{subequations}
The existence result is summarized in the following theorem.
\begin{theorem}\label{th1}
Let \eqref{super1} and \eqref{cd-st} hold, and $m=3,4,5$.
There exist positive constants $\beta \leq \alpha/2$, 
where $\alpha$ is defined in Proposition \ref{ex-st},
and $\ep_0=\ep_0(\beta,\Omega)$ depending on $\beta$ and $\Omega$
such that if $\dels \le \ep_0$, then
the stationary problem \eqref{snse} has a unique solution
$(\rho^s,u^s)$ that satisfies
\begin{gather*}
(\rho^s-\rt\circ\tilde{M},u^s-\ut\circ\tilde{M}-U) 
\in \lteasp{\beta}(\Omega)\cap H^m(\Omega),
\\
\|(\rho^s-\rt\circ\tilde{M},u^s-\ut\circ\tilde{M}-U)\|_{\lteasp{\beta}}^2
+\|(\rho^s-\rt\circ\tilde{M},u^s-\ut\circ\tilde{M}-U)\|_{H^m}^2 \leq C_0\delta,
\end{gather*}
where $C_0=C_0(\beta,\Omega)$ is a positive constant depending on $\beta$ and $\Omega$.
\end{theorem}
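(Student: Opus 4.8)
The plan is to realize $(\rho^s,u^s)$ as a perturbation of the planar profile composed with $\tilde M(x)=x_1-M(x')$, plus the extension $U$ handling the boundary data, and to solve the resulting system by an iteration that alternates between a transport equation for the density perturbation and a Lam\'e-type elliptic system for the velocity perturbation. Concretely, I would write $\rho^s=\rt\circ\tilde M+\sigma$ and $u^s=\ut\circ\tilde M+U+w$, substitute into \eqref{snse}, and use that $(\rt,\ut_1)$ solves the planar ODEs \eqref{ste} only with respect to $x_1$. Since $\nabla\tilde M=(1,-\pd_{x_2}M,-\pd_{x_3}M)\neq e_1$ and $\Delta\tilde M=-\lapy M$, one obtains for $(\sigma,w)$ a transport equation in $\sigma$ of the form $(\ut\circ\tilde M+U+w)\cdot\nabla\sigma+(\rt\circ\tilde M+\sigma)\,\div(\ut\circ\tilde M+U+w)=G_1$ coupled through lower-order terms to $w$, and a second-order system $\mu_1\Delta w+(\mu_1+\mu_2)\nabla\div w=G_2$ with homogeneous Dirichlet data $w|_{\pd\Omega}=0$ and coupling to $\nabla\sigma$. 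The forcing terms collect two contributions: the geometric errors, which carry a factor $\nabla^{\le2}M$ times derivatives of $(\rt\circ\tilde M-\rho_+,\ut\circ\tilde M-(u_+,0,0))$ and are therefore $O(\tilde\delta)$ in $\lteasp{\beta}\cap H^{m-1}$ by \eqref{stdc1} provided the weight exponent satisfies $\beta\le\alpha/2$ so that $e^{\beta x_1}$ is absorbed by $e^{-\alpha\tilde M}$; and the contributions of $U$, which by \eqref{ExBdry0} are $O(\delta)$ in the same norms. Hence the total forcing is $\lesssim_\Omega\delta$.

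For the linear solvability and a priori estimates, I would treat the two equations separately. Given a velocity field $v$ close to $\ut\circ\tilde M$ (so that $v\cdot n\ge c/2$ on $\pd\Omega$ and $v_1\le u_+/2<0$ for large $x_1$), the linear transport equation for $\sigma$ is solved by the method of characteristics, integrating inward from $x_1=+\infty$ where $\sigma\to0$; no boundary condition is imposed, consistent with the outflow condition \eqref{bce}. Testing the equation and its derivatives against $e^{\beta x_1}\sigma$, the boundary term $\tfrac12\int_{\pd\Omega}(v\cdot n)\sigma^2$ carries the good sign, and the exponential weight produces the term $-\tfrac\beta2\int v_1 e^{\beta x_1}\sigma^2\ge\tfrac\beta4|u_+|\,\ltea{\beta}{\sigma}^2$; this is exactly where $u_+<0$, i.e.\ \eqref{cd-st}, and $\beta\le\alpha/2$ are used, yielding $\ltea{\beta}{\sigma}^2+\|\sigma\|_{H^{m-1}}^2\lesssim_\Omega(\text{forcing})^2+(\text{lower order})$. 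For the velocity, the principal part is the Lam\'e operator, elliptic since $\mu_1>0$ and $2\mu_1+3\mu_2\ge0$, so standard elliptic regularity with homogeneous Dirichlet data gives two extra derivatives on $w$, the exponential weight being propagated by testing with $e^{\beta x_1}w$ and absorbing the $O(\beta)$ commutators.

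The fixed point is then set up by defining the iteration map through solving the transport equation with velocity $\ut\circ\tilde M+U+w^{(n)}$ to get $\sigma^{(n+1)}$, and then the elliptic system with density $\rt\circ\tilde M+\sigma^{(n+1)}$ to get $w^{(n+1)}$. Choosing first $\beta$ small (universally), then $C_0$ large, then $\delta\le\ep_0(\beta,\Omega)$ small enough to absorb the super-linear nonlinear terms, the a priori estimates show the map preserves the ball $\{\,\|(\sigma,w)\|_{\lteasp{\beta}}^2+\|(\sigma,w)\|_{H^m}^2\le C_0\delta\,\}$; estimating differences of consecutive iterates in the weaker norm $\lteasp{\beta}\cap H^{m-1}$ shows it is a contraction there. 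A fixed point therefore exists, and by the uniform $H^m$ bound together with weak lower semicontinuity it satisfies the claimed estimate; uniqueness follows by applying the same contraction estimate to the difference of two solutions.

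The main obstacle I expect is that the density equation is a pure transport equation and gains no regularity, so every estimate must carefully budget the derivatives of $M$, of $\ub$, and of $U$ consumed through the geometric error terms and through the coefficient $u^s$ itself; this is precisely what forces $M\in H^9$, $\ub\in H^{13/2}(\pd\Omega)$, $\|U\|_{H^7}\lesssim\delta$, and restricts $m$ to $3,4,5$. A second delicate point is that the two favorable signs in the transport energy identity — the outflow boundary term and the weighted volume term — both rely on $u_+<0$ and on taking the weight exponent $\beta$ strictly below the intrinsic decay rate $\alpha$ of the planar profile; if either fails the iteration does not close. Finally, one must organize the estimates so that all constants depending on $\Omega$ (through $\|M\|_{H^9}$ and the flattening change of variables $y=(\tilde M(x),x')$ that reduces $\Omega$ to $\mathbb R^3_+$) multiply only small quantities, either $\delta$ or derivatives of the exponentially decaying profile, so that they affect only $\ep_0$ and $C_0$ and never the universal smallness needed to absorb the nonlinearity.
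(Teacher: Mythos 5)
Your proposal takes a genuinely different route from the paper — the paper does \emph{not} solve \eqref{snse} directly, but first proves time-global solvability of the reformulated problem \eqref{eq-pv} (Theorem \ref{global1}), then shows that the translates $\Phi(\cdot+kT^*,\cdot)$ converge to a time-periodic solution with arbitrary period $T^*$, and finally deduces time-independence from uniqueness of periodic solutions. Unfortunately, your direct decoupled iteration has a gap that I do not see how to repair, and it is precisely the difficulty the authors flag when they write that ``it is hard to directly solve the stationary problem.''

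The problem is quantitative and sits in your linear step for the density. In the transport equation for $\sigma$ the term $(\rt\circ\tilde M+\sigma)\div w$ is \emph{not} part of the small forcing: it carries an $O(1)$ constant, while the only coercivity available from testing against $e^{\beta x_1}\sigma$ is the weighted volume term of size $\beta|u_+|\,\|\sigma\|_{L^2_{\mathrm e,\beta}}^2$ (the outflow boundary term controls only the trace). Hence the transport step yields at best
\begin{equation*}
\|\sigma^{(n+1)}\|_{L^2_{\mathrm e,\beta}} \lesssim \beta^{-1}\|\div w^{(n)}\|_{L^2_{\mathrm e,\beta}} + \beta^{-1}\delta ,
\end{equation*}
while the Lam\'e step returns $\|\nabla w^{(n+1)}\|_{L^2_{\mathrm e,\beta}}\lesssim \|\sigma^{(n+1)}\|_{L^2_{\mathrm e,\beta}}+\delta$ because $p'\nabla\sigma$ is likewise an $O(1)$ source. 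The composed map therefore has operator norm $O(\beta^{-1})\gg1$ for the small $\beta$ you must take, so it is neither a contraction nor does it preserve a ball of radius $C_0\delta$. The same $\beta^{-1}$ loss reappears in the characteristic representation, since $\div w$ does not decay along characteristics. The only known way to beat this is the \emph{coupled} (symmetrized) estimate: multiply the mass equation by $P(\rho)\sigma$ and the momentum equation by $w$, so that the two $O(1)$ coupling terms cancel after integration by parts and the surviving cross term $-\beta\,p'(\rho_+)\sigma w_1$ is absorbed into the quadratic form $F(\sigma,w_1)$ — which is positive definite \emph{exactly because of the supersonic condition} \eqref{super1} (see \eqref{ea10} and the analogous computation in Proposition \ref{5.1}). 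Your sketch never invokes \eqref{super1} in the linear estimates; the two ``favorable signs'' you list ($u_+<0$ and $\beta\le\alpha/2$) are not sufficient, and an argument that does not use the supersonic condition cannot be correct, since the conclusion is tied to Matsumura's supersonic classification. Even granting the coupled a priori estimate, your scheme still needs an existence mechanism for the coupled linear stationary system (degenerate hyperbolic–elliptic on an unbounded domain with curved boundary), which is nontrivial and is exactly what the paper's detour through the time-dependent problem and time-periodic solutions supplies. A secondary issue: your regularity bookkeeping gives $\sigma\in H^{m-1}$ from $\div w\in H^{m-1}$ unless you additionally exploit the effective-viscous-flux cancellation (the paper's $\mu\partial_1(\text{mass eq})+\mathcal A\cdot(\text{momentum eq})$ manipulation in Subsection \ref{Sptial-deriv} and the Cattabriga estimates), which your proposal omits.
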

We also state the stability theorem.
\begin{theorem}\label{th2}
Let \eqref{super1} and \eqref{cd-st} hold.
There exist positive constants $\beta \leq \alpha/2$, 
where $\alpha$ is defined in Proposition \ref{ex-st},
and $\ep_0=\ep_0(\beta,\Omega)$ depending on $\beta$ and $\Omega$ such that if 
$\|(\rho_0-\rho^s, u_0-u^s)\|_{\lteasp{\beta}}
+\hs{3}{(\rho_0-\rho^s, u_0-u^s)} + \dels \le \ep_0$
and $(\rho_0,u_0)$ satisfies the compatibility conditions of order 0 and 1,
then the initial-boundary value problem \eqref{nse} has 
a unique time-global solution $(\rho, u)$ such that
$(\rho-\rho^s,u-u^s) \in X^{\text{\rm e}}_{3,\beta} (0,T)$.
Moreover, it holds
\begin{equation*}
\|(\rho-\rho^s,u-u^s)(t)\|_{L^\infty}
\leq C_0 e^{-\zeta t},
\end{equation*}
where $C_0=C_0(\beta,\Omega)$ and $\zeta=\zeta(\beta,\Omega)$ are 
positive constants depending on $\beta$ and $\Omega$
but independent of $\delta$ and $t$.
\end{theorem}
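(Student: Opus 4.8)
The plan is to adapt the weighted energy method of \cite{kg06,nn09} from the flat half-space to the curved domain $\Omega$, following the classical three-step scheme: local existence, uniform a priori estimates, and continuation, with the exponential weight $e^{\beta x_1}$ simultaneously producing the uniform bound and the exponential rate. Write the perturbation $(\phi,\psi):=(\rho-\rho^s,\,u-u^s)$; it solves a quasilinear system whose coefficients are built from $(\rho^s,u^s)$ and its derivatives (controlled through Theorem \ref{th1}, and through \eqref{stdc1} for the far-field part), with the homogeneous boundary condition $\psi|_{\partial\Omega}=0$ and --- since $\ub\cdot n\ge c>0$ makes the boundary outgoing for the continuity equation --- no boundary condition for $\phi$. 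As a preliminary I would flatten the boundary by the change of variables $x\mapsto(\tilde M(x),x')$ with $\tilde M$ as in \eqref{tM1}, turning $\Omega$ into $\mathbb R^3_+$ at the cost of $M$-dependent variable coefficients whose $H^9$-regularity is exactly what is available.

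\emph{Local existence and uniqueness.} By a standard linearization and iteration in $X^{\text{\rm e}}_{3,\beta}(0,T)$, using the compatibility conditions of order $0$ and $1$ for $u_0$, one obtains a unique local-in-time solution; the exponential weight is propagated because the transport part of the continuity equation is governed by $u^s_1<0$, which is compatible with the weight $e^{\beta x_1}$. Uniqueness on any time interval follows from an $L^2$ estimate for the difference of two solutions.

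\emph{A priori estimates --- the heart of the proof.} Assume a solution on $[0,T]$ with $N(T):=\sup_{[0,T]}\bigl(\|(\phi,\psi)\|_{\lteasp{\beta}}+\hs{3}{(\phi,\psi)}\bigr)$ small. For $k=0,1,2,3$ I would derive weighted and unweighted estimates for $\nabla^k(\phi,\psi)$, multiplying the continuity equation by $\phi$ (and its weighted derivatives) and the momentum equation by $\psi$, to reach an inequality of the form
\[
\frac{d}{dt}\tilde{\mathcal E}(t)^2+c\,\mathcal D(t)^2\lesssim(\delta+N(T))\,\mathcal D(t)^2,
\]
where $\tilde{\mathcal E}$ is equivalent to $\|(\phi,\psi)\|_{\lteasp{\beta}}+\hs{3}{(\phi,\psi)}$ and the dissipation $\mathcal D$ contains $\|\nabla\psi\|_{H^3}$ and its weighted analogue together with a coercive density contribution of size $\beta|u_+|$ times a weighted $L^2$ norm of $\nabla^k\phi$, which the weight manufactures out of $\int_\Omega e^{\beta x_1}(-u^s_1)|\nabla^k\phi|^2\,dx$; the missing $\nabla\phi$-dissipation is recovered from the momentum equation by solving for $\nabla p(\rho)$ in terms of $\psi_t,\Delta\psi,\nabla\div\psi$ and lower-order terms. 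Boundary terms are handled using $\psi|_{\partial\Omega}=0$ (whence tangential derivatives of $\psi$ also vanish there) and the sign of $u^s\cdot n<0$; the genuinely multidirectional character of the stationary flow --- $u^s_2,u^s_3$ are no longer identically zero --- enters only through terms that are $O(\delta)$ by Theorem \ref{th1}. Taking $\beta\le\alpha/2$ and $\ep_0$ small makes the right-hand side absorbable, giving $\frac{d}{dt}\tilde{\mathcal E}^2+c\,\mathcal D^2\le0$ and hence a uniform-in-time bound; together with local existence and a continuation argument this yields the global solution in $X^{\text{\rm e}}_{3,\beta}(0,\infty)$.

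\emph{Exponential decay.} Finally, as in \cite{nn09}, the weighted dissipation dominates the weighted energy, $\mathcal D(t)^2\gtrsim\zeta\,\tilde{\mathcal E}(t)^2$: the weighted norms of the density derivatives come from the weight-generated term above, those of $\nabla^k\psi$ with $k\ge1$ already sit in $\mathcal D$, and the remaining zeroth-order velocity contribution is recovered by combining the momentum equation with the density estimate and exploiting the hyperbolic coupling between $\phi$ and $\psi$. Then $\frac{d}{dt}\tilde{\mathcal E}^2+\zeta\,\tilde{\mathcal E}^2\le0$ gives $\tilde{\mathcal E}(t)\le\tilde{\mathcal E}(0)e^{-\zeta t/2}$, and the Sobolev embedding $H^2(\Omega)\hookrightarrow L^\infty(\Omega)$ converts this into the stated $L^\infty$ bound. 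The principal obstacle is the a priori estimate: one must simultaneously control the curved outflow boundary, extract \emph{all} of the density dissipation from the exponential weight alone (no boundary condition being available for $\phi$), and absorb the new tangential-flow terms --- the smallness of $\delta$ and of the initial perturbation being what ultimately closes the argument.
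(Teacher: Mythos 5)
Your strategy is sound and the core machinery you invoke (the exponential weight producing zeroth-order dissipation through the outflow sign and the supersonic condition, Matsumura--Nishida estimates for derivatives, boundary flattening) is the same as the paper's, but the architecture of your proof is genuinely different from theirs. The paper never performs weighted $H^3$ energy estimates on $(\rho-\rho^s,u-u^s)$. Instead it (i) proves global existence and a uniform but \emph{non-decaying} bound $E_{3,\beta}(t)\lesssim E_{3,\beta}(0)e^{-\zeta t}+C\delta$ (estimate \eqref{bound1}) for the perturbation $\Phi$ measured from the explicit profile $(\rt,\ut)\circ\tilde M+(0,U)$ --- the $C\delta$ term being unavoidable because that system carries the $O(\delta)$ inhomogeneities $F,G$; (ii) proves a purely zeroth-order $L^2_{\text{e},\beta}$ contraction estimate for the difference of two solutions (Proposition \ref{5.1}, Lemma \ref{5.2}), which after letting $k\to\infty$ gives $\|\Phi-\Phi^s\|_{\lteasp{\beta}}\lesssim e^{-\gamma t}$; and (iii) interpolates this against the uniform $H^3$ bound via Gagliardo--Nirenberg and Sobolev to reach the $L^\infty$ decay, at the price of a reduced rate. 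Your route --- running the full weighted a priori estimate directly on the \emph{homogeneous} system satisfied by $(\rho-\rho^s,u-u^s)$, so that the $C\delta$ term disappears and the full energy itself decays --- is viable: Section \ref{S6} of the paper is precisely the unweighted version of this computation, and the order-zero weighted estimate goes through as in Proposition \ref{5.1}. It buys decay of the whole $H^3\cap\lteasp{\beta}$ norm at the full rate rather than an interpolated $L^\infty$ rate. What it costs is redoing the entire higher-order machinery for the system linearized around $(\rho^s,u^s)$, whose coefficients $\nabla^j\psi^s$ are only small in Sobolev norms (not pointwise exponentially decaying), and --- the step your sketch compresses the most --- recovering full normal derivatives near the curved boundary: ``solving the momentum equation for $\nabla p(\rho)$'' presupposes $L^2$ control of $\Delta\psi$, which in $\Omega$ is not obtained by direct differentiation but through the Stokes/Cattabriga and elliptic estimates (Lemmas \ref{CattabrigaEst} and \ref{ellipticEst}) combined with the cancellation of second-order normal derivatives via the operator $\mathcal D$ of Subsection \ref{Sptial-deriv}; this is exactly where the dependence of $\ep_0$, $C_0$, $\zeta$ on $\Omega$ enters, so it cannot be waved away as a routine variable-coefficient perturbation.
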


Theorem \ref{th2} requires 
the condition $(\rho_0-\rho^s, u_0-u^s) \in \lteasp{\beta}(\Omega)$.
Without this condition, the following stability theorem holds.

\begin{theorem}\label{th3}
Let \eqref{super1} and \eqref{cd-st} hold.
There exists a positive constant $\ep_0=\ep_0(\beta,\Omega)$ 
depending on $\beta$ and $\Omega$ such that if 
$\hs{3}{(\rho_0-\rho^s, u_0-u^s-U)} + \dels  \le \ep_0$
and $(\rho_0,u_0)$ satisfies the compatibility conditions of order 0 and 1,
then the initial-boundary value problem \eqref{nse} has 
a unique time-global solution $(\rho, u)$ such that
$(\rho-\rho^s,u-u^s) \in X_{3} (0,T)$.
Moreover, 
\begin{equation*}
\|(\rho-\rho^s,u-u^s)(t)\|_{L^\infty}\to 0 \quad \text{as $t\to\infty$}.
\end{equation*}
\end{theorem}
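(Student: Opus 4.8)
The plan is to recast \eqref{nse} as a perturbation problem about the stationary state $(\rho^s,u^s)$ of Theorem \ref{th1}, to establish a time-uniform $H^3$ estimate together with an integrable dissipation (which upgrades the local solution to a global one by continuation), and then to read off the decay. Since no exponential weight enters the hypotheses, only the \emph{unweighted} part of the estimates behind Theorems \ref{th1}--\ref{th2} is invoked, and the decay that results is merely qualitative. Setting $(\vp,\psi):=(\rho-\rho^s,\,u-u^s)$ and subtracting \eqref{snse} from \eqref{nse} gives
\begin{gather*}
\vp_t + u^s\cdot\nabla\vp + \rho^s\,\div\psi = f, \\
\rho^s(\psi_t + u^s\cdot\nabla\psi) + p'(\rho^s)\nabla\vp - \mu_1\Delta\psi - (\mu_1+\mu_2)\nabla(\div\psi) = g,
\end{gather*}
where $f,g$ consist of terms quadratic in $(\vp,\psi)$ and terms \emph{linear} in $(\vp,\psi)$ whose coefficients are first derivatives of $(\rho^s,u^s)$, so there is no genuine forcing. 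A key structural point is that, extending \eqref{stdc1} to the perturbed half-space (a byproduct of the proof of Theorem \ref{th1}), one has $|\nabla^k(\rho^s,u^s)(x)|\lesssim\dels\,e^{-\beta\tilde{M}(x)}$, so every linear coupling term in $f,g$ carries a coefficient that decays exponentially in the normal variable. The boundary condition \eqref{bce} becomes the homogeneous condition $\psi|_{\pd\Omega}=0$ (the fixed correction $U$ of \eqref{ExBdry0}, compactly supported in $x_1$ and of size $O(\dels)$, is harmless here and in comparing $\hs{3}{(\vp_0,\psi_0)}$ with the quantity in the hypothesis), no condition is attached to $\vp$ because $u^s\cdot n\ge c>0$ on $\pd\Omega$, and $(\vp,\psi)\to0$ as $x_1\to\infty$.

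A standard iteration --- solving the linear transport equation for $\vp$ along the outgoing characteristics of $u^s$, and the linear parabolic system for $\psi$ with homogeneous data --- produces a unique local solution $(\vp,\psi)\in X_3(0,T_0)$ with $\inf_\Omega\rho>0$, provided $(\rho_0,u_0)$ satisfies the compatibility conditions of orders $0$ and $1$. The core is then the a priori estimate, run under the smallness of $N(T):=\sup_{0\le t\le T}\hs{3}{(\vp,\psi)(t)}$ and of $\dels$. One starts from the $L^2$ identity obtained by testing the continuity equation with a suitable thermodynamic multiplier ($p'(\rho^s)\vp/\rho^s$, say) and the momentum equation with $\psi$: the viscous terms give the interior dissipation $\mu_1\lt{\nabla\psi}^2+(\mu_1+\mu_2)\lt{\div\psi}^2$; the transport terms integrate by parts into the favorably signed boundary integral $\tfrac12\int_{\pd\Omega}(u^s\cdot n)\bigl(p'(\rho^s)\vp^2/\rho^s+\rho^s|\psi|^2\bigr)\,dS\ge0$ thanks to $u^s\cdot n\ge c>0$, yielding in particular the boundary dissipation $\lpo{2}{\vp}{\pd\Omega}^2$; the pressure cross-term $p'(\rho^s)(\nabla\vp\cdot\psi+\vp\,\div\psi)=\div(p'(\rho^s)\vp\psi)-\vp\psi\cdot\nabla p'(\rho^s)$ contributes no boundary term since $\psi|_{\pd\Omega}=0$, only a lower-order bulk term; and every remaining linear term carries a coefficient $\lesssim\dels\,e^{-\beta\tilde{M}}$ and is absorbed via the Hardy-type inequality $\int_\Omega e^{-\beta\tilde{M}}|h|^2\,dx\le C_\beta\bigl(\lpo{2}{h}{\pd\Omega}^2+\lt{\nabla h}^2\bigr)$, which forces $\ep_0$ to depend on $\beta$.

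Higher derivatives are obtained by differentiating tangentially first --- which is compatible with $\psi|_{\pd\Omega}=0$ and with the flattening map $\tilde{M}$ of \eqref{tM1} --- then recovering the normal derivatives of $\psi$ from the momentum equation read as an elliptic system and those of $\vp$ from the continuity equation; since the density enjoys no parabolic smoothing, its interior dissipation up to order two comes from testing the (tangentially differentiated) momentum equation with $\nabla\vp$. The extra terms created by the curvature of $\pd\Omega$ and by the genuinely three-directional character of $u^s$ --- absent in the planar analyses \cite{kg06,nn09} --- are handled in the same spirit, and the boundary integrals that survive in the higher-order estimates are kept signed or small by the outflow condition and the supersonic structure \eqref{super1}, as in \cite{knz03,kg06}. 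Collecting everything yields, for $\dels$ and $N(T)$ small,
\begin{equation*}
\hs{3}{(\vp,\psi)(t)}^2 + \int_0^t\bigl(\hs{2}{\nabla\vp}^2 + \hs{3}{\nabla\psi}^2 + \lpo{2}{\vp}{\pd\Omega}^2\bigr)\,d\tau \ \le\ C\,\hs{3}{(\vp_0,\psi_0)}^2 \ \le\ C\ep_0^2, \qquad 0\le t\le T,
\end{equation*}
and together with local existence, a continuation argument, and $\ep_0$ small this gives a time-global solution with $(\rho-\rho^s,u-u^s)\in X_3(0,T)$ for every $T$.

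Finally, letting $T\to\infty$ above gives $\int_0^\infty\bigl(\lt{\nabla\vp(\tau)}^2+\lt{\nabla\psi(\tau)}^2\bigr)\,d\tau<\infty$, while the perturbation system and the uniform $H^3$ bound make $\tfrac{d}{dt}\bigl(\lt{\nabla\vp(t)}^2+\lt{\nabla\psi(t)}^2\bigr)$ bounded on $(0,\infty)$; hence $\lt{\nabla\vp(t)}\to0$ and $\lt{\nabla\psi(t)}\to0$ as $t\to\infty$. The Gagliardo--Nirenberg inequality $\li{h}^2\lesssim\lt{\nabla h}\bigl(\lt{\nabla^2 h}+\lt{\nabla h}\bigr)$ on $\Omega$ --- valid after flattening $\pd\Omega$ via $\tilde{M}$ and using a homogeneous half-space extension --- combined with the uniform bound on $\lt{\nabla^2(\vp,\psi)(t)}$ then yields $\li{(\rho-\rho^s,u-u^s)(t)}\to0$, which is the claim. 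I expect the main obstacle to be the a priori estimate on the curved domain: keeping every boundary integral favorably signed or absorbable (through the outflow condition and \eqref{super1}) and controlling the commutator and lower-order terms created by the nonflat boundary and the three-directional stationary flow, all while extracting the density dissipation from the momentum equation. The decay step is comparatively soft; the one point special to Theorem \ref{th3} is that, without the exponential weight, one has $\lt{\nabla(\vp,\psi)(t)}\to0$ but not $L^2$-decay of $(\vp,\psi)$ itself, so the closing interpolation must use gradients only.
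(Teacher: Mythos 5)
Your overall architecture --- perturbing about $(\rho^s,u^s)$, an unweighted $H^3$ energy estimate with boundary dissipation from the outflow condition and positive-definiteness from \eqref{super1}, continuation, and then integrability of the gradient dissipation plus a gradient-only Gagliardo--Nirenberg inequality for the $L^\infty$ decay --- is the same as the paper's (Section \ref{S6}), and your closing observation that only $\lt{\nabla(\vp,\psi)}$ decays, not $\lt{(\vp,\psi)}$, is exactly the right point. There is, however, one concrete step that fails as stated. You assert that ``extending \eqref{stdc1} to the perturbed half-space'' yields the pointwise bound $|\nabla^k(\rho^s,u^s)(x)|\lesssim\dels\, e^{-\beta\tilde M(x)}$, and you then absorb \emph{every} linear coupling term in $f,g$ through the Hardy inequality on the strength of this decay. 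Theorem \ref{th1} gives no such pointwise bound: the non-planar part $\Phi^s=(\vp^s,\psi^s)$ of the stationary solution is controlled only in $\lteasp{\beta}\cap H^m$ (an $L^2$-weighted bound on $\Phi^s$ itself, with no weighted control of its derivatives), and no pointwise exponential decay of $\nabla\Phi^s$ is established, or used, anywhere in the paper. Consequently the terms in $f,g$ whose coefficients involve $\nabla\rho^s-\nabla(\rt\circ\tilde M)$ or $\nabla u^s-\nabla(\ut\circ\tilde M+U)$ --- precisely the terms that distinguish this problem from the planar case of \cite{kg06,nn09} --- are not reached by your Hardy mechanism. The paper treats them separately (proof of Lemma \ref{nowe-lm1}): it splits the remainder into a part with coefficients built from $(\rt,\ut,U)$, which does go through Hardy, and a part with coefficients $\Phi^s$, which is handled by integrating by parts to move the derivative off $\psi^s$ and then applying H\"older and Sobolev, e.g. $\lt{\nabla\Phi}\,\|\psi^s\|_{L^3}\|\psi\|_{L^6}\lesssim(N(T)+\dels)\lt{\nabla\Phi}^2$, using only the smallness of $\|\Phi^s\|_{H^3}$. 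Your estimate can be repaired this way, but the mechanism you wrote down is not the one that closes.

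A second, smaller gap concerns the top-order spatial derivatives. You recover the normal derivatives of $\psi$ ``from the momentum equation read as an elliptic system'' and those of $\vp$ ``from the continuity equation,'' but the continuity equation only expresses $\div\psi$ through the material derivative of $\vp$; to obtain $\|\nabla^{k+1}\vp\|$ together with $\|\nabla^{k+2}\psi\|$ the paper must read the pair of equations as a Stokes system with $\vp$ in the role of the pressure and invoke the Cattabriga estimate (Lemmas \ref{nowe-lm1-C}--\ref{nowe-lm2-C}); this is also the sole source of the $\Omega$-dependence of $\ep_0$ in Theorem \ref{th3}. Likewise, the dissipation of the normal derivative of $\vp$ on the curved boundary is not obtained by naively testing with $\nabla\vp$: one needs the specific combination $\mathcal{D}=\tilde{\mathcal{A}}_1\partial_1+\mathcal{A}_2\partial_2+\mathcal{A}_3\partial_3$ of \eqref{Aj-def}, whose coefficients are chosen so that all second-order normal derivatives of $\psi$ cancel (see \eqref{ee4-hat}). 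These are the ingredients hiding behind your phrase ``handled in the same spirit,'' and they are where most of the actual work of the proof lies.
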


%{\color{red}{Did we prove this theorem in this draft?}}

If the domain $\Omega$ is sufficiently flat, in the above theorems,
we can take the constants $\ep_0$, $C_0$, and $\zeta$ independent of $\Omega$.
Namely, the following corollary holds.

\begin{corollary}\label{cor}
Suppose that $\|M\|_{H^s} \leq \kappa$ for $\kappa$ being in Lemma \ref{CattabrigaEst}.
Then Theorems \ref{th1}--\ref{th3} hold with constants $\ep_0$, $C_0$, and $\zeta$
independent of $\Omega$. %
\end{corollary}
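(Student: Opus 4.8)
The plan is to trace through the proofs of Theorems \ref{th1}--\ref{th3} and identify exactly where the constants $\ep_0$, $C_0$, and $\zeta$ pick up their dependence on $\Omega$, and then show that, under the flatness hypothesis $\|M\|_{H^s}\le\kappa$, that dependence collapses to a dependence only on $\|M\|_{H^9}$ (hence is already absorbed into the generic constants $C$ as declared in the Notation subsection). The key observation is that $\Omega$ enters the estimates only through two mechanisms: (i) the change of variables $x\mapsto \tilde M(x)=x_1-M(x')$ flattening $\Omega$ to $\mathbb R^3_+$, whose Jacobian and all derivatives are controlled by $\|M\|_{H^9}$ alone; and (ii) the elliptic regularity estimate for the div--curl (or Lamé-type) system on $\Omega$ invoked as Lemma \ref{CattabrigaEst} (the Cattabriga/Solonnikov estimate), whose constant is the only genuinely geometry-dependent quantity. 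The statement of Lemma \ref{CattabrigaEst} supplies a threshold $\kappa$ below which that elliptic constant is bounded by a universal constant times a function of $\|M\|_{H^9}$ only. So the corollary is essentially a bookkeeping statement: rerun the proofs with this uniform elliptic constant in place of $C(\Omega)$.

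Concretely, I would proceed as follows. First, recall from the proof of Theorem \ref{th1} that the stationary solution is constructed by a fixed-point/continuity argument whose closing inequality has the schematic form $\|(\text{perturbation})\|_{X}\le C(\Omega)\delta + C(\Omega)\|(\text{perturbation})\|_X^2$; the smallness threshold $\ep_0$ and prefactor $C_0$ are then explicit functions of $C(\Omega)$. Every appearance of $C(\Omega)$ there traces back either to the flattening diffeomorphism or to Lemma \ref{CattabrigaEst}. Under $\|M\|_{H^s}\le\kappa$, Lemma \ref{CattabrigaEst} gives $C(\Omega)\le C$ with $C$ of the generic type (depending on $\|M\|_{H^9}$ only), so the closing inequality becomes $\|\cdot\|_X\le C\delta + C\|\cdot\|_X^2$, and the resulting $\ep_0$, $C_0$ are generic constants $C$, i.e.\ $\Omega$-independent in the sense declared. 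Second, for Theorems \ref{th2} and \ref{th3}, the a priori energy estimates (basic $L^2$ estimate, weighted $\lteasp{\beta}$ estimate, and higher-order $H^3$ estimates) likewise involve $\Omega$ only through the flattening map, trace inequalities on $\partial\Omega$, Poincaré/Sobolev embeddings on $\Omega$, and the same elliptic regularity lemma used to recover normal derivatives of the density and full derivatives of the velocity; under the flatness assumption each of these carries a generic constant. In particular the exponential decay rate $\zeta$, which comes from a Gronwall/interpolation argument balancing the dissipation against the weighted norm, has its constants bounded in terms of $\alpha$, $\beta$, and generic quantities, hence becomes $\Omega$-independent.

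The main obstacle—and the only place requiring genuine care rather than bookkeeping—is verifying that the elliptic estimate constant in Lemma \ref{CattabrigaEst} really is uniform over the family $\{\Omega_M : \|M\|_{H^s}\le\kappa\}$ and not merely finite for each fixed $\Omega$. This is precisely why the threshold $\kappa$ is defined through that lemma: for small enough $\|M\|_{H^s}$, the operator on $\Omega_M$ is a small perturbation (after flattening) of the constant-coefficient operator on the half-space, so a Neumann-series / contraction argument gives a constant depending only on the half-space constant and $\|M\|_{H^9}$. I would therefore (i) invoke Lemma \ref{CattabrigaEst} to fix the uniform elliptic constant, (ii) re-examine each estimate in the proofs of Theorems \ref{th1}--\ref{th3} and replace $C(\Omega)$ by the generic $C$ wherever it originated from elliptic regularity or from the flattening diffeomorphism, checking that no other source of $\Omega$-dependence remains, and (iii) conclude that the thresholds and rates produced by the (unchanged) algebraic structure of those arguments are now generic constants. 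Since the logical skeleton of the three theorems is untouched, no new analysis is needed beyond this uniformization step, and the corollary follows.
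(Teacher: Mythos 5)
Your proposal is correct and follows essentially the same route as the paper: the authors likewise observe that all genuine $\Omega$-dependence (beyond $\|M\|_{H^9}$) enters only through the Cattabriga estimates of Lemmas \ref{lm1-C}--\ref{lm2-C} and \ref{nowe-lm1-C}--\ref{nowe-lm2-C}, replace $\lesssim_\Omega$ by $\lesssim$ there under the hypothesis $\|M\|_{H^9}\le\kappa$, and rerun Propositions \ref{apriori1} and \ref{nowe-apriori1} (and then Sections \ref{S5}--\ref{S6}) unchanged. The uniformity of the elliptic constant that you flag as the one delicate point is exactly what the second assertion of Lemma \ref{CattabrigaEst} supplies, proved in the appendix by the same flattening-plus-small-perturbation argument you describe.
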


\begin{remark}
What interests us most in Theorems \ref{th1}--\ref{th3} is that 
the existence and stability are shown as long as 
the boundary of domain is given by a graph.
In other words, the these theorems allow the boundary has a large curvature.
The works \cite{kg06,nn09} adopted the boundary condition as
$u_b(M(x'),x')=(\tilde{u}_b,0,0)$ for the half-space $\mathbb R^3_+$.
It is clear that our theorems cover this boundary condition as well.

It is also worth to point out that Corollary \ref{cor} can
cover the boundary condition $u_b(M(x'),x')=\tilde{u}_b n(x')$,
where $n(x')$ is the unit outer normal vector given in \eqref{nvector}.
Indeed, if $\varepsilon_0$ is independent of $\Omega$, 
there is no issue to take $n(x')$ depending on $\Omega$ so that
$\|\tilde{u}_b n(x')-(\tilde{u}_b,0,0)\|_{H^{13/2}(\partial\Omega)} 
+ |\tilde{u}_b-u_+|=\delta\leq \varepsilon_0/2$ holds.
This boundary condition seems more reasonable from physical point of view
since it means that the fluid is going out from 
only the normal direction of the boundary. 
\end{remark}

Note that it is hard to directly solve the stationary problem \eqref{snse}. 
This is different from the case when one has $\Omega = \mathbb{R}^3_+$ and 
looks for a planar stationary solution, 
where the planar stationary solution only depends on $x_1$ and 
therefore the system \eqref{snse} reduces to an ODE \eqref{ste}. 
It is also different from the stationary incompressible Navier-Stokes equation, 
in which the system is elliptic. 
Our stationary equations are not categorized as elliptic equations.
To get around this difficulty, 
we first prove the existence of a time-global solution to the problem \eqref{nse}, 
and then we construct a stationary solution making use of this time-global solution. 

%{\color{red}{
Let us explain the idea to construct the time-global solution.
We use a continuous argument combining time-local solvability and an a priori estimate.
Then the derivation of a priori estimate is most important.
For example, one can have a priori estimates of solutions of some inhomogeneous parabolic equations 
over bounded domains even if the long-time behavior of solutions is not anticipated. 
The key of the proof is the dissipative structure 
which makes solutions of the corresponding homogeneous equations 
decay exponentially fast as time tends to infinity.
On the other hand, we expect from the stability theorem in \cite{nn09} introduced above
that the solution $(\rho,u)$ to problem \eqref{nse} with $u_b=(u_+,0,0)$ 
may converge the constant state $(\rho_+,u_b)$ exponentially fast as time tends to infinity.
For the case $u_b\neq(u_+,0,0)$, after suitable reformulation, 
all effects coming from $u_b\neq(u_+,0,0)$ are represented by inhomogeneous terms in the equations. 
Specifically, let us define a perturbation as
$$\Phi (t, x):=(\vp,\psi)(t,x) :=(\rho, u)(t,x)-(\rt, \ut)(\tilde{M}(x)) - (0, U) (x)$$ 
and reformulate \eqref{nse} into a problem for $\Phi$. 
The dissipative structure then enables us to obtain 
the a priori estimate of solutions $\Phi$ to the reformulated problem. 
For the construction of stationary solutions, we use a similar method as in \cite{Va83}.
More precisely, we define the translated time-global solutions $\Phi^k(t,x):=\Phi(t+kT^*,x)$ 
for any $T^* > 0$ and $k=1,2,3,\ldots$. 
Then we prove that the sequence $\{\Phi^k\}$ converges to 
a certain time-periodic solution with a period $T^*$.
After this we show by using the uniqueness of time-periodic solution and the arbitrariness of $T^*$ 
that the time-periodic solution is actually time-independent.
Therefore this gives a stationary solution to our problem. 

%$\rightarrow$ We haven't introduce the reformulated problem and $\Phi = (\vp, \psi)$ yet... Shall we mention the definition of $\Phi = (\vp, \psi)$ in this paragraph?
%}}

%Moreover, to deal with the curved boundary $M(x)$, we make a change of variable to flatten the boundary. This is necessary in a large part of the analysis, especially when estimating the spatial derivatives of the solutions. Moreover, in order to match the outflow boundary condition \eqref{bce}, we construct a small function $U$ that is supported near the boundary and satisfies
%\begin{equation*}
%U(M(x'),x')=u_b(x')-(\tilde{u}_b,0,0) ,
%\end{equation*}
%and use it a correction function when solving $(\rho^s,u^s)(x)$ as a perturbation of $(\rt,\ut)(x_1-M(x'))$. 

Before closing this section, we mention the outline of this paper.
In Section \ref{sec2}, we reformulate the initial-boundary value problem \eqref{nse} 
into an initial-boundary value problem for a perturbation 
from the stationary solution $(\rt\circ\tilde{M},\ut\circ\tilde{M})$ in the half-space, 
as stated in \eqref{eq-pv}. 
In Section \ref{sec3}, we show the unique existence of the time-global solution %$\Phi$
to the reformulated problem \eqref{eq-pv} (see Theorem \ref{global1}) 
by proving an a priori estimate in Proposition \ref{apriori1}.
The derivation of the a priori estimate is based 
on a combination of the energy form in \cite{kagei05,knz03}, 
the Matsumura--Nishida energy method in \cite{m-n83},
and the weighted energy method in \cite{nny07,nn09}.
In Section \ref{S5} we construct stationary solutions by the method mentioned just above.
%We first define the translated time-global solutions $\Phi^k(t,x):=\Phi(t+kT^*,x)$ 
%for any $T^* > 0$ and $k=1,2,3,\ldots$. 
%Then we prove that $\{\Phi^k\}$ is a Cauchy sequence in some suitable Banach space and 
%obtain a limit $\Phi^*$ from it (see Proposition \ref{5.3}), 
%and show in Subsection \ref{S5.2} that $\Phi^*$ is indepednent of time
%by using the arbitrariness of $T^*$.
Subsection \ref{S5.3} is devoted to the proof of the asymptotic stability of the stationary solution 
in the weighted space $L^2_{e, \beta} (\Omega)$. 
Here we can obtain the exponential convergence rate.
For the initial data which do not belong to $L^2_{e, \beta} (\Omega)$,
we also show the asymptotic stability of the stationary solution in Section \ref{S6}. 
In Appendix A, we give the proofs of some general inequalities. 
%as well as the Cattabriga estimate and the elliptic estimate. 
Furthermore, we construct an initial data satisfying the compatibility conditions in Appendix B.
The initial data is necessary to obtain the time-global solution in Section \ref{sec3}.

\section{Reformulation}\label{sec2}
For the proof of Theorems \ref{th1} and \ref{th2},
we begin by reformulating initial-boundary value problem \eqref{nse}. 
Let us introduce perturbations
\begin{gather*}
(\vp,\psi)(t,x)
:=
(\rho, u)(t,x)
-
(\rt, \ut)(\tilde{M}(x)) - (0, U) (x),
\ \; \text{where} \ \;
\psi = (\psi_1,\psi_2,\psi_3). 
\end{gather*}
Here $\tilde{M}(x)$ is defined in \eqref{tM1}.

Owing to equations in (\ref{nse}) and (\ref{ste}),
the perturbation $(\vp,\psi)$ satisfies the system of
equations
\begin{subequations}
\label{eq-pv}
\begin{gather}
\vp_t + u \cdot \nabla \vp + \rho \div \psi
= f+F,
\label{eq-pv1}
\\
\rho \{ \psi_t + (u \cdot \nabla) \psi \}
- L \psi + p'(\rho) \nabla \vp
= g + G.
\label{eq-pv2}
\end{gather}
The boundary and initial conditions for $(\vp,\psi)$ 
follow from (\ref{ice}), (\ref{bce}), and (\ref{stbc}) as
\begin{gather}
\psi(t,M(x'),x') = 0,
\label{pbc}
\\
(\vp,\psi)(0,x)
=(\vp_0, \psi_0)(x)
:= (\rho_0, u_0)(x) - (\rt, \ut)(\tilde{M}(x)) - (0, U) (x).
\label{pic}
\end{gather}
\end{subequations}
Here $L \psi$, $f$, $F$, $g$ and $G$ are defined by
\begin{align*}
L \psi &:= \mu_1 \Delta \psi + (\mu_1 + \mu_2) \nabla \div \psi,
\\
f &:= - \nabla\tilde{\rho} \cdot \psi  -  \tilde{u}_1'\varphi - \vp \div U ,
\\
F &:=  - \nabla \tilde{\rho} \cdot U - \tilde{\rho} \div U,
\\
g&:= - \rho (\psi \cdot \nabla) (\tilde{u}+U) - \varphi ((\tilde{u}+U) \cdot \nabla) (\tilde{u}+U)
-(p'(\rho)-p'(\rt)) \nabla \tilde{\rho},
\\
G&:=-\rt ((\tilde{u}+U) \cdot \nabla) U-\rt (U \cdot \nabla)\tilde{u} + LU + p'(\rt) \tilde{\rho}' \nabla M
\\
&\qquad +\begin{bmatrix}
\mu_1 \tilde{u}''_1 \sum^3_{j=2} (\pd_{j} M)^2 + \mu_1 \tilde{u}'_1 \sum^3_{j=2}  \pd^2_{j} M
\\
-(\mu_1+\mu_2) \tilde{u}''_1 \pd_{2} M
\\
-(\mu_1+\mu_2) \tilde{u}''_1 \pd_{3} M
\end{bmatrix}.
\end{align*}
Note that $L$ is a differential operator; $f$ and $g$ are homogeneous terms for $(\phi,\psi)$; 
$F$ and $G$ are inhomogeneous terms independent of $t$.
Furthermore, $F$ and $G$
can be estimated by using $M \in H^9(\mathbb R^2)$, \eqref{stdc1}, and \eqref{ExBdry0} as
\begin{equation}\label{h1}
%\| (F,G) \|_{L^2_{e, 2\beta}} \leq 
\| (F,G) \|_{L^2_{e, 3\alpha/2}} \lesssim  \delta, \quad 
\|(F,G)\|_{H^5} \lesssim \delta.
\end{equation}

%The second inequality together with Sobolev's inequality \eqref{sobolev2} in Appendix A gives
%\begin{equation}\label{h2}
%\| \nabla^k (F,G) \|_{L^\infty} \lesssim \delta \quad \text{for $k=0,1,2,3$.}
%\end{equation}

We often express the perturbation by
\[
\Phi := (\vp,\psi),
\quad
\Phi_0 := (\vp_0,\psi_0).
\]
In order to establish the local existence of the solution
in strong sense,
we assume compatibility conditions for the initial data.
It is necessary to assume the compatibility conditions of order 0, 1, and 2:
\begin{subequations}\label{cmpa0}
\begin{gather} 
\psi_0|_{x_1=M(x')} = 0,
\quad
\left\{
\rho_0 (u_0 \cdot \nabla) \psi_0
- L \psi_0
+ p'(\rho_0) \nabla \vp_0
- (g+G)|_{t=0}
\right\}|_{x_1 = M(x')}
= 0,
\label{cmpa3} \\
\left[\partial_t\left\{
\rho (u \cdot  \nabla) \psi
- L \psi
+ p'( \rho) \nabla \vp
- g
\right\}|_{t=0}\right]_{x_1 = M(x')}
= 0.
\label{cmpa2}
\end{gather}
\end{subequations}
Note that the equation \eqref{cmpa2} (which is of order 2) can be written into a form which only contains spatial-derivatives of the initial data by using \eqref{eq-pv} (for more details, see the proof of Lemma \ref{CompatibilityCond} in Appendix B).

It suffices to show Theorems \ref{th4}--\ref{th5} and Corollary \ref{cor2} below for the completion of the
proof of Theorems \ref{th1}--\ref{th2} and the claims corresponding to Theorems \ref{th1}--\ref{th2} in Corollary \ref{cor}, respectively. 

\begin{theorem}\label{th4}
Let \eqref{super1} and \eqref{cd-st} hold, and $m=3,4,5$.
There exist positive constants $\beta \leq \alpha/2$, 
where $\alpha$ is defined in Proposition \ref{ex-st},
and $\ep_0=\ep_0(\beta,\Omega)$ depending on $\beta$ and $\Omega$
such that if $\dels \le \ep_0$, 
the stationary problem corresponding to \eqref{eq-pv} has a unique solution
$\Phi^s \in \lteasp{\beta}(\Omega)\cap H^m(\Omega)$ with
\begin{gather*}
\|\Phi^s\|_{\lteasp{\beta}}^2+\|\Phi^s\|_{H^m}^2\leq C_0 \delta,
\end{gather*}
where $C_0=C_0(\beta,\Omega)$ is a positive constant depending on $\beta$ and $\Omega$.
\end{theorem}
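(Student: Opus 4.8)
The plan is to obtain Theorem~\ref{th4} as a corollary of the time-global existence theory for the reformulated problem \eqref{eq-pv}, together with a compactness/limiting argument that extracts a stationary solution from translates of the time-global solution. First I would invoke the global solvability result (Theorem~\ref{global1}) and the a priori estimate (Proposition~\ref{apriori1}) announced in Section~\ref{sec3}: starting from any admissible initial data $\Phi_0$ satisfying the compatibility conditions of order $0$, $1$, $2$ and with small enough norm (controlled by $\delta \le \ep_0$ in the weighted $H^m$-sense), the problem \eqref{eq-pv} has a unique time-global solution $\Phi \in \mathcal{X}^{\text{\rm e}}_{m,\beta}(0,\infty)$, and the a priori estimate yields a uniform bound of the form $\|\Phi(t)\|_{\lteasp{\beta}}^2 + \|\Phi(t)\|_{H^m}^2 \le C_0\delta$ for all $t \ge 0$, with $C_0$ depending only on $\beta$ and $\Omega$ but not on $t$. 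The key point here is that the inhomogeneous terms $F,G$ are time-independent and enjoy the smallness \eqref{h1}, so the exponential dissipative structure of the homogeneous problem produces an estimate that does not deteriorate in time.

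Next I would run the translation argument sketched in the introduction. Fix $T^* > 0$ and set $\Phi^k(t,x) := \Phi(t + kT^*, x)$. Each $\Phi^k$ solves \eqref{eq-pv} on $[0,\infty)$ with initial data $\Phi^k(0) = \Phi(kT^*)$, all lying in a fixed ball of $\lteasp{\beta}(\Omega)\cap H^m(\Omega)$ of radius $\sim \sqrt{C_0\delta}$. Using the a priori estimate one shows the sequence $\{\Phi^k\}$ is Cauchy (or at least has a convergent subsequence) in a suitable lower-regularity topology on $[0,T^*]$: the difference $\Phi^{k+1} - \Phi^k$ satisfies the linearized equations with zero inhomogeneous data, and a Gronwall-type estimate combined with the dissipation gives $\|\Phi^{k+1} - \Phi^k\|$ decaying geometrically in $k$ (this is where the exponential decay of the homogeneous semiflow is essential, exactly as in \cite{Va83}). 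The limit $\Phi^\infty(t,x)$ is then a $T^*$-periodic solution of \eqref{eq-pv}, it inherits the bound $\|\Phi^\infty(t)\|_{\lteasp{\beta}}^2 + \|\Phi^\infty(t)\|_{H^m}^2 \le C_0\delta$ by lower semicontinuity of the norms, and by uniqueness for \eqref{eq-pv} (again from the a priori machinery) it is the \emph{unique} such small time-periodic solution. Since $T^* > 0$ was arbitrary, a solution that is simultaneously $T^*$-periodic for all $T^*$ in a set with an accumulation point must be time-independent; hence $\Phi^\infty = \Phi^s$ is a stationary solution, lying in $\lteasp{\beta}(\Omega)\cap H^m(\Omega)$ with the asserted bound. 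Uniqueness of $\Phi^s$ among small solutions of the stationary problem follows because any such stationary solution is in particular a time-periodic solution and must coincide with $\Phi^\infty$.

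One technical wrinkle is that Theorem~\ref{th4} asks for $\Phi^s \in \lteasp{\beta}(\Omega)\cap H^m(\Omega)$ for $m = 3,4,5$ with the uniform constant $C_0$, whereas the time-global theory is naturally set in $\mathcal{X}^{\text{\rm e}}_{m,\beta}(0,T)$; I would therefore need to start the construction from a conveniently chosen initial datum (for instance $\Phi_0 = 0$, or the special datum built in Appendix~B to meet the compatibility conditions of orders $0$, $1$, $2$) whose norm is $\lesssim \delta$, so that the a priori constant propagates the $H^m$ bound to the limit. Passing to the limit in the nonlinear terms requires the convergence of $\Phi^k$ to be strong enough in $H^{m-1}$ (say) on bounded space-time regions, which is supplied by the $L^\infty(0,T^*;H^m)$ bound together with the equation giving $\partial_t \Phi^k$ control, via an Aubin--Lions argument; one must be slightly careful because $\Omega$ is unbounded, but the exponential weight $\lteasp{\beta}$ compensates for the lack of compactness at infinity in the $x_1$-direction, and decay in the tangential directions is not needed because the limit object's regularity is what is being extracted.

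The main obstacle I anticipate is not the abstract limiting scheme but establishing the \emph{uniform-in-time} a priori estimate for \eqref{eq-pv} in the first place (Proposition~\ref{apriori1}) — in particular controlling the curvature terms collected in $G$ and the transport/convection terms involving the full multidirectional background $\ut\circ\tilde M + U$ on the curved domain $\Omega$, which is precisely the novelty over \cite{kg06,nn09}. Granting that estimate (as the excerpt permits, since it is stated in Section~\ref{sec3}), the deduction of Theorem~\ref{th4} is a soft argument: uniform bound $\Rightarrow$ contraction of translates $\Rightarrow$ time-periodic limit $\Rightarrow$ (arbitrariness of $T^*$) stationarity, with uniqueness coming for free from the uniqueness clause of the time-global theory.
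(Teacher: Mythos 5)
Your proposal follows essentially the same route as the paper: use the uniform-in-time a priori bound from Theorem \ref{global1} starting from the special compatible datum of Appendix B, show the translates $\Phi^k(t,x)=\Phi(t+kT^*,x)$ contract in $\lteasp{\beta}$ (the paper's Lemma \ref{5.2}), interpolate with the uniform $H^m$ bound to get strong convergence to a $T^*$-periodic solution, and then combine uniqueness of small periodic solutions with the arbitrariness of $T^*$ (the paper uses dyadic subdivisions $T^*/2^l$) to conclude the limit is time-independent. The argument is correct and matches the paper's construction in Section \ref{S5}.
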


\begin{theorem}\label{th5}
Let \eqref{super1} and \eqref{cd-st} hold.
There exist positive constants $\beta \leq \alpha/2$, 
where $\alpha$ is defined in Proposition \ref{ex-st},
and $\ep_0=\ep_0(\beta,\Omega)$ depending on $\beta$ and $\Omega$
such that if 
$\|\Phi_0-\Phi^s\|_{\lteasp{\beta}}+\hs{3}{\Phi_0-\Phi^s} + \dels \le \ep_0$
and $\Phi_0$ satisfies the compatibility condition \eqref{cmpa3} for $m=3,4$,
\eqref{cmpa0} for $m=5$,
%the compatibility condition \eqref{cmpa3},
then the initial-boundary value problem \eqref{eq-pv} has 
a unique time-global solution 
$\Phi \in X^{\text{\rm e}}_{m,\beta} (0,\infty)$.
Moreover, it satisfies
\begin{equation*}
\|(\Phi-\Phi^s)(t)\|_{L^\infty}\leq C_0 e^{-\zeta t},
\end{equation*}
where $C_0=C_0(\beta,\Omega)$ and $\zeta=\zeta(\beta,\Omega)$
are positive constant depending on $\beta$ and $\Omega$.
\end{theorem}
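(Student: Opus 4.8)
The plan is to run a continuation argument for the perturbation from the stationary solution $\Phi^s$, coupling local solvability with a uniform a priori estimate that also produces the exponential decay. Set $\omega:=\Phi-\Phi^s=:(\sigma,w)$. Since $\Phi^s$ solves the stationary version of \eqref{eq-pv} — equivalently $(\rho^s,u^s)$ solves \eqref{snse} — subtracting the two systems gives an initial–boundary value problem for $\omega$ of the same type as \eqref{eq-pv}: a transport equation for $\sigma$ coupled to a parabolic system for $w$ with $w|_{\partial\Omega}=0$, $\omega|_{t=0}=\Phi_0-\Phi^s$, and, crucially, \emph{no $t$-independent forcing}, because the inhomogeneities $F$ and $G$ cancel. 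Its linear coefficients are built from $(\rt,\ut)\circ\tilde{M}$, $U$ and $\Phi^s$, all $O(\delta)$-close to the constant state $(\rho_+,u_+,0,0)$ in the relevant norms by Proposition~\ref{ex-st}, \eqref{ExBdry0} and Theorem~\ref{th4}, and the genuinely nonlinear terms are quadratic in $\omega$ and $\nabla\omega$. For the local-in-time step I would invoke local solvability of this quasilinear hyperbolic–parabolic system with the outflow boundary condition: under the compatibility conditions \eqref{cmpa0} of the order matching $m$, the $\omega$-problem has a unique solution in $X^{\text{\rm e}}_{m,\beta}(0,T_0)$ on a short interval, with $T_0$ controlled by the size of the data, the weight $e^{\beta x_1}$ being propagated for $\beta$ small since conjugation by $e^{\beta x_1/2}$ perturbs the coefficients only at lower order by an amount proportional to $\beta$; this is the same construction used for Theorem~\ref{global1}.

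The heart of the proof is the a priori estimate with decay: there exist $\beta\le\alpha/2$ and $\ep_1>0$ such that if $\omega\in X^{\text{\rm e}}_{3,\beta}(0,T)$ solves the $\omega$-system and $\sup_{[0,T]}\bigl(\|\omega(t)\|_{\lteasp{\beta}}+\hs{3}{\omega(t)}\bigr)+\delta\le\ep_1$, then there are an energy functional $\mathcal E(t)\sim\|\omega(t)\|_{\lteasp{\beta}}^2+\hs{3}{\omega(t)}^2$ and a dissipation $\mathcal D(t)$ controlling $\hs{3}{\omega(t)}^2$, weighted norms of $(\sigma,w)$, and a favorable boundary trace, such that $\frac{d}{dt}\mathcal E(t)+c\,\mathcal D(t)\le 0$ and $\mathcal E(t)\le c\,\mathcal D(t)$, hence $\mathcal E(t)\le e^{-ct}\mathcal E(0)$. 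I would build this from three ingredients, exactly as in the a priori estimate behind Theorem~\ref{global1}: the symmetrized $L^2$ energy form of \cite{kagei05,knz03}, which pairs a $p'(\rho)^{-1}$-weighted $\sigma$ with $w$ so that the antisymmetric coupling between $p'(\rho)\nabla\sigma$ and $\rho\,\div w$ cancels and the viscosity furnishes dissipation of $\nabla w$; the Matsumura–Nishida procedure \cite{m-n83} for derivatives up to order three, where dissipation of $\nabla\sigma$ up to order two is recovered by reading $\nabla\sigma$ off the momentum equation (it equals $-w_t$ plus dissipative and nonlinear terms); and the weighted energy method of \cite{nny07,nn09}, i.e. testing against $e^{\beta x_1}$-weighted functions, which supplies the missing dissipation of $\sigma$ itself and of the top tangential derivatives near $\partial\Omega$. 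The outflow sign $u_b\cdot n\ge c>0$ is precisely what makes the boundary contribution of the $\sigma$-transport dissipative, so that no boundary condition on $\sigma$ is needed; the curved geometry enters through the pull-back $\tilde{M}$ and commutators with the metric, each carrying a factor $\|M\|_{H^9}$ (or a factor of $\Omega$, via the Stokes/elliptic estimate of Lemma~\ref{CattabrigaEst}) times a quantity of order $\le(\delta+\ep_1)\mathcal D$, hence absorbed into $c\,\mathcal D$ once $\delta,\ep_1$ are small. The same smallness makes $\mathcal E,\mathcal D$ equivalent to the stated norms, and $\beta\le\alpha/2$ guarantees that the weighted norms of the background and of $F,G$ — which decay like $e^{-3\alpha x_1/2}$ by \eqref{stdc1} and \eqref{h1} — are finite.

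Finally, choosing $\ep_0$ so small that $\|\Phi_0-\Phi^s\|_{\lteasp{\beta}}+\hs{3}{\Phi_0-\Phi^s}+\dels\le\ep_0$ forces $\mathcal E(0)$, and therefore (by the a priori estimate) $\sup_{t}\mathcal E(t)$, to stay below the threshold $\ep_1$, the standard continuation argument extends the local solution to $T=\infty$, so $\omega\in X^{\text{\rm e}}_{3,\beta}(0,\infty)$; a parallel higher-order estimate, in which the order-$m$ derivatives enter linearly modulo the already-small $H^3$ quantities, propagates the $X^{\text{\rm e}}_{m,\beta}$ regularity globally for $m=4,5$ when the corresponding compatibility conditions hold. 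The differential inequality gives $\mathcal E(t)\le Ce^{-ct}\mathcal E(0)$, whence $\hs{3}{(\Phi-\Phi^s)(t)}^2\le C_0 e^{-ct}$; since $H^3(\Omega)\hookrightarrow L^\infty(\Omega)$ in three dimensions, $\|(\Phi-\Phi^s)(t)\|_{L^\infty}\le C_0 e^{-\zeta t}$ with $\zeta=c/2$, and $C_0,\zeta$ depend only on $\beta$ and $\Omega$. I expect the main obstacle to be the a priori estimate of the previous paragraph: propagating the top-order $H^3$ energy while simultaneously recovering dissipation for the density $\sigma$ and controlling all boundary contributions — those from the outflow condition together with those generated by the curved boundary — inside the exponentially weighted space, so that the rate $\zeta$ comes out uniform in $\delta$ and $t$.
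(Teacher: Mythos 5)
Your proposal is correct in outline, but it takes a genuinely different and considerably heavier route than the paper. You reformulate around the stationary solution, setting $\omega=\Phi-\Phi^s$, and propose to re-run the entire weighted $H^3$ energy machinery (weighted $L^2$ form, Matsumura--Nishida derivative estimates, Stokes/elliptic estimates) for the $\omega$-system, exploiting the cancellation of $F,G$ to get a homogeneous differential inequality and hence exponential decay of the \emph{full} $H^3$ norm, followed by $H^2\hookrightarrow L^\infty$. The paper instead proves Theorem \ref{th5} with almost no new energy estimates: it reuses Theorem \ref{global1} (global existence plus the uniform, non-decaying bound \eqref{bound1} for $\Phi$ itself), and the only decay input is the $L^2_{\mathrm e,\beta}$-level contraction of Lemma \ref{5.2}, i.e.\ the relative weighted $L^2$ Gronwall estimate between $\Phi$ and its time translates $\Phi^k$, which upon letting $k\to\infty$ gives $\|(\Phi-\Phi^s)(t)\|_{\lteasp{\beta}}\lesssim e^{-\gamma t}$; the $L^\infty$ decay then follows by interpolating (Gagliardo--Nirenberg \eqref{GN1}) this decaying weighted $L^2$ norm against the merely bounded $H^m$ norm from \eqref{bound1} and applying Sobolev's inequality \eqref{sobolev2}. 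The paper's route is much shorter because it never needs dissipative control of the derivatives of $\omega$ (indeed it explicitly remarks that the direct higher-order approach works but the computations are very long); the price is a reduced decay rate ($\gamma/m$ after interpolation rather than the full energy rate). Your route, if carried out, would yield the stronger conclusion of exponential decay in $H^3$ and, as a by-product, would not need the time-periodic/translation machinery at all for the stability part — but every lemma of Section \ref{sec3} would have to be redone for the $\omega$-system, essentially combining the weighted, time-weighted estimates of Section \ref{sec3} with the source-free structure exploited in Section \ref{S6}. Both halves of that combination are demonstrated separately in the paper, so I see no obstruction of principle, only a large amount of unwritten computation that your proposal asserts rather than performs.
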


\begin{corollary}\label{cor2}
Suppose that $\|M\|_{H^s} \leq \kappa$ for $\kappa$ being in Lemma \ref{CattabrigaEst}.
Then Theorems \ref{th4} and \ref{th5} hold with constants $\ep_0$, $C_0$, and $\zeta$
independent of $\Omega$. 
\end{corollary}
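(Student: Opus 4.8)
The plan is to revisit the proofs of Theorems \ref{th4} and \ref{th5} and to verify that the only place where a constant of the type $C(\Omega)$ — one depending on information about $\Omega$ beyond $\|M\|_{H^9(\mathbb{R}^2)}$ — is ever invoked is in applications of the Cattabriga-type elliptic estimate of Lemma \ref{CattabrigaEst}. Every other estimate in Sections \ref{sec3}--\ref{S6} is carried out after the flattening change of variables $x \mapsto y = (\tilde M(x),x') = (x_1 - M(x'),x')$, whose Jacobian and all its derivatives are controlled purely by $\|M\|_{H^9(\mathbb{R}^2)}$: this is true of the energy identities built on the energy form of \cite{kagei05,knz03}, the Matsumura--Nishida higher-order estimates, the weighted estimates in the spirit of \cite{nny07,nn09}, the bounds \eqref{h1} on $(F,G)$, the bound \eqref{ExBdry3} on the extension $U$, and the trace and interpolation inequalities of Appendix A. Consequently all constants appearing in the a priori estimate of Proposition \ref{apriori1}, and in the construction of the stationary solution in Section \ref{S5}, are generic constants $C$ in the sense of Subsection \ref{ss-Notation}, except for those entering through Lemma \ref{CattabrigaEst}.

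Next I would record what Lemma \ref{CattabrigaEst} gives under its smallness hypothesis $\|M\|_{H^s}\le\kappa$: the elliptic estimate it supplies then holds with a constant depending only on the basic parameters $\mu_1,\mu_2,K,\gamma,\rho_+,u_+,\alpha$, and not on $\Omega$. The reason is that when $\|M\|_{H^s}$ is small the perturbed half-space is a small $H^s$-perturbation of $\mathbb{R}^3_+$; under the flattening map the transformed Lam\'e (or Poisson) operator differs from its constant-coefficient counterpart on $\mathbb{R}^3_+$ by a perturbation whose coefficients are small in the relevant norm, so a Neumann-series/absorption argument promotes the elliptic estimate on the flat half-space to one on $\Omega$ with a uniform constant. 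Granting this, every occurrence of $C(\Omega)$ in the proofs of Theorems \ref{th4} and \ref{th5} may be replaced by a generic $C$.

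The conclusion then follows by re-reading the two proofs verbatim. In the continuation argument of Section \ref{sec3} the a priori bound now holds with $\Omega$-independent constants, so the smallness threshold for $\delta$ in Theorem \ref{th4}, and for $\|\Phi_0-\Phi^s\|_{\lteasp{\beta}}+\hs{3}{\Phi_0-\Phi^s}+\delta$ in Theorem \ref{th5}, that closes the continuation can be taken depending only on $\beta$ and the basic parameters; the same is true of the constant $C_0$ in the resulting bounds, of the bound $\|\Phi^s\|_{\lteasp{\beta}}^2+\|\Phi^s\|_{H^m}^2 \le C_0\delta$ produced by the convergence of the translates $\Phi^k$ in Section \ref{S5}, and of the decay rate $\zeta$ in Theorem \ref{th5}, since $\zeta$ is extracted from the dissipative part of the now $\Omega$-uniform energy estimate.

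The main obstacle is bookkeeping rather than a new idea: one must check carefully that no hidden $\Omega$-dependence has entered through, for instance, the time-local existence theorem used to start the continuation (its existence time may a priori depend on $\Omega$, but for the global argument to yield $\Omega$-uniform constants only the a priori bound, not the local time, must be $\Omega$-uniform), the compatibility-condition construction of Appendix B, or the Poincar\'e and trace constants near the curved boundary. After flattening, each of these reduces to a statement on a fixed domain with coefficients controlled by $\|M\|_{H^9(\mathbb{R}^2)}$, so the smallness of $\|M\|_{H^s}$ is genuinely needed only where Lemma \ref{CattabrigaEst} is applied; confirming this completes the proof.
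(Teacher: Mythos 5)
Your proposal is correct and follows essentially the same route as the paper: the only $\Omega$-dependent constants enter through the Cattabriga estimate of Lemma \ref{CattabrigaEst}, whose final assertion already supplies an $\Omega$-independent constant when $\|M\|_{H^9(\mathbb R^2)}\leq\kappa$, so the inequalities \eqref{ef0-C}, \eqref{eg0-C}, \eqref{em0-comp} and hence Proposition \ref{apriori1} and Theorem \ref{global1} hold with uniform constants, after which Sections \ref{S5.1}--\ref{S5.3} go through verbatim. Your absorption/flattening justification for the uniform Cattabriga constant is exactly how the paper proves that part of Lemma \ref{CattabrigaEst} in Appendix A.
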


\section{Time-global solvability}\label{sec3}

This section provides the time-global solvability
of initial--boundary value problem \eqref{eq-pv}.

\begin{theorem}\label{global1}
Let \eqref{super1} and \eqref{cd-st} hold, and $m=3,4,5$.
There exist positive constants $\beta \leq \alpha/2$, 
where $\alpha$ is defined in Proposition \ref{ex-st},
and $\ep_0=\ep_0(\beta,\Omega)$ depending on $\beta$ and $\Omega$ such that if 
$\|\Phi_0\|_{\lteasp{\beta}}+\hs{m}{\Phi_0} + \dels \le \ep_0$
and $\Phi_0$ satisfies the compatibility condition \eqref{cmpa3} for $m=3,4$,
\eqref{cmpa0} for $m=5$,
then the initial-boundary value problem \eqref{eq-pv} has 
a unique time-global solution 
$\Phi \in X^{\text{\rm e}}_{m,\beta} (0,T)$.
Moreover, it satisfies
\begin{equation}\label{bound1}
\|\Phi(t)\|_{\lteasp{\beta}}^2+\|\Phi(t)\|_{H^m}^2+\|\pd_t \Phi(t)\|_{H^{m-2}}^2
\\
\leq C_0(\|\Phi_0\|_{\lteasp{\beta}}^2+\hs{m}{\Phi_0}^2)e^{-\zeta t} + C_0\dels,
\quad t \in [0,\infty),
\end{equation}
where $C_0=C_0(\beta,\Omega)$ and $\zeta=\zeta(\beta,\Omega)$
are positive constant depending on $\beta$ and $\Omega$
but independent of $\delta$ and $t$.
\end{theorem}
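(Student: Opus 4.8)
The plan is the standard route for such initial--boundary value problems: combine a short-time existence theorem with a uniform-in-time a priori estimate, and close the gap by continuation. For the local part, I would construct, for some $T_0>0$ depending only on the size of the data, a unique solution $\Phi\in X^{\text{\rm e}}_{m,\beta}(0,T_0)$ of \eqref{eq-pv} by the usual iteration: given $\Phi^{(k)}$, solve the linear transport equation \eqref{eq-pv1} for $\vp^{(k+1)}$ with the frozen velocity $u^{(k)}=\psi^{(k)}+\ut\circ\tilde M+U$, then the linear parabolic system \eqref{eq-pv2} for $\psi^{(k+1)}$, and show the map $\Phi^{(k)}\mapsto\Phi^{(k+1)}$ contracts a small ball of $X^{\text{\rm e}}_{m,\beta}(0,T_0)$. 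The stated compatibility conditions (\eqref{cmpa3} for $m=3,4$, \eqref{cmpa0} for $m=5$) are exactly what make $\Phi|_{t=0}$, $\pd_t\Phi|_{t=0}$ (and $\pd_t^2\Phi|_{t=0}$ when $m=5$) admissible traces for a strong solution; the inhomogeneous terms $F,G$ are time-independent and, by \eqref{h1}, lie in $H^5\cap\lteasp{3\alpha/2}$ with norm $\lesssim\delta$, so they are harmless. Uniqueness on $[0,T_0]$ is a plain $L^2$ estimate for the difference of two solutions. With local existence in hand, the whole theorem reduces to the a priori estimate: there is $\ep_1>0$ so that, whenever $\Phi$ solves \eqref{eq-pv} on $[0,T]$ with $\sup_{[0,T]}(\|\Phi\|_{\lteasp{\beta}}^2+\hs{m}{\Phi}^2)\le\ep_1^2$ and $\delta\le\ep_1$, the bound \eqref{bound1} holds on $[0,T]$ with $C_0,\zeta$ independent of $T$; then for data with $\|\Phi_0\|_{\lteasp{\beta}}+\hs{m}{\Phi_0}+\delta$ small enough the set of admissible $T$ is open, closed and nonempty, hence all of $[0,\infty)$.

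To derive the a priori estimate I would first flatten the boundary via $y=(\tilde M(x),x')=(x_1-M(x'),x')$ from \eqref{tM1}, which sends $\Omega$ to $\{y_1>0\}$ and turns $L$ into a second-order operator whose coefficients differ from those of the constant-coefficient Lam\'e operator by quantities controlled by $\|M\|_{H^9}$, and then run the three-layer energy argument combining \cite{kagei05,knz03}, \cite{m-n83} and \cite{nny07,nn09}. \emph{(i)} Pairing \eqref{eq-pv1} and \eqref{eq-pv2} with the components of the quadratic energy form of \cite{kagei05,knz03} (roughly, \eqref{eq-pv1} against a suitable multiple of $\vp$ and \eqref{eq-pv2} against $\psi$), first unweighted and then against the weight $e^{\beta x_1}$, and integrating over $\Omega$: the surface integrals produced by integration by parts carry the factor $\sqrt{1+|\nabla M|^2}$ and the normal \eqref{nvector}, and are controlled using the outflow sign $\ub\cdot n\ge c>0$ (which makes $\int_{\pd\Omega}(\ub\cdot n)\vp^2$ a good term, since $\vp$ has no boundary condition) and the Dirichlet condition $\psi|_{\pd\Omega}=0$ (which kills the remaining boundary contributions, including the pressure--divergence coupling); the zeroth-order terms coming from the supersonic profile in $f$, together with $-\tfrac\beta2 u_1e^{\beta x_1}\vp^2$ (note $u_1\approx\ut_1<0$), supply the dissipation of $\vp$ in $\lteasp{\beta}$, while $L\psi$ gives that of $\nabla\psi$; the restriction $\beta\le\alpha/2$, with $\alpha$ from Proposition \ref{ex-st}, keeps the profile coefficients, which by \eqref{stdc1} decay like $e^{-\alpha x_1}$, integrable against $e^{\beta x_1}$. \emph{(ii)} Applying $\nabla^k$ ($k\le m$) to \eqref{eq-pv} and pairing with $\nabla^k\vp$, $\nabla^k\psi$ yields $L^2$-control of $\nabla^k\vp$ and of $\nabla^k\psi$ in the dissipation; in addition one reads $\nabla\vp$ off the momentum equation, $p'(\rho)\nabla\vp=-\rho\psi_t-\rho(u\cdot\nabla)\psi+L\psi+g+G$, and bounds the top-order normal derivatives of $\psi$ by elliptic regularity for the Lam\'e system (Lemma \ref{CattabrigaEst}). \emph{(iii)} Differentiating in $t$, $\pd_t\Phi$ in $H^{m-2}$ is bounded by $\Phi$ in $H^m$ plus $\delta$. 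Every nonlinear term — those in $f$, $g$ and the coefficient differences $\rho-\rho_+$, $u-(u_+,0,0)$ — is put by Sobolev embedding in the shape $(\hs{m}{\Phi}+\delta)\times(\text{dissipation})$, while the $F,G$ contributions are absorbed through \eqref{h1} and Young's inequality at the price of $C\delta$. Summing the layers produces
\[
\frac{d}{dt}\mathcal{E}(t)+c\,\mathcal{D}(t)\le C\big(\hs{m}{\Phi(t)}+\delta\big)\,\mathcal{D}(t)+C\delta,
\]
where $\mathcal{E}\approx\|\Phi\|_{\lteasp{\beta}}^2+\hs{m}{\Phi}^2+\hs{m-2}{\pd_t\Phi}^2$, $\mathcal{D}$ is the total dissipation, and — the crucial point — $\mathcal{E}\lesssim\mathcal{D}$, because the supersonic/outflow structure makes the zeroth-order terms genuine members of $\mathcal{D}$. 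Choosing $\ep_1$ small enough to absorb the first term on the right gives $\tfrac{d}{dt}\mathcal{E}+c'\mathcal{E}\le C\delta$, and Gronwall's inequality yields \eqref{bound1}.

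The step I expect to be the main obstacle is the one absent from the half-space analyses of \cite{kg06,nn09}: the curved boundary. Every integration by parts now leaves a surface integral over $\pd\Omega$, and one must check that, after invoking $\ub\cdot n\ge c$ and $\psi|_{\pd\Omega}=0$, these are either of favourable sign or absorbable by constants depending on $\|M\|_{H^9}$; this, together with the need for elliptic regularity of the variable-coefficient Lam\'e system on $\Omega$ (Lemma \ref{CattabrigaEst}), is what compels the constants in \eqref{bound1} to depend on $\Omega$ in general, and is the reason a smallness assumption on $M$ restores $\Omega$-independence in Corollary \ref{cor2}. A second, more clerical, difficulty is arranging that the energy-form estimate, the Matsumura--Nishida high-order estimate and the weighted estimate close simultaneously, since the weight $e^{\beta x_1}$ interacts both with the transport term and with the exponentially decaying coefficients of the background profile.
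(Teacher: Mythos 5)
Your overall architecture --- local solvability plus a uniform a priori estimate closed by continuation, with the a priori estimate assembled from the weighted energy form, boundary flattening, the Matsumura--Nishida method and the Cattabriga estimate --- is exactly the paper's. The step of your sketch that would fail as written is item \emph{(ii)}. Applying the full gradient $\nabla^k$ to \eqref{eq-pv} and pairing with $\nabla^k\vp$, $\nabla^k\psi$ does not yield $L^2$-control of $\nabla^k\vp$ in the dissipation: first, $\nabla^k\psi$ does not vanish on $\partial\Omega$ (only tangential derivatives of $\psi$ inherit the Dirichlet condition), so the boundary terms generated by integrating $L$ by parts are not of favourable sign; the paper accordingly differentiates only tangentially in the flattened coordinates (Lemma \ref{lm2-hat}), and even then the pairing produces dissipation only of $\nabla\nabla^l_{y'}\hat\psi$ and of $\nabla^l_{y'}\tfrac{d}{dt}\hat\vp$, never of spatial derivatives of $\vp$ itself. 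Second, your proposed substitutes for the normal derivatives of $\vp$ are circular at top order: reading $\nabla\vp$ off the momentum equation, or invoking the Cattabriga estimate \eqref{Cattabriga}, requires the \emph{time-integrated} dissipation of $\|\tfrac{d}{dt}\vp\|_{H^{k+1}}$, and since $\tfrac{d}{dt}\vp=-\rho\div\psi+f+F$ by \eqref{eq-pv1}, that is equivalent to the dissipation of $\|\nabla^{k+2}\psi\|$ which is precisely what you are trying to produce.

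The missing mechanism is the paper's Lemma \ref{lm4-hat}: one forms the combination $\mu\,\partial_1(\text{mass eq.})+\hat\rho\sum_j\mathcal{A}_j(\text{momentum eq.})_j$ with the coefficients $\mathcal{A}_j$ of \eqref{Aj-def} chosen so that every second-order normal derivative $\partial_1^2\hat\psi$ cancels (the Matsumura--Nishida device, adapted to the curved boundary), arriving at the essentially first-order equation \eqref{ee6-hat} for $\mathcal{D}\hat\vp$; testing it against both $\partial^{\bm{a}}\mathcal{D}\hat\vp$ and $\partial^{\bm{a}}\mathcal{D}\tfrac{d}{dt}\hat\vp$ then supplies the time-integrated dissipation of the normal derivatives of $\vp$ and of $\tfrac{d}{dt}\vp$. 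Only with that in hand can the Cattabriga estimates be used, and the tangential, normal and Stokes estimates must be interleaved in an induction on the number of normal derivatives (Lemma \ref{lm1-comp}) to break the circularity. Since you do name \cite{m-n83} as one of your ingredients, this is an omission of the decisive step rather than a wrong route; but without it the inequality $\tfrac{d}{dt}\mathcal{E}+c\,\mathcal{D}\le C(\hs{m}{\Phi}+\delta)\mathcal{D}+C\delta$ cannot be closed, because the nonlinear remainders contain $\|\nabla\vp\|^2_{H^{m-1}}$ and absorbing them requires that this quantity genuinely belong to $\mathcal{D}$.
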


The time-global solution $\Phi$ with \eqref{bound1} can be constructed 
by a standard continuation argument (see \cite{m-n83})
using the time-local solvability in Lemma \ref{local1} 
and the a priori estimate in Proposition \ref{apriori1} below.

\begin{lemma}\label{local1}
Let $m=3,4,5$. 
Suppose that the initial data $\Phi_0 \in H^m(\Omega)$ satisfies
the compatibility condition \eqref{cmpa3} for $m=3,4$,
\eqref{cmpa0} for $m=5$.
Then there exists a positive constant $T$ depending on 
$\hs{m}{\Phi_0}$ such that
initial-boundary value problem \eqref{eq-pv}
has a unique solution $\Phi \in X(0,T)$.
Moreover, if the initial data satisfies
$\Phi_0 \in \lteasp{\beta} (\Omega)$,
it holds $\Phi \in X_\beta^{\text{\rm e}}(0,T)$.
\end{lemma}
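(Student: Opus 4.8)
The plan is to construct the local solution by a standard iteration scheme and a fixed-point argument, treating the system \eqref{eq-pv} as a linear symmetric-hyperbolic equation for $\vp$ coupled to a linear parabolic equation for $\psi$, with the nonlinear and lower-order terms frozen at the previous iterate. First I would set up the iteration: given $\Phi^{(k)} = (\vp^{(k)},\psi^{(k)})$ in a suitable ball of the solution space $X(0,T)$ with $\Phi^{(k)}(0)=\Phi_0$, define $\rho^{(k)} := \rt\circ\tilde M + U \cdot e_1\text{-part}\ \dots$ — more precisely $(\rho^{(k)},u^{(k)}) := (\rt,\ut)\circ\tilde M + (0,U) + \Phi^{(k)}$ — and solve the linear problem
\begin{gather*}
\vp^{(k+1)}_t + u^{(k)}\cdot\nabla\vp^{(k+1)} + \rho^{(k)}\div\psi^{(k+1)} = f^{(k)} + F,
\\
\rho^{(k)}\{\psi^{(k+1)}_t + (u^{(k)}\cdot\nabla)\psi^{(k+1)}\} - L\psi^{(k+1)} + p'(\rho^{(k)})\nabla\vp^{(k+1)} = g^{(k)} + G,
\end{gather*}
with the boundary condition $\psi^{(k+1)}(t,M(x'),x')=0$ and initial data $\Phi_0$. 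Here $f^{(k)},g^{(k)}$ denote $f,g$ evaluated at the $k$-th iterate. Since $u^{(k)}\cdot n = \ub\cdot n \geq c>0$ on $\partial\Omega$ (outflow), no boundary condition is imposed on $\vp^{(k+1)}$, which is consistent; the transport operator for $\vp$ together with the uniformly parabolic operator $-L$ acting on $\psi$ (recall $\mu_1>0$, $2\mu_1+3\mu_2\ge0$, so $L$ is strongly elliptic) gives a well-posed linear mixed hyperbolic–parabolic system. Solving this linear problem on a curved domain $\Omega$ can be reduced to the half-space case by the change of variables $y=\tilde M(x)$, $y'=x'$ (which is exactly why the reformulation in Section \ref{sec2} was arranged around $\tilde M$), after which one invokes standard linear theory.

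Next I would establish uniform-in-$k$ bounds. The energy estimate for the linear problem: differentiate up to $m$ times in space (and, via the equations, trade time derivatives for spatial ones — this is where the compatibility conditions \eqref{cmpa3}/\eqref{cmpa0} enter, guaranteeing that $\pd_t^j\Phi^{(k+1)}(0)$ is well-defined and controlled), multiply the $\vp$-equation by $p'(\rho^{(k)})\vp^{(k+1)}$-type weights and the $\psi$-equation by $\psi^{(k+1)}$, integrate by parts, and use the positivity $\inf\rho^{(k)}>0$ (valid on $[0,T]$ for $T$ small, since $\rho^{(k)}$ stays near $\rt\circ\tilde M + \dots$, which is uniformly positive) to absorb the parabolic dissipation $\mu_1\|\nabla\psi^{(k+1)}\|^2 + (\mu_1+\mu_2)\|\div\psi^{(k+1)}\|^2$. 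The boundary terms from integration by parts in the $\vp$-equation have a favorable sign because $u^{(k)}\cdot n>0$; the boundary terms for $\psi$ vanish thanks to $\psi^{(k+1)}|_{\partial\Omega}=0$. Using $\|(F,G)\|_{H^5}\lesssim\delta$ from \eqref{h1} and the smallness $\hs{m}{\Phi_0}$ small (absorbed into the radius of the ball), a Gronwall argument shows that for $T=T(\hs{m}{\Phi_0})$ sufficiently small the map $\Phi^{(k)}\mapsto\Phi^{(k+1)}$ preserves the ball in $X(0,T)$. Then I would show it is a contraction in the lower-order norm $C([0,T];L^2)\cap(\text{energy for differences})$: the difference $\Phi^{(k+1)}-\Phi^{(k)}$ solves the same type of linear system with right-hand sides controlled by $\|\Phi^{(k)}-\Phi^{(k-1)}\|$ times the ball radius plus $T$, giving contraction for $T$ small. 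The fixed point $\Phi\in X(0,T)$ is the desired solution; uniqueness follows from the same difference estimate applied to two solutions.

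Finally, for the weighted part: if additionally $\Phi_0\in\lteasp{\beta}(\Omega)$, I would run the analogous energy estimate after multiplying the equations by the weight $e^{\beta x_1}$ (equivalently, testing against $e^{\beta x_1}$ times the solution). The extra commutator terms produced by $\nabla e^{\beta x_1} = \beta e^{\beta x_1} e_1$ are of lower order and, for $\beta\le\alpha/2$ small, are absorbed by the parabolic dissipation and a Gronwall argument; the weighted bounds on $(F,G)$, namely $\|(F,G)\|_{\lpeasp{2}{3\alpha/2}}\lesssim\delta$ from \eqref{h1}, ensure the forcing is admissible. This yields $\Phi\in X^{\text{\rm e}}_\beta(0,T)$. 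The main obstacle is the careful treatment of the boundary terms in the curved domain $\Omega$ — ensuring that the outflow sign condition $\ub\cdot n\ge c>0$ really produces the needed control of the trace of $\vp$ and that the change of variables $y=\tilde M(x)$ does not destroy the structure of $L$ (it introduces variable, $M$-dependent coefficients, which must be shown to be a small/controlled perturbation of the constant-coefficient Laplacian using $M\in H^9$) — together with the bookkeeping of compatibility conditions needed to carry $H^m$ regularity with $m$ up to $5$. These are the places where one must be most careful, though each is by now fairly standard.
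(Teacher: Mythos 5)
The paper gives no proof of this lemma at all — it simply states that it ``can be proved in much the same way as in \cite{kg06-loc}'' and omits the details — and your iteration scheme (linearized hyperbolic--parabolic system, uniform energy bounds using the outflow sign and the Dirichlet condition on $\psi$, contraction in a lower norm, weighted estimates for the $\lteasp{\beta}$ part) is exactly the standard method of that reference. Your outline is correct and at an appropriate level of detail given that the paper itself defers entirely to the literature here.
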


For the notational convenience, 
we define a norm $E_{m,\beta}(t)$ and a dissipative norm $D_{m,\beta}(t)$ by
\begin{align} 
E_{m,\beta}(t) &:=\|\Phi\|_{\lteasp{\beta}}^2+\|\Phi\|_{H^m}^2 \quad \text{for $m\geq 0$,}  \label{Ekbeta-def}
%+\tnr{\Phi}_{m}^2,
\\
D_{m,\beta}(t)
&:=\left\{\begin{array}{ll}
\beta\|\Phi\|_{\lteasp{\beta}}^2
+ \|\nabla\psi\|_{\lteasp{\beta}}^2
+ \|{\frac{d}{dt}} {\vp} \|^2 
+ \|\vp(\tau,M(\cdot),\cdot)\|_{L^2(\mathbb R^2)}^2 & \text{if $m=0$},
\\
\displaystyle
D_{0,\beta}(t)^2+\|(\nabla\Phi,\nabla^2\psi)\|_{H^{m-1}}^2
+\sum_{i=1}^{[(m+1)/2]} \lt{\pd_t^{i} \psi}^2_{H^{m+1-2i}}
+\left\|{\frac{d}{dt}} {\vp} \right\|_{H^{m}}^2 & \text{if $m\geq 1$}.
\end{array}\right.  \label{Dkbeta-def}
\end{align}
%where
%\begin{gather*}
%\tnr{u}_{m}^2 :=
%\sum_{i=0}^m \snr{u}_{i}^2,
%\quad
%\snr{u}_{k}^2 := 
%\sum_{i=0}^{[k/2]} \lt{\pd_t^i u}^2_{H^{k-2i}}.
%\end{gather*}
Furthermore, we also use
\[
N_{\beta}(T):= \sup_{t\in[0,T]}(\|\Phi(t)\|_{\lteasp{\beta}}+\|\Phi(t)\|_{H^3}).
\]

\begin{proposition}\label{apriori1}
Let \eqref{super1} and \eqref{cd-st} hold, and $m=3,4,5$.
Suppose that $\Phi \in X^{\text{\rm e}}_{m,\beta} (0,T)$
be a solution to initial-boundary value problem \eqref{eq-pv}
for some positive constant $T$.
Then there exist positive constants $\beta \leq \alpha/2$, 
where $\alpha$ is defined in Proposition \ref{ex-st},
and $\ep_0=\ep_0(\beta,\Omega)$ depending on $\beta$ and $\Omega$ such that if 
$\sup_{t\in[0,T]}(\|\Phi(t)\|_{\lteasp{\beta}}+\|\Phi(t)\|_{H^m}) + \dels \le \ep_0$, 
the following estimate holds:
\begin{gather}
e^{\zeta t} E_{m, \beta} (t) + \int^t_0 e^{\zeta \tau} D_{m,\beta} (\tau ) \, d \tau
\leq C_0(\|\Phi_0\|_{\lteasp{\beta}}^2 + \| \Phi_0\|^2_{H^3})
+ C_0 \delta e^{\zeta \tau},
\label{apes0}\\
\|\Phi(t)\|_{\lteasp{\beta}}^2+\|\Phi(t)\|_{H^m}^2+\|\pd_t \Phi(t)\|_{H^{m-2}}^2
\leq C_0(\|\Phi_0\|_{\lteasp{\beta}}^2+\hs{m}{\Phi_0}^2)e^{-\zeta t} + C_0\dels
\label{apes1}
\end{gather}
for $t \in [0,T]$, where $C_0=C_0(\beta,\Omega)$ and $\zeta=\zeta(\beta,\Omega)$
are positive constant depending on $\beta$ and $\Omega$
but independent of $\delta$ and $t$.
\end{proposition}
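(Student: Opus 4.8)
The plan is to establish the a priori estimate \eqref{apes0}--\eqref{apes1} by the standard energy method adapted to the outflow/supersonic setting, following the combination of techniques indicated in the text: the Kagei--Kobayashi/Kawashima--Nishibata--Zhu energy form, the Matsumura--Nishida higher-order estimates, and the weighted (exponential) energy method of Nakamura--Nishibata and Nishibata--Nishikawa. First I would derive the basic $L^2$ estimate by testing \eqref{eq-pv1} against a suitable multiple of $\vp$ (weighted by $p'(\rho)/\rho$ or a similar factor so that the symmetrizer structure emerges) and \eqref{eq-pv2} against $\psi$. The supersonic condition \eqref{super1} together with the sign conditions \eqref{cd-st} and the decay \eqref{stdc1} of the planar profile is what makes the boundary terms at $x_1=M(x')$ have a good sign: the outflow condition $\ub\cdot n\ge c>0$ produces a boundary term controlling $\|\vp(\tau,M(\cdot),\cdot)\|_{L^2(\mathbb R^2)}^2$, which is exactly the last term in $D_{0,\beta}$. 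The viscous operator $L\psi$ gives $\|\nabla\psi\|^2$ after integration by parts, using $\mu_1>0$ and $2\mu_1+3\mu_2\ge0$; the term $\|\frac{d}{dt}\vp\|^2=\|\vp_t\|^2$ is recovered from the equation \eqref{eq-pv1} itself once $\div\psi$ and the transport term are controlled. The inhomogeneous terms $F,G$ are handled by \eqref{h1}, contributing the $\delta$ on the right-hand side, and the homogeneous terms $f,g$ are cubic or higher in $\Phi$ (times decaying coefficients from $\rt,\ut,U$) and are absorbed using the smallness $N_\beta(T)+\dels\le\ep_0$.

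Next I would carry out the weighted estimate: multiply the same combination by $e^{\beta x_1}$ and repeat. The key new feature is that differentiating the weight produces a \emph{positive} bulk term $\beta\|\Phi\|_{\lteasp{\beta}}^2$ (this requires $u_+<0$, i.e. the flow carrying mass toward the boundary, so the convective derivative $u\cdot\nabla$ acting against $e^{\beta x_1}$ has the right sign on the far field) at the cost of lower-order terms that are controlled by choosing $\beta\le\alpha/2$ so the profile's own weight $e^{-\alpha x_1}$ beats $e^{\beta x_1}$ in all cross terms; this is precisely why $F,G$ are assumed in $L^2_{e,3\alpha/2}$ in \eqref{h1}. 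Then I would do the higher-order estimates: apply $\nabla^k$ (tangential and normal, $1\le k\le m$) and $\pd_t^i$ to the system and repeat the energy argument. Normal derivatives of $\vp$ and $\psi_t$-type quantities are not directly controlled by parabolic smoothing on $\vp$ (the first equation is only transport), so one recovers $\|\nabla\vp\|_{H^{m-1}}$ and the time-derivative norms $\sum\|\pd_t^i\psi\|_{H^{m+1-2i}}$ algebraically from the equations, exactly as in Matsumura--Nishida; the structure of $D_{m,\beta}$ in \eqref{Dkbeta-def} reflects this bookkeeping. Commutators between $\nabla^k$, $\pd_t^i$ and the variable coefficients $\rho$, $u$, and the geometry-induced coefficients (functions of $\nabla M$, $\nabla^2 M$) are bounded using $M\in H^9$, Sobolev embedding in $\R^3$, and the smallness of $N_\beta$; here the flatness-independent constants in Corollary \ref{cor2} would require the Cattabriga-type estimate of Lemma \ref{CattabrigaEst} with $\|M\|_{H^s}\le\kappa$.

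Once the combined inequality
\[
\frac{d}{dt}\mathcal E_{m,\beta}(t)+c\,D_{m,\beta}(t)\le C\bigl(N_\beta(T)+\dels\bigr)D_{m,\beta}(t)+C\delta
\]
is obtained for an equivalent energy $\mathcal E_{m,\beta}\sim E_{m,\beta}$, the smallness assumption absorbs the first term on the right, and the crucial coercivity $E_{m,\beta}\lesssim D_{m,\beta}$ (which holds for \emph{this} problem because $\|\Phi\|_{\lteasp{\beta}}^2$ itself appears in $D_{0,\beta}$ with coefficient $\beta$, and all derivative norms appear in $D_{m,\beta}$) upgrades the differential inequality to $\frac{d}{dt}\mathcal E_{m,\beta}+\zeta\mathcal E_{m,\beta}\le C\delta$. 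Multiplying by $e^{\zeta t}$ and integrating yields \eqref{apes0}; \eqref{apes1} follows by reading off the pointwise-in-$t$ bound and using the equations once more to replace $\pd_t\Phi$ in $H^{m-2}$ by spatial norms already controlled. The main obstacle, as the paper itself flags, is that the system is \emph{not} parabolic — the density equation is hyperbolic — so the dissipation is degenerate, and the delicate point is verifying that the boundary term from the outflow condition, the weight-derivative term, and the viscous term together dominate \emph{all} of $E_{m,\beta}$, including $\|\vp\|_{H^m}$ and all normal derivatives of $\vp$; this coercivity $E_{m,\beta}\lesssim D_{m,\beta}$ is the heart of the estimate and is where the supersonic condition and the sign conditions \eqref{cd-st} are indispensable.
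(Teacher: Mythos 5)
Your overall strategy (weighted $L^2$ energy with the supersonic quadratic form and the outflow boundary term, time-derivative estimates, absorption by smallness, and the $e^{\zeta t}$ weight) matches the paper's. The genuine gap is in your treatment of the spatial derivatives: you propose to ``apply $\nabla^k$ (tangential and normal) to the system and repeat the energy argument,'' but for normal derivatives this step fails on a curved boundary. After applying $\partial_1^k$ the function $\partial_1^k\psi$ no longer vanishes on $\partial\Omega$, so the boundary terms produced by integrating the viscous and pressure terms by parts have no sign and cannot be discarded; this is precisely the obstruction that forces the paper's more elaborate machinery. The paper first flattens the boundary via $\Gamma$ in \eqref{CV1} and performs direct energy estimates only for \emph{tangential} derivatives $\nabla_{y'}^l$ (Lemma \ref{lm2-hat}), which do preserve the homogeneous Dirichlet condition. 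Normal derivatives of $\vp$ are then obtained not ``algebraically'' but from a specifically designed linear combination: one adds $\mu\,\partial_1(\text{continuity})$ to the inner product of the momentum equation with $(\hat\rho\mathcal{A}_1,\hat\rho\mathcal{A}_2,\hat\rho\mathcal{A}_3)^\top$, where the coefficients $\mathcal{A}_j$ in \eqref{Aj-def} are chosen so that \emph{all} second-order normal derivatives $\partial_1^2\hat\psi_j$ cancel (see \eqref{ee4-hat}); this yields a damped transport equation \eqref{ee5-hat} for $\mathcal{D}\hat\vp$ whose energy estimate (Lemma \ref{lm4-hat}) supplies the dissipation for $\partial_1\hat\vp$ and $\partial_1\frac{d}{dt}\hat\vp$.

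A second, related omission: you invoke the Cattabriga estimate only for the flatness-independent constants of the corollary, but in the paper it is indispensable in the \emph{main} argument. The tangential energy estimates and the $\mathcal{D}\hat\vp$ estimate alone do not control $\|\nabla^{k+2}\psi\|$ and $\|\nabla^{k+1}\vp\|$; these are recovered by writing the momentum equation as a Stokes system \eqref{ef7-C} and applying Lemma \ref{CattabrigaEst} (Lemmas \ref{lm1-C}--\ref{lm2-C}), followed by an induction on the number of normal derivatives (Lemma \ref{lm1-comp}). This is also exactly where the constant $C_0(\Omega)$ acquires its dependence on the domain beyond $\|M\|_{H^9}$, so a proof that skips the Stokes estimate cannot even account for the stated form of the constants in \eqref{apes0}--\eqref{apes1}. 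The rest of your outline (the coercivity $E_{m,\beta}\lesssim\beta^{-1}D_{m,\beta}$, the absorption of $\zeta\int e^{\zeta\tau}E_{m,\beta}$, and reading off $\|\pd_t\Phi\|_{H^{m-2}}$ from the equations) is consistent with Subsection \ref{ss-comp-apriori}.
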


%{\color{red}{We only prove this for $m=3$. The proof for the case $m=4$, $5$ is similar. ??}}

\begin{corollary}\label{cor1}
Suppose that $\|M\|_{H^s} \leq \kappa$ for $\kappa$ being in Lemma \ref{CattabrigaEst}.
Then Theorems \ref{global1} and Proposition \ref{apriori1} hold 
with constants $\ep_0$, $C_0$, and $\zeta$
independent of $\Omega$. 
\end{corollary}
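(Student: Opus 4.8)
The strategy is to re-read the proofs of Proposition \ref{apriori1} and Theorem \ref{global1} and to isolate exactly which of the intervening constants are genuinely of type $C(\Omega)$ rather than of type $C$. Inspecting the argument, one sees that every structural ingredient used there is already $\Omega$-independent in the sense of the notational convention: the ODE decay bounds \eqref{stdc1}, the inhomogeneous-term estimates \eqref{h1}, the extension $U$ with \eqref{ExBdry0}, and the constants coming from $\mu_1,\mu_2,K,\gamma,\rho_+,u_+,\alpha$ are all controlled by $\|M\|_{H^9(\mathbb R^2)}$ alone. Likewise the Sobolev embeddings, trace inequalities, Poincar\'e-type inequalities and product/commutator estimates on $\Omega$ are transported from the flat half-space $\mathbb R^3_+$ through the flattening diffeomorphism $x\mapsto(\tilde M(x),x')=(x_1-M(x'),x')$, whose derivatives up to the needed order are bounded by a function of $\|M\|_{H^9}$; hence these too carry $C$-type constants. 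The \emph{only} place where a constant depending on finer geometry of $\Omega$ enters is the elliptic (Stokes--Lam\'e, Cattabriga-type) regularity estimate used to upgrade $\|\nabla^2\psi\|$-type norms from the momentum equation.

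Thus, under the hypothesis $\|M\|_{H^s}\le\kappa$, the proof reduces to substituting, wherever that elliptic estimate is invoked, the uniform version supplied by Lemma \ref{CattabrigaEst}, whose constant is independent of $\Omega$ for all admissible $M$. With this single replacement in force, one re-runs the derivation of \eqref{apes0}--\eqref{apes1}: the coercivity relating $E_{m,\beta}(t)$ and $D_{m,\beta}(t)$, every absorption step in which a lower-order or nonlinear term is swallowed by the dissipation or made small, and the resulting conditions defining the smallness threshold now all involve exclusively $C$-type constants. Consequently the admissible $\ep_0$, the amplitude $C_0$ and the decay rate $\zeta$ produced by the argument depend only on the fixed parameters and on $\kappa$, hence are independent of $\Omega$. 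The same substitution in the proof of Lemma \ref{local1} (whose only $\Omega$-sensitive step is again the elliptic estimate) makes the time-local solvability uniform as well, and feeding the now-uniform a priori estimate into the standard continuation argument of \cite{m-n83} yields Theorem \ref{global1} with $\ep_0,C_0,\zeta$ independent of $\Omega$.

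The main obstacle is organizational rather than conceptual: one must carefully verify that no residual $C(\Omega)$ is hidden in an intermediate lemma (for instance in any auxiliary elliptic solvability step), and that the single smallness threshold $\kappa$ required by Lemma \ref{CattabrigaEst} is compatible with the smallness needed to close the nonlinear energy estimate. Since none of the relevant constants blows up as $M$ ranges over $\|M\|_{H^s}\le\kappa$, it suffices to shrink $\ep_0$ to a value depending only on $\kappa$ and the fixed data; there is no circularity, because $\kappa$ is fixed first (by Lemma \ref{CattabrigaEst}) and only afterwards is $\ep_0$ chosen in terms of it.
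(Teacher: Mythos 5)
Your proposal is correct and follows essentially the same route as the paper: the authors' proof of Corollary \ref{cor1} likewise observes that the only genuinely $\Omega$-dependent constants (beyond $\|M\|_{H^9}$) enter through the Cattabriga estimate, so that under $\|M\|_{H^9(\mathbb R^2)}\le\kappa$ one may replace $\lesssim_\Omega$ by $\lesssim$ in \eqref{ef0-C}, \eqref{eg0-C}, and \eqref{em0-comp} and then rerun the proof of Proposition \ref{apriori1} verbatim. Your additional remarks about Lemma \ref{local1} and the non-circularity of choosing $\kappa$ before $\ep_0$ are consistent with, though not spelled out in, the paper's one-line argument.
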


Lemma \ref{local1} can be proved in much the same way as in \cite{kg06-loc}.
Therefore, we omit the proof. 
In the remainder of this section, we prove Proposition \ref{apriori1} 
only for the case $m=3$, since the case $m=4,5$ can be shown similarly.
We derive the $L^2$-norm of $\Phi$ by following the method in \cite{kg06,knz03,nn09}.
To estimate the derivatives of $\Phi$, 
we use essentially the Matsumura--Nishida energy method in \cite{m-n83}.

\subsection{$L^2$ estimate} \label{ss-L2}

This subsection is devoted to the derivation of the 
estimate of the perturbation $(\vp,\psi)$ in $\lteasp{\beta}(\Omega)$.
To do this, we introduce an energy form $\cale$,
similarly as in \cite{kg06,knz03,nn09}, by
\[
\cale := \int^\rho_{\rt} \frac{p(\eta) - p(\rt)}{\eta^2} d \eta  + \frac{1}{2} |\psi|^2
=K \rt^{\gamma-1} \omega \Bigl( \frac{\rt}{\rho} \Bigr) + \frac{1}{2} |\psi|^2,
\quad
\omega(r) := r - 1 - \int_1^r \eta^{-\gamma} \, d \eta.
\]
%Notice that $\cale$ can actually be written as 
%\begin{equation*}
%\cale = K \rt^{\gamma-1} \left\{ \frac{\rt}{\rho} -1 - \int^{\rt/\rho}_1 \eta^{-\gamma} d \eta \right\} 
%+ \frac{1}{2} |\psi|^2 
%=\int^\rho_{\rt} \frac{p(\eta) - p(\rt)}{\eta^2} d \eta  + \frac{1}{2} |\psi|^2. 
%\end{equation*}
%
%
Under the smallness assumption on $N_\beta (T)$, 
we have $\li{\Phi(t)} \ll 1$ by Sobolev's inequality \eqref{sobolev2}.
Hence, the energy form $\cale$ is equivalent to the 
square of the perturbation $(\vp,\psi)$:
\begin{equation}
c (\vp^2 + |\psi|^2)
\le
\cale
\le
C (\vp^2 + |\psi|^2).
\label{sqr}
\end{equation}
Moreover we have the uniform bounds of solutions as follows:
\begin{equation}
0 < c \le \rho(t,x) \le C,
\quad
|u(t,x)| \le C,
\label{bdd}
\end{equation}
where we have used 
$N_\beta(T)+\dels \ll 1$.
Using the time and space weighted energy method,
we obtain the energy inequality in $L^2$ framework.

\begin{lemma}
\label{lm1}
Under the same conditions as in Proposition \ref{apriori1} with $m=3$, 
it holds that
\begin{multline}
e^{\zeta t} \|\Phi(t)\|_{\lteasp{\beta}}^2
+ \int_0^t e^{\zeta \tau} D_{0,\beta}^2(\tau) \, d \tau
\\
\lesssim \|\Phi_0\|_{\lteasp{\beta}}^2
+  \zeta \int_0^t e^{\zeta \tau}\|\Phi(\tau)\|_{\lteasp{\beta}}^2 \, d \tau
+  \dels \int_0^t e^{\zeta \tau} \lt{\nabla \vp(\tau)}^2 \, d \tau
+  \dels \int_0^t e^{\zeta \tau} d \tau
\label{ea0}
\end{multline}
for $t \in [0,T]$ and $\zeta>0$.
\end{lemma}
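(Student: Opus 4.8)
The plan is to carry out a time‑and‑space weighted energy estimate based on the energy form $\cale$. Combining \eqref{eq-pv2} multiplied by $\psi$ with \eqref{eq-pv1} in the way that assembles the pressure terms into a derivative of $\cale$ (as in \cite{kg06,knz03,nn09}), and using \eqref{sqr}--\eqref{bdd}, one obtains a pointwise identity of the schematic form
\begin{equation*}
\partial_t(\rho\cale)+\div\bigl(\rho u\,\cale-(\text{viscous flux})\bigr)+\mu_1|\nabla\psi|^2+(\mu_1+\mu_2)(\div\psi)^2=R,
\end{equation*}
where $R$ collects the homogeneous sources $f,g$, the inhomogeneous sources $F,G$, and the lower‑order terms produced by the $x$‑dependence of $(\rt,\ut)\circ\tilde M$ and by the curved boundary through $\tilde M(x)=x_1-M(x')$. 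One then multiplies by $e^{\beta x_1}e^{\zeta t}$ and integrates over $\Omega\times[0,t]$.

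Three mechanisms supply the dissipative norm $D_{0,\beta}^2$. Since $\psi|_{\partial\Omega}=0$, integration by parts of the viscous flux leaves no boundary term and yields $\mu_1\|\nabla\psi\|_{\lteasp\beta}^2+(\mu_1+\mu_2)\|\div\psi\|_{\lteasp\beta}^2\gtrsim\|\nabla\psi\|_{\lteasp\beta}^2$ (using $\mu_1>0$ and $2\mu_1+3\mu_2\ge0$), up to corrections of order $\beta$ in $\|\psi\|_{\lteasp\beta}^2$. The convective flux leaves the boundary integral $\int_{\partial\Omega}e^{\beta x_1}(\rho\,\ub\cdot n)\cale\,dS$; on $\partial\Omega$ one has $\psi=0$, hence $\cale|_{\partial\Omega}\gtrsim\varphi(t,M(\cdot),\cdot)^2$, while $\ub\cdot n\ge c>0$ by \eqref{bce} and $e^{\beta x_1}$ is bounded below since $M\in L^\infty$; thus this term has the favorable sign and provides $\|\varphi(t,M(\cdot),\cdot)\|_{L^2(\mathbb R^2)}^2$ — this is the only place the outflow condition enters the $L^2$ estimate. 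Finally, integrating the weight $e^{\beta x_1}$ by parts against the transport part $\rho u\cdot\nabla\cale$ gives a term whose leading contribution is $c\beta\|\Phi\|_{\lteasp\beta}^2$ with $c>0$, because $\rt\ut_1\equiv\rho_+u_+<0$ by \eqref{ste1} and \eqref{stbc}, while $\|U\|_{H^7}+N_\beta(T)\ll1$ forces $u_1<0$ throughout; this is the mechanism behind the exponentially weighted dissipation, and with $\beta\le\alpha/2$ it is compatible with the decay \eqref{stdc1} of $\rt,\ut$. The temporal weight contributes $\zeta\int_0^t e^{\zeta\tau}\|\Phi\|_{\lteasp\beta}^2\,d\tau$, which is kept on the right‑hand side of \eqref{ea0} on purpose (it is absorbed later, in Proposition \ref{apriori1}, by choosing $\zeta$ small compared with the $\beta$‑dissipation). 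The remaining entry $\|\tfrac{d}{dt}\varphi\|^2=\|\varphi_t+u\cdot\nabla\varphi\|^2$ is read off directly from \eqref{eq-pv1} as $\|-\rho\div\psi+f+F\|^2$, so it is bounded by $\|\nabla\psi\|^2$ plus sources and inserted into $D_{0,\beta}^2$ at the cost of a small multiple of the viscous dissipation.

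It remains to estimate $R$. The homogeneous terms $f,g$ are quadratic in $\Phi$ or carry a factor $\tilde\delta$ (through $\nabla\rt$, $\ut_1'$) or $\|U\|_{H^7}\lesssim\delta$; by \eqref{bdd}, \eqref{sobolev2}, \eqref{stdc1}, $N_\beta(T)+\delta\ll1$ and $\beta\le\alpha/2$ (so $e^{(\beta-\alpha)x_1}$ is bounded), Young's inequality absorbs them into $D_{0,\beta}^2$ except for a remainder $\delta\int_0^t e^{\zeta\tau}\|\nabla\varphi(\tau)\|^2\,d\tau$, which is unavoidable at the $L^2$ level and is later absorbed by the higher‑order dissipation. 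The inhomogeneous terms satisfy $\|(F,G)\|_{\lteasp\beta}\lesssim\|(F,G)\|_{\lteasp{3\alpha/2}}\lesssim\delta$ by \eqref{h1} and $\beta\le\alpha/2$; pairing them against $e^{\beta x_1}\psi$ yields the last contribution $\delta\int_0^t e^{\zeta\tau}\,d\tau$. The step I expect to be the main obstacle is the systematic control of the curvature‑induced part of $R$: since $\partial\Omega$ is the graph of $M\in H^9(\mathbb R^2)$ rather than flat, the normal $n$ of \eqref{nvector} is not constant and $\nabla(\rt\circ\tilde M)$, $\nabla(\ut\circ\tilde M)$, $LU$, etc.\ carry derivatives of $M$ — these are exactly the terms packaged into $F,G$ in Section \ref{sec2} — and one must verify, using only $\|M\|_{H^9(\mathbb R^2)}$ together with \eqref{stdc1} and \eqref{ExBdry0}, that each is of size $\delta$ uniformly, which is what keeps the constants finite though $\Omega$‑dependent. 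Discarding the non‑negative flux divergence (which integrates to the good boundary term above) then gives \eqref{ea0}.
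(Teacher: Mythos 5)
Your overall architecture (the energy identity for $\rho\cale$, multiplication by $w=e^{\beta x_1}e^{\zeta t}$, the boundary term from the convective flux giving $\|\varphi|_{\partial\Omega}\|^2_{L^2_{x'}}$, the treatment of $\|\tfrac{d}{dt}\varphi\|^2$ via \eqref{eq-pv1}, and the $\delta$-absorption of $R$) matches the paper's proof. But there is a genuine gap at the single most delicate step: the source of the term $\beta\|\Phi\|^2_{\lteasp{\beta}}$ in $D_{0,\beta}$. You attribute it entirely to the transport part $\rho u\,\cale$ of the flux, arguing that $u_1<0$ makes $-\beta\rho u_1\cale$ positive. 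That accounts for only half of $G_1$. The flux also contains the pressure part $-(p(\rho)-p(\rt))\psi$, whose first component contributes, after $\nabla w\cdot G_1=\beta w\,(G_1)_1$, the sign-indefinite cross term $-\beta w\,p'(\rho_+)\varphi\psi_1$ at leading order. The positivity you need is that of the full quadratic form
\begin{equation*}
F(\varphi,\psi_1)=\frac{K\gamma\rho_+^{\gamma-2}|u_+|}{2}\varphi^2-p'(\rho_+)\varphi\psi_1+\frac{\rho_+|u_+|}{2}\psi_1^2,
\end{equation*}
and this form is positive definite if and only if its discriminant $p'(\rho_+)^2-K\gamma\rho_+^{\gamma-1}u_+^2=p'(\rho_+)^2\bigl(1-u_+^2/p'(\rho_+)\bigr)$ is negative, i.e.\ if and only if the supersonic condition \eqref{super1} holds. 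Your proposal never invokes \eqref{super1}, and without it the cross term cannot be absorbed by Young's inequality into the two diagonal terms; in the subsonic regime the argument collapses. This is exactly where the hypothesis $|u_+|/\sqrt{p'(\rho_+)}>1$ enters the $L^2$ estimate, and it must be made explicit.

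A second, smaller omission: when you expand $\rho u_1\cale$ and $(p(\rho)-p(\rt))\psi_1$ around the constant end state to isolate $F(\varphi,\psi_1)$, the remainders ($R_{12}$, $R_{13}$ in the paper) are of size $|(\rt-\rho_+,\ut-u_+)||\Phi|^2+|\Phi|^3\lesssim(N_\beta(t)+\dels)|\Phi|^2$ and must be shown small relative to the positive-definite part before one can conclude $\int_\Omega\nabla w\cdot G_1\gtrsim\beta e^{\zeta t}\|\Phi\|^2_{\lteasp{\beta}}$; your "leading contribution" phrasing glosses over this, but it is routine once the positive-definiteness itself is secured.
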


\begin{proof}
Following the computation in \cite{kg06,knz03}, we see that
the energy form $\cale$ satisfies
\begin{equation}
(\rho \cale)_t
- \div (G_1 + B_1)
+ \mu_1 |\nabla \psi|^2
+ (\mu_1 + \mu_2) (\div \psi)^2
=
R_{11},
\label{ea1}
\end{equation}
where 
\begin{align*}
G_1
&:=
- \rho u \cale
- (p(\rho) - p(\rt)) \psi,
\nonumber
\\
B_1
&:=
\mu_1 \nabla \psi \cdot \psi
+ (\mu_1 + \mu_2) \psi \div \psi,
\nonumber
\\
R_{11}
&:=
-\rho (\psi \cdot \nabla) (\tilde{u}+U) \cdot \psi - ( p(\rho) - p(\rt) -p'(\rt) \vp) \div \tilde{u} - \frac{\vp}{\rt} L\tilde{u} \cdot \psi + G \cdot \psi 
\\
& \quad 
- \vp(U\cdot\nabla)\ut\cdot\psi - \vp ((\ut+U) \cdot \nabla ) U \cdot \psi 
 -(p(\rho) - p(\rt) ) \div U  - p'(\rt) \vp \frac{\nabla \rt}{\rt} \cdot U . 
\nonumber 
\end{align*}
Multiplying (\ref{ea1}) by a weight function 
$w = w(x_1,t) := e^{\beta x_1} e^{\zeta t}$, we get
\begin{multline}
(w \rho \cale)_t
- \div
\bigl\{
w (G_1 + B_1)
\bigr\}
+ \nabla w \cdot G_1
+ \mu_1 w |\nabla \psi|^2
+ (\mu_1 + \mu_2) w (\div \psi)^2
\\
=
w_t \rho \cale
- \nabla w \cdot B_1
+ w R_{11}.
\label{ea2}
\end{multline}

We integrate this equality over $\Omega$. 
The second term on the left hand side is estimated from below by
using the divergence theorem with \eqref{bce} and \eqref{pbc} 
as well as (\ref{sqr}) and (\ref{bdd}):
\begin{equation}
- \int_{\Omega}  \div \bigl\{ w (G_1 + B_1)\bigr\} \, dx  
 =  \int_{\pd \Omega}
(w \rho  \cale)(u_b\cdot n)  \, d \sigma \,
\gtrsim
e^{\zeta t}   \|\vp|_{\partial \Omega} \|^2_{L^2_{x'}}. 
\label{ea3}
\end{equation}
Next we derive the lower estimate of the third term on the
left hand side of (\ref{ea2}).
Taking the fact that
 $\omega(s) = \frac{\gamma}{2} (s-1)^2 + O(|s-1|^3)$ for $|s-1| \ll 1$
into account,
we compute the term $\rho u_1 \cale$  in $G_1$ as
\begin{gather}
\rho u_1 \cale
=
\frac{K \gamma \rho_+^{\gamma-2} u_+}{2} \vp^2
+ \frac{\rho_+ u_+}{2} |\psi|^2
+ R_{12},
\label{ea6}
\\
\begin{aligned}
R_{12}
:=
(\rho u_1 - \rho_+ u_+) \cale
+ \rho_+ u_+
\left[
\frac{K \gamma }{2}
\Bigl(
\frac{\rt^{\gamma-1}}{\rho^2} - \rho_+^{\gamma-3}
\Bigr)
\vp^2
+ K \rt^{\gamma-1}
\Bigl\{
\omega \Bigl( \frac{\rt}{\rho} \Bigr)
- \frac{\gamma}{2} \Bigl( \frac{\rt}{\rho} - 1 \Bigr)^2
\Bigr\}
\right].
\end{aligned}
\nonumber
\end{gather}
Also, the second term appeared in $G_1$ is computed as
\begin{gather}
(p(\rho) - p(\rt)) \psi_1
=
p'(\rho_+) \vp \psi_1
+ R_{13},
\label{ea7}
\\
R_{13}
:=
(p'(\rt) - p'(\rho_+)) \vp \psi_1
+ (p(\rho) - p(\rt) - p'(\rt) \vp) \psi_1.
\nonumber
\end{gather}
Thus, using (\ref{ea6}) and (\ref{ea7}),
the third term in \eqref{ea2} 
%on the left hand side of (\ref{ea2}) 
is rewritten as
\begin{gather*}
\nabla w \cdot G_1
=
w_{x_1}
\Bigl(
F(\vp,\psi_1) + \frac{\rho_+ |u_+|}{2} |\psi'|^2
 + R_{12} + R_{13}
\Bigr),
\\
F(\vp,\psi_1)
:=
\frac{K \gamma \rho_+^{\gamma-2} |u_+|}{2} \vp^2
- p'(\rho_+) \vp \psi_1
+ \frac{\rho_+ |u_+|}{2} \psi_1^2,
\nonumber
\end{gather*}
where $\psi'$ is the second and third components of $\psi$ defined by 
$\psi' := (\psi_2,\psi_3)$.
Owing to the supersonic condition \eqref{super1}, 
the quadratic form $F(\vp,\psi_1)$ becomes
positive definite since the discriminant of $F(\vp,\psi_1)$ satisfies
\[
p'(\rho_+)^2 - K \gamma \rho_+^{\gamma-1} u_+^2
= p'(\rho_+)^2 \left(1 - \frac{u_+^2}{p'(\rho_+)}\right) < 0.
\]
On the other hand,
the remaining terms $R_{12}$ and $R_{13}$ satisfy
\begin{equation*}
|R_{12} + R_{13}| \lesssim
|(\rt - \rho_+,\ut - u_+)| |\Phi|^2 + |\Phi|^3
\lesssim  (N_\beta(t) + \dels) |\Phi|^2.
\end{equation*}
Therefore we obtain the following lower bound of the integration of
 $\nabla w \cdot G_1$ 
\begin{equation}
\int_{\Omega} \nabla w \cdot G_1 \, dx
\geq \beta e^{\zeta t} 
\bigl\{
 c - C (N_\beta(t) + \dels)
\bigr\}
\| \Phi \|^2_{L^2_{e, \beta} (\Omega)} .
\label{ea10}
\end{equation} 
The first and the second terms on the right hand side of (\ref{ea2})
are estimated by using (\ref{sqr}), (\ref{bdd}), and the
Schwarz inequality as
\begin{gather}
\int_{\Omega}
|w_t \rho \cale| \, dx
\lesssim  \zeta e^{\zeta t} \| \Phi \|^2_{L^2_{e, \beta}},  
\label{ea4}
\\
\int_{\Omega}
|\nabla w \cdot B_1| \, dx
%\le
%C \beta (1+t)^\xi \int_{\R_+^3}
%   (1+x_1)^{\beta-1} |\psi \nabla \psi| \, dx
\lesssim
\beta e^{\zeta t}
\big(\ep \| \psi \|^2_{L^2_{e, \beta}} + \ep^{-1}  \| \nabla \psi \|^2_{L^2_{e, \beta}} \big),
\label{ea5}
\end{gather}
where $\ep$ is an arbitrary positive constant.
For the term involving $R_{11}$, we observe 
\begin{equation*}
\begin{split}
|R_{11}| 
& \lesssim |(\nabla \tilde{u}, \nabla^2 \tilde{u},\nabla U)||\Phi|^2 +|\Phi||(G,U,\nabla U)|\\
&  \lesssim |(\nabla \tilde{u}, \nabla^2 \tilde{u},\nabla U)||\Phi|^2
+\delta e^{-\alpha x_1}|\Phi|^2
+\delta^{-1}e^{\alpha x_1}|(G,U,\nabla U)|^2. 
\end{split}
\end{equation*}
We apply Hardy's inequality \eqref{hardy} to the first two terms
with $\beta \leq \alpha/2$, \eqref{stdc1}, and \eqref{ExBdry0}, 
and estimate the last term by \eqref{h1}, it then holds that
\begin{equation}
\int_\Omega w |R_{11} | dx 
\lesssim e^{\zeta t} \delta ( \| \nabla \Phi \|^2 +  \|\vp|_{\partial \Omega} \|^2_{L^2_{x'}} +1).
\label{ea9}
\end{equation}

We integrate (\ref{ea2}) over $(0,t) \times \Omega$,
substitute the estimates (\ref{ea3}) and (\ref{ea10})--(\ref{ea9}) into the resultant equality
and then let $\ep$, \footnote{Hereafter we fix this $\beta$ in our whole proof.}{$\beta$}, and $N_\beta(T)+\dels$ be suitably small.
Furthermore, we use the fact that
\begin{align*}
\left\| \frac{d}{dt} \vp  \right\|^2 
= \| \rho \div \psi + f + F\|^2 
\lesssim \| \nabla \psi\|^2+\delta (\| \nabla \vp\|^2+ \|\vp|_{\partial \Omega} \|^2_{L^2_{x'}}+1),
\end{align*}
which follows from \eqref{eq-pv1}, \eqref{stdc1}, \eqref{ExBdry0}, and \eqref{hardy}.
These computations yield the desired inequality.
\end{proof}

\subsection{Time-derivative estimates}

In this section we derive time-derivative estimates.
To this end, by applying the differential operator $\partial_t^k$ for $k=0,1$ to 
(\ref{eq-pv1}) and (\ref{eq-pv2}), we have the following
two equations:
\begin{gather}
\partial_t^k \vp_t
+ u \cdot \nabla \partial_t^k \vp
+ \rho \div \partial_t^k \psi
= f_{0,k},
\label{ec1}
\\
\rho \{
\partial_t^k \psi_t
+ (u \cdot \nabla) \partial_t^k \psi
\}
- L(\partial_t^k \psi)
+ p'(\rho) \nabla \partial_t^k \vp
= g_{0,k},
\label{ec2}
\end{gather}
where
\begin{align*}
f_{0,k}
&:=
\partial_t^k (f + F)
- [\partial_t^k, u] \nabla \vp
- [\partial_t^k, \rho] \div \psi,
\nonumber
\\
g_{0,k}
&:= \partial_t^k (g + G)
- [\partial_t^k, \rho] \psi_t
- [\partial_t^k, \rho u] \nabla \psi
- [\partial_t^k, p'(\rho)] \nabla \vp,
\nonumber
\end{align*}
where $[T,u]v := T(uv) - u T v$ is a commutator.
We also often use the two inequalities:
\begin{gather}
\|\partial_t \Phi\|_{H^1} \lesssim  \sqrt{D_{2,\beta}} + \delta ,
\label{PhiH1}
\\
\|\partial_t\vp\|_{L^\infty}\lesssim  \sqrt{E_{3,\beta}} + \delta \lesssim  N_\beta(T) + \delta.
\label{PhiSup}
\end{gather}
Indeed we can derive these from \eqref{eq-pv1}
by using \eqref{stdc1}, \eqref{ExBdry0}, \eqref{h1}, and Hardy's inequality \eqref{hardy}.

We first estimate $\pd_t \Phi$ in the next lemma. 
\begin{lemma}
\label{lm2}
Under the same conditions as in Proposition \ref{apriori1} with $m=3$, 
it holds that
\begin{multline}
e^{\zeta t} \|\pd_t \Phi(t)\|^2
+ \int_0^t e^{\zeta \tau} 
%\bigl(
\|\pd_t \nabla\psi(\tau)\|^2
%+ \lty{\pd_t \vp(\tau,M(\cdot),\cdot)}^2\bigr) 
\, d \tau
\\
\lesssim \|\Phi_0\|_{H^3}^2
+ (N_\beta(T)+\dels+\zeta) \int_0^t e^{\zeta \tau} D_{3,\beta}(\tau) \, d \tau
+ \dels \int_0^t e^{\zeta \tau} \tau
\label{ec0}
\end{multline}
for $t \in [0,T]$ and $\zeta>0$, %and $k=1,2$,
where $C$ is a positive constant 
independent of $\delta$, $t$, and $\zeta$.
\end{lemma}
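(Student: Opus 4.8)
The plan is to obtain the estimate \eqref{ec0} by a standard $L^2$-energy argument applied to the system \eqref{ec1}--\eqref{ec2} with $k=1$, combined with a time-weight $e^{\zeta t}$. First I would multiply \eqref{ec2} with $k=1$ by $e^{\zeta t}\pd_t\psi$ and integrate over $\Omega$; the viscous term $-L(\pd_t\psi)$ produces, after integration by parts, the dissipative quantity $e^{\zeta t}(\mu_1\|\pd_t\nabla\psi\|^2+(\mu_1+\mu_2)\|\pd_t\div\psi\|^2)$ on the left-hand side, and the boundary terms vanish thanks to $\pd_t\psi(t,M(x'),x')=0$, which comes from differentiating \eqref{pbc} in $t$. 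The convective term $\rho(u\cdot\nabla)\pd_t\psi$ paired with $\pd_t\psi$ is handled by integration by parts using \eqref{nse1}, producing only lower-order terms that carry a factor $\|(\nabla\rho,\nabla u)\|_{L^\infty}\lesssim N_\beta(T)+\dels$. The pressure term $p'(\rho)\nabla\pd_t\vp$ must be combined with the continuity equation \eqref{ec1}: pairing \eqref{ec1} (with $k=1$) against $e^{\zeta t}p'(\rho)\pd_t\vp/\rho$ and adding the two identities makes the worst cross-terms cancel in the usual Matsumura--Nishida way, leaving a time-derivative of an energy $\|\sqrt{\rho}\pd_t\psi\|^2 + \|\sqrt{p'(\rho)/\rho}\,\pd_t\vp\|^2$ up to controllable remainders.

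Next I would bound the right-hand side contributions. The term $\zeta e^{\zeta t}\|\pd_t\Phi\|^2$ coming from differentiating the weight is absorbed into $\zeta\int_0^t e^{\zeta\tau}D_{3,\beta}$ since $\|\pd_t\Phi\|^2\lesssim D_{3,\beta}$ by \eqref{PhiH1}. The commutator forcing terms $f_{0,1}$ and $g_{0,1}$ are estimated by the product rule: each commutator $[\pd_t,\rho]\div\psi$, $[\pd_t,u]\nabla\vp$, $[\pd_t,\rho u]\nabla\psi$, $[\pd_t,p'(\rho)]\nabla\vp$ is a product of $\pd_t(\text{coefficient})$ with a first spatial derivative, so using \eqref{PhiSup}, \eqref{PhiH1}, Sobolev embedding, and the smallness of $N_\beta(T)+\dels$, these are bounded by $(N_\beta(T)+\dels)\sqrt{D_{3,\beta}}\,\|\pd_t\nabla\psi\|$ plus similar, which Young's inequality turns into $\ep\,e^{\zeta t}\|\pd_t\nabla\psi\|^2 + C(N_\beta(T)+\dels)e^{\zeta t}D_{3,\beta}$. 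The genuinely inhomogeneous pieces $\pd_t F$ and $\pd_t G$ vanish because $F$ and $G$ are $t$-independent; the pieces $\pd_t f$ and $\pd_t g$ that involve $U$, $\tilde\rho$, $\tilde u$ are products of those fixed profiles (controlled via \eqref{stdc1}, \eqref{ExBdry0}, \eqref{h1}) with $\pd_t\Phi$ or $\Phi$, contributing at worst $\dels\,e^{\zeta t}D_{3,\beta}$ after Hardy's inequality \eqref{hardy} to handle the weighted spatial decay, together with a pure source term bounded by $\dels\,e^{\zeta t}$; multiplying the latter by the factor $\tau$ that appears when tracking the decay of lower-order contributions (as in Lemma \ref{lm1}) accounts for the $\dels\int_0^t e^{\zeta\tau}\tau\,d\tau$ on the right of \eqref{ec0}. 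Finally I integrate the resulting differential inequality in $\tau$ over $(0,t)$, use the equivalence of the energy with $\|\pd_t\Phi\|^2$ from \eqref{bdd}, bound $\|\pd_t\Phi(0)\|$ by $\hs{3}{\Phi_0}$ via \eqref{eq-pv} evaluated at $t=0$ (here the compatibility conditions are what guarantee $\pd_t\Phi(0)\in L^2$ with the right bound), and choose $\ep$ small.

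The main obstacle I anticipate is the careful bookkeeping of the pressure/continuity cancellation in the weighted setting: with the variable coefficient $p'(\rho)$ and the factor $e^{\zeta t}$, the cross term does not cancel exactly, and one must check that the leftover is of the form $(N_\beta(T)+\dels+\zeta)e^{\zeta t}D_{3,\beta}$ rather than something uncontrollable like $e^{\zeta t}\|\pd_t\vp\|^2$ with an $O(1)$ coefficient. Keeping $\|\pd_t\vp\|^2$ on the right is acceptable only because it is part of $D_{0,\beta}\subset D_{3,\beta}$ and carries the small prefactor; so the estimate is genuinely an a-priori estimate that will later be closed together with Lemma \ref{lm1} and the higher spatial-derivative estimates. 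A secondary subtlety is that $\pd_t\vp$ has no boundary condition (the continuity equation has no boundary condition), so all integrations by parts involving $\nabla\pd_t\vp$ must be routed through the momentum equation paired with $\pd_t\psi$, which does vanish on $\pd\Omega$; I would be careful that every boundary integral that arises is either zero by $\pd_t\psi|_{\pd\Omega}=0$ or is the good outflow boundary term $\gtrsim e^{\zeta t}\|\pd_t\vp|_{\pd\Omega}\|_{L^2_{x'}}^2$ with a favorable sign, exactly as in \eqref{ea3}.
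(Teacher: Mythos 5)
Your proposal is correct and follows essentially the same route as the paper: multiply the time-differentiated continuity equation by $P(\rho)\,\pd_t\vp$ with $P(\rho)=p'(\rho)/\rho$ and the time-differentiated momentum equation by $\pd_t\psi$, add the identities so that the cross term $p'(\rho)\,\div(\pd_t\psi)\,\pd_t\vp$ cancels, use $\pd_t\psi|_{\pd\Omega}=0$ together with the outflow condition to make the boundary flux nonnegative, bound the commutator terms by $(N_\beta(T)+\dels)|(\pd_t\Phi,\nabla\Phi)|^2$, and close with the time weight $e^{\zeta t}$, \eqref{PhiH1}, and the compatibility conditions for the initial bound. The only cosmetic discrepancy is your reading of the last term in \eqref{ec0} as $\dels\int_0^t e^{\zeta\tau}\tau\,d\tau$; it arises simply from the $\dels$ in \eqref{PhiH1} and is $\dels\int_0^t e^{\zeta\tau}\,d\tau$, which does not affect the argument.
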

\begin{proof}
Multiplying \eqref{ec1} with $k=1$ by $P(\rho) \, \partial_t \vp$, where
$P(\rho) := p'(\rho) / \rho$, and using the facts that 
$\hat{\rho}_t = - \hat{\div} (\hat{\rho} \hat{u})$ 
and $ - P' (\hat{\rho}) \hat{\rho} + P(\hat{\rho}) = (3 - \gamma) P(\hat{\rho})$,
we get
\begin{multline}
\Bigl(
\frac{1}{2} P(\rho) | \partial_t \vp |^2
\Bigr)_t
+ \div \Bigl(
\frac{1}{2} P(\rho) u
|\partial_t \vp|^2
\Bigr)
+ p'(\rho) \div (\partial_t \psi) \partial_t \vp
\\
=
P(\rho) f_{0,k} \, \partial_t \vp
+
\frac{3-\gamma}{2} P(\rho) \div u \, |\partial_t \vp|^2.
\label{ec3}
\end{multline}
Multiply (\ref{ec2}) by $\partial_t \psi$ successively to get
\begin{gather}
\Bigl(
\frac{1}{2} \rho |\partial_t \psi|^2
\Bigr)_t
+ \div B_2
+ \mu_1 |\nabla (\partial_t \psi)|^2
+ (\mu_1 + \mu_2) |\div (\partial_t \psi)|^2
\mspace{240mu}
\notag
\\
\mspace{240mu}
= p'(\rho) \div (\partial_t \psi) \partial_t \vp
+ (g_{0,k} + p''(\rho) \, \partial_t \vp \nabla \rho)
   \cdot \partial_t \psi,
\label{ec4}
\\
B_2 :=
\frac{1}{2} \rho u |\partial_t \psi|^2
- \mu_1 \nabla (\partial_t \psi)  \cdot \partial_t \psi
- (\mu_1+\mu_2) \div (\partial_t \psi) \partial_t \psi 
+ p'(\rho) \partial_t \vp \, \partial_t \psi.
\nonumber
\end{gather}
Adding (\ref{ec3}) to (\ref{ec4}) yields
\begin{gather}
\Bigl(
\frac{1}{2} P(\rho) | \partial_t \vp |^2
+ \frac{1}{2} \rho |\partial_t \psi|^2
\Bigr)_t
+ \div \Bigl(
\frac{1}{2} P(\rho) u
|\partial_t \vp|^2
+ B_2
\Bigr)
\mspace{150mu}
\nonumber
\\
\mspace{200mu}
{}+ \mu_1 |\nabla (\partial_t \psi)|^2
+ (\mu_1 + \mu_2) |\div (\partial_t \psi)|^2
=
R_2,
\label{ec5}
\\
R_2
:=
P(\rho) f_{0,k} \, \partial_t \vp
+
\frac{3-\gamma}{2} P(\rho) \div u \, |\partial_t \vp|^2
+ (g_{0,k} + p''(\rho) \, \partial_t \vp \nabla \rho)
   \cdot \partial_t \psi.
\nonumber
\end{gather}
Owing to \eqref{bce} and (\ref{bdd}),
we have the nonnegativity of the second term on the
left hand side of (\ref{ec5}) as
\begin{gather}
\int_{\Omega}
\div \Bigl(
\frac{P(\rho) u}{2}
|\partial_t \vp|^2
+ B_2
\Bigr) \, dx
=\int_{\partial\Omega}
\frac{P(\rho) }{2}
|\partial_t \vp|^2 (u\cdot n)
\, d\sigma
\geq 0.
\label{ec6}
\end{gather}
Notice that here we used $\partial_t \psi =0$ on $\partial \Omega$, which holds because of \eqref{pbc}. By \eqref{PhiSup} and Sobolev inequality \eqref{sobolev2},
the nonlinear term $R_2$ is estimated as
\begin{equation}
|R_2|  \lesssim (N_{\beta}(T) + \dels)|(\Phi_t,\nabla\Phi)|^2.
\label{ec7}
\end{equation}

Now one can have the desired inequality (\ref{ec0}) as follows.
Multiply (\ref{ec5}) by a time weight function $e^{\zeta t}$, 
integrate the resultant equality over $(0,t) \times \Omega$,
and substitute in (\ref{ec6}) and (\ref{ec7}).
Then applying inequality \eqref{PhiH1}
yields the desired inequality (\ref{ec0}).
\end{proof}

Next we estimate $\pd_t^k\nabla\psi$ for $k=0,1$.

\begin{lemma}
\label{lm3}
Under the same conditions as in Proposition \ref{apriori1} with $m=3$, 
it holds that
\begin{align}
{}&
e^{\zeta t}\lt{\pd_t^k\nabla\psi(t)}^2
+ \int_0^t e^{\zeta \tau} \lt{\pd_t^{k+1}\psi(\tau)}^2 \, d \tau
\notag\\
&\lesssim \hs{3}{\Phi_0}^2
+ \lambda \calh_{k}^\zeta(t)
+ \lambda^{-1} \calp_{k}^\zeta(t)
+ (N_\beta(t)+\dels+\zeta) \int_0^t e^{\zeta \tau} D_{3,\beta}(\tau) \, d \tau
+  \dels \int_0^t e^{\zeta \tau} \tau
\label{ed0}
\end{align}
for $t \in [0,T]$, $\zeta>0$, $\lambda \in (0,1)$, and $k=0,1$, where $C$ is a positive constant 
independent of $\delta$, $t$, and $\zeta$.
Furthermore, $\calh_{k}^\zeta(t)$ and $\calp_{k}^\zeta(t)$ are defined by
\begin{align*}
\calh_{k}^\zeta(t)
& :=
e^{\zeta t} \lt{\pd_t^k \nabla \vp(t)}^2
+ \int_0^t e^{\zeta \tau} \lt{\pd_t^k \nabla \vp(\tau)}^2 \, d \tau,
\\
\calp_{k}^\zeta(t)
& :=
e^{\zeta t} \lt{\pd_t^k \psi(t)}^2
+ \int_0^t e^{\zeta \tau} \lt{\pd_t^k\nabla \psi(\tau)}^2 \, d \tau.
\end{align*}
\end{lemma}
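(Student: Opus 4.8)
The plan is to run the parabolic‑type energy estimate obtained by testing the $\pd_t^k$‑differentiated momentum equation \eqref{ec2} with $\pd_t^{k+1}\psi$. Abbreviate $v:=\pd_t^k\psi$, $\phi:=\pd_t^k\vp$. Integrating $\rho\,\pd_t v\cdot\pd_t v$ over $\Omega$ produces the dissipation $\|\pd_t^{k+1}\psi\|^2$; integrating $-Lv\cdot\pd_t v$ and using $\pd_t v|_{\pd\Omega}=0$ (a consequence of \eqref{pbc}) gives $\tfrac{d}{dt}\big(\tfrac{\mu_1}2\|\nabla v\|^2+\tfrac{\mu_1+\mu_2}2\|\div v\|^2\big)$, which — since $\mu_1+\mu_2>0$ under $2\mu_1+3\mu_2\ge0$ — supplies the energy $\|\pd_t^k\nabla\psi\|^2$ on the left of \eqref{ed0}. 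First I would multiply the resulting identity by $e^{\zeta t}$ and integrate over $(0,t)\times\Omega$, using \eqref{bdd}; the $t=0$ value of the energy goes to the right as $\hs{3}{\Phi_0}^2$ (for $k=1$ one first expresses $\pd_t\psi(0)$, $\pd_t\vp(0)$ through spatial derivatives of $\Phi_0$ via \eqref{ec1}--\eqref{ec2} at $t=0$, which costs only $\hs{3}{\Phi_0}^2$ plus $\dels$‑terms), while the weight produces $\zeta\int_0^te^{\zeta\tau}\|\pd_t^k\nabla\psi\|^2\,d\tau$, absorbed into $(N_\beta+\dels+\zeta)\int_0^te^{\zeta\tau}D_{3,\beta}\,d\tau$ since $\|\pd_t^k\nabla\psi\|^2\le D_{3,\beta}$ for $k=0,1$ by \eqref{Dkbeta-def}.

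The convective and source terms are routine. For $\rho(u\cdot\nabla)v\cdot\pd_t v$ I would use Young's inequality with a \emph{fixed} small constant, absorbing the $\|\pd_t^{k+1}\psi\|^2$‑part into the dissipation on the left and sending the $\|\pd_t^k\nabla\psi\|^2$‑part into $\calp_k^\zeta\le\lambda^{-1}\calp_k^\zeta$. For $g_{0,k}\cdot\pd_t^{k+1}\psi$ I would again use Young with a fixed constant and estimate $\|g_{0,k}\|^2$ via \eqref{stdc1}, \eqref{ExBdry0}, \eqref{h1}, \eqref{PhiH1}, \eqref{PhiSup}: every term of $g_{0,k}$ is either a product of $\Phi$ (or $\pd_t\Phi$) with one of $\nabla\rt$, $\nabla\ut$, $U$, $\nabla U$ — hence carrying a factor $\dels$ — or quadratic in $\Phi$ — hence carrying a factor $N_\beta$ — or $G$ (resp.\ a $\pd_t$‑commutator), so $\|g_{0,k}\|^2\lesssim(N_\beta+\dels)D_{3,\beta}$ plus the $\dels$‑source term of \eqref{ed0}.

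The one genuinely delicate term is the pressure coupling $\int_\Omega p'(\rho)\nabla\phi\cdot\pd_t v\,dx$: a naive Young's inequality would cost an $O(1)$ multiple of $\calh_k^\zeta$, whereas \eqref{ed0} allows only the small multiple $\lambda\calh_k^\zeta$. The remedy is the Matsumura--Nishida device of integrating by parts in time:
\[
p'(\rho)\nabla\phi\cdot\pd_t v=\pd_t\big(p'(\rho)\nabla\phi\cdot v\big)-p''(\rho)\vp_t\,\nabla\phi\cdot v-p'(\rho)\nabla\pd_t\phi\cdot v ,
\]
using $\rho_t=\vp_t$. After the $e^{\zeta\tau}$‑weighted time integration the exact‑derivative term leaves $e^{\zeta t}\int_\Omega p'(\rho)\nabla\phi(t)\cdot v(t)\,dx$ (the $t=0$ part joining $\hs{3}{\Phi_0}^2$) together with $-\zeta\int_0^te^{\zeta\tau}\!\int_\Omega p'(\rho)\nabla\phi\cdot v$; the former is split by Young as $\lambda e^{\zeta t}\|\nabla\phi(t)\|^2+C\lambda^{-1}e^{\zeta t}\|v(t)\|^2\le\lambda\calh_k^\zeta+C\lambda^{-1}\calp_k^\zeta$, and the latter, together with the $p''(\rho)\vp_t\nabla\phi\cdot v$ term (bounded using $\li{\vp_t}\lesssim N_\beta+\dels$ from \eqref{PhiSup}), falls into $(N_\beta+\dels+\zeta)\int_0^te^{\zeta\tau}D_{3,\beta}$.

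Finally $p'(\rho)\nabla\pd_t\phi\cdot v$ I would integrate by parts in space — no boundary term since $v|_{\pd\Omega}=0$ — to get $\int_\Omega\pd_t\phi\big(p'(\rho)\div v+p''(\rho)\nabla\rho\cdot v\big)\,dx$, and then substitute $\pd_t\phi=-u\cdot\nabla\phi-\rho\div v+f_{0,k}$ from \eqref{ec1}. The emerging terms are of three kinds: quadratic in $(\nabla\phi,\nabla v)$, contributing $\lambda\calh_k^\zeta+\lambda^{-1}\calp_k^\zeta$ by Young; quadratic in $\nabla v$ alone, contributing $\calp_k^\zeta\le\lambda^{-1}\calp_k^\zeta$; and terms with a spare factor $N_\beta$ (from $\nabla\vp$ placed in $L^\infty$), $\dels$ (from $\nabla\rt$ and $\nabla U$ inside $\nabla\rho$), or $\|f_{0,k}\|$, all landing in $(N_\beta+\dels)\int_0^te^{\zeta\tau}D_{3,\beta}$ and the $\dels$‑source term. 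Collecting everything and choosing the fixed Young constant small enough to absorb the $\pd_t^{k+1}\psi$‑contributions on the left gives \eqref{ed0}. The main obstacle is precisely this coupling term: keeping the coefficient of $\calh_k^\zeta$ small forces the integration by parts in time, and the resulting $\nabla\pd_t\phi$ then has to be traded back, via the continuity equation, for $\div v$ and lower‑order quantities; everything else reduces to \eqref{bdd}, \eqref{PhiH1}, \eqref{PhiSup}, \eqref{h1}.
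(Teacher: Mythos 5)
Your proposal is correct and follows essentially the same route as the paper: testing \eqref{ec2} with $\pd_t^{k+1}\psi$, extracting the energy from $-L(\pd_t^k\psi)\cdot\pd_t^k\psi_t$, and handling the pressure coupling by the time integration by parts plus spatial integration by parts plus substitution of \eqref{ec1} — your three-step manipulation reproduces exactly the paper's identity \eqref{ed5}, and the subsequent $\lambda$-splitting into $\calh_k^\zeta$, $\calp_k^\zeta$ and the $(N_\beta+\dels+\zeta)D_{3,\beta}$ remainder estimates match \eqref{ed6}--\eqref{ed9}. No essential difference or gap.
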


\begin{proof}
Multiplying (\ref{ec2}) by $\partial_t^k \psi_t$, we get
\begin{equation}
\rho |\partial_t^k \psi_t|^2
+ \rho (u \cdot \nabla) \dtn{j}{k} \psi \cdot \partial_t^k \psi_t
- L (\partial_t^k \psi) \cdot \partial_t^k \psi_t
+ p'(\rho) \nabla \partial_t^k \vp \cdot \partial_t^k \psi_t
= g_{0,k} \cdot \partial_t^k \psi_t.
\label{ed3}
\end{equation}
The third and the fourth terms on the left hand side of (\ref{ed3}) 
are rewritten to
\begin{align}
&
- L (\partial_t^k \psi) \cdot \partial_t^k \psi_t
=
\Bigl(
\frac{\mu_1}{2} |\nabla \partial_t^k \psi|^2
+ \frac{\mu_1+\mu_2}{2} |\div \partial_t^k \psi|^2
\Bigr)_t
\nonumber
\\
&
\mspace{150mu}
- \div
\Bigl\{
\mu_1 \nabla \partial_t^k \psi \cdot \partial_t^k \psi_t
+ (\mu_1+\mu_2) (\div \partial_t^k \psi )\partial_t^k \psi_t
\Bigr\},
\label{ed4}
\\
&
p'(\rho) \nabla \partial_t^k \vp \cdot \partial_t^k \psi_t
=
\{ p'(\rho) \nabla \partial_t^k \vp \cdot \partial_t^k \psi \}_t
- \div (p'(\rho) \partial_t^k \vp_t \, \partial_t^k \psi)
\nonumber
\\
&
\mspace{160mu}
- p'(\rho) \div \partial_t^k \psi
   (u \cdot \nabla \partial_t^k \vp + \rho \div \partial_t^k \psi)
- p''(\rho) \vp_t \nabla \partial_t^k \vp \cdot \partial_t^k \psi
\nonumber
\\
&
\mspace{160mu}
- p''(\rho) \nabla \rho \cdot \partial_t^k \psi
   (u \cdot \nabla \partial_t^k \vp + \rho \div \partial_t^k \psi)
+ f_{0,k} \div (p'(\rho) \partial_t^k \psi).
\label{ed5}
\end{align}
Substituting (\ref{ed4}) and (\ref{ed5}) into (\ref{ed3}) yields
\begin{equation}
\dt E_{3}
- \div B_3
+ \rho |\partial_t^k \psi_t|^2
= G_3 + R_3,
\label{ed1}
\end{equation}
where $E_3$, $B_3$, $G_3$ and $R_3$ are defined by
\begin{align*}
E_3 := {}
&
\frac{\mu_1}{2} |\nabla \partial_t^k \psi|^2
+ \frac{\mu_1 + \mu_2}{2} |\div \partial_t^k \psi|^2
+ p'(\rho) \nabla \partial_t^k \vp \cdot \partial_t^k \psi,
\\
B_3 := {}
&
\mu_1 \nabla \partial_t^k \psi \cdot \partial_t^k \psi_t
+ (\mu_1 + \mu_2) \partial_t^k \psi_t \div \partial_t^k \psi
+ p'(\rho) \partial_t^k \vp_t \, \partial_t^k \psi,
\\
G_3 := {}
&
p'(\rho) \div \partial_t^k \psi \, (u \cdot \nabla \partial_t^k \vp 
  + \rho \div \partial_t^k \psi)
-\rho (u \cdot \nabla) \partial_t^k \psi \cdot \partial_t^k \psi_t,
\\
R_3 := {}
&
p''(\rho) \vp_t \nabla \partial_t^k \vp \cdot \partial_t^k \psi
+ p''(\rho) \nabla \rho \cdot \partial_t^k \psi
     (u \cdot \nabla \partial_t^k \vp  + \rho \div \partial_t^k \psi)
\\
&
- f_{0,k} \div(p'(\rho) \partial_t^k \psi)
+  g_{0,k}  \cdot \partial_t^k \psi_t.
\end{align*}
Owing to $\partial_t \psi =0$ on $\partial \Omega$, we have  
\begin{gather}
\int_{\Omega}
\div B_3
 \, dx= 0.
\label{ed10}
\end{gather}
For arbitrary positive constants $\ep$ and $\lambda$,
the integrations of $E_3$ and $G_3$ over $\Omega$ are estimated as
\begin{gather}
c \lt{\nabla \partial_t^k \psi}^2
- \lambda \lt{\nabla \partial_t^k \vp}^2
- C \lambda^{-1} \lt{\partial_t^k \psi}^2
\le
\int_{\Omega} E_3 \, d x
\lesssim
\ho{\partial_t^k \Phi}^2,
\label{ed6}
\\
\int_{\Omega} |G_3| \, d x
\lesssim
\lambda \lt{\nabla \partial_t^k \vp}^2
+ \ep \lt{\partial_t^k \psi_t}^2
+ (\lambda^{-1}+\ep^{-1}) \lt{\nabla \partial_t^k \psi}^2
\label{ed2}
\end{gather}
by using \eqref{bdd}.
It is straightforward by \eqref{PhiSup} to check that
\begin{equation}
|R_3| \lesssim \left\{
\begin{array}{ll}
(N_\beta(T)+\delta)|(\nabla\Phi,\pd_t \Phi)|^2+|(\rt',\ut', \nabla U)||\Phi|^2 +\delta^{-1}|(F, G)|^2 & \text{if $k=0$,}
\\
(N_\beta(T)+\delta)|(\nabla\Phi,\pd_t \Phi,\nabla^2\Phi,\pd_t \nabla \Phi,\pd_{tt} \psi)|^2 & \text{if $k=1$.}
\end{array} 
\right.
\label{ed9}
\end{equation}

Finally, we multiply (\ref{ed1}) by $e^{\zeta t}$,
integrate the resultant equality over $(0,t) \times \Omega$ and
substitute in the estimates (\ref{ed10})--(\ref{ed9}). Making use of \eqref{PhiH1}, \eqref{hardy} and letting $\ep$ be suitably small lead to the desired estimate (\ref{ed0}).
\end{proof}

\subsection{Spatial-derivative estimates}\label{Sptial-deriv}

In order to flatten the boundary and obtain tangential derivatives, we introduce the following change of variables:
\begin{equation}\label{CV1}
\Gamma : 
\begin{cases}
& x_1 = y_1 + M(y_2, y_3), \\
& x_2 = y_2, \\
& x_3 = y_3,  \\
\end{cases}
\end{equation} 
and its inverse 
\begin{equation} \label{CV11}
\hat{\Gamma} : 
\begin{cases}
& y_1 = x_1 - M(x_2, x_3), \\
& y_2 = x_2, \\
& y_3 = x_3.  \\
\end{cases}
\end{equation}
We notice that $\Gamma(\mathbb{R}_3^+)=\Omega$, where 
$\mathbb{R}_3^+ : = \{ (y_1, y_2, y_3) \in \mathbb{R}^3 : y_1 >0 \}$.
We set $y' =(y_2, y_3)$ and denote the matrix
\begin{equation}\label{CV2}
A(y') : = 
%(\nabla \hat{\Gamma} )^T(y')
%= 
\left( \begin{array}{ccc} 
1 & 0 & 0   \\
-\partial_{y_2} M(y') & 1 & 0 \\
-\partial_{y_3} M(y') & 0 & 1 \\ 
 \end{array} \right).
\end{equation}
%\begin{equation} 
%A(x') : = 
%(\nabla \hat{\Gamma} )^T(x')
%= 
%\left( \begin{array}{ccc} 
%1 & 0 & 0   \\
%-\partial_{x_2} M(x') & 1 & 0 \\
%-\partial_{x_3} M(x') & 0 & 1 \\ 
% \end{array} \right).
%\end{equation}
Let us define
\begin{gather*}
\hat{\vp} (t,y) := \vp(t,\Gamma(y)), \quad \hat{\psi} (t,y) : = \psi (t,\Gamma (y)), 
\\ \hat{\rho} (t,y) := \rho(t,\Gamma(y)), \quad \hat{u} (t,y) := u(t,\Gamma(y)), \quad \hat{U} (t,y) := U(t,\Gamma(y)).
\end{gather*}
Note that $(\hat{\vp}, \hat{\psi})(t,y)$ is a vector-valued function defined on $\{t \geq 0\} \times \overline{\mathbb{R}_3^+}$, 
and $(\rt,\ut)(\tilde{M}(\Gamma(y)))=(\rt,\ut)(y_1)$ holds. We now have
\begin{equation} \label{CV3}
\nabla_x \vp (t,\Gamma(y)) = \hat{\nabla} \hat{\vp}(t,y) := A \nabla_y \hat{\vp} (t,y) = \big(\sum_{j=1}^3 A_{kj} \partial_{y_j} \hat{\vp}  \big)_{k=1, 2, 3} (t,y), 
\end{equation}
\begin{equation} \label{CV4}
\div \!{}_x  \psi (t,\Gamma(y)) = \hat{\div} \  \hat{\psi}(t,y) := (A \nabla_y) \cdot \hat{\psi} (t,y) =  \sum_{k=1}^3 \sum_{j=1}^3 A_{kj} \partial_{y_j} \hat{\psi}_k (t,y), 
\end{equation}
\begin{equation} \label{CV5}
\Delta_x  \psi (t,\Gamma(y)) = \hat{\Delta} \hat{\psi}(t,y) := (A \nabla_y) \cdot (A \nabla_y \hat{\psi}) (t,y) =  \sum_{k=1}^3 \big( \sum_{j=1}^3 A_{kj} \partial_{y_j} \big)^2 \hat{\psi} (t,y),
\end{equation}
\begin{equation} \label{CV6}
\frac{d}{dt} \vp (t,\Gamma(y)) = \hat{\frac{d}{dt} } \hat{\vp} (t,y) := \partial_t \hat{\vp}(t,y) + \hat{u} \cdot \hat{\nabla} \hat{\vp} (t,y).
\end{equation}

From \eqref{eq-pv}, we obtain the equation for $(\hat{\vp}, \hat{\psi})$
\begin{subequations}
\label{eq-pv-hat}
\begin{gather}
 \hat{\vp}_t + \hat{u} \cdot \hat{\nabla} \hat{\vp} + \hat{\rho} \hat{\div} \hat{\psi}
= \hat{f} + \hat{F},
\label{eq-pv1-hat}
\\
\hat{\rho} \{ \hat{\psi}_t  + (\hat{u} \cdot \hat{\nabla}) \hat{\psi} \}
- \hat{L} \hat{\psi} + p'(\hat{\rho}) \hat{\nabla} \hat{\vp}
= \hat{g} + \hat{G}
\label{eq-pv2-hat}
\end{gather}
and the initial and boundary conditions
\begin{gather}
(\hat{\vp},\hat{\psi})(0,y)
=
% \vvp_0(x) =
 (\hat{\vp}_0, \hat{\psi}_0)(y)
= (\rho_0, u_0)(\Gamma(y)) - (\rt, \ut)(y_1) - (0, U(\Gamma (y))),
\label{pic-hat}
\\
\hat{\psi}(t,0,y') = 0.
\label{pbc-hat}
\end{gather}
\end{subequations}
Here $\hat{L} \hat{\psi}$, $\hat{f}$, $\hat{F}$, $\hat{g}$ and $\hat{G}$ are defined by
\begin{gather*}
\hat{L} \hat{\psi}(t,y) := \mu_1 \hat{\Delta} \hat{\psi} (t,y) + (\mu_1 + \mu_2) \hat{\nabla} \hat{\div} \hat{\psi} (t,y),
\\
\hat{f} (t,y) :=f(t,\Gamma(y)),%= -\hat{\psi}_1 \tilde{\rho}' (y_1) -\hat{\varphi} \tilde{u}_1' (y_1) - \sum_{j=2}^3 \hat{\psi}_j \tilde{\rho}' (y_1) \partial_{x_j} M - \hat{\vp} \hat{\div} U(\Gamma(y)) ,
\quad
\hat{F} (y) :=F(\Gamma(y)),%=  -\hat{\nabla} \tilde{\rho} (y_1) \cdot U (\Gamma (y)) - \tilde{\rho} (y_1) \hat{\div} U (\Gamma(y)) ,
%=
% - \hat{\psi}_1  \tilde{\rho}' (y_1) - \hat{\varphi} \tilde{u}'_1 (y_1) - \sum^3_{j=2} \hat{\psi}_j \tilde{\rho}' (y_1) \pd_{y_j} M,
\quad
\hat{g} (t,y) := g(t,\Gamma(y)), %\\
%& =
%\begin{bmatrix}
%\hat{g}_1
%\\
% (p'(\hat{\rho})-p'(\rt(y_1))) \tilde{\rho}' (y_1) \pd_{x_2} M - \hat{\vp} (\hat{u} \cdot \hat{\nabla}) U_2 (\Gamma (y)) -  \rt (y_1) (\hat{\psi} \cdot \hat{\nabla}) U_2 (\Gamma (y)) 
%\\
% (p'(\hat{\rho})-p'(\rt(y_1))) \tilde{\rho}' (y_1) \pd_{x_3} M - \hat{\vp} (\hat{u} \cdot \hat{\nabla}) U_3 (\Gamma (y)) - \rt (y_1) (\hat{\psi} \cdot \hat{\nabla}) U_3 (\Gamma (y)) 
%\end{bmatrix}
%,
\quad
\hat{G} (y) := G (\Gamma(y)).
\end{gather*}
%where
%\begin{equation*}
%\begin{split}
%\hat{g}_1 
%& := - \hat{\rho} \hat{\psi}_1 \tilde{u}'_1 (y_1) - \hat{\varphi} \tilde{u}_1 (y_1) \tilde{u}'_1 (y_1) + \hat{\rho} \sum^3_{j=2} \hat{\psi}_j \tilde{u}_1' (y_1) \pd_{x_j} M - (p'(\hat{\rho})- p'(\tilde{\rho} (y_1)) ) \tilde{\rho}' (y_1) \\
%& - \hat{\vp} U_1 (\Gamma (y)) \ut_1' (y_1) + \hat{\vp} \sum_{j=2}^3 U_j (\Gamma (y)) \ut_1' (y_1) \partial_{x_j} M - \hat{\vp} (\hat{u} \cdot \hat{\nabla} U_1 (\Gamma (y))) - \rt (y_1) (\hat{\psi} \cdot \hat{\nabla} U_1 (\Gamma (y)))  , \\  
%\end{split}
%\end{equation*}

We now derive the estimate on the spatial-derivatives for the tangential directions. 
\begin{lemma}
\label{lm2-hat}
Under the same conditions as in Proposition \ref{apriori1} with $m=3$, 
it holds that 
\begin{align}
{}&e^{\zeta t} \|\nabla^l_{y'} \hat{\Phi}(t)\|_{L^2(\mathbb R^3_+)}^2
+ \int_0^t e^{\zeta \tau} 
\left(\|\nabla\nabla^l_{y'}  \hat{\psi}(\tau)\|_{L^2(\mathbb R^3_+)}^2
+ \left\| \nabla^l_{y'} \hat{\frac{d}{dt}} \hat{\vp} (\tau) \right\|_{L^2(\mathbb R^3_+)}^2 
%+ \| \nabla^l_{y'} \hat{\vp} (\tau, 0 ,\cdot) \|_{L^2_{y'}}^2 
\right) \, d \tau 
\notag \\
&\lesssim\|\Phi_0\|_{H^3}^2
+ \epsilon \int^t_0 e^{\zeta \tau}\left( \| \nabla \vp(\tau) \|^2_{H^{l-1}} + \| \nabla \psi(\tau) \|^2_{H^{l}} \right) d \tau 
+ \epsilon^{-1} \int^t_0 e^{\zeta \tau}\| \nabla \psi(\tau) \|^2_{H^{l-1}} d \tau 
\notag \\
&\quad + (N_\beta(T)+\dels+\zeta) \int_0^t e^{\zeta \tau} D_{3,\beta}(\tau) \, d \tau
+  \dels   \int_0^t e^{\zeta \tau} d \tau
\label{ec0-hat}
\end{align}
for $t \in [0,T]$, $\zeta>0$, $\epsilon \in (0,1)$, and $l = 1, 2, 3$.
\end{lemma}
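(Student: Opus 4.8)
The plan is to obtain the tangential estimate by applying $\nabla^l_{y'}$ to the flattened system \eqref{eq-pv-hat} and running an energy argument analogous to the $L^2$ estimate in Lemma \ref{lm1} and the time-derivative estimate in Lemma \ref{lm2}, but now in the half-space variables $y$ where tangential derivatives commute cleanly with the (constant-coefficient-in-$y'$) boundary $\{y_1=0\}$ and with the differential operators $\hat\nabla$, $\hat\div$, $\hat L$. The key point that makes this work is that the coefficient matrix $A(y')$ depends only on $y'$ through $M$, so differentiating in $y'$ produces only lower-order commutator terms involving derivatives of $M$, which are controlled by $\|M\|_{H^9}$.

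First I would differentiate \eqref{eq-pv1-hat}--\eqref{eq-pv2-hat} by $\nabla^l_{y'}$ to get a system for $(\nabla^l_{y'}\hat\vp,\nabla^l_{y'}\hat\psi)$ of the same shape as \eqref{ec1}--\eqref{ec2}, with right-hand sides containing $\nabla^l_{y'}(\hat f+\hat F)$, $\nabla^l_{y'}(\hat g+\hat G)$ and commutators of $\nabla^l_{y'}$ with $\hat u\cdot\hat\nabla$, $\hat\rho\,\hat\div$, $\hat L$, $p'(\hat\rho)\hat\nabla$, etc. Then I would multiply the $\hat\vp$-equation by $P(\hat\rho)\,\nabla^l_{y'}\hat\vp$ (mimicking the energy form $\cale$ / the multiplier used in Lemma \ref{lm2}) and the $\hat\psi$-equation by $\nabla^l_{y'}\hat\psi$, add them, and integrate over $\mathbb{R}^3_+$ against the time weight $e^{\zeta t}$. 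The boundary terms are handled exactly as in \eqref{ea3} and \eqref{ec6}: on $\{y_1=0\}$ we have $\nabla^l_{y'}\hat\psi=0$ by \eqref{pbc-hat} (tangential derivatives of a function vanishing on the flat boundary still vanish there), so the viscous boundary integrals drop; the convective boundary integral involving $(\hat u\cdot n)$ has a good sign from the outflow condition $u_b\cdot n\ge c>0$, producing the trace term that we simply discard (or keep as a nonnegative contribution). The viscous terms $\mu_1|\nabla\nabla^l_{y'}\hat\psi|^2 + (\mu_1+\mu_2)|\hat\div\,\nabla^l_{y'}\hat\psi|^2$ give the dissipation $\|\nabla\nabla^l_{y'}\hat\psi\|^2$, and — as in Lemma \ref{lm1} via \eqref{eq-pv1} — the $\hat\vp$-equation lets us trade $\hat\div\hat\psi$ for $\widehat{\frac{d}{dt}}\hat\vp$, yielding the $\|\nabla^l_{y'}\widehat{\frac{d}{dt}}\hat\vp\|^2$ term on the left.

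Next I would estimate the right-hand side. The homogeneous terms $\hat f,\hat g$ and the commutators are quadratic (or higher) in $\hat\Phi$ and its derivatives with coefficients involving $\nabla^{\le l}\hat\rho$, $\nabla^{\le l}\hat u$, $\nabla^{\le l}M$, so by \eqref{PhiSup}, Sobolev embedding \eqref{sobolev2}, and the smallness of $N_\beta(T)+\dels$ they are absorbed into $(N_\beta(T)+\dels)\int_0^t e^{\zeta\tau}D_{3,\beta}\,d\tau$, except for the genuinely lower-order "cross" terms which produce the $\epsilon$- and $\epsilon^{-1}$-weighted integrals of $\|\nabla\vp\|_{H^{l-1}}$, $\|\nabla\psi\|_{H^l}$, $\|\nabla\psi\|_{H^{l-1}}$ appearing in \eqref{ec0-hat} (these come from the $p'(\hat\rho)\hat\nabla\nabla^l_{y'}\hat\vp\cdot\nabla^l_{y'}\hat\psi$ coupling and from commutators hitting $\hat\nabla$; one integrates by parts in $y'$ and uses Young's inequality). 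The inhomogeneous terms $\hat F,\hat G$ are $t$-independent with $\|(\hat F,\hat G)\|_{H^5}\lesssim\delta$ by \eqref{h1} (transported by $\Gamma$, using $M\in H^9$), so $\int_\Omega e^{\zeta\tau}|\nabla^l_{y'}(\hat F,\hat G)\cdot(\cdot)|\lesssim \dels\int_0^t e^{\zeta\tau}d\tau$ plus an absorbable piece — this gives the last term in \eqref{ec0-hat}. The term $w_t(\cdots)$-type contribution from $\partial_t e^{\zeta t}$ is $\zeta e^{\zeta t}\|\nabla^l_{y'}\hat\Phi\|^2$, folded into the $(N_\beta+\dels+\zeta)D_{3,\beta}$ term. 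Integrating in time and using $\|\Phi_0\|_{H^3}$ to bound the data at $t=0$ (the change of variables $\Gamma$ is bi-Lipschitz with $H^m$-bounds depending on $\|M\|_{H^9}$, so $\|\hat\Phi_0\|_{H^l}\lesssim\|\Phi_0\|_{H^3}$ for $l\le 3$) yields \eqref{ec0-hat}.

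The main obstacle I expect is bookkeeping the commutators $[\nabla^l_{y'},\,\cdot\,]$ with the variable-coefficient operators $\hat\nabla,\hat\div,\hat L$ and with $p'(\hat\rho)$, $\hat u\cdot\hat\nabla$: one must verify that every term either is high-order-times-small (absorbed into $D_{3,\beta}$ with the $N_\beta+\dels$ prefactor), or is one of the three explicitly allowed $\epsilon$/$\epsilon^{-1}$ lower-order terms, and that no term with a "bad" sign survives at the boundary — in particular checking that $\nabla^l_{y'}\hat\psi|_{y_1=0}=0$ is used correctly so that $\int_{\mathbb R^3_+}\hat\div B_3$-type terms vanish, and that the only surviving boundary contribution is the nonnegative outflow term as in \eqref{ea3}. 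Everything else is a routine repetition of the schemes in Lemmas \ref{lm1}--\ref{lm3} now carried out in the $y$-coordinates on the genuine half-space.
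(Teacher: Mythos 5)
Your proposal follows essentially the same route as the paper: apply $\nabla^l_{y'}$ to the flattened system, use the multipliers $P(\hat\rho)\,\nabla^l_{y'}\hat\vp$ and $\nabla^l_{y'}\hat\psi$, exploit the exact cancellation of the pressure coupling term, obtain the boundary sign from the outflow condition and $\nabla^l_{y'}\hat\psi|_{y_1=0}=0$, recover $\nabla^l_{y'}\widehat{\tfrac{d}{dt}}\hat\vp$ from the continuity equation, and absorb the commutator and inhomogeneous contributions exactly as in \eqref{ec7-hat}--\eqref{ec9-hat}. The only cosmetic difference is that the $\epsilon$/$\epsilon^{-1}$ terms arise purely from the commutator estimates (the $p'(\hat\rho)\,\hat\div(\nabla^l_{y'}\hat\psi)\,\nabla^l_{y'}\hat\vp$ coupling cancels identically between the two energy identities, so no integration by parts in $y'$ is needed there).
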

\begin{proof}
Applying the differential operator $\nabla^l_{y'}$ to 
(\ref{eq-pv1-hat}) and (\ref{eq-pv2-hat}), we have the following
two equations:
\begin{gather}
\nabla^l_{y'} \hat{\vp}_t
+ \hat{u} \cdot \hat{\nabla} \nabla^l_{y'} \hat{\vp}
+ \hat{\rho} \hat{\div} \nabla^l_{y'} \hat{\psi}
= \hat{f}_{l,0},
\label{ec1-hat}
\\
\hat{\rho} \{
\nabla^l_{y'} \hat{\psi}_t
+ (\hat{u} \cdot \hat{\nabla}) \nabla^l_{y'} \hat{\psi}
\}
- \hat{L}(\nabla^l_{y'} \hat{\psi})
+ p'(\hat{\rho}) \hat{\nabla} \nabla^l_{y'} \hat{\vp}
= \hat{g}_{l,0},
\label{ec2-hat}
\end{gather}
where
\begin{align*}
\hat{f}_{l,0}
&:=
\nabla^l_{y'} (\hat{f} + \hat{F})
- \{\nabla^l_{y'}(( \hat{u} \cdot \hat{\nabla} ) \hat{\vp} ) - \hat{u} \cdot \hat{\nabla} \nabla^l_{y'} \hat{\vp}\}
- \{\nabla^l_{y'} ( \hat{\rho} \hat{\div} \hat{\psi}) - \hat{\rho} \hat{\div} \nabla^l_{y'} \hat{\psi} \},
\\
\hat{g}_{l,0}
&:= \nabla^l_{y'} (\hat{g} + \hat{G})
- [ \nabla^l_{y'}, \hat{\rho}] \hat{\psi}_t  
- \{ \nabla^l_{y'} (\hat{\rho} ( \hat{u} \cdot \hat{\nabla}) \hat{\psi}) - \hat{\rho} (\hat{u} \cdot \hat{\nabla}) \nabla^l_{y'} \hat{\psi}   \}
- [\nabla^l_{y'},\hat{L}] \hat{\psi}\\
& \qquad - \{ \nabla^l_{y'} (p'(\hat{\rho}) \hat{\nabla} \hat{\psi} ) - p'(\hat{\rho}) \hat{\nabla} \nabla^l_{y'} \hat{\psi}  \}.
\end{align*}
Recall that $P(\rho) := p'(\rho) / \rho$. Multiplying \eqref{ec1-hat} by $P(\hat{\rho}) \, \nabla^l_{y'} \hat{\vp}$ and using the facts that $\hat{\rho}_t = - \hat{\div} (\hat{\rho} \hat{u})$ and $ - P' (\hat{\rho}) \hat{\rho} + P(\hat{\rho}) = (3 - \gamma) P(\hat{\rho}) $, 
we get
\begin{multline}
\Bigl(
\frac{1}{2} P(\hat{\rho}) | \nabla^l_{y'} \hat{\vp} |^2
\Bigr)_t
+ \hat{\div} \Bigl(
\frac{1}{2} P(\hat{\rho}) \hat{u}
|\nabla^l_{y'} \hat{\vp}|^2
\Bigr)
+ p'(\hat{\rho}) \hat{\div} (\nabla^l_{y'} \hat{\psi}) \nabla^l_{y'} \hat{\vp}
\\
=
P(\hat{\rho}) \hat{f}_{l,0} \, \nabla^l_{y'} \hat{\vp}
+
\frac{3-\gamma}{2} P(\hat{\rho}) \hat{\div} \hat{u} \, |\nabla^l_{y'} \hat{\vp}|^2.
\label{ec3-hat}
\end{multline}
Multiply (\ref{ec2}) by $\nabla^l_{y'} \hat{\psi}$ successively and make use of $\hat{\rho}_t = - \hat{\div} (\hat{\rho} \hat{u})$ to get
\begin{gather}
\Bigl(
\frac{1}{2} \hat{\rho} |\nabla^l_{y'} \hat{\psi}|^2
\Bigr)_t
+\hat{\div} \hat{B}_2
+ \mu_1 |\hat{\nabla} (\nabla^l_{y'} \hat{\psi})|^2
+ (\mu_1 + \mu_2) |\hat{\div} (\nabla^l_{y'} \hat{\psi})|^2
\mspace{200mu}
\nonumber
\\
\mspace{150mu}
= p'(\hat{\rho}) \hat{\div} (\nabla^l_{y'} \hat{\psi}) \nabla^l_{y'} \hat{\vp}
+ (\hat{g}_{l,0} + p''(\hat{\rho}) \, \nabla^l_{y'} \hat{\vp} \hat{\nabla} \hat{\rho})
   \cdot \nabla^l_{y'} \hat{\psi},
\label{ec4-hat}
\\
\hat{B}_2
:= {} 
\frac{1}{2} \hat{\rho} \hat{u} |\nabla^l_{y'} \hat{\psi}|^2
- \mu_1 \hat{\nabla} (\nabla^l_{y'} \hat{\psi})  \cdot \nabla^l_{y'} \hat{\psi}
- (\mu_1+\mu_2) \hat{\div} (\nabla^l_{y'} \hat{\psi}) \nabla^l_{y'} \hat{\psi} 
+ p'(\hat{\rho}) \nabla^l_{y'} \hat{\vp} \, \nabla^l_{y'} \hat{\psi}.
\nonumber
\end{gather}
Adding (\ref{ec3-hat}) to (\ref{ec4-hat}) yields
\begin{gather}
\Bigl(
\frac{1}{2} P(\hat{\rho}) | \nabla^l_{y'} \hat{\vp} |^2
+ \frac{1}{2} \hat{\rho} |\nabla^l_{y'} \hat{\psi}|^2
\Bigr)_t
+ \hat{\div} \Bigl(
\frac{1}{2} P(\hat{\rho}) \hat{u}
|\nabla^l_{y'} \hat{\vp}|^2
+ \hat{B}_2
\Bigr)
\mspace{150mu}
\nonumber
\\
\mspace{200mu}
{}+ \mu_1 |\hat{\nabla} (\nabla^l_{y'} \hat{\psi})|^2
+ (\mu_1 + \mu_2) |\hat{\div} (\nabla^l_{y'} \hat{\psi})|^2
=
\hat{R}_2,
\label{ec5-hat}
\\
\hat{R}_2
:=
P(\hat{\rho}) \hat{f}_{l,0} \, \nabla^l_{y'} \hat{\vp}
+
\frac{3-\gamma}{2} P(\hat{\rho}) \hat{\div} \hat{u} \, |\nabla^l_{y'} \hat{\vp}|^2
+ (\hat{g}_{l,0} + p''(\hat{\rho}) \, \nabla^l_{y'} \hat{\vp} \hat{\nabla} \hat{\rho})
   \cdot \nabla^l_{y'} \hat{\psi}.
\nonumber
\end{gather}

Let us look at the left hand side of \eqref{ec5-hat}.
Owing to the divergence theorem with \eqref{bce} and \eqref{pbc},
we have the nonnegativity of the second terms on the
left hand side of (\ref{ec5-hat}) as
\begin{align}
\int_{\mathbb{R}^3_+}
\hat{\div} \Bigl(
\frac{P(\hat{\rho}) \hat{u}}{2}
|\nabla^l_{y'} \hat{\vp}|^2 + \hat{B}_2
\Bigr) \, dy
=\int_{\mathbb{R}^2}
\frac{P(\hat{\rho}) }{2}
|\nabla^l_{y'} \hat{\vp}|^2
 (\ub \cdot n) \sqrt{1+|\nabla M|^2}  \, dy' \geq 0.
\label{ec6-hat}
\end{align}
Notice that here we used $\nabla^l_{y'} \psi =0$ on $\partial \Omega$, which holds because of \eqref{pbc}. Using the fact that 
$|\nabla \nabla^l_{y'} \hat{\psi}| \lesssim |\hat{\nabla} \nabla^l_{y'} \hat{\psi}|$,
we also have the good contribution from the third and fourth terms in \eqref{ec5-hat} as
\begin{equation}\label{ec10-hat}
\int_{\mathbb{R}^3_+}  \mu_1 |\hat{\nabla} \nabla^l_{y'} \hat{\psi}|^2
+ (\mu_1 + \mu_2) |\hat{\div} \nabla^l_{y'} \hat{\psi}|^2 \,dy
\gtrsim \int_{\mathbb{R}^3_+} \mu_1 |\nabla \nabla^l_{y'} \hat{\psi}|^2 \,dy.
\end{equation}

We are going to show that $\hat{R}_2$ satisfies
\begin{multline}
\int_{\mathbb R^3_+}|\hat{R}_2| \,dy
\lesssim 
(N_{\beta}(T) + \dels)D_{3,\beta} 
+ \epsilon \left(\|\nabla_y \hat{\vp}\|_{H^{l-1}(\mathbb R^3_+)}^2
+ \|\nabla_y \hat{\psi}\|_{H^l(\mathbb R^3_+)}^2\right)
\\
+ \epsilon^{-1} \|\nabla_y \hat{\psi}\|_{H^{l-1}(\mathbb R^3_+)}^2 
+ \delta^{-1}\|(\hat{F},\hat{G})\|_{H^3(\mathbb R^3_+)}^2, 
\label{ec7-hat}
\end{multline} 
where some $\epsilon \in (0,1)$. 
Let us first estimate the integrations of 
$P(\hat{\rho}) \hat{f}_{l,0} \, \nabla^l_{y'} \hat{\vp}$ and 
$\hat{g}_{l,0} \cdot \nabla^l_{y'} \hat{\psi}$ in $\hat{R}_2$, 
where $\hat{f}_{l,0}$ and $\hat{g}_{l,0}$ are defined in \eqref{ec1-hat} and \eqref{ec2-hat}.
Noting that $(\rt',\ut')(\tilde{M}(\Gamma(y)))=(\rt',\ut')(y_1)$ and applying
Sobolev's inequalities  \eqref{sobolev0}--\eqref{sobolev2} with \eqref{stdc1} and \eqref{ExBdry3},
we have
\begin{align}
\|(\nabla_{y'}^l \hat{f},\nabla_{y'}^l \hat{g})\|_{L^2(\mathbb R^3_+)}
& \lesssim \delta  \left( \|\nabla_y \hat{\vp}\|_{H^{2}(\mathbb R^3_+)} + \|\nabla_y \hat{\psi}\|_{H^3(\mathbb R^3_+)} \right) 
\notag \\
& \quad + \left\||(\hat{\vp}, \hat{\psi})| (|(\rt',\ut')||\nabla^l_{y'} \nabla M| + |\nabla^l_{y'} \nabla U|)\right\|_{L^2(\mathbb R^3_+)}
\notag\\
& \lesssim \delta  \left( \|\nabla_y \hat{\vp}\|_{H^{2}(\mathbb R^3_+)} + \|\nabla_y \hat{\psi}\|_{H^3(\mathbb R^3_+)} \right) 
\notag \\
& \quad + \| (\hat{\vp}, \hat{\psi})\|_{L^6(\mathbb R^3_+)} \left( \left\||(\rt',\ut')| |\nabla^l_{y'} \nabla M| \right\|_{L^3(\mathbb R^3_+)} + \|\nabla^l_{y'} \nabla U \|_{L^3(\mathbb R^3_+)} \right)
\notag\\
&  \lesssim \delta  D_{3,\beta}. 
\label{ec8-hat}
\end{align}
Using \eqref{ec8-hat} and Schwarz's inequality, one can see that the integrations of
$|P(\hat{\rho}) (\nabla_{y'}^l \hat{f} + \nabla_{y'}^l \hat{F}) \nabla^l_{y'} \hat{\vp}|$ and 
$|(\nabla_{y'}^l \hat{g} + \nabla_{y'}^l \hat{G})  \cdot \nabla^l_{y'} \hat{\psi}|$ are bounded 
from above by the right hand side of \eqref{ec7-hat}.
Notice that the other terms in $P(\hat{\rho}) \hat{f}_{l,0} \, \nabla^l_{y'} \hat{\vp}$ and 
$\hat{g}_{l,0} \cdot \nabla^l_{y'} \hat{\psi}$ are just commutator terms.
Using suitably Lemma \ref{CommEst} with the facts that 
\begin{gather*}
\hat{\rho}(y)= \tilde{\rho}(y_1) + \hat{\vp}(y), \quad 
\nabla^l_{y'} (\rt(y_1) \, \cdot \, ) =\rt(y_1)(\nabla^l_{y'} \, \cdot\, ),
\\
\hat{u}(y)= \tilde{u}(y_1) + \hat{\psi}(y) + \hat{U}(y), \quad
\nabla^l_{y'} (\ut(y_1) \, \cdot \, ) =\ut(y_1)(\nabla^l_{y'} \, \cdot \, ),
\end{gather*}
we can see that the commutator terms are bounded by the right hand side of \eqref{ec7-hat}.
Now we have completed the estimation of all terms 
in $P(\hat{\rho}) \hat{f}_{l,0} \, \nabla^l_{y'} \hat{\vp}$ and 
$\hat{g}_{l,0} \cdot \nabla^l_{y'} \hat{\psi}$. 
It is quite straightforward to handle the other terms in $\hat{R}_2$ 
with aid of \eqref{stdc1} and \eqref{ExBdry3}.
Therefore we conclude \eqref{ec7-hat}.

Applying $\nabla^l_{y'}$ to \eqref{eq-pv1-hat}, we arrive at
\begin{equation*}
\nabla^l_{y'} \hat{\frac{d}{dt}} \hat{\vp}=
-\nabla^l_{y'} \big( \hat{\rho} \hat{\div} \hat{\psi} \big)
+ \nabla^l_{y'} (\hat{f} + \hat{F}). 
\end{equation*}
We take the $L^2$-norm and estimate the terms on the right hand side 
with aid of \eqref{ec8-hat} as

\begin{equation}
\left\| \nabla^l_{y'} \hat{\frac{d}{dt}} \hat{\vp}  \right\|_{L^2(\mathbb R^3_+)}^2 
\lesssim \| \hat{\nabla} \nabla_{y'}^l \hat{\psi}\|_{L^2(\mathbb R^3_+)}^2  
+\delta D_{3,\beta}+\|\nabla_y \hat{\psi}\|_{H^{l-1}(\mathbb R^3_+)}^2
+\|\hat{F}\|_{L^2(\mathbb R^3_+)}^2.
\label{ec9-hat}
\end{equation}

We multiply (\ref{ec5-hat}) by the time weight function $e^{\zeta t}$, 
integrate the resultant equality over $(0,t) \times \mathbb{R}^3_+$,
and substitute (\ref{ec6-hat})--(\ref{ec7-hat}) into the result. 
Using \eqref{h1} and \eqref{ec9-hat} and
performing change of variables $y \rightarrow x$ on the right hand side, 
we arrive at the desired inequality (\ref{ec0-hat}). 
This completes the proof of the lemma. 
\end{proof}

Next we estimate the spatial-derivatives for the normal direction.
To simplify the notations, we denote $\partial_j := \partial_{y_j}$ for $j =1,2,3$. 
%By \eqref{eq-pv1-hat}, we have
%\begin{equation*}
%\hat{\frac{d}{dt}} \hat{\vp} + \hat{\rho} \hat{\div} \hat{\psi} = \hat{f}_1 + \hat{f}_2. 
%\end{equation*}
Applying $\partial_1$ to \eqref{eq-pv1-hat} and multiplying the result 
by $\mu := 2 \mu_1 + \mu_2$ yields
\begin{equation}  
\mu \partial_1  \hat{\frac{d}{dt}} \hat{\vp}  + \mu \partial_1 \hat{\rho} \hat{\div} \hat{\psi} + \mu \hat{\rho} \partial_1 \hat{\div} \hat{\psi} = \mu \partial_1 (\hat{f} + \hat{F}) . 
\label{ee1-hat}
\end{equation}
We need to make some cancellation on the term $\mu \hat{\rho} \partial_1 \hat{\div} \hat{\psi}$ so as to avoid the highest order derivative in the normal direction $y_1$. Denote
\begin{align} 
& \mathcal{A}_1 := \frac{\mu}{\mu_1 (1 + |\nabla M|^2) + \mu_1 + \mu_2}, \quad
 \mathcal{A}_j := -\frac{\partial_j M}{\mu_1 (1 + |\nabla M|^2)}\{\mu-(\mu_1 + \mu_2)\mathcal{A}_1\},\quad  j=2,3,
\notag \\
& \tilde{\mathcal{A}}_1:=\mathcal{A}_1-\mathcal{A}_2\partial_2 M-\mathcal{A}_3\partial_3 M>0, \quad
\mathcal{D} := \tilde{\mathcal{A}}_1\partial_1 + {\mathcal{A}}_2\partial_2 + {\mathcal{A}}_3\partial_3.  
\label{Aj-def}
\end{align}
Take an inner product of \eqref{eq-pv2-hat} with 
$(\hat{\rho} \mathcal{A}_1, \hat{\rho}\mathcal{A}_2,\hat{\rho} \mathcal{A}_3 )^{\top}$, we obtain
\begin{multline}
\hat{\rho}^2 \left( \sum_{j=1}^3 \mathcal{A}_j \hat{\psi}_{jt} + \sum_{j=1}^3 \mathcal{A}_j (\hat{u} \cdot \hat{\nabla}) \hat{\psi}_j  \right) - \mu_1 \hat{\rho} \sum_{j=1}^3 \mathcal{A}_j \hat{\Delta} \hat{\psi}_j 
\\
- (\mu_1 + \mu_2) \hat{\rho} \sum_{j=1}^3 \mathcal{A}_j \partial_j \hat{\div} \hat{\psi} + p'(\hat{\rho}) \hat{\rho} \mathcal{D}\hat{\vp}  = \hat{\rho} \sum_{j=1}^3 \mathcal{A}_j (\hat{g}_j + \hat{G}_j), 
\label{ee2-hat}
\end{multline}
where $\hat{g}_j$ and $ \hat{G}_j$ are the $j$-th components of $\hat{g}$ and $\hat{G}$, respectively.
Adding \eqref{ee1-hat} and \eqref{ee2-hat} together gives
\begin{multline}
\mu \partial_1  \hat{\frac{d}{dt}} \hat{\vp}  + \mu \partial_1 \hat{\rho} \hat{\div} \hat{\psi}  + \hat{\rho}^2 \left( \sum_{j=1}^3 \mathcal{A}_j \hat{\psi}_{jt} + \sum_{j=1}^3 \mathcal{A}_j (\hat{u} \cdot \hat{\nabla}) \hat{\psi}_y  \right) \\
 + p'(\hat{\rho}) \hat{\rho} \mathcal{D}\hat{\vp} + I + II + III 
 = \mu \partial_1 (\hat{f} + \hat{F} ) + \hat{\rho} \sum_{j=1}^3 \mathcal{A}_j (\hat{g}_j + \hat{G}_j), 
\label{ee3-hat}
\end{multline}
where
\begin{align*}
I &: = \mu \hat{\rho} \partial_1 \hat{\div} \hat{\psi} = \mu \hat{\rho} (\partial_1^2 \hat{\psi}_1
-\partial_2 M \partial_1^2 \hat{\psi}_2 - \partial_3 M \partial_1^2 \hat{\psi}_3)+I', 
\\
II &:= - \mu_1 \hat{\rho} \sum_{j=1}^3 \mathcal{A}_j \hat{\Delta} \hat{\psi}_j = - \mu_1 \hat{\rho} \sum_{j=1}^3 \mathcal{A}_j (1 + |\nabla M|^2)\partial_1^2 \hat{\psi}_j+II',
\\
III &:= - (\mu_1 + \mu_2) \hat{\rho} \sum_{j=1}^3 \mathcal{A}_j \partial_j \hat{\div} \hat{\psi} = - (\mu_1 + \mu_2) \hat{\rho} \mathcal{A}_1 (\partial_1^2 \hat{\psi}_1
-\partial_2 M \partial_1^2 \hat{\psi}_2 - \partial_3 M \partial_1^2 \hat{\psi}_3) + III',  
\end{align*}
where $I'$, $II'$, and $III'$ do not have terms 
with second-order normal derivative $\partial_{1}^2$.
Due to the choice of $\mathcal{A}_j$, it is straightforward to check that
\begin{equation}
I+II+III= I' + II' + III' = \hat{\rho}\sum_{1\leq |\bm{b}|\leq 2, \ b_1\neq2, \ j=1,2,3} a_{\bm{b}}\partial^{\bm{b}}_y\hat{\psi}_j,
%\sum_{j=1}^3 (a_j \cdot \nabla_{y'} )   \partial_1 \hat{\psi}_j +  \sum_{j=1}^3 b_j \partial_1 \hat{\psi}_j +  \sum_{j=1}^3 (c_j \cdot \nabla_{y'} )   \hat{\psi}_j +  \sum_{j=1}^3 (d_j \Delta_{y'} )  \hat{\psi}_j . 
\label{ee4-hat}
\end{equation}
where $a_{\bm{b}}$ denotes scalar-valued functions 
$a_{\bm{b}} = a_{\bm{b}} (\mu_1, \mu_2, \nabla M, \nabla^2 M)$.
Substituting \eqref{ee4-hat} into \eqref{ee3-hat}, multiplying the result by $\tilde{\mathcal{A}}_1$,
and using $\tilde{\mathcal{A}}_1\partial_1=\mathcal{D}-\mathcal{A}_2\partial_2-\mathcal{A}_3\partial_3$,
we arrive at
\begin{align}
& \mu \mathcal{D}  \hat{\frac{d}{dt}} \hat{\vp}  - \mu(\mathcal{A}_2\partial_2+\mathcal{A}_3\partial_3)  \hat{\frac{d}{dt}} \hat{\vp}  + \mu \tilde{\mathcal{A}}_1 \partial_1 \hat{\rho} \hat{\div} \hat{\psi}\notag \\
& \quad   + \tilde{\mathcal{A}}_1 \hat{\rho}^2 \left( \sum_{j=1}^3 \mathcal{A}_j \hat{\psi}_{jt} + \sum_{j=1}^3 \mathcal{A}_j (\hat{u} \cdot \hat{\nabla}) \hat{\psi}_j  \right)
+ \tilde{\mathcal{A}}_1 p'(\hat{\rho}) \hat{\rho} \mathcal{D}\hat{\vp}  
 + \tilde{\mathcal{A}}_1 \hat{\rho}\sum_{1\leq |\bm{b}|\leq 2, \ b_1\neq2, \ j=1,2,3} a_{\bm{b}} \partial^{\bm{b}}_y\hat{\psi}_j
\notag \\
& = \mu \tilde{\mathcal{A}}_1 \partial_1 (\hat{f} + \hat{F} ) + \tilde{\mathcal{A}}_1 \hat{\rho} \sum_{j=1}^3 \mathcal{A}_j (\hat{g}_j + \hat{G}_j). 
\label{ee5-hat}
\end{align}

\begin{lemma}
\label{lm4-hat}
Suppose that the same conditions as in Proposition \ref{apriori1} with $m=3$ hold. Define the index $\bm{a} = (a_1, a_2, a_3)$ with $a_1, a_2, a_3 \geq 0$ and $|\bm{a}| := a_1 + a_2 + a_3$. Let $\partial^{\bm{a}} := \partial^{a_1}_{y_1} \partial^{a_2}_{y_2} \partial^{a_3}_{y_3}$. Then it holds that 
\begin{align}
& e^{\zeta t} \| \partial^{\bm{a}} \partial_1 \hat{\vp} (t) \|_{L^2(\mathbb R^3_+)}^2 
+ \int^t_0 e^{\zeta \tau}\left(  \|\partial^{\bm{a}} {\cal D} \hat{\vp} (\tau) \|_{L^2(\mathbb R^3_+)}^2 
+ \left\| \partial^{\bm{a}} \partial_1 \hat{\frac{d}{dt}} \hat{\vp}(\tau) \right\|_{L^2(\mathbb R^3_+)}^2  \right) \, d\tau 
\notag\\
&  \lesssim  \|\vp_0\|_{H^3}^2 
+ \int_0^t e^{\zeta \tau} \left(
\left\|\partial^{\bm{a}} \nabla_{y'} \hat{\frac{d}{dt}} \hat{\vp}(\tau) \right\|_{L^2(\mathbb R^3_+)}^2 
+ \|\partial^{\bm{a}} \nabla_y \nabla_{y'} \hat{\psi}(\tau)\|_{L^2(\mathbb R^3_+)}^2 \right)\, d \tau
\notag\\
&\quad + \int_0^t e^{\zeta \tau} \left(|\bm{a}|\|\nabla_y {\vp}(\tau) \|^2_{H^{|\bm{a}| -1}}
+|\bm{a}|\left\|\nabla {\frac{d}{dt}} {\vp}(\tau) \right\|_{H^{|\bm{a}|-1}}^2
+\|{\psi}_t(\tau) \|^2_{H^{|\bm{a}|}} 
+\|\nabla_y {\psi}(\tau)\|^2_{H^{|\bm{a}|}} \right)\, d \tau
\notag\\
& \quad %+\zeta \int_0^t e^{\zeta \tau} \|\vp\|_{H^3}^2  \, d \tau 
+ (N_\beta (T)+\dels+\zeta) \int_0^t e^{\zeta \tau} D_{3, \beta}(\tau) \, d \tau 
+ \dels\int_0^t e^{\zeta \tau} d \tau
\label{ee0-hat}
\end{align}
for $t \in [0, T]$, $\zeta > 0$, and $0\leq |\bm{a}| \leq 2$.
\end{lemma}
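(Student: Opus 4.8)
The plan is to read \eqref{ee5-hat} as a damped transport equation for the single scalar $\mathcal{D}\hat{\vp}$ and to run a time-weighted energy estimate on it. Since $\mathcal{A}_j$ and $\tilde{\mathcal{A}}_1$ depend only on $y'$ (through $\nabla M$, $\nabla^2 M$, bounded because $M\in H^9$) and are $t$-independent, one has $\mathcal{D}\hat{\frac{d}{dt}}\hat{\vp}=(\partial_t+\hat{u}\cdot\hat{\nabla})(\mathcal{D}\hat{\vp})+[\mathcal{D},\hat{u}\cdot\hat{\nabla}]\hat{\vp}$ with a commutator that is first order in $\hat{\vp}$, so \eqref{ee5-hat} has the schematic form
\[
\mu\,(\partial_t+\hat{u}\cdot\hat{\nabla})(\mathcal{D}\hat{\vp})+\tilde{\mathcal{A}}_1\,p'(\hat{\rho})\,\hat{\rho}\,\mathcal{D}\hat{\vp}=\mathcal{R},
\]
where $\mathcal{R}$ gathers the tangential term $-\mu(\mathcal{A}_2\partial_2+\mathcal{A}_3\partial_3)\hat{\frac{d}{dt}}\hat{\vp}$, the $\hat{\psi}$-contributions, the commutators, and the forcings $\mu\tilde{\mathcal{A}}_1\partial_1(\hat{f}+\hat{F})$ and $\tilde{\mathcal{A}}_1\hat{\rho}\sum_j\mathcal{A}_j(\hat{g}_j+\hat{G}_j)$. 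The decisive gain from the cancellation \eqref{ee4-hat} is that every $\hat{\psi}$-derivative in $\mathcal{R}$ carries at most one normal derivative $\partial_1$; after applying $\partial^{\bm{a}}$ with $|\bm{a}|\le 2$ these are therefore absorbed by $\|\partial^{\bm{a}}\nabla_y\nabla_{y'}\hat{\psi}\|_{L^2}$, $\|\psi_t\|_{H^{|\bm{a}|}}$ and $\|\nabla_y\psi\|_{H^{|\bm{a}|}}$ from the right side of \eqref{ee0-hat}.

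Applying $\partial^{\bm{a}}$ gives an analogous equation for $w:=\partial^{\bm{a}}\mathcal{D}\hat{\vp}$ up to commutators with the coefficients $\mathcal{A}_j$, $\tilde{\mathcal{A}}_1$, $p'(\hat{\rho})\hat{\rho}$ and $\hat{u}$ (with $\hat{\rho}=\rt+\hat{\vp}$, $\hat{u}=\ut+\hat{\psi}+\hat{U}$); I would then multiply by $e^{\zeta t}w$ and integrate over $(0,t)\times\mathbb{R}^3_+$. The transport term produces $\frac{d}{dt}\big[\tfrac{\mu}{2}e^{\zeta t}\|w\|^2\big]$, the harmless $-\tfrac{\mu\zeta}{2}e^{\zeta t}\|w\|^2$, a small quadratic volume error $\sim\int(\hat{\div}\hat{u})w^2$ (no $\nabla^2 M$ survives the integration by parts, since the rows of $A$ are divergence-free in $y$, and $\hat{\div}\hat{u}$ is small by \eqref{bdd}, \eqref{stdc1} and the smallness of $N_\beta(T)+\dels$), and a boundary integral over $\{y_1=0\}$ with integrand proportional to $(\ub\cdot n)\sqrt{1+|\nabla M|^2}\,w^2\ge 0$ by \eqref{bce}, which may simply be discarded. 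The zeroth-order term yields $c\int_0^t e^{\zeta\tau}\|w\|^2\,d\tau$ because $\tilde{\mathcal{A}}_1\,p'(\hat{\rho})\,\hat{\rho}\ge c>0$; since $\tilde{\mathcal{A}}_1$ is bounded below and $\tilde{\mathcal{A}}_1\partial_1=\mathcal{D}-\mathcal{A}_2\partial_2-\mathcal{A}_3\partial_3$, this controls $\int_0^t e^{\zeta\tau}\|\partial^{\bm{a}}\partial_1\hat{\vp}\|^2$ up to tangential derivatives. Taking $\zeta$ small and estimating $\mathcal{R}$ and the commutators by Cauchy--Schwarz, the Leibniz rule, \eqref{sobolev0}--\eqref{sobolev2}, \eqref{hardy}, \eqref{PhiSup}, \eqref{stdc1}, \eqref{ExBdry3} and \eqref{h1} bounds the right-hand side exactly by $\|\vp_0\|_{H^3}^2$, the tangential groups $\|\partial^{\bm{a}}\nabla_{y'}\hat{\frac{d}{dt}}\hat{\vp}\|^2$ and $\|\partial^{\bm{a}}\nabla_y\nabla_{y'}\hat{\psi}\|^2$, the lower-normal-order norms $|\bm{a}|\|\nabla_y\vp\|_{H^{|\bm{a}|-1}}^2$, $|\bm{a}|\|\nabla\frac{d}{dt}\vp\|_{H^{|\bm{a}|-1}}^2$, $\|\psi_t\|_{H^{|\bm{a}|}}^2$, $\|\nabla_y\psi\|_{H^{|\bm{a}|}}^2$, together with $(N_\beta(T)+\dels+\zeta)\int_0^t e^{\zeta\tau}D_{3,\beta}\,d\tau$ and $\dels\int_0^t e^{\zeta\tau}\,d\tau$.

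For the last quantity $\int_0^t e^{\zeta\tau}\|\partial^{\bm{a}}\partial_1\hat{\frac{d}{dt}}\hat{\vp}\|^2$ on the left of \eqref{ee0-hat} I would use \eqref{ee5-hat} directly: solving it for $\mathcal{D}\hat{\frac{d}{dt}}\hat{\vp}$, applying $\partial^{\bm{a}}$, splitting $\mathcal{D}$ back into $\tilde{\mathcal{A}}_1\partial_1+\mathcal{A}_2\partial_2+\mathcal{A}_3\partial_3$ and dividing by $\tilde{\mathcal{A}}_1>0$ bounds $\|\partial^{\bm{a}}\partial_1\hat{\frac{d}{dt}}\hat{\vp}\|$ by $\|\partial^{\bm{a}}\mathcal{D}\hat{\vp}\|$ (already controlled by the dissipation just obtained) plus the same $\hat{\psi}$-terms, tangential $\hat{\frac{d}{dt}}\hat{\vp}$-terms, commutators and sources, so integrating against $e^{\zeta\tau}$ closes the estimate; a final change of variables $y\mapsto x$ converts the norms over $\mathbb{R}^3_+$ into those displayed in \eqref{ee0-hat}. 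I expect the main obstacle to be organizational rather than analytic: one must verify, term by term in $\partial^{\bm{a}}$ of the commutator and forcing terms, that every surviving $\hat{\psi}$-derivative either carries a tangential direction or has total order $\le|\bm{a}|+1$, and that each differentiation of a coefficient costs only $M$-derivatives (bounded via $M\in H^9$) or exponentially decaying $(\rt,\ut)$-derivatives (absorbed via \eqref{hardy} and \eqref{stdc1}). The analytic heart --- first-order transport, a strictly positive zeroth-order damping, and a good outflow boundary term --- is exactly the mechanism used for the planar problem in \cite{knz03,nn09}.
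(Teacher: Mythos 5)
Your proposal follows the paper's proof essentially verbatim: both treat \eqref{ee5-hat}, after applying $\partial^{\bm{a}}$, as a damped transport equation for $\partial^{\bm{a}}\mathcal{D}\hat{\vp}$, exploit the positivity of the zeroth-order coefficient $\tilde{\mathcal{A}}_1 p'(\hat{\rho})\hat{\rho}$, the nonnegative outflow boundary integral, and the cancellation \eqref{ee4-hat} of $\partial_1^2\hat{\psi}$, and then recover $\partial_1$ from $\mathcal{D}$ modulo tangential derivatives. The only cosmetic difference is that the paper extracts the dissipation of $\partial^{\bm{a}}\mathcal{D}\hat{\frac{d}{dt}}\hat{\vp}$ by adding a second multiplier $\partial^{\bm{a}}\mathcal{D}\hat{\frac{d}{dt}}\hat{\vp}$ to the energy identity \eqref{ee7-hat}, whereas you read it off algebraically from the equation after the energy estimate; the two routes are equivalent, since both reduce to bounding the same remainder $\hat{I}_1$.
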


\begin{proof}
Applying $\partial^{\bm{a}}$ to \eqref{ee5-hat} yields
\begin{equation}
\mu \partial^{\bm{a}} \mathcal{D} \hat{\frac{d}{dt}} \hat{\vp} + \tilde{\mathcal{A}}_1 p'(\hat{\rho}) \hat{\rho} \partial^{\bm{a}} \mathcal{D} \hat{\vp}  
= \hat{I}_1 ,
\label{ee6-hat}
\end{equation}
where 
\begin{equation*}
\begin{split}
 \hat{I}_1 := 
& \Big\{ -\partial^{\bm{a}} [\tilde{\mathcal{A}}_1 p'(\hat{\rho}) \hat{\rho}  \mathcal{D} \hat{\vp}]
%(\sum_{j=1}^3 \mathcal{A}_j \partial_j \hat{\vp} -\mathcal{A}_2 \partial_2 M \partial_1 \hat{\vp} -\mathcal{A}_3 \partial_3 M \partial_1 \hat{\vp} )]  
+ \tilde{\mathcal{A}}_1 p'(\hat{\rho}) \hat{\rho} \partial^{\bm{a}} \mathcal{D} \hat{\vp} %(\sum_{j=1}^3 \mathcal{A}_j \partial^{\bm{a}} \partial_j \hat{\vp} -\mathcal{A}_2 \partial_2 M \partial^{\bm{a}} \partial_1 \hat{\vp} -\mathcal{A}_3 \partial_3 M \partial^{\bm{a}} \partial_1 \hat{\vp} )  
\Big\} 
\\
& + \partial^{\bm{a}} \Big\{\mu(\mathcal{A}_2\partial_2+\mathcal{A}_3\partial_3) \hat{\frac{d}{dt}} \hat{\vp} - \mu \tilde{\mathcal{A}}_1 \partial_1 \hat{\rho} \hat{\div} \hat{\psi} - \tilde{\mathcal{A}}_1 \hat{\rho}^2 ( \sum_{j=1}^3 \mathcal{A}_j \hat{\psi}_{jt} + \sum_{j=1}^3 \mathcal{A}_j \hat{u} \cdot \hat{\nabla} \hat{\psi}_j ) 
\\
%- \sum_{j=1}^3 (a_j \nabla_{y'}) \cdot \partial_1 \hat{\psi}_j \\
%& - \sum_{j=1}^3 (c_j \nabla_{y'}) \cdot \hat{\psi}_j - \sum_{j=1}^3 (d_j \nabla_{y'}^2) \cdot \hat{\psi}_j - \sum_{j=1}^3 b_j \partial_1 \hat{\psi}_j 
& \qquad \qquad  + \tilde{\mathcal{A}}_1 \hat{\rho}\sum_{1 \leq |\bm{b}|\leq 2, \ b_1\neq2, \ j=1,2,3} a_{\bm{b}} \partial^{\bm{b}}_y\hat{\psi}_j + \mu \tilde{\mathcal{A}}_1 \partial_1 (\hat{f} + \hat{F} ) + \tilde{\mathcal{A}}_1 \hat{\rho} \sum_{j=1}^3 \mathcal{A}_j (\hat{g}_j + \hat{G}_j) \Big\} . \\
\end{split}
\end{equation*}
Multiplying \eqref{ee6-hat} by $\partial^{\bm{a}} \mathcal{D} \hat{\vp}$ and $\partial^{\bm{a}} \mathcal{D} \hat{\frac{d}{dt}} \hat{\vp}$, respectively, and adding the two resultant equalities together, we obtain
\begin{multline}
 \left(\frac{1}{2} \mu |\partial^{\bm{a}} \mathcal{D} \hat{\vp}|^2 + \frac{1}{2} \tilde{\mathcal{A}}_1 p'(\hat{\rho}) \hat{\rho} |\partial^{\bm{a}} \mathcal{D} \hat{\vp}|^2  \right)_t + \mu \left|\partial^{\bm{a}} \mathcal{D} \hat{\frac{d}{dt}} \hat{\vp}\right|^2 + \tilde{\mathcal{A}}_1 p'(\hat{\rho}) \hat{\rho} |\partial^{\bm{a}} \mathcal{D} \hat{\vp}|^2 \\
+ \hat{\div}\left(\frac{1}{2} \mu |\partial^{\bm{a}} \mathcal{D} \hat{\vp}|^2 \hat{u} + \frac{1}{2} \tilde{\mathcal{A}}_1 p'(\hat{\rho}) \hat{\rho} |\partial^{\bm{a}} \mathcal{D} \hat{\vp}|^2\hat{u}  \right)
= \hat{R}_3, %\tilde{\mathcal{A}}_1 p'(\hat{\rho}) \hat{\rho} (\partial^{\bm{a}} \mathcal{D} \hat{\vp})( \partial^{\bm{a}} \mathcal{D} (\hat{\frac{d}{dt}} \hat{\vp}))
\label{ee7-hat}
\end{multline}
where
\begin{equation*}
\begin{split}
\hat{R}_3&:=\big\{[\hat{u} \cdot \hat{\nabla} \partial^{\bm{a}} \mathcal{D} \hat{\vp} ] - \partial^{\bm{a}} \mathcal{D}   [\hat{u} \cdot \hat{\nabla} \hat{\vp}] \big\} (\mu \partial^{\bm{a}} \mathcal{D} \hat{\vp} + \tilde{\mathcal{A}}_1 p'(\hat{\rho}) \hat{\rho}\partial^{\bm{a}} \mathcal{D} \hat{\vp})
\\
& \quad + \frac{1}{2}\mu |\partial^{\bm{a}} \mathcal{D} \hat{\vp}|^2 \hat{\div} \hat{u}  
+ \frac{1}{2} \tilde{\mathcal{A}}_1 (p'(\hat{\rho}) \hat{\rho} )_t|\partial^{\bm{a}} \mathcal{D} \hat{\vp}|^2 
+ \frac{1}{2} |\partial^{\bm{a}} \mathcal{D} \hat{\vp}|^2 \hat{\div} \left(\tilde{\mathcal{A}}_1 p'(\hat{\rho}) \hat{\rho} \hat{u}\right) \\
& \quad + \hat{I}_1 \left(\partial^{\bm{a}} \mathcal{D} \hat{\vp} + \partial^{\bm{a}} \mathcal{D}  \hat{\frac{d}{dt}} \hat{\vp} \right) . 
\end{split}
\end{equation*}

Let us estimate the integrations of some terms in \eqref{ee7-hat}. 
We first find the good contribution for $\partial^{\bm{a}} \partial_1 \hat{\frac{d}{dt}} \hat{\vp}$
from the second and third terms on the left hand side. 
Indeed, using the fact $\partial_1=\tilde{\mathcal{A}}_1^{-1}\mathcal{D}-\tilde{\mathcal{A}}_1^{-1}\mathcal{A}_2\partial_2-\tilde{\mathcal{A}}_1^{-1}\mathcal{A}_3\partial_3$, %we see that for any smooth functions $F$
%\begin{equation*}
% \begin{split}
% |\partial^\alpha \partial_1 F| 
% &\lesssim |\partial^\alpha \mathcal{D} F| 
% +|\partial^\alpha \nabla_{y'} F | 
%+|\alpha| \sum_{|\bm{b}|\leq |\alpha|} |\partial^{\bm{b}}F|,
% \end{split}
%\end{equation*}
%and this gives
we see that
\begin{gather} 
%\int_{\mathbb R^3_+} |\partial^{\bm{a}} \partial_1 \hat{\vp}|^2 \,dy 
%\lesssim \int_{\mathbb R^3_+}
% \tilde{\mathcal{A}}_1 p'(\hat{\rho})\hat{\rho}|\partial^{\bm{a}} \mathcal{D}  \hat{\vp} |^2  \,dy  
%+\|\partial^{\bm{a}} \nabla_{y'}  \hat{\vp} \|_{L^2(\mathbb R^3_+)}^2 
%+|\bm{a}| \|\hat{\vp} \|_{H^{|\bm{a}|}(\mathbb R^3_+)}^2,
%\label{ee9-hat}\\
\int_{\mathbb R^3_+} \left|\partial^{\bm{a}} \partial_1 \hat{\frac{d}{dt}} \hat{\vp}\right|^2 \!dy 
\lesssim \int_{\mathbb R^3_+}
 \mu\left|\partial^{\bm{a}} \mathcal{D} \hat{\frac{d}{dt}} \hat{\vp}\right|^2 \!dy  
+\left\|\partial^{\bm{a}} \nabla_{y'} \hat{\frac{d}{dt}} \hat{\vp}\right\|_{L^2(\mathbb R^3_+)}^2 
\!+|\bm{a}| \left\|\hat{\frac{d}{dt}} \hat{\vp} \right\|_{H^{|\bm{a}|}(\mathbb R^3_+)}^2.
\label{ee10-hat}
\end{gather}
Owing to the {Divergence} Theorem with \eqref{bce} and \eqref{pbc},
we have the nonnegativity of the fourth term on the left hand side as
\begin{align}
&{}
\int_{\mathbb{R}^3_+}
\hat{\div}\left(\frac{1}{2} \mu |\partial^{\bm{a}} \mathcal{D} \hat{\vp}|^2 \hat{u} + \frac{1}{2} \tilde{\mathcal{A}}_1 p'(\hat{\rho}) \hat{\rho} |\partial^{\bm{a}} \mathcal{D} \hat{\vp}|^2\hat{u}  \right) dy
\notag \\
&=\int_{\mathbb{R}^2}
\frac{\ub \cdot n}{2}
\left(\mu |\partial^{\bm{a}} \mathcal{D} \hat{\vp}|^2+\tilde{\mathcal{A}}_1 p'(\hat{\rho}) \hat{\rho} |\partial^{\bm{a}} \mathcal{D} \hat{\vp}|^2\right)
 \sqrt{1+|\nabla M|^2} \, dy' \geq 0.
%\notag\\
%& _{\|M\|_{H^4}} \gtrsim
%\|\partial^{\bm{a}} \mathcal{D} \hat{\vp}|_{\partial \mathbb{R}^3_+} \|^2_{L^2}.
\label{ee8-hat}
\end{align}
Furthermore, we claim that the integration of $\hat{R}_3$ in \eqref{ee7-hat} is estimated as
\begin{align}
\int_{\mathbb R^3_+} |\hat{R}_3 | \,dy 
& \lesssim (\epsilon + N_\beta(T) + \delta )  \| \partial^{\bm{a}} \partial_1 \hat{\vp} \|_{L^2(\mathbb R^3_+)}^2 
+( \epsilon + N_\beta(T) + \delta ) \left\|\partial^{\bm{a}} \partial_1 \hat{\frac{d}{dt}} \hat{\vp}  \right\|_{L^2(\mathbb R^3_+)}^2
\notag\\
&\quad +( N_\beta(T) + \delta ) D_{3,\beta} 
+\epsilon^{-1}\left\|\partial^{\bm{a}} \nabla_{y'} \hat{\frac{d}{dt}} \hat{\vp} \right\|_{L^2(\mathbb R^3_+)}^2
+\epsilon^{-1}\|\partial^{\bm{a}} \nabla_y \nabla_{y'} \hat{\psi}\|_{L^2(\mathbb R^3_+)}^2
\notag\\
&\quad +\epsilon^{-1}|\bm{a}|\|\nabla_y \hat{\vp} \|_{H^{|\bm{a}| -1}(\mathbb R^3_+)}
+\epsilon^{-1}|\bm{a}|\left\|\hat{\frac{d}{dt}} \hat{\vp}\right\|_{H^{|\bm{a}|}(\mathbb R^3_+)}
+\epsilon^{-1}\| \hat{\psi}_t \|_{H^{|\bm{a}|}(\mathbb R^3_+)} 
\notag\\
&\quad +\epsilon^{-1}\|\nabla_y \hat{\psi}\|_{H^{|\bm{a}|}(\mathbb R^3_+)}
+\epsilon^{-1}\delta,
\label{ee11-hat}
\end{align}
where $\epsilon$ is a positive constant to be determined later.
To show this, we start from the estimation of $\hat{I}_1$:
\begin{align}
\| \hat{I}_1\|_{L^2(\mathbb R^3_+)} & \lesssim
( N_\beta(T) + \delta )  D_{3,\beta}%(\| \nabla_y \hat{\vp} \|_{H^2}+\| \nabla_y \hat{\psi} \|_{H^3}+\|\hat{\vp}(t,0,\cdot)\|_{L^2_{y'}})
+\left\|\partial^{\bm{a}} \nabla_{y'} \hat{\frac{d}{dt}} \hat{\vp} \right\|_{L^2(\mathbb R^3_+)}\!
+\|\partial^{\bm{a}} \nabla_y \nabla_{y'} \hat{\psi}\|_{L^2(\mathbb R^3_+)}
+|\bm{a}|\|\nabla_y \hat{\vp} \|_{H^{|\bm{a}| -1}(\mathbb R^3_+)}
\notag \\
&\quad +|\bm{a}|\left\|\hat{\frac{d}{dt}} \hat{\vp}\right\|_{H^{|\bm{a}|}(\mathbb R^3_+)} \!
+\| \hat{\psi}_t \|_{H^{|\bm{a}|}(\mathbb R^3_+)} 
+\|\nabla_y \hat{\psi}\|_{H^{|\bm{a}|}(\mathbb R^3_+)}
+\|(\hat{F},\hat{G})\|_{L^2(\mathbb R^3_+)}.
\label{ee12-hat}
\end{align}
It is straightforward to check that all terms except those having $\hat{f}$ and $\hat{g}$
can be estimated by the right hand side of \eqref{ee12-hat}.
Let us handle the terms having $\hat{f}$ and $\hat{g}$.
By the applications of Hardy's inequality \eqref{hardy} and
Sobolev's inequalities \eqref{sobolev1} and \eqref{sobolev2}
together with \eqref{stdc1} and \eqref{ExBdry3}, it holds that  
\begin{align}
{}& \| \mu \partial^{\bm{a}}  \tilde{\mathcal{A}}_1 \partial_1 \hat{f}\|_{L^2(\mathbb R^3_+)}
+ \left\| \partial^{\bm{a}} \tilde{\mathcal{A}}_1  \hat{\rho} \sum_{j=1}^3 \mathcal{A}_j \hat{g}_j \right\|_{L^2(\mathbb R^3_+)} 
\notag \\
&\lesssim \delta \left( \|\nabla\hat{\vp} \|_{H^2(\mathbb R^3_+)} 
+ \|\nabla\hat{\psi} \|_{H^3(\mathbb R^3_+)}\right)
+ \sum_{i=1}^{|\bm{a}|+1}\left\||(\rt^{(i)},\ut^{(i)})||\hat{\Phi}|\right\|_{L^2(\mathbb R^3_+)}\!
+ \|\hat{\Phi}\|_{L^6(\mathbb R^3_+)} \sum_{i=0}^{|\alpha|+1} \|\nabla^i U\|_{L^3(\mathbb R^3_+)}
\notag \\
&\lesssim \delta D_{3,\beta}. 
\label{ee13-hat}
\end{align}
Therefore we conclude that \eqref{ee12-hat} holds.
From now on we estimate the integration of $\hat{R}_3$.
It is easy to show by using \eqref{ee12-hat}, Schwarz's inequality and Sobolev's inequality \eqref{sobolev2} that the last four terms in $\hat{R}_3$ are bounded by the right hand side of \eqref{ee11-hat}. It remains to handle only the first term, that is, the commutator term.
The $L^2$-norm of the commutator $\hat{u} \cdot \hat{\nabla} \partial^{\bm{a}} \mathcal{D} \hat{\vp} - \partial^{\bm{a}} \mathcal{D} (\hat{u} \cdot \hat{\nabla} \hat{\vp} )$ can be estimated as
\begin{align*}
&\|\hat{u} \cdot \hat{\nabla} \partial^{\bm{a}} \mathcal{D} \hat{\vp} - \partial^{\bm{a}} \mathcal{D} (\hat{u} \cdot \hat{\nabla} \hat{\vp} )\| 
\notag\\
&\leq \|\ut_1 \partial_1 \partial^{\bm{a}} \mathcal{D} \hat{\vp} - \partial^{\bm{a}} \mathcal{D} (\ut_1\partial_1 \hat{\vp} )\|
+\|(\hat{\psi}+\hat{U}) \cdot \hat{\nabla} \partial^{\bm{a}} \mathcal{D} \hat{\vp} - \partial^{\bm{a}} \mathcal{D} ((\hat{\psi}+\hat{U}) \cdot \hat{\nabla} \hat{\vp} )\| 
\notag\\
&= \|(\ut_1-u_+) \partial^{\bm{a}} \mathcal{D} \partial_1 \hat{\vp} - \partial^{\bm{a}} \mathcal{D} ((\ut_1-u_+)\partial_1 \hat{\vp} )\|
+\|(\hat{\psi}+\hat{U}) \cdot \hat{\nabla} \partial^{\bm{a}} \mathcal{D} \hat{\vp} - \partial^{\bm{a}} \mathcal{D} ((\hat{\psi}+\hat{U}) \cdot \hat{\nabla} \hat{\vp} )\| 
\notag\\
&\lesssim (N_\beta(T)+\delta)D_{3,\beta},
\end{align*}
where we have written explicitly $\hat{u}$ and used the triangular inequality in deriving the first inequality; we have expanded the derivative operators and applied Sobolev's inequalities \eqref{sobolev0} and \eqref{sobolev2} deriving the last inequality.
Using this, one can check that the integrations of the first two terms in $\hat{R}_3$ 
is also bound by the right hand side of \eqref{ee11-hat}.
Therefore we conclude that \eqref{ee11-hat} holds.

We multiply (\ref{ee7-hat}) by the time weight function $e^{\zeta t}$, 
integrate the resultant equality over $(0,t) \times \mathbb{R}^3_+$,
substitute (\ref{ee8-hat}) and (\ref{ee11-hat}) into the result,
let $\epsilon + N_\beta(T) + \delta$ be small enough,
and use (\ref{ee10-hat}).
Performing change of variables $y \rightarrow x$ for some terms on right hand side, 
we arrive at the desired inequality (\ref{ee0-hat}). 
This completes the proof of the lemma.
\end{proof}

\subsection{Cattabriga estimates} \label{ss-CattabrigaEst}

We complete the good contribution of spatial-derivatives 
using the Cattabriga estimate in Lemma \ref{CattabrigaEst}.
We remark that the Cattabriga estimate has crucial dependence on $\Omega$. The other estimates rely on Hardy's inequality, Sobolev's inequalities, Gagliardo-Nirenberg inequality and the commutator estimates, which depend on Sobolev's norms of $M$. Recall that in Subsection \ref{ss-Notation} all the constants depend on the Sobolev's norms of $M$. 

\begin{lemma} \label{lm1-C}
Under the same assumption as in Proposition \ref{apriori1} with $m=3$,
it holds, for $k=0, 1, 2$, 
%{\color{red}{
\begin{align}
\| \nabla^{k+2} \psi \|^2 + \| \nabla^{k+1} \vp \|^2 
 \lesssim_{\Omega} \| \psi_t \|^2_{H^{k}} 
+ \| \nabla \psi \|^2_{H^{k}} 
+ \left\| \frac{d}{dt} \vp \right\|^2_{H^{k+1}} \!
+ (N_\beta (T) + \delta ) D_{3,\beta}+\delta.
\label{ef0-C}
\end{align}
%}}
\end{lemma}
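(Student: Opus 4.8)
The plan is to reinterpret the momentum equation \eqref{eq-pv2} as a stationary Stokes-type system for $\psi$ together with an auxiliary ``pressure'', apply the Cattabriga estimate of Lemma \ref{CattabrigaEst} on the graph domain $\Omega$, and then read off the bound for $\nabla^{k+1}\vp$ from the auxiliary pressure gradient.

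First I would set
\[
q:=p(\rho)-p(\rt)-(\mu_1+\mu_2)\div\psi .
\]
Since $\nabla\bigl(p(\rho)-p(\rt)\bigr)=p'(\rho)\nabla\vp+\bigl(p'(\rho)-p'(\rt)\bigr)\nabla\rt$, equation \eqref{eq-pv2} rewrites as
\[
\mu_1\Delta\psi-\nabla q
=\rho\psi_t+\rho(u\cdot\nabla)\psi-(g+G)-\bigl(p'(\rho)-p'(\rt)\bigr)\nabla\rt=:\mathcal F,
\]
while \eqref{eq-pv1} gives the divergence constraint
\[
\div\psi=\frac{1}{\rho}\Bigl(-\frac{d}{dt}\vp+f+F\Bigr)=:\mathcal G,
\]
and \eqref{pbc} gives $\psi|_{\partial\Omega}=0$. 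Thus $(\psi,q)$ solves a Stokes system on $\Omega$ with forcing $\mathcal F$, divergence datum $\mathcal G$ and homogeneous Dirichlet data, so Lemma \ref{CattabrigaEst} yields, for $k=0,1,2$,
\[
\|\nabla^{k+2}\psi\|^2+\|\nabla q\|_{H^k}^2\lesssim_\Omega\|\mathcal F\|_{H^k}^2+\|\mathcal G\|_{H^{k+1}}^2 .
\]

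Next I would estimate $\mathcal F$ and $\mathcal G$. Writing $u=\ut+\psi+U$ and $\rho=\rt+\vp$, the only contributions in $\mathcal F$ whose coefficient is of size $O(1)$ are $u_+\partial_1\psi$ (coming from $\ut_1\partial_1\psi$, the defect $\ut_1-u_+$ being exponentially small by \eqref{stdc1}) and $\rho\psi_t$; these go into $\|\nabla\psi\|_{H^k}$ and $\|\psi_t\|_{H^k}$ respectively. Every remaining term carries either a small factor ($\vp$, $\psi$, $U$), or an exponentially decaying factor ($\rt-\rho_+$, $\ut-u_+$, $\nabla\rt$), or is a piece of $g$; using Sobolev's inequalities, Hardy's inequality \eqref{hardy}, the decay estimate \eqref{stdc1} and \eqref{ExBdry0}, they are bounded by $(N_\beta(T)+\delta)D_{3,\beta}$, while the inhomogeneous part $G$ contributes $\delta$ through \eqref{h1}; hence $\|\mathcal F\|_{H^k}^2\lesssim\|\psi_t\|_{H^k}^2+\|\nabla\psi\|_{H^k}^2+(N_\beta(T)+\delta)D_{3,\beta}+\delta$. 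For $\mathcal G$, expanding $\nabla^{k+1}(\rho^{-1}\frac{d}{dt}\vp)$ isolates the main term $\rho^{-1}\nabla^{k+1}\frac{d}{dt}\vp$, controlled by $\|\frac{d}{dt}\vp\|_{H^{k+1}}$, plus terms in which the factor $\frac{d}{dt}\vp$ (small in $L^\infty$ by \eqref{PhiSup}) multiplies a derivative of $\rho^{-1}$; together with the $f$-part (homogeneous, small) and the $F$-part (giving $\delta$) this gives $\|\mathcal G\|_{H^{k+1}}^2\lesssim\|\frac{d}{dt}\vp\|_{H^{k+1}}^2+(N_\beta(T)+\delta)D_{3,\beta}+\delta$.

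Finally I would recover the density derivatives: from the definition of $q$,
\[
p'(\rho)\nabla\vp=\nabla q+(\mu_1+\mu_2)\nabla\div\psi-\bigl(p'(\rho)-p'(\rt)\bigr)\nabla\rt,
\]
and since $p'(\rho)\ge c>0$ by \eqref{bdd} and $|\nabla^{k+1}\div\psi|\le|\nabla^{k+2}\psi|$, we get $\|\nabla^{k+1}\vp\|^2\lesssim\|\nabla q\|_{H^k}^2+\|\nabla^{k+2}\psi\|^2+(N_\beta(T)+\delta)D_{3,\beta}$ (the last term again absorbing $(p'(\rho)-p'(\rt))\nabla\rt$ via \eqref{hardy} and \eqref{stdc1}). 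Feeding in the Cattabriga bound and the estimates of $\mathcal F,\mathcal G$ yields \eqref{ef0-C}. I expect the main obstacle to be the bookkeeping in the third step: one must split every variable coefficient so that the $O(1)$ pieces multiply only quantities already on the right-hand side of \eqref{ef0-C} ($\|\nabla\psi\|_{H^k}$, $\|\psi_t\|_{H^k}$, $\|\frac{d}{dt}\vp\|_{H^{k+1}}$), while all top-order factors are paired with small or exponentially decaying coefficients so that their contributions are absorbed into $(N_\beta(T)+\delta)D_{3,\beta}$; a secondary point is that the Cattabriga estimate on the curved domain $\Omega$ is precisely where the $\Omega$-dependence of the constant enters, as already noted before the statement of the lemma.
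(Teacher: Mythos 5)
Your proposal is correct and follows essentially the same route as the paper: both recast \eqref{eq-pv} as a Stokes boundary value problem on $\Omega$ with homogeneous Dirichlet data and invoke the Cattabriga estimate of Lemma \ref{CattabrigaEst}, estimating the data by the same Hardy/Sobolev/decay arguments. The only (harmless) difference is bookkeeping: the paper keeps $p'(\rho_+)\nabla\vp$ as the pressure gradient and absorbs $(\mu_1+\mu_2)\nabla\div\psi$ into the forcing by rewriting it as $(\mu_1+\mu_2)\rho_+^{-1}\nabla V$ via the continuity equation, whereas you fold that term into an auxiliary pressure $q=p(\rho)-p(\rt)-(\mu_1+\mu_2)\div\psi$ and recover $\nabla^{k+1}\vp$ afterwards; both devices serve the same purpose of keeping the viscous divergence term at an admissible order.
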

\begin{proof}
From \eqref{eq-pv}, and recalling $\frac{d}{dt} = \partial_t + u \cdot \nabla $, we obtain
a boundary value problem of the Stokes equation:
\begin{equation} \label{ef7-C}
 \rho_+\div\psi = V , \quad
 - \mu_1 \Delta \psi + p'(\rho_+)\nabla \vp = W, %\quad \text{in $\Omega$}, 
\quad
 \psi|_{\partial \Omega} =0 , \quad \lim_{|x| \rightarrow \infty} |\psi| =0,
\end{equation}
where
\begin{align*}
V : =& f + F - \frac{d}{dt}\vp - (\rho-\rho_+)\div \psi , \\
W : =& - \rho \{ \psi_t + ( u \cdot \nabla ) \psi \} + (\mu_1 + \mu_2) \nabla \div \psi + g + G - (p'(\rho)-p'(\rho_+)) \nabla \vp
\\
=& - \rho \{ \psi_t + ( u \cdot \nabla ) \psi \} + (\mu_1 + \mu_2) \rho_+^{-1} \nabla V + g + G - (p'(\rho)-p'(\rho_+)) \nabla \vp.
\end{align*}
Applying the Cattabriga estimate \eqref{Cattabriga} to problem \eqref{ef7-C}, we have %, for $k = 0,1,2$,
\begin{equation*}
\| \nabla^{k+2} \psi \|^2 + \| \nabla^{k+1} \vp \|^2 \lesssim_{\Omega} \| V \|^2_{H^{k+1}} + \| W \|^2_{H^k} + \| \nabla \psi \|^2.
%\label{ef1-C}
\end{equation*}
It is straightforward to show that $\| V \|^2_{H^{k+1}}$ and $\| W \|^2_{H^k}$ are bound by the right hand side of \eqref{ef0-C}. Indeed, we can use the same method as in the derivation of \eqref{ee13-hat} to estimates the terms $f$ and $g$. The other terms can be estimated by using \eqref{h1} and Sobolev's inequalities \eqref{sobolev0} and \eqref{sobolev2}. Therefore we conclude \eqref{ef0-C}.
\end{proof}

We also show similar estimates for $(\hat{\vp},\hat{\psi})(t,y)$, where $y \in \mathbb R^3_+$.
For the notational convenience, we denote
\begin{equation}\label{checkOp}
\check{\partial}_{y_j}: = \sum_{i=1}^3 (A^{-1}(x'))_{ij} \partial_{x_i}. 
\end{equation}
%\begin{equation*}
%\check{\nabla}_{y'} := A^{-1}(x') \nabla_x,
%\end{equation*}
where $A$ is defined in \eqref{CV2}; $(A^{-1}(x'))_{ij}$ means the $(i,j)$-component of $A^{-1}(x')$;
$\check{\partial}_{y_j} \vp (t,\Gamma(y)) = \partial_{y_j} \hat{\vp}(t,y)$ holds.
Furthermore, $\check{\nabla}^l_{y'}$ means the totality of all $l$-times tangential derivatives 
$\check{\partial}_{y_j}$ only for $j=2,3$.
Then applying $\check{\nabla}^l_{y'}$ to \eqref{ef7-C}, we obtain a boundary value problem of the Stokes equation: 
\begin{equation}\label{checkEq}
\rho_+\div \check{\nabla}^l_{y'} \psi = \check{V}, \ \
- \mu_1 \Delta \check{\nabla}^l_{y'} \psi + p'(\rho_+)\nabla \check{\nabla}^l_{y'} \vp = \check{W}, \ \  %\quad \text{in $\Omega$}, 
\check{\nabla}^l_{y'} \psi|_{\partial \Omega} =0 , \ \ \lim_{|x| \rightarrow \infty} | \check{\nabla}^l_{y'} \psi| =0,
\end{equation}
where
\begin{align*}
\check{V} : = 
& \check{\nabla}^l_{y'} (f+F) - \check{\nabla}^l_{y'} \frac{d}{dt}\vp - \check{\nabla}^l_{y'} [(\rho-\rho_+)\div \psi ] + [\rho_+ \div \check{\nabla}^l_{y'} \psi -  \rho_+ \check{\nabla}^l_{y'} \div \psi] , \\
\check{W} : = 
&- \check{\nabla}^l_{y'} [ \rho \{ \psi_t + ( u \cdot \nabla ) \psi \} ] + (\mu_1 + \mu_2) \check{\nabla}^l_{y'}  \nabla \div \psi + \check{\nabla}^l_{y'} g + \check{\nabla}^l_{y'} G \\
& - \check{\nabla}^l_{y'} [ (p'(\rho)-p'(\rho_+)) \nabla \vp ]  + \mu_1 (\check{\nabla}^l_{y'} \Delta \psi - \Delta \check{\nabla}^l_{y'} \psi) 
+ p'(\rho_+) ( \nabla \check{\nabla}^l_{y'} \vp - \check{\nabla}^l_{y'} \nabla \vp ).  
\end{align*}

\begin{lemma} \label{lm2-C}
Under the same assumption as in Proposition \ref{apriori1} with $m=3$,
it holds, for $k=0,  1$, $k+l = 1, 2$, 
\begin{align}
&\| \nabla^{k+2}  \nabla^l_{y'} \hat{\psi} \|_{L^2(\mathbb R^3_+)}^2 
+ \|\nabla^{k+1} \nabla^l_{y'} \hat{\vp} \|_{L^2(\mathbb R^3_+)}^2 
\notag \\
& \lesssim_{\Omega} 
\left\|{\nabla}^l_{y'} \hat{\frac{d}{dt}} \hat{\vp} \right\|^2_{H^{k+1}(\mathbb R^3_+)} \!
+ \| \psi_t \|^2_{H^{k+l}} 
+ \| \nabla \psi \|^2_{H^{k+l}} 
+ \| \nabla \vp \|^2_{H^{k+l-1}}
+ (N_\beta (T) + \delta ) D_{3,\beta}+ \delta.
\label{eg0-C}
\end{align}
\end{lemma}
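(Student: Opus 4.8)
The plan is to mirror the proof of Lemma \ref{lm1-C}: we regard $\check{\nabla}^l_{y'}\psi$ and $\check{\nabla}^l_{y'}\vp$ as functions on $\Omega$ solving the constant-coefficient Stokes boundary value problem \eqref{checkEq}, apply the Cattabriga estimate \eqref{Cattabriga} to this problem, and then transport the resulting bound to the half-space via the volume-preserving change of variables $\Gamma$ of \eqref{CV1}. Concretely, \eqref{Cattabriga} applied to \eqref{checkEq} gives
\[
\|\nabla^{k+2}\check{\nabla}^l_{y'}\psi\|_{L^2(\Omega)}^2+\|\nabla^{k+1}\check{\nabla}^l_{y'}\vp\|_{L^2(\Omega)}^2
\lesssim_{\Omega}\|\check{V}\|_{H^{k+1}(\Omega)}^2+\|\check{W}\|_{H^{k}(\Omega)}^2+\|\check{\nabla}^l_{y'}\psi\|_{L^2(\Omega)}^2,
\]
and since $\check{\partial}_{y_j}\vp(t,\Gamma(y))=\partial_{y_j}\hat{\vp}(t,y)$ (so $\nabla^{k+1}_x\check{\nabla}^l_{y'}\vp$ pulls back to $\hat{\nabla}^{k+1}\nabla^l_{y'}\hat{\vp}$, similarly for $\psi$) and $\Gamma$ has unit Jacobian with derivatives bounded by $\|M\|_{H^9}$, the left-hand side dominates, up to lower-order terms that we absorb into the right side of \eqref{eg0-C}, the quantity $\|\nabla^{k+2}\nabla^l_{y'}\hat{\psi}\|_{L^2(\mathbb R^3_+)}^2+\|\nabla^{k+1}\nabla^l_{y'}\hat{\vp}\|_{L^2(\mathbb R^3_+)}^2$ (here one uses $|\nabla\,\cdot\,|\lesssim|\hat{\nabla}\,\cdot\,|$ as in \eqref{ec10-hat}). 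The last term $\|\check{\nabla}^l_{y'}\psi\|_{L^2(\Omega)}^2\lesssim\|\nabla\psi\|_{H^{k+l-1}}^2$ is harmless. Thus everything reduces to estimating $\|\check{V}\|_{H^{k+1}(\Omega)}$ and $\|\check{W}\|_{H^{k}(\Omega)}$.

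For $\check{V}$, the term $\check{\nabla}^l_{y'}\tfrac{d}{dt}\vp$ pulls back to $\nabla^l_{y'}\hat{\tfrac{d}{dt}}\hat{\vp}$, whose $H^{k+1}$ norm is the first term on the right of \eqref{eg0-C}; the terms $\check{\nabla}^l_{y'}(f+F)$ are handled exactly as in \eqref{ec8-hat} and \eqref{ee13-hat}, splitting $\rt,\ut$ off from $\rho,u$ and using \eqref{stdc1}, \eqref{ExBdry0}, \eqref{h1}, Hardy's inequality \eqref{hardy} and Sobolev's inequalities \eqref{sobolev0}--\eqref{sobolev2}, to get a bound $\delta D_{3,\beta}+\delta$; the product $(\rho-\rho_+)\div\psi$ is split as $\vp\div\psi+(\rt-\rho_+)\div\psi$, the first being genuinely quadratic and bounded by $(N_\beta(T)+\delta)D_{3,\beta}$, the second carrying the exponentially decaying profile and bounded via \eqref{hardy} by $\delta$ times $\|\nabla\psi\|_{H^{k+l}}$-type norms; and the commutator $\rho_+[\div,\check{\nabla}^l_{y'}]\psi$ loses one derivative onto coefficients built from $\nabla M,\dots,\nabla^{k+l+1}M$ and is bounded by $\|\nabla\psi\|_{H^{k+l}}$ (with an $\Omega$-dependent constant, which is admissible in \eqref{eg0-C}). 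For $\check{W}$, the terms $\check{\nabla}^l_{y'}[\rho\{\psi_t+(u\cdot\nabla)\psi\}]$, $\check{\nabla}^l_{y'}g$, $\check{\nabla}^l_{y'}G$ and $\check{\nabla}^l_{y'}[(p'(\rho)-p'(\rho_+))\nabla\vp]$ contribute $\|\psi_t\|_{H^{k+l}}$, the forcing bound $\delta D_{3,\beta}+\delta$, and nonlinear remainders $(N_\beta(T)+\delta)D_{3,\beta}$, exactly as before; the two correction commutators $\mu_1(\check{\nabla}^l_{y'}\Delta\psi-\Delta\check{\nabla}^l_{y'}\psi)$ and $p'(\rho_+)(\nabla\check{\nabla}^l_{y'}\vp-\check{\nabla}^l_{y'}\nabla\vp)$ again trade a derivative for an $M$-dependent coefficient and are bounded by $\|\nabla\psi\|_{H^{k+l}}$ and $\|\nabla\vp\|_{H^{k+l-1}}$ respectively.

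The one term needing the structural trick of Lemma \ref{lm1-C} is $(\mu_1+\mu_2)\check{\nabla}^l_{y'}\nabla\div\psi$ in $\check{W}$, which is of the top order: I would write $\check{\nabla}^l_{y'}\nabla\div\psi=\nabla\div\check{\nabla}^l_{y'}\psi+[\check{\nabla}^l_{y'},\nabla\div]\psi$ and use the first equation of \eqref{checkEq}, $\rho_+\div\check{\nabla}^l_{y'}\psi=\check{V}$, to replace $\nabla\div\check{\nabla}^l_{y'}\psi$ by $\rho_+^{-1}\nabla\check{V}$, so this term folds into $\|\check{V}\|_{H^{k+1}}$ plus a commutator of order $k+l+1$ in $\psi$ bounded by $\|\nabla\psi\|_{H^{k+l}}$. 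Collecting all the bounds, changing variables $y\mapsto x$ where convenient, using \eqref{h1} for $(\hat F,\hat G)$, and letting $N_\beta(T)+\delta$ be small, yields \eqref{eg0-C}.

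The main obstacle I anticipate is purely the bookkeeping: one must verify that after commuting $\check{\nabla}^l_{y'}$ past the constant-coefficient operators $\mu_1\Delta$, $(\mu_1+\mu_2)\nabla\div$ and $p'(\rho_+)\nabla$ no uncontrolled top-order \emph{linear} term in $\psi$ or $\vp$ survives --- this is precisely what the substitution $\nabla\div\check{\nabla}^l_{y'}\psi=\rho_+^{-1}\nabla\check{V}$ is for --- and that every remainder commutator, which carries one extra derivative on the coefficient built from $M$, lands on one of the Sobolev norms $\|\psi_t\|_{H^{k+l}}$, $\|\nabla\psi\|_{H^{k+l}}$, $\|\nabla\vp\|_{H^{k+l-1}}$ already present on the right of \eqref{eg0-C}, which forces one to track the indices $k+l$ and $k+l-1$ carefully throughout.
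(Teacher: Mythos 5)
Your proposal is correct and follows essentially the same route as the paper: apply the Cattabriga estimate \eqref{Cattabriga} to the tangentially differentiated Stokes system \eqref{checkEq}, bound $\|\check{V}\|_{H^{k+1}}$ and $\|\check{W}\|_{H^{k}}$ exactly as in Lemma \ref{lm1-C} (including the substitution $\nabla\div\check{\nabla}^l_{y'}\psi=\rho_+^{-1}\nabla\check{V}$ to kill the top-order term), and then transfer the left-hand side to $\mathbb R^3_+$ via $\check{\partial}_{y_j}\vp(t,\Gamma(y))=\partial_{y_j}\hat{\vp}(t,y)$. The paper's own proof is just a terser version of this argument, deferring the estimates of $\check{V}$ and $\check{W}$ to the proof of Lemma \ref{lm1-C}.
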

\begin{proof}
We apply the Cattabriga estimate \eqref{Cattabriga} to \eqref{checkEq} and obtain %, for $k = 0, \ 1, \ 2$,
\begin{equation}
\| \nabla^{k+2} \check{\nabla}^l_{y'} \psi \|^2 + \|\nabla^{k+1} \check{\nabla}^l_{y'} \vp \|^2 \lesssim_{\Omega} \|\check{V} \|^2_{H^{k+1}} + \|\check{W}\|^2_{H^k} + \|\check{\nabla}^l_{y'} \psi \|^2.
\label{eg1-C}
\end{equation}
The terms $\|\check{V} \|^2_{H^{k+1}}$ and $\|\check{W}\|^2_{H^k}$ can be estimated 
in the same way as in the proof of Lemma \ref{lm1-C}. Now we conclude from \eqref{eg1-C} that  
\begin{align*}
&\| \nabla^{k+2} \check{\nabla}^l_{y'} \psi \|^2 
+ \|\nabla^{k+1} \check{\nabla}^l_{y'} \vp \|^2  
\\
&\lesssim_{\Omega} 
\left\|\check{\nabla}^l_{y'} \frac{d}{dt} \vp \right\|^2_{H^{k+1}} \!
+ \| \psi_t \|^2_{H^{k+l}} 
+ \| \nabla \psi \|^2_{H^{k+l}} 
+ \| \nabla \vp \|^2_{H^{k+l-1}}
+ (N_\beta (T) + \delta ) D_{3,\beta}+ \delta.
\end{align*}
Then changing the coordinate $x \in \Omega$ to the coordinate $y \in \mathbb R_+^3$ 
in the left hand side of this inequality 
and also using $\check{\partial}_{y_j} \vp (t,\Gamma(y)) = \partial_{y_j} \hat{\vp}(t,y)$, 
we arrive at \eqref{eg0-C}.
\end{proof}

Let us complete the derivation of the dissipative terms for the spatial derivatives.

\begin{lemma} \label{lm1-comp}
Under the same assumption as in Proposition \ref{apriori1} with $m=3$,
it holds that
\begin{align}
&   e^{\zeta t} \| \nabla^{p+1}  \vp (t) \|^2 + \int^t_0 e^{\zeta \tau} \left\| \nabla^{p+1}   \frac{d}{dt} \vp (\tau) \right\|^2  d \tau   
\notag \\
&\lesssim_{\Omega} \| \Phi_0\|^2_{H^3} + e^{\zeta t} \| \vp (t) \|_{H^p}^2 + \int^t_0 e^{\zeta \tau} \|\psi_t(\tau) \|^2_{H^p} \, d \tau  
+ \epsilon^{-1}\int^t_0 e^{\zeta \tau} D_{p, \beta} (\tau) \, d \tau 
\notag \\
& \qquad + \epsilon \int^t_0  e^{\zeta \tau}(\| \nabla \vp (\tau) \|^2_{H^{p}} +\| \nabla \psi (\tau) \|^2_{H^{p+1}} ) d \tau 
\notag \\
& \qquad + (N_\beta (T) + \delta + \zeta) \int^t_0 e^{\zeta \tau} D_{3, \beta} (\tau) \, d \tau
+ \delta  \int^t_0 e^{\zeta \tau}  \, d \tau
\label{el0-comp}
\end{align}
for $\epsilon \in (0,1)$ and $p=0,1,2$.
\end{lemma}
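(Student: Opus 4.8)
The plan is to transfer the estimate to the half-space $\mathbb R^3_+$ via the change of variables $\Gamma$ of \eqref{CV1}, to split a derivative of order $p+1$ into its purely tangential part and the part carrying at least one normal derivative, and then to invoke the two spatial-derivative estimates already available: Lemma~\ref{lm2-hat} with $l=p+1$ for the tangential part, and Lemma~\ref{lm4-hat}, summed over all multi-indices $\bm{a}$ with $|\bm{a}|=p$, for the normal part. Concretely, by \eqref{CV3} and its iterates together with $\hat{\frac{d}{dt}}\hat{\vp}(t,y)=(\frac{d}{dt}\vp)(t,\Gamma(y))$, on $\Omega$ the derivatives $\nabla^{p+1}\vp$ and $\nabla^{p+1}\frac{d}{dt}\vp$ equal $\nabla^{p+1}_y\hat{\vp}$ and $\nabla^{p+1}_y\hat{\frac{d}{dt}}\hat{\vp}$ multiplied by coefficients built from $\nabla M$, plus lower-order $y$-derivatives of $\hat{\vp}$ times higher derivatives of $M$; hence $e^{\zeta t}\|\nabla^{p+1}\vp(t)\|^2\lesssim_\Omega e^{\zeta t}\|\nabla^{p+1}_y\hat{\vp}(t)\|^2+e^{\zeta t}\|\vp(t)\|_{H^p}^2$, which already produces the term $e^{\zeta t}\|\vp(t)\|_{H^p}^2$ on the right of \eqref{el0-comp}, while the corresponding remainder $\|\frac{d}{dt}\vp\|_{H^p}^2$ for $\frac{d}{dt}\vp$ is reduced, via \eqref{eq-pv1} (using \eqref{Dkbeta-def}, \eqref{h1}, and Hardy's inequality for $f$), to $\|\nabla\psi\|_{H^p}^2$ plus inhomogeneous contributions, hence to $D_{p,\beta}+\delta D_{3,\beta}+\delta$. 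After the splitting, the pointwise left-hand sides of Lemmas~\ref{lm4-hat} and~\ref{lm2-hat} (the terms $e^{\zeta t}\|\partial^{\bm{a}}\partial_1\hat{\vp}(t)\|^2$ and $e^{\zeta t}\|\nabla^{p+1}_{y'}\hat{\Phi}(t)\|^2$) and their time-integrated left-hand sides (the terms $\|\partial^{\bm{a}}\partial_1\hat{\frac{d}{dt}}\hat{\vp}\|^2$ and $\|\nabla^{p+1}_{y'}\hat{\frac{d}{dt}}\hat{\vp}\|^2$) together reconstruct precisely the left side of \eqref{el0-comp}; the surplus dissipative quantities generated on the way (the $\mathcal{D}\hat{\vp}$-term of Lemma~\ref{lm4-hat} and the $\nabla\nabla^{p+1}_{y'}\hat{\psi}$-term of Lemma~\ref{lm2-hat}) are kept for use below.

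The remaining work is to reorganize the right-hand sides of Lemma~\ref{lm4-hat} (summed over $|\bm{a}|=p$) and Lemma~\ref{lm2-hat} ($l=p+1$) into the shape of \eqref{el0-comp}. The benign contributions of Lemma~\ref{lm4-hat} pass over directly: $\|\psi_t\|_{H^{|\bm{a}|}}^2=\|\psi_t\|_{H^p}^2$ appears verbatim; the terms $|\bm{a}|\,\|\nabla\vp\|_{H^{|\bm{a}|-1}}^2$, $\|\nabla\psi\|_{H^{|\bm{a}|}}^2$, and (via \eqref{eq-pv1}) $|\bm{a}|\,\|\nabla\frac{d}{dt}\vp\|_{H^{|\bm{a}|-1}}^2$ are all dominated by $D_{p,\beta}$ for $p=0,1,2$ by \eqref{Dkbeta-def}; and the $(N_\beta(T)+\delta+\zeta)D_{3,\beta}$- and $\delta$-terms, together with the $\epsilon$- and $\epsilon^{-1}$-weighted contributions furnished by Lemma~\ref{lm2-hat}, are already in the required form. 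The delicate part consists of the two cross-terms $\|\partial^{\bm{a}}\nabla_{y'}\hat{\frac{d}{dt}}\hat{\vp}\|^2$ and $\|\partial^{\bm{a}}\nabla_y\nabla_{y'}\hat{\psi}\|^2$ on the right of Lemma~\ref{lm4-hat}, i.e.\ an order-$(p+1)$ derivative of $\frac{d}{dt}\vp$, respectively an order-$(p+2)$ derivative of $\psi$, each carrying at least one tangential derivative. I would substitute $\hat{\frac{d}{dt}}\hat{\vp}=-\hat{\rho}\,\hat{\div}\hat{\psi}+\hat{f}+\hat{F}$ from \eqref{eq-pv1-hat} to reduce the first cross-term to the second, plus commutators (estimated by $(N_\beta(T)+\delta)D_{3,\beta}$) and inhomogeneous terms (estimated by $\delta$ through bounds of the type \eqref{ec8-hat} and \eqref{h1}). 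For the second cross-term, the pieces carrying $p+1$ tangential derivatives are dominated by $\nabla\nabla^{l}_{y'}\hat{\psi}$ with $l\le p+1$ and hence lie in the dissipative left-hand side of Lemma~\ref{lm2-hat}, while the pieces with fewer tangential derivatives are handled by the Cattabriga estimates of Lemmas~\ref{lm1-C} and~\ref{lm2-C}—this is why the constant in \eqref{el0-comp} depends on $\Omega$; the right-hand sides of those Cattabriga estimates contribute only lower-order $\psi_t$-, $\nabla\psi$-, and $\nabla\vp$-norms (absorbed into $\|\psi_t\|_{H^p}^2$ and $D_{p,\beta}$), their $\|\nabla^{l}_{y'}\hat{\frac{d}{dt}}\hat{\vp}\|_{H^{k+1}}^2$-term (whose tangential part goes back to Lemma~\ref{lm2-hat} and whose normal part is of the form $\partial^{\bm{c}}\partial_1\hat{\frac{d}{dt}}\hat{\vp}$ with $|\bm{c}|=p$, already on the left of Lemma~\ref{lm4-hat}), the usual $(N_\beta(T)+\delta)D_{3,\beta}+\delta$, and a top-order residue that is absorbed, by Young's inequality and the $\epsilon$-mechanism of Lemma~\ref{lm2-hat}, into $\epsilon(\|\nabla\vp\|_{H^p}^2+\|\nabla\psi\|_{H^{p+1}}^2)$.

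The main obstacle is precisely this near-circular coupling: the normal-direction control of $\frac{d}{dt}\vp$ (equivalently of $\div\psi$) supplied by Lemma~\ref{lm4-hat} and the Cattabriga control of the second-order $\psi$-derivatives supplied by Lemmas~\ref{lm1-C},~\ref{lm2-C} have right-hand sides that feed into one another, so one must verify that throughout the reorganization no term of order exceeding the left side of \eqref{el0-comp} ever survives with a coefficient that is neither an arbitrarily small $\epsilon$, nor a smallness factor $N_\beta(T)+\delta$, nor compensated by the genuinely lower-order dissipation norm $D_{p,\beta}$. Keeping this bookkeeping consistent—and, in particular, exploiting that $\mathcal{D}=\tilde{\mathcal{A}}_1\partial_1+\mathcal{A}_2\partial_2+\mathcal{A}_3\partial_3$ with $\tilde{\mathcal{A}}_1>0$, so that $\mathcal{D}$-derivatives and $\partial_1$-derivatives are interchangeable modulo tangential ones—is the heart of the argument, while the definitive cancellation of all the $\epsilon$-, $(N_\beta(T)+\delta)$-, and $\zeta$-weighted quantities is deferred to the assembly of the spatial-derivative estimates in Proposition~\ref{apriori1}. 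With the accounting in place, I would conclude by adding the estimate of Lemma~\ref{lm4-hat} summed over $|\bm{a}|=p$ to the estimate of Lemma~\ref{lm2-hat} with $l=p+1$, inserting the reductions above, and changing variables from $\mathbb R^3_+$ back to $\Omega$; the $\zeta$-weighted $D_{3,\beta}$-term and the $\delta$-term on the right of \eqref{el0-comp} are inherited unchanged from those two lemmas, which completes the proof of \eqref{el0-comp} for $p=0,1,2$.
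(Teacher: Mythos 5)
Your proposal assembles exactly the ingredients the paper uses — the change of variables to $\mathbb R^3_+$, Lemma \ref{lm2-hat} with $l=p+1$ for the tangential part, Lemma \ref{lm4-hat} for the normal part, the equation \eqref{eq-pv1-hat} to trade $\hat{\frac{d}{dt}}\hat{\vp}$ for $\hat{\div}\hat{\psi}$, and the Cattabriga estimates of Lemmas \ref{lm1-C}--\ref{lm2-C} for the derivatives of $\hat\psi$ carrying several normal derivatives — and it correctly identifies the mutual coupling between Lemma \ref{lm4-hat} and the Cattabriga bounds as the crux. But the mechanism you offer for breaking that coupling does not work as stated, and this is a genuine gap. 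The top-order cross-terms $\|\partial^{\bm{a}}\nabla_{y'}\hat{\frac{d}{dt}}\hat{\vp}\|^2$ and $\|\partial^{\bm{a}}\nabla_y\nabla_{y'}\hat{\psi}\|^2$ on the right of \eqref{ee0-hat} enter with an $O(1)$ (indeed $\epsilon^{-1}$) coefficient: they are not multiplied by $\epsilon$, not multiplied by $N_\beta(T)+\delta$, and not dominated by $D_{p,\beta}$, so none of the three absorption criteria you list applies to them. Saying that the resulting quantity $\partial^{\bm{c}}\partial_1\hat{\frac{d}{dt}}\hat{\vp}$ with $|\bm{c}|=p$ is ``already on the left of Lemma \ref{lm4-hat}'' is circular if you sum over all $|\bm{a}|=p$ at once: a term appearing on the right of one instance and on the left of another, with unrelated constants, cannot be cancelled.

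What saves the argument — and what the paper makes explicit — is that every such cross-term carries strictly fewer normal derivatives than the left-hand side of the instance of Lemma \ref{lm4-hat} that produced it. The paper therefore proves the intermediate claim \eqref{ek0}, namely the bound restricted to multi-indices with $a_1\le j$, by induction on $j$: the base case $j=1$ uses only Lemma \ref{lm2-hat} (both cross-terms of \eqref{ee0-hat} with $a_1=0$ sit on the dissipative left side of \eqref{ec0-hat}); the step $q\to q+1$ first applies the Cattabriga estimate \eqref{eg0-C} with $l=p+1-q$, $k=q-1$, bounding its right-hand side by the already-established level-$q$ estimate, and only then invokes \eqref{ee0-hat} with $a_1=q$, whose cross-terms are now controlled by the level-$q$ estimate and by \eqref{ek2}. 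Your observation that $\mathcal D$ and $\partial_1$ are interchangeable modulo tangential derivatives is needed inside this step (it is how \eqref{ee10-hat} converts $\mathcal D$-control into $\partial_1$-control), but it is not what breaks the circle; the ordering by the normal-derivative count is. Without stating that induction (or an equivalent weighted summation with weights decreasing in $a_1$), the ``bookkeeping'' you defer to cannot be completed.
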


 %Define the index $\bm{a} = (a_1, a_2, a_3)$, with $a_1, \ a_2, \ a_3 \geq 0$, $|\bm{a}| := a_1 + a_2 + a_3$. Let $\partial^{\bm{a}} := \partial^{a_1}_{y_1} \partial^{a_2}_{y_2} \partial^{a_3}_{y_3}$. Then it holds that 
\begin{proof}
As in Lemma \ref{lm4-hat}, we define the index $\bm{a} = (a_1, a_2, a_3)$ with $a_1, \ a_2, \ a_3 \geq 0$ and $|\bm{a}| := a_1 + a_2 + a_3$. Let $\partial^{\bm{a}} := \partial^{a_1}_{y_1} \partial^{a_2}_{y_2} \partial^{a_3}_{y_3}$. It suffices to prove that for $j = 1, 2, \ldots , p+1$ and $\bm{a} = (a_1, a_2, a_3)$,
\begin{equation}
\sum_{|\bm{a}| = p+1, a_1 \leq j} \left( e^{\zeta t} \| \partial^{\bm{a}}_y \hat{\vp} (t) \|_{L^2(\mathbb R^3_+)}^2 
+ \int^t_0 e^{\zeta \tau} \left\| \partial^{\bm{a}}_y \hat{\frac{d}{dt}}\hat{\vp} (\tau) \right\|_{L^2(\mathbb R^3_+)}^2 d \tau \right)
\lesssim_{\Omega} \mathcal{R}_{p,\epsilon},
\label{ek0}
\end{equation}
where 
\begin{equation*}
\begin{split}
\mathcal{R}_{p,\epsilon} :=& \| \Phi_0\|^2_{H^3} + \int^t_0 e^{\zeta \tau} \|\psi_t (\tau) \|^2_{H^p} \, d \tau  
+ \epsilon^{-1}\int^t_0 e^{\zeta \tau} D_{p, \beta} (\tau) \, d \tau \\
& + \epsilon \int^t_0  e^{\zeta \tau}(\| \nabla \vp (\tau) \|^2_{H^{p}} +\| \nabla \psi (\tau) \|^2_{H^{p+1}} ) d \tau \\
& + (N_\beta (T) + \delta + \zeta) \int^t_0 e^{\zeta \tau} D_{3, \beta} (\tau) \, d \tau
+ \delta  \int^t_0 e^{\zeta \tau}  \, d \tau. 
\end{split}
\end{equation*}
Indeed the desired estimate \eqref{el0-comp} follows from changing the coordinate $y \in \mathbb R_+^3$ to the coordinate $x \in \Omega$ in the left hand side of \eqref{ek0} with $j=p+1$.

To obtain \eqref{ek0} with $j=1$, we add up \eqref{ec0-hat} with $l = p+1$ 
and \eqref{ee0-hat} with $a_1=0$, $a_2 + a_3 = p$, 
and estimate 
$\int^t_0 e^{\zeta \tau} \| \partial^{\bm{a}} \nabla_{y'} \hat{\frac{d}{dt}} \hat{\vp} \|_{L^2}^2  \, d \tau$ 
and 
$\int^t_0 e^{\zeta \tau} \| \partial^{\bm{a}} \nabla_y \nabla_{y'} \hat{\psi} \|_{L^2}^2  \, d \tau$ 
by using \eqref{ec0-hat}.
Now, assuming \eqref{ek0} holds for $j=q$, 
we show that it holds for $j=q+1$. 
We take the weighted-in-time integral of \eqref{eg0-C} with $l = p+1-q$ and $k=q-1$, 
and use \eqref{ek0} with $j = q$ 
to estimate the highest-order term in $\|{\nabla}^{p+1-q}_{y'} \hat{\frac{d}{dt}} \hat{\vp} \|^2_{H^q} $. 
Then we arrive at
\begin{equation}
\int^t_0 e^{\zeta \tau}  \left( \| \nabla^{q+1}  \nabla^{p+1-q}_{y'} \hat{\psi}(\tau) \|_{L^2(\mathbb R^3_+)}^2 + \| \nabla^q  \nabla^{p+1-q}_{y'} \hat{\vp} (\tau) \|_{L^2(\mathbb R^3_+)}^2 \right) \, d \tau 
\lesssim_{\Omega} \mathcal{R}_{p,\epsilon}.
\label{ek2}
\end{equation}
Taking $a_1 =q$ and $ a_2 + a_3 = p-q$ in \eqref{ee0-hat}, and using \eqref{ek0} with $j = q$ and \eqref{ek2} to estimate the second term on the right hand side of \eqref{ee0-hat},
we obtain
\begin{equation}
e^{\zeta t} \| \partial^{q+1}_1 \nabla^{p-q}_{y'} \hat{\vp} (t) \|_{L^2(\mathbb R^3_+)}^2 + \int^t_0 e^{\zeta \tau}  \left\|\partial^{q+1}_1 \nabla^{p-q}_{y'} \hat{\frac{d}{dt}} \hat{\vp}(\tau)  \right\|_{L^2(\mathbb R^3_+)}^2  \, d \tau 
\lesssim_{\Omega} \mathcal{R}_{p,\epsilon}.
\label{ek3}
\end{equation} 
Combining \eqref{ek3} and \eqref{ek0} with $j=q$, we obtain \eqref{ek0} with $j=q+1$. The proof is completed by induction. 
\end{proof}

%Writing the left hand side of \eqref{ek0-comp} in $x$-coordinates, we have
%\begin{lemma} \label{lm2-comp}
%Under the same assumption as in Proposition \ref{apriori1} with $m=3$,
%it holds, for $p=0,1,2$, 
%\begin{equation}
%\begin{split}
%&   e^{\zeta t} \| \nabla^{p+1}  \vp (t) \|^2 + \int^t_0 e^{\zeta \tau} \| \nabla^{p+1} (  \frac{d}{dt} \vp) (\tau) \|^2 \, d \tau   \\
%&\lesssim_{\Omega,\|M\|_{H^7}} \| \Phi_0\|^2_{H^3} + e^{\zeta t} \| \vp (t) \|_{H^p}^2 + \int^t_0 e^{\zeta \tau} \|\psi_t \|^2_{H^p} \, d \tau  
%+ \epsilon^{-1}\int^t_0 e^{\zeta \tau} D_{p, \beta} (\tau) \, d \tau \\
%& \quad + \epsilon \int^t_0  e^{\zeta \tau}(\| \nabla \vp \|^2_{H^{p}} +\| \nabla \psi \|^2_{H^{p+1}} ) d \tau \\
%& \quad %+\zeta \int_0^t e^{\zeta \tau}\|\Phi(\tau)\|_{H^3}^2\,d\tau
%+ (N_\beta (T) + \delta + \zeta) \int^t_0 e^{\zeta \tau} D_{3, \beta} (\tau) \, d \tau
%+ \delta  \int^t_0 e^{\zeta \tau}  \, d \tau. \\
%\end{split}
%\label{el0-comp}
%\end{equation}
%If $\|M\|_{H^{7}} \leq \kappa$ holds for $\kappa$ being in Lemma \ref{CattabrigaEst},
%we can replace $\lesssim_{\Omega,\|M\|_{H^{7}}}$ by $\lesssim$ in the above inequality.
%\end{lemma}

Moreover, we have the following lemma.
\begin{lemma} \label{lm3-comp}
Under the same assumption as in Proposition \ref{apriori1} with $m=3$,
it holds, for $p=0,1,2$, 
\begin{align}
&   e^{\zeta t} \| \nabla^{p+1}  \vp (t) \|^2 + \int^t_0 e^{\zeta \tau}  \|( \nabla^{p+1} \vp, \nabla^{p+2} \psi )(\tau)\|^2   \, d \tau  + \int^t_0 e^{\zeta \tau} \left\| \nabla^{p+1}   \frac{d}{dt} \vp (\tau) \right\|^2 d \tau   
\notag \\
&\lesssim_{\Omega} \| \Phi_0\|^2_{H^3} + e^{\zeta t} \| \vp (t) \|_{H^p}^2 + \int^t_0 e^{\zeta \tau} \|\psi_t(\tau) \|^2_{H^p} \, d \tau  + \int^t_0 e^{\zeta \tau} D_{p, \beta} (\tau) \, d \tau 
\notag\\
& \qquad + (N_\beta (T) + \delta + \zeta) \int^t_0 e^{\zeta \tau} D_{3, \beta} (\tau) \, d \tau
+ \delta  \int^t_0 e^{\zeta \tau}  \, d \tau. 
\label{em0-comp}
\end{align}
\end{lemma}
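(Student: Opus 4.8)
The plan is to combine the Cattabriga-type bound of Lemma~\ref{lm1-C} with the estimate of Lemma~\ref{lm1-comp}, and then to close the resulting inequality by absorbing the top-order terms into the left-hand side.

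First I would apply Lemma~\ref{lm1-C} with $k=p$ (legitimate since $p=0,1,2$), multiply by the time weight $e^{\zeta\tau}$ and integrate over $(0,t)$. This gives
\[
\int_0^t e^{\zeta\tau}\|(\nabla^{p+1}\vp,\nabla^{p+2}\psi)(\tau)\|^2\,d\tau
\lesssim_\Omega
\int_0^t e^{\zeta\tau}\Bigl(\|\psi_t\|_{H^p}^2+\|\nabla\psi\|_{H^p}^2+\bigl\|\tfrac{d}{dt}\vp\bigr\|_{H^{p+1}}^2\Bigr)\,d\tau
+(N_\beta(T)+\delta)\int_0^t e^{\zeta\tau}D_{3,\beta}\,d\tau
+\delta\int_0^t e^{\zeta\tau}\,d\tau .
\]
By the definition \eqref{Dkbeta-def} of $D_{p,\beta}$ one has $\|\nabla\psi\|_{H^p}^2+\|\tfrac{d}{dt}\vp\|_{H^p}^2\lesssim D_{p,\beta}$, and $\|\tfrac{d}{dt}\vp\|_{H^{p+1}}^2\lesssim D_{p,\beta}+\|\nabla^{p+1}\tfrac{d}{dt}\vp\|^2$; hence the right-hand side above is bounded by $\int_0^t e^{\zeta\tau}\|\nabla^{p+1}\tfrac{d}{dt}\vp\|^2\,d\tau$ plus terms that already appear on the right-hand side of \eqref{em0-comp}. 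Writing $E$ for that admissible right-hand side and setting $A:=e^{\zeta t}\|\nabla^{p+1}\vp(t)\|^2$, $B:=\int_0^t e^{\zeta\tau}\|\nabla^{p+1}\tfrac{d}{dt}\vp\|^2\,d\tau$, $C:=\int_0^t e^{\zeta\tau}\|(\nabla^{p+1}\vp,\nabla^{p+2}\psi)\|^2\,d\tau$, the step just described reads $C\lesssim_\Omega E+B$.

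Next I would invoke Lemma~\ref{lm1-comp}. Splitting off the top-order pieces via $\|\nabla\vp\|_{H^p}^2\lesssim D_{p,\beta}+\|\nabla^{p+1}\vp\|^2$ and $\|\nabla\psi\|_{H^{p+1}}^2\lesssim D_{p,\beta}+\|\nabla^{p+2}\psi\|^2$, the $\epsilon$-term in \eqref{el0-comp} is bounded by $\epsilon C$ up to admissible terms, so \eqref{el0-comp} yields $A+B\lesssim_\Omega \epsilon^{-1}E+\epsilon C$ for every $\epsilon\in(0,1)$. Substituting $C\lesssim_\Omega E+B$ gives $A+B\le C_\ast(\Omega)\bigl(\epsilon^{-1}E+\epsilon B\bigr)$; choosing $\epsilon=\epsilon(\Omega)\in(0,1)$ with $C_\ast(\Omega)\epsilon\le\tfrac12$ absorbs $\epsilon B$ into the left-hand side and produces $A+B\lesssim_\Omega E$ (the factor $\epsilon^{-1}$ having become an $\Omega$-constant). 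Feeding this back into $C\lesssim_\Omega E+B$ gives $C\lesssim_\Omega E$, so that $A+B+C\lesssim_\Omega E$, which is exactly \eqref{em0-comp}.

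The argument is essentially bookkeeping; the one point that needs care is that the absorption of $\epsilon C$ must be carried out with constants allowed to depend on $\Omega$ — these enter through the Cattabriga estimate underlying both Lemma~\ref{lm1-C} and Lemma~\ref{lm1-comp} — so the smallness threshold for $\epsilon$, and hence the implied constant in \eqref{em0-comp}, legitimately depend on $\Omega$, consistent with the $\lesssim_\Omega$ in the statement. No structural input beyond Lemmas~\ref{lm1-C} and \ref{lm1-comp} and the definition of $D_{p,\beta}$ is required.
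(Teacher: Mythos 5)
Your proposal is correct and follows essentially the same route as the paper: the paper's proof is precisely to take the weighted-in-time integral of \eqref{ef0-C} with $k=p$, combine it with \eqref{el0-comp}, and let $\epsilon$ be suitably small. Your write-up merely makes explicit the bookkeeping (splitting $H^{p+1}$-norms into a $D_{p,\beta}$ part plus a top-order part, and the $\Omega$-dependent absorption of $\epsilon B$), all of which is consistent with the definitions and the $\lesssim_\Omega$ in the statement.
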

\begin{proof}
Take the weighted-in-time integral of \eqref{ef0-C} with $k =p$ and
combine it together with \eqref{el0-comp}. 
Then, letting $\epsilon$ be suitably small, we arrive at \eqref{em0-comp}.
\end{proof}

\subsection{Elliptic estimates} \label{EllipticEst}

Using the elliptic estimate (Lemma \ref{ellipticEst}),
we rewrite some terms for the time-derivatives into terms for the spatial-derivatives.

\begin{lemma} \label{lm1-E}
Under the same assumption as in Proposition \ref{apriori1} with $m=3$, it holds that
\begin{gather}
\| \nabla^{k+2} \psi \| \lesssim  \| \psi_t \|_{H^{k}} + E_{k+1,\beta}+ \delta, \quad k=0,1,
\label{eh0-E} \\
\| \nabla^{2}   \psi_t \|  \lesssim  \|\psi_{tt} \| +D_{2,\beta}.
\label{ej0-E}
\end{gather}
\end{lemma}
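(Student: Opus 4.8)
The plan is to regard the momentum equation \eqref{eq-pv2} as a second-order elliptic (Lam\'e-type) boundary value problem for $\psi$ subject to the Dirichlet condition \eqref{pbc}, and to feed the remaining terms into the elliptic regularity estimate of Lemma \ref{ellipticEst}. For \eqref{eh0-E}, rewrite \eqref{eq-pv2} as $L\psi = h$ with $\psi|_{\partial\Omega}=0$ and $\psi\to 0$ as $|x|\to\infty$, where $h := \rho\{\psi_t+(u\cdot\nabla)\psi\} + p'(\rho)\nabla\vp - g - G$. Since $\mu_1>0$ and $\mu=2\mu_1+\mu_2>0$, the operator $L$ is strongly elliptic, so Lemma \ref{ellipticEst} bounds $\|\nabla^{k+2}\psi\|$ by $\|h\|_{H^k}$ plus a lower-order term in $\psi$ absorbed by $E_{k+1,\beta}$, for $k=0,1$ (so $k+2\le 3$ and we stay within $\Phi\in H^3$). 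It then remains to estimate $\|h\|_{H^k}$ term by term using \eqref{bdd} and the $3$D Sobolev/Moser product inequalities: $\rho\psi_t$ contributes $\|\psi_t\|_{H^k}$; $\rho(u\cdot\nabla)\psi$ and $p'(\rho)\nabla\vp$ contribute $\|\nabla\psi\|_{H^k}+\|\nabla\vp\|_{H^k}$, controlled by $E_{k+1,\beta}$; the homogeneous term $g$ is linear in $\Phi$ with coefficients built from $\tilde\rho-\rho_+$, $\tilde u-(u_+,0,0)$, $\nabla\tilde u$, $U$, $\nabla U$, hence has $\delta$-small coefficients by \eqref{stdc1} and \eqref{ExBdry0} and is absorbed into $\delta+E_{k+1,\beta}$; and $G$ is bounded by $\|G\|_{H^5}\lesssim\delta$ from \eqref{h1}. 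Collecting these yields \eqref{eh0-E}.

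For \eqref{ej0-E}, differentiate \eqref{eq-pv2} in $t$ — legitimate since $\psi\in X^{\text{\rm e}}_{3,\beta}(0,T)$ — and use $\partial_t G = 0$ together with $\psi_t|_{\partial\Omega}=0$ (the boundary is time-independent) to obtain $L\psi_t = \rho\psi_{tt} + \rho_t\psi_t + \partial_t\{\rho(u\cdot\nabla)\psi\} + \partial_t\{p'(\rho)\nabla\vp\} - \partial_t g$, again an elliptic problem for $\psi_t$ with homogeneous Dirichlet data. Lemma \ref{ellipticEst} with $k=0$ gives $\|\nabla^2\psi_t\| \lesssim \|L\psi_t\| + \|\nabla\psi_t\|$, and $\|\nabla\psi_t\|\le\|\psi_t\|_{H^1}$ already sits inside $D_{2,\beta}$. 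The leading term $\rho\psi_{tt}$ contributes $\|\psi_{tt}\|$; every other term carries a factor equal to $\rho_t=\vp_t$, $u_t=\psi_t$, or a single spatial derivative of $\Phi$, each controlled by $D_{2,\beta}$ — recall $D_{2,\beta}\supset\|\nabla\Phi\|_{H^1}^2 + \|\nabla^2\psi\|_{H^1}^2 + \|\partial_t\psi\|_{H^1}^2 + \|\tfrac{d}{dt}\vp\|_{H^2}^2$, while $\|\vp_t\|_{L^\infty}$ and $\|\psi_t\|_{L^\infty}$ are small by \eqref{PhiSup}, \eqref{PhiH1}. The one term needing care is $\partial_t\{p'(\rho)\nabla\vp\} = p''(\rho)\vp_t\nabla\vp + p'(\rho)\nabla\vp_t$: for the last summand one re-expresses $\vp_t$ via \eqref{eq-pv1} as $\vp_t = -u\cdot\nabla\vp - \rho\div\psi + f + F$, so $\|\nabla\vp_t\| \lesssim \|\nabla^2\vp\| + \|\nabla\Phi\|_{H^1} + \|\nabla(f,F)\| \lesssim \sqrt{D_{2,\beta}} + \delta$, using $\|\nabla^2\vp\|\le\|\nabla\Phi\|_{H^1}$ and \eqref{stdc1}, \eqref{ExBdry0}, \eqref{h1}. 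Hence $\|L\psi_t\| \lesssim \|\psi_{tt}\| + D_{2,\beta}$, which is \eqref{ej0-E}.

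The product-rule and embedding bookkeeping is routine; the two points that actually need attention are, first, that Lemma \ref{ellipticEst} is invoked with a constant depending only on $\|M\|_{H^9}$, so both estimates are stated with $\lesssim$ rather than $\lesssim_\Omega$ (in contrast with the Cattabriga-based Lemmas \ref{lm1-C} and \ref{lm2-C}); and, second — the main obstacle — that for \eqref{ej0-E} no term on the right of the differentiated equation secretly demands a norm of $\Phi$ not captured by $D_{2,\beta}$. This is exactly why $\nabla\vp_t$ must be traded, via the continuity equation \eqref{eq-pv1}, for $\nabla^2\vp$ and $\nabla^2\psi$ (both sitting inside $\|\nabla\Phi\|_{H^1}^2$ and $\|\nabla^2\psi\|_{H^1}^2\subset D_{2,\beta}$) rather than kept as a time derivative, and why one must retain the small prefactors ($\vp_t$, $\psi_t$, $\nabla\tilde u$, $U$, $\dots$) that allow several products to be absorbed into $D_{2,\beta}$ and $\delta$.
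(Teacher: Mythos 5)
Your proof follows the same route as the paper's: both inequalities are obtained by viewing \eqref{eq-pv2} (respectively its time derivative, using $\partial_t G=0$ and the time-independence of the boundary so that $\psi_t|_{\partial\Omega}=0$) as a homogeneous Dirichlet problem for the Lam\'e operator and invoking Lemma \ref{ellipticEst}, with the right-hand side estimated via \eqref{bdd}, \eqref{stdc1}, \eqref{ExBdry0} and \eqref{h1}. The only cosmetic differences are that the paper freezes the coefficients at $(\rho_+,u_+)$ before applying the elliptic estimate (putting the variable-coefficient remainders into $H$), and controls $\|\nabla\vp_t\|$ directly through the term $\|\tfrac{d}{dt}\vp\|^2_{H^2}$ already contained in $D_{2,\beta}$ rather than by substituting the continuity equation, which is why the paper's \eqref{ej0-E} carries no extra $+\delta$ while your route picks up a harmless one from $\nabla F$.
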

\begin{proof}
Let us first show \eqref{eh0-E}.
From \eqref{eq-pv}, we have the elliptic boundary value problem:
\begin{equation} \label{eh1-E}
-\mu_1 \Delta \psi -(\mu_1 + \mu_2) \nabla \div \psi = - \rho_+ \psi_t - p'(\rho_+)\nabla \vp + H, \quad 
\psi|_{\partial\Omega}=0, \quad \lim_{|x|\to\infty}\psi=0,
\end{equation}
where
\begin{equation*}
H := -(\rho - \rho_+) \psi_t - (p'(\rho) -p'(\rho_+)) \nabla \vp -\rho (u \cdot \nabla) \psi +g +G. 
\end{equation*}
Applying Lemma \ref{ellipticEst}, we obtain, for $k=0, \ 1$, 
\begin{equation*}
\| \nabla^{k+2} \psi \|  \lesssim  \|\psi_t \|_{H^{k}} + \| \nabla \vp \|_{H^k} +\|H \|_{H^k} + \|  \psi \|.  
%\label{eh2-E}
\end{equation*}
It is straightforward to check that the term $\|H \|_{H^k}$ is bound by the right hand side of \eqref{eh0-E}. 
Therefore, we conclude \eqref{eh0-E}.

We next apply $\partial_t$ to \eqref{eh1-E} to obtain
\begin{equation*}
-\mu_1 \Delta \psi_t - (\mu_1 + \mu_2) \nabla \div \psi_t 
= - \rho_+\psi_{tt} -p'(\rho_+)\nabla \vp_t + H_t, \quad 
\psi_t |_{\partial \Omega} =0, \quad \lim_{|x|\rightarrow \infty} \psi_t =0. 
\end{equation*}
Applying Lemma \ref{ellipticEst} with $k=0$ again to this boundary value problem gives
\begin{equation*}
\| \nabla^{ 2} \psi_t \|  \lesssim  \|\psi_{tt} \|  + \| \nabla \vp_t \|  +\|H_t \| + \|  \psi_t \|.
%\label{ej1-E}
\end{equation*}
Furthermore, $\|H_t \|$ can be estimated easily by the right hand side of \eqref{ej0-E}.
From the discussion above, we obtain \eqref{ej0-E}.
\end{proof}

\subsection{Completion of the a priori estimates} \label{ss-comp-apriori}

Now we can complete the a priori estimates. 

\begin{proof}[Proof of Proposition \ref{apriori1}]
From \eqref{ea0}, we know that
\begin{align}
& e^{\zeta t} E_{0, \beta} (t) + \int^t_0 e^{\zeta \tau} D_{0,\beta} (\tau ) \, d \tau  
\notag \\
& \lesssim \|\Phi_0\|_{\lteasp{\beta}}^2 + \| \Phi_0\|^2_{H^3} 
+ (N_\beta (T) + \delta +\zeta) \int^t_0 e^{\zeta \tau} D_{3, \beta} (\tau) \, d \tau
+ \delta  \int^t_0 e^{\zeta \tau}  \, d \tau.   
\label{en0-comp}
\end{align}
We next show that for $l=1,2,3$,
\begin{align}
& e^{\zeta t} E_{l, \beta} (t) + \int^t_0 e^{\zeta \tau} D_{l,\beta} (\tau ) \, d \tau  
\notag \\
& \lesssim_{\Omega}  \| \Phi_0\|^2_{H^3} 
+ e^{\zeta t} E_{l-1, \beta} (t) + \int^t_0 e^{\zeta \tau} D_{l-1,\beta} (\tau ) \, d \tau
\notag \\
& \qquad + (N_\beta (T) + \delta +\zeta) \int^t_0 e^{\zeta \tau} D_{3, \beta} (\tau) \, d \tau
+ \delta  \int^t_0 e^{\zeta \tau}  \, d \tau + (l-1) \delta e^{\zeta \tau} .   
\label{en1-comp}
\end{align}

Let us first treat the case $l=1$.
Multiply \eqref{em0-comp} with $p =0$ by a positive constant $\nu$ and add the result to 
\eqref{ed0} with $k=0$. Taking $\nu$ and $\lambda$ suitably small, we obtain 
\eqref{en1-comp} with $l=1$.

Next, for the case $l=2$, we recall \eqref{PhiH1}, then multiply \eqref{em0-comp} with $p =1$ and 
\eqref{eh0-E} with $k=0$ by $\nu$ and $\nu e^{\zeta t}$, respectively.
Adding up the two results and \eqref{ec0}
and then taking $\nu$ small, we have \eqref{en1-comp} with $l=2$.

Lastly, for the case $l=3$, we multiply \eqref{ej0-E} by $\nu e^{\zeta \tau}$ and integrate it over $(0,t)$.
We then multiply \eqref{eh0-E} with $k=1$ and 
\eqref{em0-comp} with $p =2$ by $\nu$ and $\nu^2$, respectively.
Add up these three results and \eqref{ed0} with $k=1$. 
Then taking $\nu$ and $\lambda$ suitably small yields \eqref{en1-comp} with $l=3$.

The estimates \eqref{en0-comp} and \eqref{en1-comp} imply that
\begin{equation*}
\begin{split}
& e^{\zeta t} E_{3, \beta} (t) + \int^t_0 e^{\zeta \tau} D_{3,\beta} (\tau ) \, d \tau  \\
& \lesssim_{\Omega}  \|\Phi_0\|_{\lteasp{\beta}}^2 + \| \Phi_0\|^2_{H^3} 
+ (N_\beta (T) + \delta +\zeta) \int^t_0 e^{\zeta \tau} D_{3, \beta} (\tau) \, d \tau
+ \delta  \int^t_0 e^{\zeta \tau}  \, d \tau + \delta e^{\zeta \tau}.   \\  
\end{split}
\end{equation*}
Letting $N_\beta (T) + \delta +\zeta$ be small enough, one can have \eqref{apes0}.
Dividing \eqref{apes0} by $e^{\zeta \tau}$ and using equations \eqref{eq-pv1} and \eqref{eq-pv2}
to obtain the estimates for the time-derivatives, we conclude \eqref{apes1}.
\end{proof}

We discuss briefly the proof of Corollary \ref{cor1}.

\begin{proof}[Proof of Corollary \ref{cor1}]
If $\|M\|_{H^{9}(\mathbb R^2)} \leq \kappa$ holds for $\kappa$ being in Lemma \ref{CattabrigaEst},
we can replace $\lesssim_{\Omega}$ by $\lesssim$ 
in inequalities \eqref{ef0-C}, \eqref{eg0-C}, and \eqref{em0-comp}.
Then following the proof of Proposition \ref{apriori1} with these improved inequalities, 
we conclude Corollary \ref{cor1}.
\end{proof}

\section{Construction of stationary solutions}\label{S5}
For the construction of stationary solutions, 
we make use of the time-global solution $\Phi$ in Theorem \ref{global1}. 
We first prove an unique result Proposition \ref{5.1} 
for the time-periodic solutions to \eqref{eq-pv1}--\eqref{pbc}. 
Then we consider $\Phi$ and its translated version $\Phi^k(t,x):=\Phi(t+kT^*,x)$ 
for any $T^* > 0$ and $k=1,2,3,\ldots$. 
We prove in Proposition \ref{5.3} that $\{\Phi^k\}$ is a Cauchy sequence 
in the Banach space $C([0,T^*];H^{m-1}(\Omega)) \cap C^1([0,T^*];H^{m-3}(\Omega))$ 
and obtain a limit $\Phi^*$ from it, 
then we show in Proposition \ref{5.3} 
that $\Phi^*$ is a time-periodic solution to problem \eqref{eq-pv1}--\eqref{pbc} with period $T^*>0$. 
In Subsection \ref{S5.2}, 
using uniqueness of time-periodic solutions, %to the problem \eqref{eq-pv1}--\eqref{pbc}, 
we prove that $\Phi^*$ is actually time-independent and 
therefore gives a stationary solution to \eqref{eq-pv1}--\eqref{pbc}.

We remark that it is also possible to show directly that
$\Phi^k_t$ converges to zero as $k\to\infty$ by differentiating equations \eqref{eq-pv}
with respect to $t$ and then applying the energy method used in Section \ref{sec3} 
to the resultant equations. 
However the computations are very long.
Therefore we adopt the method mentioned above to construct the stationary solution.

\subsection{Time-periodic solutions}\label{S5.1}
\subsubsection{Uniqueness}\label{S5.1.1}
In this subsection, we show the uniqueness of time-periodic solutions to the problem 
of equations \eqref{eq-pv1} and \eqref{eq-pv2} 
with boundary condition \eqref{pbc} in the solution space 
\begin{equation*}
\mathcal{X}^{\text{\rm e}}_{m,\beta} (0,T)
=X^{\text{\rm e}}_{m-1,\beta} (0,T)
\cap L^\infty(0,T ; H^m(\Omega)).
%\{
%(\vp,\psi) \in L^\infty(0,T ; H^m)
%\; ; \,
%\nabla \psi \in L^2(0,T ; H^m)
%\}.
\end{equation*}

\begin{proposition}\label{5.1}
Let \eqref{super1} and \eqref{cd-st} hold.
For $\beta>0$ being in Theorem \ref{global1},
there exists $\epsilon>0$ depending on $\|M\|_{H^9}$ but independent of $\Omega$ such that 
if a time-periodic solution $\Phi^* \in \mathcal{X}^{\text{\rm e}}_{3,\beta} (0,T)$
with a period $T^*>0$ to problem \eqref{eq-pv1}--\eqref{pbc}
exists and satisfies the following inequality, then it is unique:
\begin{equation}\label{uniasp1}
\sup_{t\in[0,T^*]}(\|\Phi^*(t)\|_{H^3}
+\|\partial_t \vp^*(t)\|_{H^{2}}
+\|\partial_t \psi^*(t)\|_{H^{1}})
+\delta \leq \epsilon.
\end{equation}
\end{proposition}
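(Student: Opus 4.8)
The plan is to compare two candidate time-periodic solutions, run (the differential form of) the energy estimates of Section~\ref{sec3} on their difference --- which solves an \emph{inhomogeneous-term-free} version of \eqref{eq-pv1}--\eqref{eq-pv2} --- and then use periodicity to kill the energy. So suppose $\Phi^{*}=(\vp^{*},\psi^{*})$ and $\bar\Phi^{*}=(\bar\vp^{*},\bar\psi^{*})$ are two time-periodic solutions in $\mathcal{X}^{\text{\rm e}}_{3,\beta}(0,T)$ of the same period $T^{*}>0$, both satisfying \eqref{uniasp1}, and set $\Psi=(\sigma,\chi):=\Phi^{*}-\bar\Phi^{*}$. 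Writing $\rho^{*}:=\rt\circ\tilde{M}+\vp^{*}$ and $u^{*}:=\ut\circ\tilde{M}+U+\psi^{*}$ and subtracting the two copies of \eqref{eq-pv1}--\eqref{eq-pv2}, the terms $F$ and $G$ (which do not depend on $\Phi$) cancel, leaving
\[
\sigma_{t}+u^{*}\cdot\nabla\sigma+\rho^{*}\,\div\chi=\mathfrak{f},\qquad
\rho^{*}\{\chi_{t}+(u^{*}\cdot\nabla)\chi\}-L\chi+p'(\rho^{*})\nabla\sigma=\mathfrak{g},\qquad
\chi|_{\partial\Omega}=0,
\]
where $\mathfrak{f}$ and $\mathfrak{g}$ are homogeneous in $\Psi$: each summand is one of $\sigma,\chi,\nabla\sigma,\nabla\chi$ multiplied by a coefficient built from $\nabla(\rt,\ut,U)$ and from $\Phi^{*},\bar\Phi^{*},\partial_{t}\Phi^{*},\partial_{t}\bar\Phi^{*}$. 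By \eqref{stdc1}, \eqref{ExBdry3}, the embeddings $H^{2}\hookrightarrow L^{\infty}$ and $H^{1}\hookrightarrow L^{6}$, and \eqref{uniasp1}, every such coefficient is small: its $L^{\infty}$-norm is $\lesssim\epsilon$, or it is pointwise bounded by $\epsilon\,e^{-\alpha x_{1}}$, or (in the single place where $\partial_{t}\bar\psi^{*}$ occurs) its $L^{6}$-norm is $\lesssim\epsilon$.

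I would then reproduce, for this linear homogeneous system, the chain of estimates of Section~\ref{sec3}: the weighted $L^{2}$ identity as in Lemma~\ref{lm1} (multiply the $\sigma$-equation by $(p'(\rho^{*})/\rho^{*})\,\sigma$ and the $\chi$-equation by $\chi$, weight by $e^{\beta x_{1}}$, integrate; the outflow condition $\ub\cdot n\geq c>0$ in \eqref{bce} with $\chi|_{\partial\Omega}=0$ gives the favorable boundary term, the supersonic condition \eqref{super1} makes the quadratic form produced by $\nabla(e^{\beta x_{1}})\cdot(\text{flux})$ positive definite, and the viscosity gives the dissipation $\|\nabla\chi\|^{2}_{\lteasp{\beta}}$), the time- and spatial-derivative estimates as in Lemmas~\ref{lm2}--\ref{lm3-comp} (boundary flattening, the normal-derivative cancellation, the Cattabriga estimate of Lemma~\ref{lm1-C}), and the elliptic estimates. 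The computations are local in time, so the absence of initial data is irrelevant. The key point is that, $F$ and $G$ being absent, \emph{no} remainder of size $\delta$ appears; every remainder now carries a factor $\epsilon$ coming from the smallness in \eqref{uniasp1}. Combining as in Subsection~\ref{ss-comp-apriori} --- all of it carried out at the $H^{2}$ level, which is exactly what the regularity $\Psi\in L^{\infty}_{t}H^{3}$, $\partial_{t}\sigma\in H^{2}$, $\partial_{t}\chi\in H^{1}$ inherited from \eqref{uniasp1} affords --- gives, for $t\in[0,T^{*}]$,
\[
\frac{d}{dt}E_{2,\beta}(t)+c\,D_{2,\beta}(t)\leq C\epsilon\,D_{2,\beta}(t),
\]
with $E_{2,\beta},D_{2,\beta}$ as in \eqref{Ekbeta-def}--\eqref{Dkbeta-def} but formed from $\Psi$ in place of $\Phi$.

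Choosing $\epsilon$ so small that $C\epsilon\leq c/2$ and integrating this inequality over one period, the left-hand side telescopes by $T^{*}$-periodicity ($E_{2,\beta}(T^{*})=E_{2,\beta}(0)$), so that $\tfrac{c}{2}\int_{0}^{T^{*}}D_{2,\beta}(\tau)\,d\tau\leq0$; hence $D_{2,\beta}\equiv0$ on $[0,T^{*}]$, and since $D_{2,\beta}(t)=0$ forces $\|\Psi(t)\|_{\lteasp{\beta}}=0$, we conclude $\Psi\equiv0$, i.e.\ $\Phi^{*}=\bar\Phi^{*}$. As in Section~\ref{sec3}, $\Omega$ enters only through the Cattabriga estimate (Lemma~\ref{CattabrigaEst}), so $\epsilon$ can be taken to depend only on $\|M\|_{H^{9}}$, and under the flatness hypothesis of Corollary~\ref{cor2} it is independent of $\Omega$.

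I expect the main obstacle to be the derivation of the energy inequality at this limited regularity. In estimating $\mathfrak{g}$ one meets a term $\sigma\,\partial_{t}\bar\psi^{*}$ in which $\partial_{t}\bar\psi^{*}$ lies only in $H^{1}\hookrightarrow L^{6}$, not in $L^{\infty}$; it must be absorbed into the viscous dissipation by interpolating $\|e^{\beta x_{1}/2}\Psi\|_{L^{3}}$ between $\|e^{\beta x_{1}/2}\Psi\|_{L^{2}}$ and $\|e^{\beta x_{1}/2}\nabla\Psi\|_{L^{2}}$, at the cost of the small factor $\epsilon$. One must likewise check that all commutator terms and all Cattabriga/elliptic estimates close at the $H^{2}$ level for $\Psi$, using the one extra spatial derivative and the time regularity of $\Phi^{*},\bar\Phi^{*}$ supplied precisely by hypothesis \eqref{uniasp1}.
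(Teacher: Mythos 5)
Your core mechanism is the right one and matches the paper's: subtract the two periodic solutions so the inhomogeneous terms $F,G$ cancel, observe that the resulting system is linear and homogeneous in the difference with coefficients made small by \eqref{uniasp1}, derive a weighted energy inequality whose dissipation is coercive, and integrate over one period so that periodicity annihilates the energy. The difference is that you propose to rerun the \emph{entire} machinery of Section \ref{sec3} (time derivatives, boundary flattening, normal-derivative cancellation, Cattabriga and elliptic estimates) at the $H^2$ level, whereas the paper stops after the single weighted $L^2$ identity. The reason the $L^2$ step suffices is exactly the structure you already identified: the supersonic condition makes the quadratic form coming from $\nabla(e^{\beta x_1})\cdot(\text{flux})$ positive definite in \emph{both} components of the difference, so the resulting inequality \eqref{5.1-eq14} already carries the dissipation $c\beta\|\overline{\Phi}\|_{\lteasp{\beta}}^2$ of the full difference (not merely of its gradient), and integrating over a period immediately forces $\overline{\Phi}\equiv 0$. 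No higher-order norms are needed to conclude uniqueness. This matters beyond economy: the obstacle you flag at the end is real, since the regularity afforded by \eqref{uniasp1} (only $\partial_t\psi^*\in H^1$, no control of $\partial_{tt}\psi^*$) is not obviously enough to justify the second-order time-derivative and Cattabriga estimates for the difference, so closing your $H^2$-level inequality would require additional work that the statement's hypotheses may not support. Your treatment of the single genuinely delicate remainder, $\sigma\,\partial_t\bar\psi^*$, by interpolation against the viscous dissipation is essentially what the paper does (it uses $\|\psi^{\#}_t\|_{L^4}\,\|e^{\beta x_1/2}\overline{\vp}\|_{L^2}\,\|e^{\beta x_1/2}\overline{\psi}\|_{L^4}$ and Sobolev embedding), and your remark on the $\Omega$-independence of $\epsilon$ is consistent with the fact that the Cattabriga estimate never enters the $L^2$-only argument.
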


Let $\Phi^*=(\vp^*,\psi^*)$ and $\Phi^\#=(\vp^\#,\psi^\#)$ 
be time-periodic solutions to \eqref{eq-pv1}--\eqref{pbc}.
It is straightforward to see that
$\overline{\Phi} = (\overline{\vp}, \overline{\psi} )= \Phi^* - \Phi^\#$ 
satisfies the system   
\begin{subequations} \label{5.1-eq1}
\begin{gather}
\overline{\vp}_{t} + (\tilde{u} + U + \psi^* ) \cdot \nabla \overline{\vp}  + (\tilde{\rho} + \vp^* ) \div \overline{\psi}  
=\overline{f}, \label{5.1-eq1-1}
\\
(\tilde{\rho} + \vp^*) \{ \overline{\psi}_{t} + (\tilde{u} + U + \psi^*) \cdot \nabla \overline{\psi}  \} - L \overline{\psi} + p'(\tilde{\rho} + \vp^*) \nabla \overline{\vp} 
= \overline{g}. \label{5.1-eq1-2}
\end{gather}
The boundary condition for $(\overline{\vp},\overline{\psi})$ 
follow from (\ref{pbc}) as
\begin{gather*}
\overline{\psi} (t,M(x'),x') = 0.
\end{gather*}
\end{subequations}
Here $\overline{f}$ and $\overline{g}$ are defined by
\begin{align*}
\overline{f} &:=
- \nabla\tilde{\rho} \cdot \overline{\psi}  - \ut' \overline{\vp}  - \overline{\vp} \div U - \overline{\psi} \cdot \nabla \vp^\# - \overline{\vp} \div \psi^\#,
\\
\overline{g} &:= - \left(\{(\rt\!+\!\vp^*)\overline{\psi}+\overline{\vp} \psi^\# \}\!\cdot\! \nabla \right) (\tilde{u}\!+\!U) - \overline{\varphi} ((\tilde{u}\!+\!U) \cdot \nabla) (\tilde{u}\!+\!U) -(p'(\rt\!+\!\vp^*)-p'(\rt\!+\!\vp^\#)) \nabla \tilde{\rho}
\\
&\qquad - \overline{\vp}  \psi^\#_t - \{(\rt+\vp^*)(\ut+U+\psi^*)-(\rt+\vp^\#)(\ut+U+\psi^\#) \}\cdot \nabla \psi^\# 
\\ 
&\qquad - ( p'(\tilde{\rho} + \vp^*) -p'(\tilde{\rho} + \vp^\#) ) \nabla \vp^\#.
\end{align*}
It is easy to check from \eqref{uniasp1} that 
\begin{equation*}
|\overline{f}| \lesssim \epsilon |\overline{\Phi}|, \quad |\overline{g}| \lesssim \epsilon |\overline{\Phi}| + |\overline{\vp}| |\psi^\#_t| . 
\end{equation*}

\begin{proof}[Proof of Proposition \ref{5.1}]
Denote $p'_* := p'(\tilde{\rho} + \vp^*)$ and $P_* := \frac{p'_*}{\tilde{\rho} + \vp^*}$. 
Multiplying \eqref{5.1-eq1-1} by $e^{\beta x_1} P_* \overline{\vp}$, we obtain
\begin{multline}\label{5.1-eq2}
\left(\frac{1}{2} e^{\beta x_1} P_* \overline{\vp}^2\right)_t  
+ \div\left(\frac{1}{2}  e^{\beta x_1} P_* (\tilde{u} + U + \psi^*) \overline{\vp}^2 \right)
- \frac{\beta}{2}   e^{\beta x_1} P_* (\tilde{u}_1 + U_1 + \psi^*_1 ) \overline{\vp}^2 
\\
+ e^{\beta x_1} p_*' \overline{\vp} \div\overline{\psi}
=e^{\beta x_1} P_* \overline{\vp} \overline{f}
+\frac{1}{2}  e^{\beta x_1} (P_*)_t \overline{\vp}^2 
+ \frac{1}{2}   e^{\beta x_1} \{\div (P_* (\tilde{u} + U + \psi^* ))\} \overline{\vp}^2.
\end{multline}
Multiplying \eqref{5.1-eq1-2} by $e^{\beta x_1} \overline{\psi}$ gives
\begin{align}
&\left(\frac{1}{2}  e^{\beta x_1} (\tilde{\rho} + \vp^*) |\overline{\psi}|^2\right)_t 
+ \div \left( \frac{1}{2} e^{\beta x_1} |\overline{\psi}|^2 (\tilde{\rho} + \vp^*) (\tilde{u} + U  + \psi^*) +e^{\beta x_1} p'_*\overline{\vp}\overline{\psi} \right) 
\notag \\
&\quad - \div \left(\mu_1 e^{\beta x_1} (\nabla \overline{\psi})\cdot \overline{\psi}
+(\mu_1 + \mu_2) e^{\beta x_1} (\div \overline{\psi}) \overline{\psi} \right)
\notag \\
&\quad - \frac{\beta}{2} e^{\beta x_1}(\tilde{\rho} + \vp^*) (\tilde{u}_1 + U_1 + \psi^*_1)|\overline{\psi}|^2
- \beta e^{\beta x_1} p'_*\overline{\vp}\overline{\psi}_1
\notag \\
&\quad + \mu_1 e^{\beta x_1} |\nabla \overline{\psi}|^2  
+ (\mu_1 + \mu_2)  e^{\beta x_1} |\div \overline{\psi}|^2
- e^{\beta x_1} p'_* \overline{\vp} \div\overline{\psi}
\notag \\
& = e^{\beta x_1} \overline{\psi} \cdot \overline{g}
+ \frac{1}{2} e^{\beta x_1} \vp^*_t |\overline{\psi}|^2
+ \frac{1}{2} e^{\beta x_1} |\overline{\psi}|^2 \div\{(\tilde{\rho} + \vp^*)(\tilde{u} + U + \psi^*)\}
+ e^{\beta x_1} \overline{\vp}\nabla p'_* \cdot \overline{\psi}
\notag \\
&\quad - \mu_1 \beta   e^{\beta x_1}  (\partial_1 \overline{\psi}) \cdot \overline{\psi} 
- (\mu_1 + \mu_2) \beta  e^{\beta x_1} (\div \overline{\psi}) \overline{\psi}_1.
\label{5.1-eq3}
\end{align}
Adding \eqref{5.1-eq2} and \eqref{5.1-eq3} yields that
\begin{multline}
\left(\frac{1}{2} e^{\beta x_1} P_* \overline{\vp}^2
+\frac{1}{2}  e^{\beta x_1} (\tilde{\rho} + \vp^*) |\overline{\psi}|^2\right)_t 
+\div\left(\overline{G}_1+\overline{B}_1\right)
-\beta(\overline{G}_1)_1
\\
+ \mu_1 e^{\beta x_1} |\nabla \overline{\psi}|^2  
+ (\mu_1 + \mu_2)  e^{\beta x_1} |\div \overline{\psi}|^2 
=\overline{R}_1+ \beta (\overline{B}_1)_1,
\label{5.1-eq4}
\end{multline}
\begin{align*}
\overline{G}_1:=& \frac{1}{2} e^{\beta x_1} P_* (\tilde{u} + U + \psi^*) \overline{\vp}^2
+\frac{1}{2} e^{\beta x_1} |\overline{\psi}|^2 (\tilde{\rho} + \vp^*) (\tilde{u} + U + \psi^*)
+e^{\beta x_1} p'_*\overline{\vp}\overline{\psi},
\\
\overline{B}_1:=&-\mu_1 e^{\beta x_1} (\nabla \overline{\psi})\cdot \overline{\psi}
-(\mu_1 + \mu_2) e^{\beta x_1} (\div \overline{\psi}) \overline{\psi},
\\
\overline{R}_1:=& \frac{1}{2}  e^{\beta x_1} (P_*)_t \overline{\vp}^2
+ \frac{1}{2}   e^{\beta x_1} \{\div (P_* (\tilde{u} + U + \psi^* ))\} \overline{\vp}^2 
+ \frac{1}{2} e^{\beta x_1} \vp^*_t |\overline{\psi}|^2
\notag \\
&+ \frac{1}{2} e^{\beta x_1} |\overline{\psi}|^2 \div\{(\tilde{\rho} + \vp^*)(\tilde{u} + U + \psi^*)\}
+ e^{\beta x_1} \overline{\vp}\nabla p'_* \cdot \overline{\psi}
+ e^{\beta x_1} P_* \overline{\vp} \overline{f}
+ e^{\beta x_1} \overline{\psi} \cdot \overline{g}.
\end{align*}

The second term on the left hand side of \eqref{5.1-eq4} is estimated from
below by using the divergence theorem, the fact $(u_b\cdot n) \geq c>0$, $ P_* \geq c>0$ and \eqref{pbc} as
\begin{equation}\label{5.1-eq5}
\int_\Omega \div(\overline{B}_1+\overline{G}_1) \,dx 
= \int_{\partial\Omega} \frac{1}{2} e^{\beta M(x')}P_* \overline{\vp}^2 (u_b\cdot n) d \sigma
\gtrsim \|\overline{\vp}(t,M(\cdot),\cdot) \|_{L^2(\mathbb R^2)}^2.
\end{equation}
Next we derive the lower estimate of the third term on the
left hand side of \eqref{5.1-eq4}.
We compute the term $\overline{G}_1$ as
\begin{gather*}
(\overline{G}_1)_1
=
\frac{1}{2} e^{\beta x_1} p'(\rho_+) \rho_+^{-1} u_+\overline{\vp}^2
+\frac{1}{2} e^{\beta x_1} |\overline{\psi}|^2 \rho_+u_+
+e^{\beta x_1} p'(\rho_+) \overline{\vp}\overline{\psi}_1
+ e^{\beta x_1} \overline{R}_{2},
\label{5.1-ea6}
\\
\begin{aligned}
\overline{R}_{2}
:=& \frac{1}{2} \left\{P_* (\tilde{u}_1 + U_1 + \psi^*_1 )- p'(\rho_+)\rho_+^{-1}u_+ \right\}\overline{\vp}^2
+\frac{1}{2}|\overline{\psi}|^2 \left\{(\tilde{\rho} + \vp^*) (\tilde{u}_1 + U_1 + \psi^*_1)-\rho_+u_+ \right\}
\\
&+\left\{p'_*-p'(\rho_+)\right\}\overline{\vp}\overline{\psi}_1.
\end{aligned}
\notag
\end{gather*}
Thus, by using this, the third term on the left hand side of \eqref{5.1-eq4} is rewritten as
\begin{gather*}
-\beta (\overline{G}_1)_1
=
\beta e^{\beta x_1}
\Bigl(
F(\overline{\vp},\overline{\psi}_1) + \frac{\rho_+ |u_+|}{2} |\overline{\psi}'|^2 - \overline{R}_{2}
\Bigr),
\\
F(\overline{\vp},\overline{\psi}_1)
:=
\frac{p'(\rho_+)}{2\rho_+}|u_+| \overline{\vp}^2
- p'(\rho_+) \overline{\vp} \overline{\psi}_1
+ \frac{\rho_+ |u_+|}{2} \overline{\psi}_1^2,
\nonumber
\end{gather*}
where $\overline{\psi}'$ is the second and third components of $\overline{\psi}$ defined by 
$\overline{\psi}' := (\overline{\psi}_2,\overline{\psi}_3)$.
Owing to the supersonic condition \eqref{super1}, 
the quadratic form $F(\overline{\vp},\overline{\psi}_1)$ becomes
positive definite since the discriminant of $F(\overline{\vp},\overline{\psi}_1)$ satisfies
\[
p'(\rho_+)^2 - p'(\rho_+) u_+^2
= p'(\rho_+)^2 \left(1 - \frac{u_+^2}{p'(\rho_+)}\right) < 0.
\]
On the other hand,
the remaining terms $\overline{R}_{2}$ satisfy
\begin{equation*}
|\overline{R}_{2}|
\lesssim |(\rt - \rho_+,\ut - \rho_+,U_1)| |\overline{\Phi}|^2 + |(\vp^*,\psi^*)| |\overline{\Phi}|^2
\lesssim \epsilon |\Phi|^2.
\end{equation*}
Therefore we get the lower estimate of the integration of
$-\beta (\overline{G}_1)_1$ as 
\begin{equation}
\int_{\Omega} -\beta (\overline{G}_1)_1 \, dx
\geq
\beta
(c - C \epsilon)
\| \overline{\Phi} \|^2_{L^2_{e, \beta} (\Omega)}.
\label{5.1-eq6}
\end{equation}
The right hand side of \eqref{5.1-eq4}
is estimated by using \eqref{uniasp1}, \eqref{5.1-eq4}, 
and the Schwarz inequality as
\begin{gather*}
|\overline{R}_1| \lesssim \epsilon e^{\beta x_1}|\overline{\Phi}|^2
+|\psi^\#_t|e^{\beta x_1}|\overline{\vp}||\overline{\psi}|,
\\
|\beta (B_1)_1| \lesssim \beta (\lambda e^{\beta x_1}|\overline{\psi}|^2
+\lambda^{-1}e^{\beta x_1}|\nabla\overline{\psi}|^2),
\end{gather*}
where $\lambda$ is an arbitrary positive constant.
Then the integrations are estimated by \eqref{uniasp1} and the Sobolev inequality as
\begin{gather}
\int_{\Omega}
|\overline{R}_1| \, dx
\lesssim  \epsilon \| \Phi \|^2_{L^2_{e, \beta}} 
+ \|\psi^\#_t\|_{L^4}\|e^{\beta x_1/2}\overline{\vp}\|_{L^2}
\|e^{\beta x_1/2}\overline{\psi}\|_{L^4}
\lesssim  \epsilon \| (\Phi,\nabla \overline{\psi} )\|^2_{L^2_{e,\beta}},
\label{5.1-eq7}
\\
\int_{\Omega}
|\beta (G_1)_1|  \, dx
\lesssim
\beta \big(\nu \| \overline{\psi} \|^2_{L^2_{e, \beta}} 
+ \nu^{-1}  \| \nabla \overline{\psi} \|^2_{L^2_{e, \beta}} \big).
\label{5.1-eq8}
\end{gather}

Integrate \eqref{5.1-eq4} over $\Omega$,
substitute the estimates \eqref{5.1-eq5}--\eqref{5.1-eq8} in the resultant equality
and then let $\lambda$, \footnote{Here it is enough to take the same $\beta$ as in the proof of Lemma \ref{lm1}}{$\beta$}, and $\epsilon$ suitably small to obtain
\begin{equation}\label{5.1-eq14}
\frac{1}{2} \frac{d}{dt} \int e^{\beta x_1} 
\left(P_* \overline{\vp}^2 + (\tilde{\rho} + \vp^*) |\overline{\psi}|^2 \right) dx 
+ c \beta \|\overline{\Phi}\|_{L^2_{e, \beta}}^2  
%+ c \|\nabla \overline{\psi}\|_{L^2_{e, \beta}}^2 
\leq 0. 
\end{equation}
Integrating \eqref{5.1-eq14} over $[0,T^*]$ and using the periodicity of solutions
lead to $\overline{\Phi}=0$.
The proof is complete.
\end{proof}

\subsubsection{Existence}\label{S5.1.2}

For the construction of time-periodic solutions, 
we use the time-global solution $\Phi$ in Theorem \ref{global1}.
Here we see from Lemma \ref{CompatibilityCond} in Appendix \ref{B}
that there exist initial data satisfying the conditions in Theorem \ref{global1}.
Now we define
\[
 \Phi^k(t,x):=\Phi(t+kT^*,x)
\quad \text{for $k=1,2,3,\ldots$.} 
\]

Let us first show the following lemma.

\begin{lemma}\label{5.2}
Let \eqref{super1} and \eqref{cd-st} hold.
For $\beta>0$ being in Theorem \ref{global1} and any $T^*>0$,
there exists $\gamma_0=\gamma_0(\Omega)>0$ and $C_0=C_0(\Omega)>0$ 
depending on $\Omega$ but independent of $k$ and $T^*$ such that
\begin{equation}\label{exiapes0}
\|(\Phi-\Phi^k)(t)\|_{\lteasp{\beta}}\leq C_0e^{-\gamma_0 t}
\quad \text{for $k=1,2,3,\ldots$.}
\end{equation}
\end{lemma}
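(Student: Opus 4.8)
The plan is to reduce the estimate \eqref{exiapes0} to the weighted $L^2_{e,\beta}$ energy computation already carried out in the proof of Proposition~\ref{5.1}, applied now to the difference $\bar\Phi:=\Phi-\Phi^k=(\bar\vp,\bar\psi)$, followed by an integration in time of the resulting differential inequality. First I would observe that the system \eqref{eq-pv} is autonomous: the operator $L$, the coefficients $(\rt,\ut)(\tilde M(\cdot))$ and $U$, and the inhomogeneous terms $F,G$ are all independent of $t$. Hence $\Phi^k(t,x)=\Phi(t+kT^*,x)$ is again a global solution of \eqref{eq-pv} with the \emph{same} $F,G$, and $\Phi^k\in X^{\text{\rm e}}_{m,\beta}(0,\infty)$. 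Subtracting the equations for $\Phi$ and $\Phi^k$, one checks that $\bar\Phi$ satisfies exactly the system \eqref{5.1-eq1} with $\Phi^*$ taken to be $\Phi$ and $\Phi^\#$ taken to be $\Phi^k$; in particular $F$ and $G$ cancel, so the system for $\bar\Phi$ is homogeneous. The smallness needed to run the Proposition~\ref{5.1} computation (the analogue of \eqref{uniasp1}) holds uniformly in $t$ for both $\Phi$ and $\Phi^k$: by the global bound \eqref{bound1} with $m=3$, together with Sobolev's inequality \eqref{sobolev2} to pass to the $L^4$-norm of $\partial_t\psi$, one has $\sup_{t\ge0}\bigl(\|\Phi(t)\|_{H^3}+\|\partial_t\vp(t)\|_{H^2}+\|\partial_t\psi(t)\|_{H^1}\bigr)+\dels\le C_0(\ep_0^2+\dels)$, which is below the threshold $\epsilon$ of Proposition~\ref{5.1} provided $\ep_0$ in Theorem~\ref{global1} is chosen small enough, and the same bound holds with $\Phi$ replaced by $\Phi^k$.

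Next I would repeat the computation in the proof of Proposition~\ref{5.1} essentially verbatim: multiply the $\bar\vp$-equation by $e^{\beta x_1}P\bar\vp$ with $P:=p'(\rt+\vp)/(\rt+\vp)$, multiply the $\bar\psi$-equation by $e^{\beta x_1}\bar\psi$, add the results, and integrate over $\Omega$. The boundary term is nonnegative by the divergence theorem together with \eqref{pbc} and the outflow condition $(\ub\cdot n)\ge c>0$ in \eqref{bce}; the transport term $-\beta(\overline{G}_1)_1$ yields a positive-definite quadratic form thanks to the supersonic condition \eqref{super1}; and all remaining terms are controlled by Hardy's inequality \eqref{hardy}, Sobolev's inequality \eqref{sobolev2}, and the uniform smallness above, the term involving $\partial_t\psi^k$ being absorbed exactly as the $\psi^\#_t$-term in Proposition~\ref{5.1}. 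This gives, with $\mathcal{E}(t):=\int_\Omega e^{\beta x_1}\bigl(P\bar\vp^2+(\rt+\vp)|\bar\psi|^2\bigr)\,dx\simeq\|\bar\Phi(t)\|_{\lteasp{\beta}}^2$, the inequality $\tfrac{1}{2}\tfrac{d}{dt}\mathcal{E}(t)+c\beta\|\bar\Phi(t)\|_{\lteasp{\beta}}^2\le0$, which is the analogue of \eqref{5.1-eq14}. Since $\|\bar\Phi(t)\|_{\lteasp{\beta}}^2\gtrsim\mathcal{E}(t)$, Gr\"onwall's inequality yields $\mathcal{E}(t)\le\mathcal{E}(0)e^{-2\gamma_0 t}$ for some $\gamma_0=\gamma_0(\Omega)>0$ proportional to $\beta$, hence $\|\bar\Phi(t)\|_{\lteasp{\beta}}\le C_0\|\bar\Phi(0)\|_{\lteasp{\beta}}e^{-\gamma_0 t}$. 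To finish, I would bound $\|\bar\Phi(0)\|_{\lteasp{\beta}}=\|\Phi_0-\Phi(kT^*,\cdot)\|_{\lteasp{\beta}}\le\|\Phi_0\|_{\lteasp{\beta}}+\|\Phi(kT^*,\cdot)\|_{\lteasp{\beta}}\le C_0(\Omega)$ uniformly in $k$ and $T^*$, once more by \eqref{bound1}; this establishes \eqref{exiapes0}.

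I do not expect a genuine obstacle here. The only point requiring care is that the Proposition~\ref{5.1}-type estimate needs the \emph{uniform-in-time} smallness of $\Phi$, $\Phi^k$ and their first time derivatives — which is precisely why \eqref{bound1} was proved with decay down to the small quantity $C_0\dels$ rather than with mere boundedness in time. Everything else is bookkeeping: checking that the constants entering $\gamma_0$ and $C_0$ are independent of $k$ and $T^*$, the sole $k,T^*$-dependence entering through the uniformly bounded quantity $\|\Phi(kT^*,\cdot)\|_{\lteasp{\beta}}$. The structural input that makes the energy estimate closed, i.e.\ free of any forcing term, is the cancellation of the $t$-independent inhomogeneities $F,G$ in the equation for $\bar\Phi$.
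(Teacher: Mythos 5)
Your proposal is correct and follows essentially the same route as the paper: the paper forms $\overline{\Phi}=\Phi^k-\Phi^{k'}$, notes that the inhomogeneities cancel so that \eqref{5.2-eq1} is homogeneous, reruns the weighted energy computation of Proposition \ref{5.1} with the uniform smallness supplied by \eqref{bound1}, and integrates the resulting differential inequality (taking $k'=0$ at the end), exactly as you do. The only cosmetic difference is that the paper builds the weight from $\vp^k$ rather than from $\vp$, which is immaterial since both solutions satisfy the required uniform bounds.
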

\begin{proof}
For any $k$, $k'$, let $\overline{\Phi} = (\overline{\vp}, \overline{\psi} )= \Phi^k - \Phi^{k'}$ satisfy the system   
\begin{subequations}
\label{5.2-eq1}
\begin{gather}
\overline{\vp}_{t} + (\tilde{u} + U + \psi^k ) \cdot \nabla \overline{\vp}  + (\tilde{\rho} + \vp^k ) \div \overline{\psi}  
=f^{k,k'}, 
\label{5.2-eq1-1}
\\
(\tilde{\rho} + \vp^k) \{ \overline{\psi}_{t} + (\tilde{u} + U + \psi^k) \cdot \nabla \overline{\psi}  \} - L \overline{\psi} + p'(\tilde{\rho} + \vp^k) \nabla \overline{\vp} 
= g^{k,k'}. 
\label{5.2-eq1-2}
\end{gather}
The boundary condition for $(\vp,\psi)$ 
follow from (\ref{pbc}) as
\begin{gather*}
\overline{\psi} (t,M(x'),x') = 0.
\end{gather*}
\end{subequations}
Here $f^{k,k'}$ and $g^{k,k'}$ are defined by
\begin{align*}
f^{k,k'} \!&:=
- \!\nabla\tilde{\rho} \cdot \overline{\psi}  - \ut' \overline{\vp}  - \overline{\vp} \div U - \overline{\psi} \cdot \nabla \vp^{k'} - \overline{\vp} \div \psi^{k'},
\\
g^{k,k'} \!&:= - \left(\!\{(\rt\!+\!\vp^k)\overline{\psi}+\overline{\vp} \psi^{k'} \}\!\cdot\! \nabla \!\right)\! (\tilde{u}\!+\!U) - \overline{\varphi} ((\tilde{u}\!+\!U) \!\cdot\! \nabla) (\tilde{u}\!+\!U) -(p'(\rt\!+\!\vp^k)-p'(\rt\!+\!\vp^{k'})) \nabla \tilde{\rho}
\\
&\qquad - \overline{\vp}  \psi^{k'}_t - \{(\rt+\vp^k)(\ut+U+\psi^k)-(\rt+\vp^{k'})(\ut+U+\psi^{k'}) \}\cdot \nabla \psi^{k'} 
\\ 
&\qquad - ( p'(\tilde{\rho} + \vp^k) -p'(\tilde{\rho} + \vp^{k'}) ) \nabla \vp^{k'}.
\end{align*}

Repeat exactly the proof of Proposition \ref{5.1} 
with $k$ in place of $*$, $k'$ in place of $\#$, 
and \eqref{uniasp1} in place of \eqref{bound1} with small initial data,
we obtain
\begin{equation*}%\label{5.2-eq2}
\frac{1}{2} \frac{d}{dt} \int e^{\beta x_1} \left( \frac{p'(\tilde{\rho} + \vp^k)}{\tilde{\rho} 
+ \vp^k} \overline{\vp}^2  +  (\tilde{\rho} + \vp^k) |\overline{\psi}|^2 \right) dx 
+ c \beta \|\overline{\Phi}\|_{L^2_{e, \beta}}^2  \leq 0 ,  \\
\end{equation*}
which implies \eqref{exiapes0} once we take $k' =0$. The proof is complete.
\end{proof}

Now we can construct the time-periodic solutions:
\begin{proposition}\label{5.3}
Let \eqref{super1} and \eqref{cd-st} hold, and $m = 3,4,5$.
For $\beta>0$ being in Theorem \ref{global1} and any $T^*>0$,
there exists a constant $\epsilon>0$ independent of $T^*$ such that 
if $\delta \leq \epsilon$, then the problem of  \eqref{eq-pv1}--\eqref{pbc}
has a time-periodic solution $\Phi^*\in {\mathcal X}^m_\beta(0,T^*)$
with a period $T^*>0$. Furthermore, it satisfies
\begin{equation}\label{apes3}
\sup_{t\in[0,T^*]}(\|\Phi^*(t)\|_{H^m}
+\|\partial_t \vp^*(t)\|_{H^{m-1}}
+\|\partial_t \psi^*(t)\|_{H^{m-2}})
\leq C_0\delta,
\end{equation}
where $C_0=C_0(\Omega)>0$ is a constant depending on $\Omega$ but independent of $T^*$.
%Furthermore, $C$ is indepedent of $\Omega$ if $\|M\|_{H^{7}} \leq \kappa$ holds for $\kappa$ being in Lemma \ref{CattabrigaEst}.
\end{proposition}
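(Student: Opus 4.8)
The plan is to obtain the time-periodic solution $\Phi^*$ as the limit of the translated time-global solutions $\Phi^k(t,x)=\Phi(t+kT^*,x)$, where $\Phi$ is the time-global solution supplied by Theorem \ref{global1} (applied to initial data constructed in Lemma \ref{CompatibilityCond}). First I would record that each $\Phi^k$ solves the same system \eqref{eq-pv1}--\eqref{pbc} on $[0,T^*]$, that the bound \eqref{bound1} gives a uniform estimate $\sup_k \sup_{t\in[0,T^*]}(\|\Phi^k(t)\|_{H^m}+\|\partial_t\Phi^k(t)\|_{H^{m-2}})\lesssim_\Omega \delta$ (taking $t=kT^*$ as the new initial time and absorbing the decaying term), and that the smallness hypothesis on $\Phi_0$ in Theorem \ref{global1} is met once $\delta\le\epsilon$. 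The main work is to show $\{\Phi^k\}$ is Cauchy in a suitable Banach space.

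The key step is the contraction estimate. Fixing $k<k'$ and setting $\overline{\Phi}=\Phi^k-\Phi^{k'}$, Lemma \ref{5.2} already gives the $L^2_{e,\beta}$-difference bound $\|(\Phi-\Phi^k)(t)\|_{\lteasp{\beta}}\le C_0 e^{-\gamma_0 t}$; by the semigroup/translation structure this yields $\|\overline{\Phi}(t)\|_{\lteasp{\beta}}\le \|\overline{\Phi}(0)\|_{\lteasp{\beta}}e^{-\gamma_0 t}$ and, evaluating the estimate of Lemma \ref{5.2} at the translated times, $\|\overline{\Phi}(t)\|_{\lteasp{\beta}}\lesssim_\Omega e^{-\gamma_0(t+kT^*)}\to 0$ as $k\to\infty$, uniformly in $t\in[0,T^*]$ and in $k'>k$. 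To upgrade this $L^2$ contraction to higher norms, I would differentiate the difference system \eqref{5.2-eq1} (at most $m-1$ spatial derivatives and one time derivative, mirroring the energy hierarchy of Section \ref{sec3} but now in the \emph{difference} variables, which is genuinely linear in $\overline{\Phi}$ with coefficients controlled by the uniform $H^m$ bound on the $\Phi^k$), and combine with the elliptic and Cattabriga estimates (Lemmas \ref{lm1-E}, \ref{lm1-C}, \ref{lm2-C}) exactly as in the proof of Proposition \ref{apriori1}. This produces
\[
\sup_{t\in[0,T^*]}\big(\|\overline{\Phi}(t)\|_{H^{m-1}}+\|\partial_t\overline{\Phi}(t)\|_{H^{m-3}}\big)\lesssim_\Omega e^{-\gamma_0 kT^*},
\]
so $\{\Phi^k\}$ is Cauchy in $C([0,T^*];H^{m-1}(\Omega))\cap C^1([0,T^*];H^{m-3}(\Omega))$, hence converges to some $\Phi^*$ in that space.

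It then remains to identify $\Phi^*$ as a time-periodic solution of the required regularity. Periodicity with period $T^*$ follows because $\Phi^{k}(T^*,\cdot)=\Phi^{k+1}(0,\cdot)$, so passing $k\to\infty$ gives $\Phi^*(T^*,\cdot)=\Phi^*(0,\cdot)$; passing to the limit in \eqref{eq-pv1}--\eqref{eq-pv2} (the nonlinear terms converge in $H^{m-3}$ because the convergence is in $H^{m-1}$ and the coefficients are uniformly bounded in $H^m$, with $m\ge 3$ so that $H^{m-1}\hookrightarrow L^\infty$) shows $\Phi^*$ solves the system, and \eqref{pbc} is preserved by the trace theorem. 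The bound \eqref{apes3} comes from lower semicontinuity: the uniform $H^m$ and $\partial_t H^{m-2}$ bounds on $\Phi^k$ are inherited by the weak-$*$ limit, and the smallness is proportional to $\delta$ by the $C_0\delta$ right-hand side in \eqref{bound1} after the exponential term is absorbed using the decay and the arbitrariness of the base time. Finally, that $\Phi^*\in\mathcal{X}^{\text{\rm e}}_{m,\beta}(0,T^*)$ (in particular the $\lteasp{\beta}$ membership, needed to apply the uniqueness Proposition \ref{5.1} later) follows by also passing to the limit in the $L^2_{e,\beta}$-weighted estimates, which are uniform in $k$ by \eqref{bound1}.

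The main obstacle I anticipate is the higher-norm contraction estimate: the difference system \eqref{5.2-eq1} must be handled with the full machinery of Section \ref{sec3} — tangential derivatives via the flattening change of variables \eqref{CV1}, normal derivatives via the cancellation identity \eqref{ee5-hat}, and the Cattabriga/elliptic estimates — but now one must be careful that every estimate is linear and homogeneous in $\overline{\Phi}$ (no $\delta$-independent forcing terms survive, since $F$ and $G$ cancel in the difference), so that the exponential factor $e^{-\gamma_0 kT^*}$ genuinely propagates to all derivatives rather than being polluted by a non-decaying remainder. Checking this bookkeeping, and verifying that the constants remain independent of $T^*$, is the delicate part.
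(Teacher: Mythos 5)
Your overall architecture (translates $\Phi^k$, a Cauchy sequence in $C([0,T^*];H^{m-1})\cap C^1([0,T^*];H^{m-3})$, passage to the limit, periodicity from $\Phi^k(T^*,\cdot)=\Phi^{k+1}(0,\cdot)$, and identification via Lemma \ref{CompatibilityCond} and uniqueness) matches the paper. The gap is in the one step you yourself flag as delicate: upgrading the $\lteasp{\beta}$ contraction of Lemma \ref{5.2} to the $H^{m-1}$ (and $C^1_tH^{m-3}$) contraction. Your proposal is to rerun the full derivative hierarchy of Section \ref{sec3} on the difference system \eqref{5.2-eq1}. This is not just long bookkeeping; it faces two concrete obstructions. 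First, the source of the factor $e^{-\gamma_0 kT^*}$ in Lemma \ref{5.2} is that the weighted $L^2$ energy of $\Phi-\Phi^{k'-k}$ decays from time $0$ of the \emph{original} solution, and one then evaluates at times $\ge kT^*$; if instead you run a higher-order energy estimate for $\overline{\Phi}=\Phi^k-\Phi^{k'}$ on $[0,T^*]$, the initial datum $\overline{\Phi}(0)=\Phi(kT^*)-\Phi(k'T^*)$ is only $O(\sqrt{\delta})$ in $H^{m-1}$ by \eqref{bound1}, with no smallness in $k$, so exponential decay over the fixed window $[0,T^*]$ cannot produce $e^{-\gamma_0 kT^*}$. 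Second, even running the estimate from time $0$, the top-order terms in $\overline{f},\overline{g}$ place $m$-th order (and $\partial_t\nabla^{m-1}$) derivatives on the coefficients $\Phi^{k'}$, which are bounded by the dissipative norm $D_{m,\beta}$ of $\Phi^{k'}$ — a quantity that is only time-integrable with bound $O(\delta e^{\zeta t})$, not decaying — so the Gronwall structure for $\overline{\Phi}$ is polluted by non-contracting remainders. The paper explicitly remarks that this direct route, while in principle possible, is avoided precisely because of these complications.

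What the paper actually does is a one-line interpolation that you are missing: by Gagliardo--Nirenberg \eqref{GN1},
\begin{equation*}
\sup_{t\in[0,T^*]}\|(\Phi^k-\Phi^{k'})(t)\|_{H^{m-1}}
\lesssim \sup_{t}\|(\Phi^k-\Phi^{k'})(t)\|_{H^{m}}^{1-1/m}\,
\sup_{t}\|(\Phi^k-\Phi^{k'})(t)\|_{\lteasp{\beta}}^{1/m}
\lesssim_{\Omega} e^{-\gamma k'T^*/m}
\end{equation*}
for $k>k'$, using the uniform $H^m$ bound from \eqref{bound1} together with Lemma \ref{5.2}; the $C^1_tH^{m-3}$ convergence then follows by expressing $\partial_t(\Phi^k-\Phi^{k'})$ through the difference equations \eqref{5.2-eq1} (giving an $L^2$ bound $\lesssim_\Omega e^{-\gamma k'T^*/m}$) and interpolating once more when $m=4,5$. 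No derivative-level energy estimates on the difference system are needed. If you replace your contraction step by this interpolation argument, the rest of your proposal goes through essentially as the paper's proof does; I would also note that for \eqref{apes3} the paper needs the specific initial data $\Phi_0^{\#}$ of Lemma \ref{CompatibilityCond} (so that $\|\Phi_0\|_{\lteasp{\beta}}+\|\Phi_0\|_{H^m}\lesssim\delta$ in \eqref{exies8}--\eqref{exies9}) together with the uniqueness of Proposition \ref{5.1} to transfer that bound to the periodic solution obtained from any other admissible data — a point you gesture at but should make explicit.
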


\begin{proof}
First of all, applying Theorem \ref{global1} to 
initial-boundary value problem \eqref{eq-pv},
we obtain a time-global solution $\Phi$ to \eqref{eq-pv} with
\eqref{bound1} and \eqref{exiapes0}.

Recall that $\Phi^k(t,x):=\Phi(t+kT^*,x)$ for any $T^* > 0$ and $k=1,2,3,\ldots$. Let us first prove that $\{\Phi^k\}$ is a Cauchy sequence in the Banach space 
$C([0,T^*];H^{m-1}(\Omega)) \cap C^1([0,T^*];H^{m-3}(\Omega))$.
For $k>k'$, one can see from \eqref{exiapes0} that 
for $k>k'$, there holds
\begin{align*}
\sup_{t \in [0,T^*]} \|(\Phi^k-\Phi^{k'})(t) \|_{\lteasp{\beta}}
&= \sup_{t \in [0,T^*]} \|(\Phi(t+kT^*)-\Phi(t+k'T^*)\|_{\lteasp{\beta}}
\\
&= \sup_{t \in [k'T^*,(k'+1)T^*]} \|\Phi(t)-\Phi(t+(k-k')T^*)\|_{\lteasp{\beta}}
\\
& \lesssim_{\Omega} e^{-\gamma k'T^*}.
%\quad \text{($\because$ \eqref{exiapes0})} 
\end{align*}
This and \eqref{bound1} together with the Gagliardo-Nirenberg inequalities \eqref{GN1} leads to 
\begin{align*}
%{}&
\sup_{t \in [0,T^*]} \|(\Phi^k-\Phi^{k'})(t) \|_{H^{m-1}}
%\notag\\
& \lesssim \sup_{t \in [0,T^*]} \|(\Phi^k-\Phi^{k'})(t)\|_{H^m}^{1-1/m}
\|(\Phi^k-\Phi^{k'})(t)\|_{\lteasp{\beta}}^{1/m}
\notag \\
& \lesssim_{\Omega} e^{-\gamma k'T^*/m}. 
%\quad (\because \text{\eqref{bound1}})
%\label{exies3}
\end{align*}
So, what is left is to show that $\{\Phi^k\}$ is a Cauchy sequence 
in $C^1([0,T^*];H^{m-3}(\Omega))$.

We have already known from the proof of Lemma \ref{5.2} that $\Phi^k-\Phi^{k'}$ satisfies \eqref{5.2-eq1}. From this and \eqref{bound1}, one can have
\[
|\partial_t (\Phi^k-\Phi^{k'})| \lesssim
|((\Phi^k-\Phi^{k'}),\nabla(\Phi^k-\Phi^{k'}),\nabla^2(\Phi^k-\Phi^{k'}))|
\]
which gives
\begin{equation*}
\|\partial_t (\Phi^k-\Phi^{k'})\| \lesssim_\Omega e^{-\gamma k'T^*/m}.
\end{equation*}
In the case $m=3$, this estimate is sufficient.
For the case $m=4,5$, this estimate and \eqref{bound1}
together with Gagliardo-Nirenberg inequalities leads to 
\begin{align}
{}&
\sup_{t \in [0,T^*]} \|\partial_t(\Phi^k-\Phi^{k'})(t) \|_{H^{m-3}}
\lesssim_{\Omega} e^{-\gamma k'T^*/\{m(m-3)\}}.
\notag
\end{align}
Hence, we see that  $\{\Phi^k\}$ is a Cauchy sequence 
and thus there exists a limit  $\Phi^*$ such that
\begin{equation}\label{converge1}
\Phi^k \to \Phi^* \quad \text{in} \ \
C([0,T];{\lteasp{\beta}}(\Omega)) \cap
\bigcap_{i=0}^1C^i([0,T^*];H^{m-1-2i}(\Omega)).
\end{equation}
It is straightforward to check that 
the limit $\Phi^*$ satisfies \eqref{eq-pv}.

Then we can check that $\Phi^* \in {\cal X}^m_\beta(0,T)$ as follows.
On the other hand, by a standard argument,
$\Phi^k(t)$ converges to $\Phi^*(t)$ weakly in $H^m(\Omega)$ 
for each $t \in [0,T^*]$ and also
\begin{equation}\label{exies8}
\sup_{t\in [0,T^*]}\|\Phi^*(t)\|_{H^m} 
\lesssim_\Omega \|\Phi_0\|_{\lteasp{\beta}}+\|\Phi_0\|_{H^m}+\delta
\end{equation}
follows from \eqref{bound1}. 
Hence, we conclude $\Phi^* \in L^\infty([0,T^*];H^{m}(\Omega))$.
It is also seen from system \eqref{eq-pv} that
$\partial_t \vp^* \in L^\infty([0,T^*];H^{m-1}(\Omega))$,
$\partial_t \psi^* \in L^\infty([0,T^*];H^{m-2}(\Omega))$,
\begin{gather}
\sup_{t\in [0,T^*]}\|\partial_t \vp^*(t)\|_{H^{m-1}} 
%\leq C(\|\Phi_0\|_{\lteasp{\beta}}+\|\Phi_0\|_{H^m}+\delta),
%\label{exies7}\\
+\sup_{t\in [0,T^*]}\|\partial_t \psi^*(t)\|_{H^{m-2}} 
\lesssim_\Omega \|\Phi_0\|_{\lteasp{\beta}}+\|\Phi_0\|_{H^{m}}+\delta.
\label{exies9}
\end{gather}

Let us show that $\Phi^*$ is a time-periodic function with period $T^*>0$.
The sequences $\Phi^k(T^*,x)$ and $\Phi^{k+1}(0,x)$ 
converge to $\Phi^*(T^*,x)$ and $\Phi^*(0,x)$, respectively, 
as $k$ tends to infinity.
Notice that $\Phi^k(T^*,x)=\Phi^{k+1}(0,x)$ holds and so does $\Phi^*(T^*,x)=\Phi^*(0,x)$.
Hence, we have constructed a time-periodic solution $\Phi^*$ 
to problem \eqref{eq-pv1}--\eqref{pbc} 
in the function space ${\mathcal X}^{\text{e}}_{m,\beta}(0,T^*)$
in which the uniqueness has been shown.
What is left is to prove estimate \eqref{apes3}.
For the initial data $\Phi_0=\Phi_0^{\#}$ in Lemma \ref{CompatibilityCond},
we have another time-periodic solution by the above method.
However, Proposition \ref{5.1} together with estimates \eqref{exies8} and \eqref{exies9}
ensure that these periodic solutions are same.
Hence, \eqref{apes3} follows from plugging $\Phi_0=\Phi_0^{\#}$ into \eqref{exies8} and \eqref{exies9}.
The proof is complete.
\end{proof}

\subsection{Stationary solutions}\label{S5.2}

Now we show that the time-periodic solutions constructed in Subsection \ref{S5.1}
are time-independent, which gives us Theorem \ref{th4}.

\begin{proof}[Proof of Theorem \ref{th4}]
Proposition \ref{5.3} ensures the existence of
time-periodic solutions $\Phi^*$
of problem \eqref{eq-pv1}--\eqref{pbc} for any period $T^*$.
We remark that the smallness assumption for $\delta$ is independent of the period $T^*$.
Hence, one can have time-periodic solutions $\Phi^*$ with the period $T^*$ and
$\Phi^*_l$ with the period $T^*/2^l$ for $l \in \mathbb N$ 
under the same assumption for $\delta$.
Furthermore, $\Phi^*=\Phi^*_l$ follows from 
Proposition \ref{5.1}, since $\Phi^*$ and $\Phi^*_l$ are 
the time-periodic solutions with the period $T^*$ and satisfy \eqref{apes3}. 
Hence, we see that
\[
\Phi^*\left(0,x\right)=
\Phi^*\left(\frac{i}{2^l}T^*,x\right)
\quad \text{for $i=1,2,3,\ldots,2^l$ and $l=0,1,2,\ldots$.} 
\]
Because the set 
$\cup_{l \geq 0} \{{i}/{2^l} \ ; \ i=1,2,3,\ldots,2^l\}$
is dense in $[0,T^*]$,
we see from the continuity of $\Phi^*$
that $\Phi^*$ is independent of $t$.
Hence, $\Phi^s=\Phi^*$
is the desired solution to 
the stationary problem corresponding to problem \eqref{eq-pv}.
\end{proof}

\subsection{Stability with exponential weight functions}\label{S5.3}

We prove the stability of stationary solutions, which gives us Theorem \ref{th5}.

\begin{proof}[Proof of Theorem \ref{th5}]
Theorem \ref{global1} and Lemma \ref{5.2} ensure that
initial--boundary value problem \eqref{eq-pv} has 
a unique time-global solution satisfying \eqref{bound1} and \eqref{exiapes0}
if $\|\Phi_0\|_{\lteasp{\beta}}+\|\Phi_0\|_{H^m}$ and $\delta$ are small enough.
So, it suffices to show that this time-global solution $\Phi$ converges 
to the stationary solution solution $\Phi^s$ exponentially fast 
as $t$ tends to infinity.
Passing the limit $k\to \infty$ in \eqref{exiapes0},
we have $\|(\Phi-\Phi^s)(t)\|_{\lteasp{\beta}} \lesssim_\Omega e^{-\gamma t}$
thanks to \eqref{converge1} and $\Phi^s=\Phi^*$.
This and \eqref{bound1} together with Gagliardo-Nirenberg inequalities \eqref{GN1} 
and Sobolev's inequality \eqref{sobolev2} lead to 
\begin{equation*}%\label{expdecay1}
\sup_{x\in \Omega}|(\Phi-\Phi^s)(t,x)| \lesssim_\Omega e^{-\gamma t},
\end{equation*}
where $\gamma$ is a positive constant independent of $t$.
Hence, the proof is complete.
\end{proof}

\subsection{Corollary}

We discuss briefly the proof of Corollary \ref{cor2}.

\begin{proof}[Proof of Corollary \ref{cor2}]
From Corollary \ref{cor1}, we have an improved estimate \eqref{bound1} 
with constants $C_0=C_0(\beta)$ and $\zeta=\zeta(\beta)$ independent of $\Omega$. 
In the same way as in Subsections \ref{S5.1}--\ref{S5.3} with the improved estimate, 
we can conclude Corollary \ref{cor2}.
\end{proof}

\section{Stability with no weight function}\label{S6}

In this section we discuss Theorem \ref{th3}, which gives the stability of $(\rho^s, u^s)$ in $H^3$.
Here we do not assume $(\rho_0-\rho^s, u_0-u^s) \in \lteasp{\beta}$.

For $(\rt, \ut)$ in Proposition \ref{ex-st},
$U$ in \eqref{ExBdry0},
 and $\Phi^s$ in Theorem \ref{th4}, let us set
\begin{equation*}
(\rho^s, u^s)(x) : = (\rt, \ut)(\tilde{M}(x)) + (0,U)(x)+ \Phi^s(x) .
\end{equation*}
Then it is obvious that $(\rho^s, u^s)$ satisfies \eqref{snse}.
%\begin{subequations}
%\label{nsesta}
%\begin{gather}
% \div (\rho^s u^s) = 0,
%\label{nsesta1}
%\\
%\rho^s \{  (u^s \cdot \nabla) u^s \}
%= 
%\mu_1 \Delta u^s + (\mu_1 + \mu_2) \nabla (\div u^s) - \nabla p(\rho^s).
%\label{nsesta2}
%\end{gather}
%\end{subequations}
We also introduce the perturbations
\begin{gather*}
 (\vp,\psi)(t,x)
:=
(\rho, u)(t,x)
-
(\rho^s, u^s)( x),
\ \; \text{where} \ \;
\psi = (\psi_1,\psi_2,\psi_3). 
\end{gather*}
%In the meantime, we denote
%\begin{gather*}
%\Phi^s = (\vp^s, \psi^s)(t,x)
%:=
%(\rho^s, u^s)(t, x)
%-
%(\rt, \ut)(\tilde{M}(x)),
%\ \; \text{where} \ \;
%\psi = (\psi_1,\psi_2,\psi_3). 
%\end{gather*}
%Here $\tilde{M}(x)$ is defined in \eqref{tM1}.
%
%Thanks to Theorem \ref{th4}, we have $\|\Phi^s \|_{H^m} \lesssim \delta$. 
For notational convenience, we use norms $E_{m,0}$ and $D_{m,0}$,
which are defined in the same way as \eqref{Ekbeta-def} and \eqref{Dkbeta-def}, 
with $\Phi = ( \vp, \psi)(t,x)$ being replaced by the functions defined right above. 
Furthermore, we also define 
\[
N (T) := \sup_{0\leq t \leq T} \|\Phi (t)\|_{H^3}.  
\]
We will see that $\Phi = ( \vp, \psi)(t,x)$ satisfies the bound $N (T) \ll1$
by assuming the smallness of the initial data $( \vp, \psi)(0,x)$.

Owing to (\ref{nse}),
the perturbation $(\vp,\psi)$ satisfies the system of
equations
\begin{subequations}
\label{nowe-eq-pv}
\begin{gather}
\vp_t + u \cdot \nabla \vp + \rho \div \psi
= f,
\label{nowe-eq-pv1}
\\
\rho \{ \psi_t + (u \cdot \nabla) \psi \}
- L \psi + p'(\rho) \nabla \vp
= g .
\label{nowe-eq-pv2}
\end{gather}
The boundary and initial conditions for $(\vp,\psi)$ 
follow from (\ref{ice}), (\ref{bce}), and (\ref{stbc}) as
\begin{gather}
\psi(t,M(x'),x') = 0,
\label{nowe-pbc}
\\
(\vp,\psi)(0,x)
=
% \vvp_0(x) :=
 (\vp_0, \psi_0)(x)
:= (\rho_0, u_0)(x) - (\rho^s, u^s)(x).
\label{nowe-pic}
\end{gather}
\end{subequations}
Here $L \psi$, $f$, and $g$ are defined by
\begin{equation*} %\label{nowe-f-g-def}
\begin{split}
L \psi &:= \mu_1 \Delta \psi + (\mu_1 + \mu_2) \nabla \div \psi,
\\
f &:=
-\vp \div u^s  - \nabla \rho^s \cdot \psi ,
%= - \psi_1 \tilde{\rho}' - \varphi \tilde{u}'_1 - \sum^3_{j=2} \psi_j \tilde{\rho}' \pd_{x_j} M,
\\
g &:=
- \rho \psi \cdot \nabla u^s - \vp u^s \cdot \nabla u^s - (p'(\rho)- p'(\rho^s) ) \nabla \rho^s . \\
%\begin{bmatrix}
%- \rho \psi_1 \tilde{u}'_1 - \varphi \tilde{u}_1 \tilde{u}'_1 + \rho \sum^3_{j=2} \psi_j \tilde{u}_1' \pd_{x_j} M - (p'(\rho)- p'(\tilde{\rho}) ) \tilde{\rho}'
%\\
%(p'(\rho)-p'(\rt)) \tilde{\rho}' \pd_{x_2} M
%\\
%(p'(\rho)-p'(\rt)) \tilde{\rho}' \pd_{x_3} M
%\end{bmatrix}
%\\
%h&:=
%\begin{bmatrix}
%- \mu_1 \tilde{u}''_1 \sum^3_{j=2} (\pd_{x_j} M)^2 - \mu_1 \tilde{u}'_1 \sum^3_{j=2}  \pd^2_{x_j} M
%\\
%p'(\rt) \tilde{\rho}' \pd_{x_2} M
%\\
%p'(\rt) \tilde{\rho}' \pd_{x_3} M
%\end{bmatrix}
\end{split}
\end{equation*}

%We are going to prove the following result, which is analogous to Proposition \ref{apriori1}.

In order to prove Theorem \ref{th3}, it suffices to show Proposition \ref{nowe-apriori1} below.
Indeed, the global solvability follows from the continuation argument used in \cite{m-n83}.
Furthermore, the decay property also can be obtained 
in much the same way as in Section 5 of \cite{kg06}.

\begin{proposition}\label{nowe-apriori1}
Let \eqref{super1} and \eqref{cd-st} hold.
Suppose that $\Phi \in X_3 (0,T)$
be a solution to initial--boundary value problem \eqref{nowe-eq-pv}
for some positive constant $T$.
%Then there exist positive constants $\beta \leq \alpha/2$, 
%where $\alpha$ is defined in Propsition \ref{ex-st},
Then there exists a positive constant 
$\ep_0=\ep_0(\Omega)$ depending on $\Omega$ such that if $N (t) + \dels \le \ep_0$, 
the following estimate holds:
\begin{equation}\label{nowe-apes1}
E_{3,0}(t)+\int_0^t D_{3,0}(\tau)\,d\tau
\leq C_0\hs{3}{\Phi_0}^2
\quad \text{for $t \in [0,T]$},
\end{equation}
where $C_0(\Omega)$ is a positive constant 
depending on $\Omega$ but independent of $t$.
\end{proposition}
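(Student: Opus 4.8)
The plan is to adapt the a priori estimate of Proposition \ref{apriori1} to the present situation, where the perturbation is measured against the true stationary solution $(\rho^s,u^s)$ rather than against the planar profile $(\rt,\ut)\circ\tilde M$, and where no exponentially weighted norm is available. The crucial structural simplification is that $(\vp,\psi)=(\rho,u)-(\rho^s,u^s)$ satisfies the \emph{homogeneous} system \eqref{nowe-eq-pv}: there are no inhomogeneous terms $F$, $G$, so all right-hand sides $f,g$ are genuinely quadratic in $\Phi$ modulo the slowly varying coefficients built from $(\rho^s,u^s)$. Consequently the target inequality \eqref{nowe-apes1} has no $C_0\delta$ term and no $e^{\zeta\tau}$ growth, and one does not even need time weights: it suffices to run the energy method at $\zeta=0$.

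First I would redo the $L^2$ estimate of Subsection \ref{ss-L2} with the energy form $\cale$ built around $\rho^s$ instead of $\rt$, i.e. replace $\rt$ by $\rho^s$ in $\omega(\rt/\rho)$. The boundary term on $\partial\Omega$ is again nonnegative because $u_b\cdot n\geq c>0$ by \eqref{bce}, giving control of $\|\vp|_{\partial\Omega}\|_{L^2_{x'}}$. The key positivity comes from the flux term $\nabla w\cdot G_1$ evaluated at the end state: since by Theorem \ref{th4} (or \ref{th1}) we have $(\rho^s-\rho_+,u^s-(u_+,0,0))\to 0$ as $x_1\to\infty$ and is small in $H^3$, the quadratic form $F(\vp,\psi_1)$ is positive definite by the supersonic condition \eqref{super1}, exactly as in Lemma \ref{lm1}. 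Here, however, there is no exponential weight; instead one uses that $\div u^s$, $\nabla\rho^s$ decay (they lie in $H^3$ and are built from exponentially decaying $(\rt,\ut)$ and compactly supported $U$ plus the $H^m$ function $\Phi^s$), so Hardy's inequality \eqref{hardy} is still applicable to absorb the terms $|\nabla u^s||\Phi|^2$ into $\delta(\|\nabla\Phi\|^2+\|\vp|_{\partial\Omega}\|^2_{L^2_{x'}})$. This yields the $L^2$-analogue of \eqref{ea0} with $\zeta=0$, $\beta=0$ and no $\delta$-constant term.

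Next I would reproduce verbatim, with obvious substitutions, the time-derivative estimates (Lemmas \ref{lm2}, \ref{lm3}), the tangential and normal spatial-derivative estimates after flattening the boundary via $\Gamma$ (Lemmas \ref{lm2-hat}, \ref{lm4-hat}), the Cattabriga estimates (Lemmas \ref{lm1-C}--\ref{lm1-comp}, \ref{lm3-comp}) and the elliptic estimates (Lemma \ref{lm1-E}); all of these go through because they only used the structure of the equations and the smoothness/smallness of the background, both of which hold for $(\rho^s,u^s)$. Finally one assembles them exactly as in Subsection \ref{ss-comp-apriori}: a weighted combination over $l=1,2,3$ closes the loop, and after choosing $N(T)+\delta$ small enough one absorbs $\int_0^t D_{3,0}\,d\tau$ from the right, obtaining $E_{3,0}(t)+\int_0^t D_{3,0}(\tau)\,d\tau\lesssim_\Omega \|\Phi_0\|_{H^3}^2$.

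The main obstacle is that without the exponential weight $e^{\beta x_1}$ one loses the mechanism that previously produced a \emph{dissipative} zeroth-order term $\beta\|\Phi\|^2_{L^2_{e,\beta}}$ from $\int_\Omega \nabla w\cdot G_1$; here $\nabla w=0$ and that term vanishes. This is exactly why \eqref{nowe-apes1} does not claim exponential decay, only uniform boundedness, and why $D_{m,0}$ is defined (via the $\beta=0$ branch of \eqref{Dkbeta-def}) without a $\|\Phi\|^2$ term: one must check carefully that at each stage the energy method still closes using only the genuinely dissipative quantities $\|\nabla\psi\|$, $\|\tfrac{d}{dt}\vp\|$, $\|\vp|_{\partial\Omega}\|_{L^2_{x'}}$ and their higher-order analogues — in particular that the flux term still delivers the boundary control $\|\vp|_{\partial\Omega}\|^2_{L^2_{x'}}$ and that $F(\vp,\psi_1)$ remains positive definite so that no zeroth-order coercivity is actually needed to absorb the lower-order error terms. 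Granting this (which is the content of the $\beta=0$ case already implicitly present in the machinery of Section \ref{sec3}), the assembly is routine.
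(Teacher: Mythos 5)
Your proposal follows the same route as the paper: run the Section \ref{sec3} machinery at $\zeta=0$ with no spatial weight, with the energy form built around $\rho^s$, exploiting that \eqref{nowe-eq-pv} is homogeneous (no $F$, $G$), and then reassemble the time-derivative, tangential/normal, Cattabriga and elliptic estimates exactly as in Subsection \ref{ss-comp-apriori}. Two points in your $L^2$ step need correction, though only one matters. First, your appeal to the positive definiteness of $F(\vp,\psi_1)$ is vacuous here: with $w\equiv 1$ the flux term $\nabla w\cdot G_1$ is identically zero, so there is nothing to be positive definite about — you do acknowledge this in your last paragraph, but the first paragraph should not invoke it.

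Second, and more substantively: you claim Hardy's inequality absorbs all terms of the form $|\nabla u^s||\Phi|^2$ because ``$\div u^s$, $\nabla\rho^s$ decay.'' This is not justified for the contribution of $\Phi^s$. Theorem \ref{th4} puts $\Phi^s$ in $\lteasp{\beta}\cap H^3$ with small norm, but gives no pointwise exponential decay of $\nabla\Phi^s$, so Hardy's inequality \eqref{hardy} (which needs an explicit $e^{-\alpha x_1}$ weight in the integrand) does not apply to, e.g., $\int\rho(\psi\cdot\nabla)\psi^s\cdot\psi\,dx$. The paper splits the remainder into a part built from $\ut+U$ (handled by Hardy as in Lemma \ref{lm1}) and a part built from $\psi^s$, and treats the latter by integrating by parts to move the derivative off $\psi^s$ and then using H\"older with $\|\psi^s\|_{L^3}\|\psi\|_{L^6}\|\nabla\Phi\|\lesssim(N(T)+\delta)\|\nabla\Phi\|^2$ via \eqref{sobolev0}--\eqref{sobolev1} and the $H^3$-smallness of $\Phi^s$. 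This is a routine fix, but as written your justification for that step is incorrect; once it is replaced by the H\"older/Sobolev (or integration-by-parts) argument, the rest of your outline matches the paper's proof.
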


We remark that the essential difference between the problems \eqref{eq-pv} and \eqref{nowe-eq-pv} 
is whether the inhomogeneous terms $F$ and $G$ appear.
Therefore the proof of Proposition \ref{nowe-apriori1} is very similar 
to that of Proposition \ref{apriori1}.
In the remainder of this section, we sketch the proof of Proposition \ref{nowe-apriori1},
which is given by making use of $ \rho  =\rt +  \varphi^s  +  \varphi$, $u  = \ut + \psi^s  +  \psi  + U $, 
and Theorem \ref{th4}.

\subsection{$L^2$ estimate} \label{ss-nowe-L2}

The first step is to derive the 
estimate of the perturbation $(\vp,\psi)$ in $L^2 $.
To do this, we introduce an energy form $\cale$ as in Subsection \ref{ss-L2}:
\[
\cale :=
K (\rho^s)^{\gamma-1}
\omega \Bigl( \frac{\rho^s}{\rho} \Bigr)
+ \frac{1}{2} |\psi|^2,
%\quad
%\omega(r) := r - 1 - \int_1^r \eta^{-\gamma} \, d \eta.
\]
%Notice that $\cale$ can actually be written as 
%\begin{equation*}
%\cale = K (\rho^s)^{\gamma-1} \left\{ \frac{\rho^s}{\rho} -1 - \int^{\rho^s/\rho}_1 \eta^{-\gamma} d \eta \right\} 
%+ \frac{1}{2} |\psi|^2 
%=\int^\rho_{\rho^s} \frac{p(\eta) - p(\rho^s)}{\eta^2} d \eta  + \frac{1}{2} |\psi|^2. 
%\end{equation*}
%Under the smallness assumption on $N (T)$, we have $\li{\Phi(t)} \ll 1$.
%Hence,
%the energy form $\cale$ 
which is equivalent to the 
square of the perturbation $(\vp,\psi)$:
\begin{equation}
c (\vp^2 + |\psi|^2)
\le
\cale
\le
C  (\vp^2 + |\psi|^2).
\label{nowe-sqr}
\end{equation}
Moreover 
we have the uniform bounds of solutions as follows:
\begin{equation}
0 < c  \le \rho(t,x) \le C  ,
\quad
|u(t,x)| \le C.
\label{nowe-bdd}
\end{equation}
Here we have used $  N (T) \ll 1$. We obtain the energy inequality in $L^2$ framework, as stated in the following lemma analogous to Lemma \ref{lm1}:

\begin{lemma}
\label{nowe-lm1}
Under the same conditions as in Proposition \ref{nowe-apriori1}, 
it holds that
\begin{equation}
  \|\Phi(t)\|^2
+ \int_0^t  D_{0,0}(\tau) \, d \tau
\lesssim  \|\Phi_0\|^2
+  \dels \int_0^t   \lt{\nabla \vp(\tau)}^2 \, d \tau
\label{nowe-ea0}
\end{equation}
for $t \in [0,T]$.
\end{lemma}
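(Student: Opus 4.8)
The plan is to mimic the proof of Lemma~\ref{lm1} but in the unweighted setting $\beta=0$, which is actually simpler because the weight function reduces to $w=e^{\zeta t}$ with $\zeta=0$, i.e.\ no time weight is needed either. First I would derive the local energy identity for $\cale$: following the computation in \cite{kg06,knz03}, one has
\begin{equation*}
(\rho\cale)_t - \div(G_1+B_1) + \mu_1|\nabla\psi|^2 + (\mu_1+\mu_2)(\div\psi)^2 = R_{11},
\end{equation*}
where now $G_1 := -\rho u\cale - (p(\rho)-p(\rho^s))\psi$, $B_1 := \mu_1\nabla\psi\cdot\psi + (\mu_1+\mu_2)\psi\div\psi$, and $R_{11}$ collects the lower-order terms involving derivatives of the stationary solution $(\rho^s,u^s)$ and of $U$. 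The key structural point is that here $(\rho^s,u^s)$ plays the role that $(\rt,\ut)$ played before, so $R_{11}$ is controlled by $|(\nabla u^s,\nabla^2 u^s,\nabla U)||\Phi|^2$ plus cubic terms.

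Next I would integrate this identity over $\Omega$ (no spatial weight). The divergence term produces the boundary contribution $\int_{\partial\Omega}(\rho\cale)(u_b\cdot n)\,d\sigma \gtrsim \|\vp|_{\partial\Omega}\|_{L^2_{x'}}^2$ using \eqref{bce}, \eqref{nowe-sqr}, and \eqref{nowe-bdd}, exactly as in \eqref{ea3}. The crucial gain that in Lemma~\ref{lm1} came from the $\nabla w\cdot G_1$ term (the positive-definite quadratic form $F(\vp,\psi_1)$ from the supersonic condition) is \emph{absent} here since $\nabla w=0$; instead, for the $L^2$ estimate with no weight one relies solely on the viscous dissipation $\mu_1\|\nabla\psi\|^2$ together with the boundary term and the identity $\|\tfrac{d}{dt}\vp\|^2 = \|\rho\div\psi+f\|^2 \lesssim \|\nabla\psi\|^2 + \dels(\|\nabla\vp\|^2+\|\vp|_{\partial\Omega}\|_{L^2_{x'}}^2)$, which follows from \eqref{nowe-eq-pv1}, the decay estimates \eqref{stdc1}, \eqref{ExBdry0}, and Hardy's inequality \eqref{hardy}. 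For $R_{11}$ I would use Hardy's inequality to absorb the terms with exponentially-decaying coefficients $|(\nabla u^s,\nabla^2 u^s,\nabla U)|$ into $\dels\|\nabla\Phi\|^2 + \dels\|\vp|_{\partial\Omega}\|_{L^2_{x'}}^2$, and use $N(T)+\dels\ll1$ to absorb the cubic terms. Assembling these pieces, choosing the smallness parameters appropriately, and noting $D_{0,0}$ contains exactly $\|\nabla\psi\|^2 + \|\tfrac{d}{dt}\vp\|^2 + \|\vp|_{\partial\Omega}\|_{L^2_{x'}}^2$ (the $\beta\|\Phi\|^2$ term dropping out since $\beta=0$), yields \eqref{nowe-ea0}.

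The main obstacle — and the reason the statement of Lemma~\ref{nowe-lm1} retains the term $\dels\int_0^t\|\nabla\vp\|^2\,d\tau$ on the right-hand side — is that without the exponential weight there is \emph{no mechanism in this $L^2$ step to control $\|\vp\|_{L^2}$ or $\|\nabla\vp\|_{L^2}$}; the supersonic-condition coercivity was purely a boundary-layer weighted phenomenon. So unlike \eqref{ea0}, here one cannot even get $\|\Phi\|^2$ on the left controlling itself, and the $\|\nabla\vp\|^2$ integral must be carried along and closed later using the higher-order (Cattabriga/elliptic) estimates exactly as in the proof of Proposition~\ref{apriori1}. Thus the proof of this lemma is genuinely the easy part; I would keep it short, flagging that the compatibility of signs in the boundary term and the Hardy-type absorption of the $(\rho^s,u^s)$-coefficients are the only things requiring care, and defer all closure of the $\vp$-derivatives to the subsequent derivative estimates.
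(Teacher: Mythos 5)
Your overall architecture matches the paper's: same energy form $\cale$ built from $\rho^s$, same identity $(\rho\cale)_t-\div(G_1+B_1)+\mu_1|\nabla\psi|^2+\cdots=R_1$, same boundary term from \eqref{bce}, the observation that the supersonic coercivity is absent because there is no spatial weight, and the identity for $\|\tfrac{d}{dt}\vp\|^2$ to complete $D_{0,0}$. However, there is a genuine gap in your treatment of the remainder term. You claim $R_{11}$ is controlled by $|(\nabla u^s,\nabla^2 u^s,\nabla U)||\Phi|^2$ and that these coefficients are ``exponentially decaying'' so that Hardy's inequality absorbs them into $\dels\|\nabla\Phi\|^2+\dels\|\vp|_{\partial\Omega}\|^2_{L^2_{x'}}$. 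But $u^s=\ut\circ\tilde M+U+\psi^s$, and the stationary perturbation $\Phi^s=(\vp^s,\psi^s)$ from Theorem \ref{th4} is only known to lie in $\lteasp{\beta}\cap H^m$; its derivatives have \emph{no pointwise exponential decay}, so Hardy does not apply to the part of $R_1$ carrying $\nabla\psi^s$ or $\nabla^2\psi^s$. The fallback $\|\nabla\psi^s\|_{L^\infty}\lesssim\sqrt{\dels}$ would leave a term $\sqrt{\dels}\,\|\psi\|_{L^2}^2$, which is fatal here: with $\beta=0$ the dissipation $D_{0,0}$ contains no zeroth-order term, so $\|\psi\|_{L^2}^2$ cannot be absorbed on either side.

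The paper's proof handles exactly this point by splitting $R_1=R_{11}+R_{12}$, where $R_{11}$ carries the $(\ut+U)$-coefficients (treated by Hardy, as you describe) and $R_{12}$ carries the $\psi^s$-coefficients. The term $\int_\Omega\rho(\psi\cdot\nabla)\psi^s\cdot\psi\,dx$ in $R_{12}$ is integrated by parts using $\psi|_{\partial\Omega}=0$ to move the derivative off $\psi^s$, after which H\"older and Sobolev give a bound of the form $\|\nabla\Phi\|\,\|\psi^s\|_{L^3}\|\psi\|_{L^6}\lesssim(N(T)+\dels)\|\nabla\Phi\|^2$, in which every factor of $\psi$ is converted to $\|\nabla\psi\|$ via \eqref{sobolev1}. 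Your proof needs this integration-by-parts step (or an equivalent device placing $\Phi$ rather than $\Phi^s$ under the derivative); without it the estimate of the remainder does not close.
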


\begin{proof}
%By a straight forward computation similar to Lemma \ref{lm1}, 
By a computation similar to the derivation of \eqref{ea1}, we see that
the energy form $\cale$ satisfies
\begin{equation}
(\rho \cale)_t
- \div (G_1 + B_1)
+ \mu_1 |\nabla \psi|^2
+ (\mu_1 + \mu_2) (\div \psi)^2
=
R_{1},
\label{nowe-ea1}
\end{equation}
where 
\begin{align*}
G_1
&:=
- \rho u \cale
- (p(\rho) - p(\rho^s)) \psi,
\nonumber
\\
B_1
&:=
\mu_1 \nabla \psi \cdot \psi
+ (\mu_1 + \mu_2) \psi \div \psi,
\nonumber
\\
R_{1}
&:=
-\rho (\psi \cdot \nabla) u^s \cdot \psi - ( p(\rho) - p(\rho^s) -p'(\rho^s) \vp) \div u^s - \frac{\vp}{\rho^s} L u^s \cdot \psi .
\nonumber
\end{align*}
We integrate \eqref{nowe-ea1} over $\Omega$. The second term on the left hand side is estimated from below by
using the divergence theorem, (\ref{nowe-sqr}), (\ref{nowe-bdd}), the boundary conditions \eqref{bce} and \eqref{nowe-pbc}
as
%we have the lower estimate as
\begin{equation}
- \int_{\Omega}  \div \bigl\{ G_1 + B_1 \bigr\} \, dx  
=  \int_{\pd \Omega}
( \rho  \cale)(u_b\cdot n)  \, d \sigma \,
  \gtrsim
 \|\vp|_{\partial \Omega} \|^2_{L^2_{x'}}. 
\label{nowe-ea3}
\end{equation}
Using $ u^s =  (\ut +U ) + \psi^s$, we decompose $R_{1}$ into two parts $R_{11}$ and $R_{12}$ as
\begin{equation*}
\begin{split}
& R_{11} :=
-\rho (\psi \cdot \nabla) (\ut +U) \cdot \psi - ( p(\rho) - p(\rho^s) -p'(\rho^s) \vp) \div (\ut +U) - \frac{\vp}{\rho^s} L (\ut +U) \cdot \psi , \\
& R_{12} :=
-\rho (\psi \cdot \nabla) \psi^s \cdot \psi - ( p(\rho) - p(\rho^s) -p'(\rho^s) \vp) \div \psi^s - \frac{\vp}{\rho^s} L \psi^s \cdot \psi . \\
\end{split}
\end{equation*}
The integral of $R_{11}$ is estimated in a very similar way as the one for the integral of $R_{11}$ in Lemma \ref{lm1}:
\begin{equation} \label{nowe-ea4}
\left|\int_\Omega R_{11} dx \right| \lesssim ( N(T)+\delta) ( \| \nabla \Phi \|^2 +  \|\vp|_{\partial \Omega} \|^2_{L^2_{x'}}) . 
\end{equation}
For the first term in $R_{12}$, we use integration by parts with the boundary condition \eqref{nowe-pbc} as
\begin{equation*}
\begin{split}
\left| \int_\Omega  \rho (\psi \cdot \nabla) \psi^s \cdot \psi  dx \right| 
& =  \Big|   - \int_\Omega (\nabla \rho \cdot \psi ) \psi^s \cdot \psi dx - \int_\Omega (\rho \div \psi ) \psi^s \cdot \psi  dx - \int_\Omega  \rho \psi^s \cdot (\nabla \psi \cdot  \psi)  dx   \Big|   \\
& \lesssim   \delta\| e^{-\alpha x_1/2}\psi \|^2 + \| \nabla \Phi \| \|\psi^s \|_{L^3} \| \psi \|_{L^6} \\
& \lesssim  ( N(T)+\delta)  \| \nabla \Phi \|^2,
\end{split}
\end{equation*}
where we have used \eqref{stdc1}, Theorem \ref{th4}, Hardy's inequality \eqref{hardy}, and Sobolev's inequality \eqref{sobolev1} in deriving the above inequalities.
Similarly one can estimate the other terms in $R_{12}$ and thus have
\begin{equation} \label{nowe-ea5}
\left|\int_\Omega R_{12} dx \right| \lesssim ( N(T)+\delta) ( \| \nabla \Phi \|^2 + \|\vp|_{\partial \Omega} \|^2_{L^2_{x'}}) . 
\end{equation}
We integrate (\ref{nowe-ea1}) over $(0,t) \times \Omega$ and substitute the estimates (\ref{nowe-ea3})--(\ref{nowe-ea5}) into the resultant equality. Then we let $\ep$ and $N (T)+\dels$ be suitably small.
Furthermore, using \eqref{stdc1}, \eqref{ExBdry0}, \eqref{nowe-eq-pv1}, Theorem \ref{th4}, \eqref{hardy}, and \eqref{sobolev1}, we obtain
\begin{equation*}
\begin{split}
\left\| \frac{d}{dt} \vp  \right\|^2 
 = \| \rho \div \psi + \vp \div u^s + \nabla \rho^s \cdot \psi \|^2  
\lesssim  \|\nabla \psi\|^2 + \delta\|\nabla \vp\|^2 + \delta \|\vp|_{\partial \Omega} \|^2_{L^2_{x'}}  .
\end{split}
\end{equation*}
These computations yield the desired inequality.
\end{proof}

\subsection{Time-derivative estimates} \label{ss-nowe-time-deriv}

In this section we derive time-derivative estimates.
To this end, by applying the differential operator $\partial_t^k$ for $k=0,1$ to 
(\ref{nowe-eq-pv1}) and (\ref{nowe-eq-pv2}), we have two equations:
\begin{gather}
\partial_t^k \vp_t
+ u \cdot \nabla \partial_t^k \vp
+ \rho \div \partial_t^k \psi
= f_{0,k},
\label{nowe-ec1}
\\
\rho \{
\partial_t^k \psi_t
+ (u \cdot \nabla) \partial_t^k \psi
\}
- L(\partial_t^k \psi)
+ p'(\rho) \nabla \partial_t^k \vp
= g_{0,k},
\label{nowe-ec2}
\end{gather}
where
\begin{align*}
f_{0,k}
&:=
\partial_t^k f
- [\partial_t^k, u] \nabla \vp
- [\partial_t^k, \rho] \div \psi,
\nonumber
\\
g_{0,k}
&:= \partial_t^k g
- [\partial_t^k, \rho] \psi_t
- [\partial_t^k, \rho u] \nabla \psi
- [\partial_t^k, p'(\rho)] \nabla \vp.
\nonumber
\end{align*}
Here $[T,u]v := T(uv) - u T v$ is a commutator. 
%We also frequently make use of the following facts
%\begin{gather}
%\|\partial_t\Phi\|_{H^1} \lesssim  \sqrt{D_{2,0}},
%\label{nowe-PhiH1}
%\\
%\|\partial_t\vp\|_{L^\infty} \lesssim \sqrt{E_{3,0}} \lesssim  N (T),
%\label{nowe-PhiSup}
%\end{gather}
%which follow from Proposition \ref{ex-st} and H\"older's, Hardy's, and Sobolev's inequalities.

Firstly, the following lemma, which is parallel to Lemma \ref{lm2}, provides an estimate of $\pd_t \Phi$. 

\begin{lemma}
\label{nowe-lm2}
Under the same conditions as in Proposition \ref{nowe-apriori1}, 
it holds that
\begin{equation} \label{nowe-ec0}
 \|\pd_t \Phi(t)\|^2
+ \int_0^t  
\|\pd_t \nabla\psi(\tau)\|^2 \, d \tau
\lesssim  \|\Phi_0\|_{H^3}^2
%+ \zeta \int_0^t e^{\zeta \tau}\|\pd_t\Phi(\tau)\|^2 \, d \tau
+ (  N (T)+\dels) \int_0^t  D_{3,0}(\tau) \, d \tau   
\end{equation}
for $t \in [0,T]$.
\end{lemma}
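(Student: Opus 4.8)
The plan is to reproduce the argument of Lemma \ref{lm2} with the time weight $e^{\zeta\tau}$ removed (take $\zeta=0$) and with the simplification that \eqref{nowe-eq-pv} carries no inhomogeneous forcing $F,G$. Taking $k=1$ in \eqref{nowe-ec1}--\eqref{nowe-ec2}, I would multiply \eqref{nowe-ec1} by $P(\rho)\,\partial_t\vp$ with $P(\rho):=p'(\rho)/\rho$ and use $\rho_t=-\div(\rho u)$ together with $-P'(\rho)\rho+P(\rho)=(3-\gamma)P(\rho)$ to get an identity of the form \eqref{ec3}, and multiply \eqref{nowe-ec2} by $\partial_t\psi$ to get an identity of the form \eqref{ec4}. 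Adding the two, the cross term $p'(\rho)\,\div(\partial_t\psi)\,\partial_t\vp$ cancels and one obtains
\begin{multline*}
\Bigl(\tfrac12 P(\rho)|\partial_t\vp|^2+\tfrac12\rho|\partial_t\psi|^2\Bigr)_t
+\div\Bigl(\tfrac12 P(\rho)u|\partial_t\vp|^2+B_2\Bigr)\\
+\mu_1|\nabla\partial_t\psi|^2+(\mu_1+\mu_2)|\div\partial_t\psi|^2=R_2,
\end{multline*}
with $B_2$ and $R_2$ as in \eqref{ec5}, except that $f_{0,1},g_{0,1}$ are now built from the $f,g$ of \eqref{nowe-eq-pv}.

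Next I would integrate over $\Omega$. The divergence term leaves only the boundary integral $\int_{\partial\Omega}\tfrac12 P(\rho)|\partial_t\vp|^2(u_b\cdot n)\,d\sigma\ge0$, since $\partial_t\psi=0$ on $\partial\Omega$ by \eqref{nowe-pbc} and $u_b\cdot n\ge c>0$ by \eqref{bce}; this is the analogue of \eqref{ec6}. The viscous terms are coercive because $\mu_1>0$ and $\mu_1+\mu_2>0$ (the latter from $2\mu_1+3\mu_2\ge0$), giving a good $c\|\nabla\partial_t\psi\|^2$. The core of the proof is the pointwise bound on $R_2$: using that $u^s,\rho^s$ are time-independent (so $\partial_t f=-\partial_t\vp\,\div u^s-\nabla\rho^s\cdot\partial_t\psi$, and similarly for $\partial_t g$), the bounds $|\nabla^j(\rho^s-\rho_+,u^s-(u_+,0,0))|\lesssim\delta e^{-\alpha x_1}+|\nabla^j\Phi^s|$ from \eqref{stdc1}, \eqref{ExBdry0} and Theorem \ref{th4}, Hardy's inequality \eqref{hardy} (tested against $\partial_t\psi$, which vanishes on $\partial\Omega$) and Sobolev's inequalities \eqref{sobolev1}--\eqref{sobolev2}, and splitting off the $\psi^s$-contributions exactly as $R_1=R_{11}+R_{12}$ in the proof of Lemma \ref{nowe-lm1}, one gets $|R_2|\lesssim(N(T)+\delta)\,|(\partial_t\Phi,\nabla\Phi)|^2$ plus terms whose coefficients decay like $\delta e^{-\alpha x_1}$ or are supported near $\partial\Omega$; the commutator terms in $f_{0,1},g_{0,1}$ are handled verbatim as in Lemma \ref{lm2}.

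Finally, integrating the identity over $(0,t)$ and using \eqref{nowe-bdd} to bound the quadratic form from below, the left-hand side dominates $c\,\|\partial_t\Phi(t)\|^2+c\int_0^t\|\nabla\partial_t\psi\|^2\,d\tau$. The contribution at $\tau=0$ is $\lesssim\|\partial_t\Phi(0)\|^2\lesssim\|\Phi_0\|^2_{H^3}$, obtained by rewriting $\partial_t\Phi(0)$ through \eqref{nowe-eq-pv} in terms of the spatial derivatives of $\Phi_0$ and using the $L^\infty$-smallness $\|\nabla u^s,\nabla\rho^s\|_{L^\infty}\lesssim\delta$ from \eqref{stdc1}, \eqref{ExBdry0}, Theorem \ref{th4}. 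And $\int_0^t\!\int_\Omega R_2$ is bounded by $(N(T)+\delta)\int_0^t(\|\partial_t\Phi\|^2+\|\nabla\Phi\|^2)\,d\tau\lesssim(N(T)+\delta)\int_0^t D_{3,0}(\tau)\,d\tau$, where I use $\|\partial_t\psi\|^2_{H^2}+\|\nabla\Phi\|^2+\|\tfrac{d}{dt}\vp\|^2\lesssim D_{3,0}$ together with \eqref{nowe-eq-pv1} to control $\|\partial_t\vp\|$ by $\sqrt{D_{3,0}}$. This gives \eqref{nowe-ec0}.

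The step I expect to be the main obstacle is the estimate of $R_2$: one must track every term containing $u^s,\rho^s$ or their derivatives and verify it is either exponentially decaying in $x_1$ (so Hardy's inequality against $\partial_t\psi$ applies) or carries an explicit factor $\delta$ from Theorem \ref{th4}, so that all of $R_2$ is swallowed by $(N(T)+\delta)D_{3,0}$ and, in contrast to Lemma \ref{lm2}, no free $\delta\int_0^t d\tau$ term survives on the right-hand side.
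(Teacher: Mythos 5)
Your proposal is correct and follows essentially the same route as the paper, which simply reruns the proof of Lemma \ref{lm2} with $\zeta=0$ and with the $F,G$-free right-hand sides of \eqref{nowe-eq-pv}; your extra care in splitting the $\psi^s$-contributions of $R_2$ (parallel to the $R_{11}/R_{12}$ decomposition in Lemma \ref{nowe-lm1}) is exactly the adjustment the paper's terse proof implicitly relies on. No gaps.
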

\begin{proof}
The proof is very similar to Lemma \ref{lm2}. Indeed, by a similar process as the derivation for \eqref{ec5}, we know from \eqref{nowe-ec1} and \eqref{nowe-ec2} that
\begin{multline*}
\Bigl(
\frac{1}{2} P(\rho) | \partial_t \vp |^2
+ \frac{1}{2} \rho |\partial_t \psi|^2
\Bigr)_t
+ \div \Bigl(
\frac{1}{2} P(\rho) u
|\partial_t \vp|^2
+ B_2
\Bigr)
\mspace{150mu}
\nonumber
\\
\mspace{200mu}
{}+ \mu_1 |\nabla (\partial_t \psi)|^2
+ (\mu_1 + \mu_2) |\div (\partial_t \psi)|^2
=
R_2,
\end{multline*}
where
\begin{align*}
B_2&:=
\frac{1}{2} \rho u |\partial_t \psi|^2
- \mu_1 \nabla (\partial_t \psi)  \cdot \partial_t \psi
- (\mu_1+\mu_2) \div (\partial_t \psi) \partial_t \psi 
+ p'(\rho) \partial_t \vp \, \partial_t \psi, \nonumber \\
R_2&:=
P(\rho) f_{0,k} \, \partial_t \vp
+
\frac{3-\gamma}{2} P(\rho) \div u \, |\partial_t \vp|^2
+ (g_{0,k} + p''(\rho) \, \partial_t \vp \nabla \rho)
\cdot \partial_t \psi. 
\end{align*}
Then carrying out estimates as in the proof of Lemma \ref{lm2} with $\zeta=0$ gives \eqref{nowe-ec0}. We omit the details here.
\end{proof}

We also estimate $\pd_t^k \nabla\psi$ for $k=0,1$. The result is the following lemma parallel to Lemma \ref{lm3}.

\begin{lemma}
\label{nowe-lm3}
Under the same conditions as in Proposition \ref{nowe-apriori1}, 
it holds that
\begin{multline}
 \lt{\pd_t^k\nabla\psi(t)}^2
+ \int_0^t  \lt{\pd_t^{k+1}\psi(\tau)}^2 \, d \tau 
\\
\lesssim 
 \hs{3}{\Phi_0}^2
+ \lambda \calh_{k} (t)
+ \lambda^{-1} \calp_{k} (t)
+ ( N (t)+\dels ) \int_0^t  D_{3,0}(\tau)^2 \, d \tau
\label{nowe-ed0}
\end{multline}
for $t \in [0,T]$ and $k=0,1$.
Here, $\calh_{k}^\zeta(t)$ and $\calp_{k}^\zeta(t)$ are defined by
\begin{align*}
\calh_{k} (t)
 :=
  \lt{\pd_t^k \nabla \vp(t)}^2
+ \int_0^t   \lt{\pd_t^k \nabla \vp(\tau)}^2 \, d \tau,
\quad
\calp_{k} (t)
 :=
 \lt{\pd_t^k \psi(t)}^2
+ \int_0^t   \lt{\pd_t^k\nabla \psi(\tau)}^2 \, d \tau.
\end{align*}
\end{lemma}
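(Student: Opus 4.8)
The plan is to mimic closely the proof of Lemma~\ref{lm3}, replacing the weight $e^{\zeta t}$ (resp. $e^{\beta x_1}$) by $1$ throughout, and absorbing the inhomogeneous contributions $F,G$ --- which are now absent in \eqref{nowe-eq-pv} --- into the terms involving $\psi^s$ and $\vp^s$, estimated by means of Theorem~\ref{th4}, \eqref{stdc1}, Hardy's inequality \eqref{hardy}, and Sobolev's inequalities. Concretely, I would multiply \eqref{nowe-ec2} by $\partial_t^k \psi_t$ and carry out the same algebraic rearrangement as in \eqref{ed3}--\eqref{ed1}: the third and fourth terms on the left-hand side produce the time-derivative of an energy density $E_3$ and a divergence term $B_3$, together with a good dissipative contribution $\rho|\partial_t^k\psi_t|^2$; the rest is collected into $G_3$ and $R_3$ as in the statement of Lemma~\ref{lm3}, with $g_{0,k}$ now given by its expression in \eqref{nowe-eq-pv} (i.e. without $\partial_t^k G$). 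One integrates over $\Omega$, uses $\partial_t^k\psi = 0$ on $\partial\Omega$ (from \eqref{nowe-pbc}) to kill $\int_\Omega \div B_3\,dx$, and then integrates over $(0,t)$.

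The key estimates to reproduce are the analogues of \eqref{ed6}, \eqref{ed2}, and \eqref{ed9}. The bounds on $\int_\Omega E_3\,dx$ and $\int_\Omega |G_3|\,dx$ go through verbatim using \eqref{nowe-bdd}, giving the $\lambda$, $\lambda^{-1}$ and $\epsilon$, $\epsilon^{-1}$ terms and hence $\calh_k$, $\calp_k$ after one chooses $\epsilon$ small. For $R_3$, I expect the bound
\[
|R_3| \lesssim (N(T)+\delta)|(\nabla\Phi,\partial_t\Phi,\nabla^2\Phi,\partial_t\nabla\Phi,\partial_{tt}\psi)|^2
\]
when $k=1$, and for $k=0$ the bound
\[
|R_3| \lesssim (N(T)+\delta)|(\nabla\Phi,\partial_t\Phi)|^2 + |(\rt',\ut',\nabla U,\nabla\psi^s)||\Phi|^2,
\]
where the last group of terms replaces the $\delta^{-1}|(F,G)|^2$ term of \eqref{ed9}: the $\psi^s$-dependent pieces are handled using $\|\psi^s\|_{H^3}\lesssim\delta$ from Theorem~\ref{th4} together with $\|\Phi\|_{L^6}$ via Sobolev, exactly as in the treatment of $R_{12}$ in Lemma~\ref{nowe-lm1}. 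Finally, I would use the analogue of \eqref{PhiH1} --- which here follows from \eqref{nowe-eq-pv1}, \eqref{stdc1}, \eqref{ExBdry0}, Theorem~\ref{th4} and \eqref{hardy}, and reads $\|\partial_t\Phi\|_{H^1}\lesssim\sqrt{D_{2,0}}+\delta$ --- and the analogue of \eqref{PhiSup}, together with \eqref{hardy}, to absorb lower-order terms into $D_{3,0}$; after letting $\epsilon$ be suitably small this yields \eqref{nowe-ed0}.

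The main obstacle, as in Lemma~\ref{lm1} and Lemma~\ref{nowe-lm1}, is controlling the terms containing $\psi^s$ and $\vp^s$ that arise from $f_{0,k}$ and $g_{0,k}$: unlike $(\rt-\rho_+,\ut-u_+)$, the stationary perturbation $\Phi^s$ does not decay exponentially, so one cannot pair it against $e^{-\alpha x_1}$ and apply Hardy's inequality directly. Instead one must rely on its smallness in $H^3$ (Theorem~\ref{th4}) combined with $L^6$--$L^3$--$L^2$ Hölder splitting, which is precisely why these terms contribute $(N(T)+\delta)D_{3,0}$ rather than $\delta\cdot(\text{something lower order})$. Once this is in hand the rest of the argument is a routine transcription of the proof of Lemma~\ref{lm3}, and I would not spell out the elementary commutator and Schwarz-inequality manipulations.
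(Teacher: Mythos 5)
Your proposal is correct and follows essentially the same route as the paper, which itself simply rederives the identity \eqref{ed1} (with the same $E_3$, $B_3$, $G_3$, $R_3$, now with $\zeta=0$ and without $F$, $G$) and repeats the estimates of Lemma \ref{lm3}. Your additional care with the $\Phi^s$-dependent terms --- smallness in $H^3$ from Theorem \ref{th4} plus the $L^6$--$L^3$--$L^2$ splitting, as in the treatment of $R_{12}$ in Lemma \ref{nowe-lm1} --- is exactly the point the paper leaves implicit.
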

\begin{proof}
By a similar computation as in the derivation of \eqref{ed1}, 
we see from \eqref{nowe-ec1} and \eqref{nowe-ec2} that
\begin{equation*}
\dt E_{3}
- \div B_3
+ \rho |\partial_t^k \psi_t|^2
= G_3 + R_3,
\label{nowe-ed1}
\end{equation*}
where $E_3$, $B_3$, $G_3$ and $R_3$ are defined by
\begin{align*}
E_3 := {}
&
\frac{\mu_1}{2} |\nabla \partial_t^k \psi|^2
+ \frac{\mu_1 + \mu_2}{2} |\div \partial_t^k \psi|^2
+ p'(\rho) \nabla \partial_t^k \vp \cdot \partial_t^k \psi,
\\
B_3 := {}
&
\mu_1 \nabla \partial_t^k \psi \cdot \partial_t^k \psi_t
+ (\mu_1 + \mu_2) \partial_t^k \psi_t \div \partial_t^k \psi
+ p'(\rho) \partial_t^k \vp_t \, \partial_t^k \psi,
\\
G_3 := {}
&
p'(\rho) \div \partial_t^k \psi \, (u \cdot \nabla \partial_t^k \vp 
  + \rho \div \partial_t^k \psi)
-\rho (u \cdot \nabla) \partial_t^k \psi \cdot \partial_t^k \psi_t,
\\
R_3 := {}
&
p''(\rho) \vp_t \nabla \partial_t^k \vp \cdot \partial_t^k \psi
+ p''(\rho) \nabla \rho \cdot \partial_t^k \psi
     (u \cdot \nabla \partial_t^k \vp  + \rho \div \partial_t^k \psi)
\\
&
- f_{0,k} \div(p'(\rho) \partial_t^k \psi)
+    g_{0,k}  \cdot \partial_t^k \psi_t.
\end{align*}
Estimate the terms as in the proof of Lemma \ref{lm3},% with $\beta=\zeta=0$, 
we arrive at \eqref{nowe-ed0}. %We omit the details.
\end{proof}

\subsection{Spatial-derivative estimates}  \label{ss-nowe-spatial-deriv}

%In order to obtain tangential derivatives, 
We do the same change of variables as in Subsection \ref{Sptial-deriv}
and also define
\begin{equation*}
\hat{\vp} (t,y) := \vp(t,\Gamma(y)), \quad \hat{\psi} (t,y) : = \psi (t,\Gamma (y)), \quad \hat{\rho} (t,y) := \rho(t,\Gamma(y)), \quad \hat{u} (t,y) := u(t,\Gamma(y))
\end{equation*}
for $y \in \mathbb{R}_3^+ : = \{ (y_1, y_2, y_3) \in \mathbb{R}^3 : y_1 >0 \}$,
where $\Gamma$ is defined in \eqref{CV1} and $\Gamma(\mathbb R^3_+)=\Omega$. 
Furthermore, $\hat{\nabla}$, $\hat{\div}$, $\hat{\Delta}$, 
$\hat{\frac{d}{dt}}$ denote the same differential operators defined in 
\eqref{CV3}--\eqref{CV6}. %(where the matrix $A$ is as defined in \eqref{CV2}). 

Now we obtain the equation for $(\hat{\vp}, \hat{\psi})$:
\begin{subequations}
\label{nowe-eq-pv-hat}
\begin{gather}
 \hat{\vp}_t + \hat{u} \cdot \hat{\nabla} \hat{\vp} + \hat{\rho} \hat{\div} \hat{\psi}
= \hat{f},
\label{nowe-eq-pv1-hat}
\\
\hat{\rho} \{ \hat{\psi}_t  + (\hat{u} \cdot \hat{\nabla}) \hat{\psi} \}
- \hat{L} \hat{\psi} + p'(\hat{\rho}) \hat{\nabla} \hat{\vp}
= \hat{g}.
\label{nowe-eq-pv2-hat}
\end{gather}
The initial and boundary conditions for $(\hat{\vp},\hat{\psi})$ 
follow from (\ref{nowe-pic}), (\ref{nowe-pbc}) as
\begin{gather}
(\hat{\vp},\hat{\psi})(0,y)
=
% \vvp_0(x) =
 (\hat{\vp}_0, \hat{\psi}_0)(y)
= (\rho_0, u_0)(\Gamma(y)) - (\rho^s, u^s)(\Gamma(y)),
\label{nowe-pic-hat}
\\
\hat{\psi}(t,0,y') = 0.
\label{nowe-pbc-hat}
\end{gather}
\end{subequations}
Here $\hat{L} \hat{\psi}$, $\hat{f}$, and $\hat{g}$ are defined by
\begin{align*}
\hat{L} \hat{\psi}(t,y) := \mu_1 \hat{\Delta} \hat{\psi}(t,y) + (\mu_1 + \mu_2) \hat{\nabla} \hat{\div} \hat{\psi}(t,y), \quad
\hat{f}(t,y) :=f(t,\Gamma(y)), \quad 
\hat{g}(t,y) := g(t,\Gamma(y)).
\end{align*}

We derive the estimate on the tangential spatial derivatives, 
which is parallel to Lemma \ref{lm2-hat}.

\begin{lemma}
\label{nowe-lm2-hat}
Under the same conditions as in Proposition \ref{nowe-apriori1}, 
it holds,
\begin{align}
{}& \|\nabla^l_{y'} \hat{\Phi}(t)\|_{L^2(\mathbb R^3_+)}^2
+ \int_0^t 
\left(\|\nabla\nabla^l_{y'}  \hat{\psi}(\tau)\|_{L^2(\mathbb R^3_+)}^2
+ \left\| \nabla^l_{y'}  \hat{\frac{d}{dt}} \hat{\vp}  (\tau) \right\|_{L^2(\mathbb R^3_+)}^2 
 \right) \, d \tau 
\notag\\
&\lesssim  \|\Phi_0\|_{H^3}^2
%+ \zeta \int_0^t \|\Phi(\tau)\|_{H^l}^2 \, d \tau
+ \epsilon \int^t_0  (\| \nabla \vp(\tau) \|^2_{H^{l-1}} + \| \nabla \psi(\tau) \|^2_{H^{l}} ) d \tau 
+ \epsilon^{-1} \int^t_0  \| \nabla \psi \|^2_{H^{l-1}} d \tau 
\notag\\
& \quad + (N (T)+\dels ) \int_0^t  D_{3,0}(\tau) \, d \tau
\label{nowe-ec0-hat}
\end{align}
for $t \in [0,T]$, $\epsilon \in (0,1)$,  and $l = 1, 2, 3$.
%and $k=1,2$,
% where $C$ is a positive constant independent of $\delta$, $t$, and $\zeta$.
\end{lemma}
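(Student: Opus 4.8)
The plan is to mirror the argument of Lemma \ref{lm2-hat}, replacing the stationary background $(\rt\circ\tilde M, \ut\circ\tilde M) + (0,U)$ by the full stationary solution $(\rho^s,u^s)$, and dropping the inhomogeneous terms $\hat F,\hat G$ which are now absent in \eqref{nowe-eq-pv-hat}. Concretely, I would first apply $\nabla^l_{y'}$ to \eqref{nowe-eq-pv1-hat} and \eqref{nowe-eq-pv2-hat} to obtain the analogues of \eqref{ec1-hat}--\eqref{ec2-hat}, with commutator-type right-hand sides $\hat f_{l,0}$ and $\hat g_{l,0}$ built from $\hat f,\hat g$. Multiplying the density equation by $P(\hat\rho)\,\nabla^l_{y'}\hat\vp$ (using $\hat\rho_t = -\hat\div(\hat\rho\hat u)$ and $-P'(\hat\rho)\hat\rho + P(\hat\rho) = (3-\gamma)P(\hat\rho)$) and the momentum equation by $\nabla^l_{y'}\hat\psi$, then adding, yields the energy identity in the shape of \eqref{ec5-hat} with a remainder $\hat R_2$ of the same structure (but without $\hat F,\hat G$ contributions).

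Next I would integrate over $\mathbb R^3_+$. The divergence term produces a boundary integral over $\mathbb R^2$ carrying the factor $(\ub\cdot n)\sqrt{1+|\nabla M|^2} \ge c > 0$, which is nonnegative exactly as in \eqref{ec6-hat} — here one uses $\nabla^l_{y'}\hat\psi = 0$ on $\{y_1=0\}$, which follows from \eqref{nowe-pbc-hat}. The viscous terms give the good dissipation $\gtrsim \|\nabla\nabla^l_{y'}\hat\psi\|^2$ via $|\nabla\nabla^l_{y'}\hat\psi| \lesssim |\hat\nabla\nabla^l_{y'}\hat\psi|$, as in \eqref{ec10-hat}. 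For $\hat R_2$, the main work is bounding the $L^2$-norms of $\nabla^l_{y'}\hat f$ and $\nabla^l_{y'}\hat g$: here I would decompose $u^s = (\ut + U) + \psi^s$, $\rho^s = \rt + \vp^s$, so that the derivatives falling on $\rt,\ut,U$ are handled exactly as in \eqref{ec8-hat} (producing a factor $\delta$ after Sobolev and \eqref{stdc1}, \eqref{ExBdry3}), while the derivatives falling on $(\vp^s,\psi^s)$ are controlled using Theorem \ref{th4} (which gives $\|\Phi^s\|_{H^m} \le C_0\delta$ and hence a factor $\delta$ again). The commutator terms are treated with Lemma \ref{CommEst} just as before. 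The net bound should read $\int_{\mathbb R^3_+}|\hat R_2|\,dy \lesssim (N(T)+\delta)D_{3,0} + \epsilon(\|\nabla_y\hat\vp\|_{H^{l-1}}^2 + \|\nabla_y\hat\psi\|_{H^l}^2) + \epsilon^{-1}\|\nabla_y\hat\psi\|_{H^{l-1}}^2$. To close the estimate on $\nabla^l_{y'}\hat{\frac{d}{dt}}\hat\vp$, I would apply $\nabla^l_{y'}$ to \eqref{nowe-eq-pv1-hat}, solve for $\nabla^l_{y'}\hat{\frac{d}{dt}}\hat\vp = -\nabla^l_{y'}(\hat\rho\hat\div\hat\psi) + \nabla^l_{y'}\hat f$, and take $L^2$-norms, obtaining the analogue of \eqref{ec9-hat} without the $\|\hat F\|^2$ term.

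Finally, integrating the energy identity over $(0,t)\times\mathbb R^3_+$ (no time weight $e^{\zeta\tau}$ is needed here since $\zeta=0$ in this section), substituting the boundary nonnegativity, the viscous lower bound, the $\hat R_2$ bound, and the $\hat{\frac{d}{dt}}\hat\vp$ estimate, and then performing the change of variables $y\to x$ on the right-hand side, gives \eqref{nowe-ec0-hat}. I expect the only genuinely delicate point to be the bookkeeping in the estimate of $\nabla^l_{y'}\hat g$: since $\psi^s$ now appears inside $u^s$ and $\rho^s$, one must verify that every term carrying a derivative of $\psi^s$ (rather than of $\ut$ or $U$) still produces a factor $\delta$ via Theorem \ref{th4}, using Sobolev embeddings $H^2\hookrightarrow L^\infty$ and $H^1\hookrightarrow L^6$, $L^3$ to distribute regularity — this is routine but must be done carefully to ensure the constant depends on $\Omega$ only through $\|M\|_{H^9}$ and the stationary bound. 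Everything else is a direct transcription of the proof of Lemma \ref{lm2-hat}.
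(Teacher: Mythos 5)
Your proposal is correct and follows essentially the same route as the paper: the paper's own proof is a two-line reference back to the derivation of \eqref{ec5-hat} and the term-by-term estimates of Lemma \ref{lm2-hat} with $\zeta=0$, and your plan simply fills in those details, including the one genuinely new point (decomposing $\rho^s=\rt+\vp^s$, $u^s=\ut+U+\psi^s$ and invoking Theorem \ref{th4} to extract the factor $\delta$ from the $\Phi^s$ contributions), which is exactly what the paper indicates at the start of Section \ref{S6}.
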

\begin{proof}
%The proof is very similar to the one for Lemma \ref{lm2-hat}.
By a similar computation as in the derivation of \eqref{ec5-hat}, 
we see from \eqref{nowe-eq-pv1-hat} and \eqref{nowe-eq-pv2-hat} that 
\begin{multline*}
\Bigl(
\frac{1}{2} P(\hat{\rho}) | \nabla^l_{y'} \hat{\vp} |^2
+ \frac{1}{2} \hat{\rho} |\nabla^l_{y'} \hat{\psi}|^2
\Bigr)_t
+ \hat{\div} \Bigl(
\frac{1}{2} P(\hat{\rho}) \hat{u}
|\nabla^l_{y'} \hat{\vp}|^2
+ \hat{B}_2
\Bigr)
\\
{}+ \mu_1 |\hat{\nabla} (\nabla^l_{y'} \hat{\psi})|^2
+ (\mu_1 + \mu_2) |\hat{\div} (\nabla^l_{y'} \hat{\psi})|^2
=
\hat{R}_2,
\end{multline*}
where
\begin{align}
\hat{B}_2
:=& 
\frac{1}{2} \hat{\rho} \hat{u} |\nabla^l_{y'} \hat{\psi}|^2
- \mu_1 \hat{\nabla} (\nabla^l_{y'} \hat{\psi})  \cdot \nabla^l_{y'} \hat{\psi}
- (\mu_1+\mu_2) \hat{\div} (\nabla^l_{y'} \hat{\psi}) \nabla^l_{y'} \hat{\psi} 
+ p'(\hat{\rho}) \nabla^l_{y'} \hat{\vp} \, \nabla^l_{y'} \hat{\psi} ,
\nonumber
\\
\hat{R}_2
:=&
P(\hat{\rho}) \hat{f}_{l,0} \, \nabla^l_{y'} \hat{\vp}
+
\frac{3-\gamma}{2} P(\hat{\rho}) \hat{\div} \hat{u} \, |\nabla^l_{y'} \hat{\vp}|^2
+ (\hat{g}_{l,0} + p''(\hat{\rho}) \, \nabla^l_{y'} \hat{\vp} \hat{\nabla} \hat{\rho})
   \cdot \nabla^l_{y'} \hat{\psi}.
\nonumber
\end{align}
Then by a similar way of estimating the terms as in the proof of Lemma \ref{lm2-hat} with $\zeta=0$, 
we arrive at \eqref{nowe-ec0-hat}. We omit the details.
\end{proof}

We also derive the estimates on the normal spatial derivative,
which is parallel to Lemma \ref{lm4-hat}. 

\begin{lemma}
\label{nowe-lm4-hat}
Suppose that the same conditions as in Proposition \ref{nowe-apriori1} hold. Define the index $\bm{a} = (a_1, a_2, a_3)$ with $a_1, a_2, a_3 \geq 0$ and $|\bm{a}| := a_1 + a_2 + a_3$. Let $\partial^{\bm{a}} := \partial^{a_1}_{y_1} \partial^{a_2}_{y_2} \partial^{a_3}_{y_3}$. Then it holds that 
\begin{align}
&  \| \partial^{\bm{a}} \partial_{y_1} \hat{\vp} (t) \|_{L^2(\mathbb R^3_+)}^2 
+ \int^t_0 \left( \|\partial^{\bm{a}} {\cal D} \hat{\vp} (\tau) \|_{L^2(\mathbb R^3_+)}^2 
+ \left\| \partial^{\bm{a}} \partial_{y_1} \hat{\frac{d}{dt}} \hat{\vp}(\tau) \right\|_{L^2(\mathbb R^3_+)}^2  \right) \, d\tau 
\notag\\
&  \lesssim \|\vp_0\|_{H^3}^2 
+ \int_0^t  \left(
\left\|\partial^{\bm{a}} \nabla_{y'} \hat{\frac{d}{dt}} \hat{\vp}(\tau) \right\|_{L^2(\mathbb R^3_+)}^2
+ \|\partial^{\bm{a}} \nabla_y \nabla_{y'} \hat{\psi}(\tau) \|_{L^2(\mathbb R^3_+)}^2 \right)\, d \tau
\notag\\
&\quad + \int_0^t  \left(|{\bm{a}}|\|\nabla_y {\vp} (\tau)\|^2_{H^{|{\bm{a}}| -1}}
+|{\bm{a}}| \left\|\nabla {\frac{d}{dt}} {\vp}(\tau) \right\|_{H^{|{\bm{a}}|-1}}^2
+\|{\psi}_t (\tau)\|^2_{H^{|{\bm{a}}|}} 
+\|\nabla_y {\psi} (\tau)\|^2_{H^{|{\bm{a}}|}} \right)\, d \tau
\notag\\
& \quad %+\zeta \int_0^t e^{\zeta \tau} \|\vp\|_{H^3}^2  \, d \tau 
+ (N (T)+\dels ) \int_0^t  D_{3, 0}(\tau) \, d \tau  
\label{nowe-ee0-hat}
\end{align}
for $t \in [0, T]$ and $0\leq |\bm{a}| \leq 2$.
\end{lemma}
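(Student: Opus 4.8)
The plan is to mimic closely the proof of Lemma \ref{lm4-hat}, adapting it to the present setting where the inhomogeneous terms $F$ and $G$ are absent. First I would derive the analogue of equation \eqref{ee5-hat}: applying $\partial_1$ to \eqref{nowe-eq-pv1-hat} multiplied by $\mu = 2\mu_1 + \mu_2$, taking the inner product of \eqref{nowe-eq-pv2-hat} with $(\hat{\rho}\mathcal{A}_1, \hat{\rho}\mathcal{A}_2, \hat{\rho}\mathcal{A}_3)^\top$ with the same coefficients $\mathcal{A}_j$ defined in \eqref{Aj-def}, and adding the two. The crucial algebraic cancellation — that the combination $I+II+III$ contains no second-order normal derivative $\partial_1^2$ — is purely a statement about the operator $L$ and the change of variables, so it carries over verbatim since $\hat{L}$ and $\Gamma$ are the same here. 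This produces an identity of the form
\begin{equation*}
\mu \mathcal{D}\,\hat{\frac{d}{dt}}\hat{\vp} - \mu(\mathcal{A}_2\partial_2 + \mathcal{A}_3\partial_3)\,\hat{\frac{d}{dt}}\hat{\vp} + \mu\tilde{\mathcal{A}}_1\partial_1\hat{\rho}\,\hat{\div}\hat{\psi} + \tilde{\mathcal{A}}_1\hat{\rho}^2\bigl(\textstyle\sum_j \mathcal{A}_j\hat{\psi}_{jt} + \sum_j\mathcal{A}_j(\hat{u}\cdot\hat{\nabla})\hat{\psi}_j\bigr) + \tilde{\mathcal{A}}_1 p'(\hat{\rho})\hat{\rho}\,\mathcal{D}\hat{\vp} + \tilde{\mathcal{A}}_1\hat{\rho}\!\!\sum_{1\leq|\bm{b}|\leq2,\,b_1\neq2,\,j}\!\! a_{\bm{b}}\partial^{\bm{b}}_y\hat{\psi}_j = \mu\tilde{\mathcal{A}}_1\partial_1\hat{f} + \tilde{\mathcal{A}}_1\hat{\rho}\textstyle\sum_j\mathcal{A}_j\hat{g}_j,
\end{equation*}
which is \eqref{ee5-hat} with the $\hat{F}, \hat{G}$ terms deleted.

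Next I would apply $\partial^{\bm{a}}$ for $0\le|\bm{a}|\le2$, multiply the resulting equation \eqref{ee6-hat}-analogue by $\partial^{\bm{a}}\mathcal{D}\hat{\vp}$ and by $\partial^{\bm{a}}\mathcal{D}\,\hat{\frac{d}{dt}}\hat{\vp}$, and add. This yields an energy identity as in \eqref{ee7-hat} whose left-hand side contains the good terms $\frac12(\mu + \tilde{\mathcal{A}}_1 p'(\hat{\rho})\hat{\rho})|\partial^{\bm{a}}\mathcal{D}\hat{\vp}|^2$ differentiated in $t$, the dissipation $\mu|\partial^{\bm{a}}\mathcal{D}\,\hat{\frac{d}{dt}}\hat{\vp}|^2 + \tilde{\mathcal{A}}_1 p'(\hat{\rho})\hat{\rho}|\partial^{\bm{a}}\mathcal{D}\hat{\vp}|^2$, and a divergence term whose $\mathbb{R}^3_+$-integral is nonnegative by the divergence theorem together with $\ub\cdot n\ge c>0$ and \eqref{nowe-pbc}, exactly as in \eqref{ee8-hat}. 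The remainder $\hat{R}_3$ is estimated just as in Lemma \ref{lm4-hat}: the commutator $\hat{u}\cdot\hat{\nabla}\partial^{\bm{a}}\mathcal{D}\hat{\vp} - \partial^{\bm{a}}\mathcal{D}(\hat{u}\cdot\hat{\nabla}\hat{\vp})$ is handled by splitting $\hat{u} = \ut + \hat{\psi}^s + \hat{\psi} + \hat{U}$ (instead of $\ut + \hat{\psi} + \hat{U}$), using \eqref{stdc1}, Theorem \ref{th4} for the $\psi^s$ contribution, and Sobolev/Hardy inequalities; the $\hat{f}, \hat{g}$ terms are bounded as in \eqref{ee13-hat}, again with the extra $\psi^s$ pieces controlled by Theorem \ref{th4}. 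Then I would use the identity $\partial_1 = \tilde{\mathcal{A}}_1^{-1}\mathcal{D} - \tilde{\mathcal{A}}_1^{-1}\mathcal{A}_2\partial_2 - \tilde{\mathcal{A}}_1^{-1}\mathcal{A}_3\partial_3$ to convert $\mathcal{D}$-dissipation into $\partial_1$-dissipation, as in \eqref{ee10-hat}, and similarly recover $\partial_1(\hat{d/dt})\hat{\vp}$. Integrating in $t$ (with $\zeta = 0$, no time weight needed here), absorbing the small-coefficient terms $(\epsilon + N(T) + \delta)$ into the left-hand side, and changing variables $y\to x$ for some terms on the right gives the claimed estimate \eqref{nowe-ee0-hat}.

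The main simplification compared to Lemma \ref{lm4-hat} is that there is no $\delta\int_0^t e^{\zeta\tau}d\tau$-type forcing term on the right, because $f$ and $g$ in \eqref{nowe-eq-pv} are genuinely homogeneous in $\Phi$ (they vanish when $\Phi = 0$, being linear combinations of $\vp, \psi$ with coefficients built from $\rho^s, u^s$). Consequently all contributions from $\hat{f}, \hat{g}$ and their derivatives are bounded by $(N(T)+\delta)D_{3,0}$ rather than producing an inhomogeneous $\delta$-term; the $\delta$-dependence of the coefficients of $\rho^s, u^s$ near their limits enters only through \eqref{stdc1} and Theorem \ref{th4}, exactly as the $\delta$-smallness hypothesis allows. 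I expect the main obstacle to be bookkeeping: one must carefully verify that every term in $\hat{R}_3$ and $\hat{I}_1$ involving $\psi^s = u^s - \ut\circ\tilde{M} - U$ is controlled — here $\psi^s \in H^m$ with $\|\psi^s\|_{H^m}\lesssim_\Omega\delta$ by Theorem \ref{th4}, and its spatial derivatives up to the needed order must be placed in $L^3$ or $L^\infty$ via Sobolev embedding while the companion factor sits in $L^6$ or $L^2$, paralleling \eqref{ee13-hat} line by line. Since this is entirely routine given the template of Lemma \ref{lm4-hat}, I would in the write-up simply say "By a computation similar to the derivation of \eqref{ee7-hat}" and "estimating the terms as in the proof of Lemma \ref{lm4-hat} with $\zeta = 0$, and using $u^s = \ut\circ\tilde{M} + \psi^s + U$ together with Theorem \ref{th4}, we arrive at \eqref{nowe-ee0-hat}," omitting the details.
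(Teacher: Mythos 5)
Your proposal is correct and follows essentially the same route as the paper, which likewise derives the analogue of \eqref{ee7-hat} with the $\hat{F},\hat{G}$ terms deleted and then invokes the estimates of Lemma \ref{lm4-hat} with $\zeta=0$. Your additional observations — that the inhomogeneous $\delta\int e^{\zeta\tau}d\tau$ term disappears because $f,g$ are homogeneous in $\Phi$, and that the $\psi^s$ contributions are absorbed via Theorem \ref{th4} — are exactly the points the paper leaves implicit.
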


\begin{proof}
%The proof is essentially the same as the one for Lemma \ref{lm4-hat}. 
By a similar computation as in the derivation of \eqref{ee7-hat}, we obtain 
\begin{multline*}
 \left(\frac{1}{2} \mu |\partial^{\bm{a}} \mathcal{D} \hat{\vp}|^2 + \frac{1}{2} \tilde{\mathcal{A}}_1 p'(\hat{\rho}) \hat{\rho} |\partial^{\bm{a}} \mathcal{D} \hat{\vp}|^2  \right)_t + \mu \left|\partial^{\bm{a}} \mathcal{D} \hat{\frac{d}{dt}} \hat{\vp}\right|^2 + \tilde{\mathcal{A}}_1 p'(\hat{\rho}) \hat{\rho} |\partial^{\bm{a}} \mathcal{D} \hat{\vp}|^2 
\\
+ \hat{\div}\left(\frac{1}{2} \mu |\partial^{\bm{a}} \mathcal{D} \hat{\vp}|^2 \hat{u} + \frac{1}{2} \tilde{\mathcal{A}}_1 p'(\hat{\rho}) \hat{\rho} |\partial^{\bm{a}} \mathcal{D} \hat{\vp}|^2\hat{u}  \right)
= \hat{R}_3.
\end{multline*}
Here $\tilde{\cal A}_1$ and ${\cal D}$ are defined in \eqref{Aj-def}, and 
\begin{equation*}
\begin{split}
\hat{R}_3&:=\big\{[\hat{u} \cdot \hat{\nabla} \partial^{\bm{a}} \mathcal{D} \hat{\vp} ] - \partial^{\bm{a}} \mathcal{D}   [\hat{u} \cdot \hat{\nabla} \hat{\vp}] \big\} (\mu \partial^{\bm{a}} \mathcal{D} \hat{\vp} + \tilde{\mathcal{A}}_1 p'(\hat{\rho}) \hat{\rho}\partial^{\bm{a}} \mathcal{D} \hat{\vp})
\\
& \quad + \frac{1}{2}\mu |\partial^{\bm{a}} \mathcal{D} \hat{\vp}|^2 \hat{\div} \hat{u}  
+ \frac{1}{2} \tilde{\mathcal{A}}_1 (p'(\hat{\rho}) \hat{\rho} )_t|\partial^{\bm{a}} \mathcal{D} \hat{\vp}|^2 
+ \frac{1}{2} |\partial^{\bm{a}} \mathcal{D} \hat{\vp}|^2 \hat{\div} \left(\tilde{\mathcal{A}}_1 p'(\hat{\rho}) \hat{\rho} \hat{u}\right) \\
& \quad + \hat{I}_1 \left(\partial^{\bm{a}} \mathcal{D} \hat{\vp} + \partial^{\bm{a}} \mathcal{D}  \hat{\frac{d}{dt}} \hat{\vp} \right),
\end{split}
\end{equation*}
where $\hat{I}_1$ is defined for the coefficients ${\cal A}_j$ in \eqref{Aj-def} and $a_{\bm{b}}$ in \eqref{ee4-hat} as
\begin{equation*}
\begin{split}
 \hat{I}_1 := 
& \Big\{ -\partial^{\bm{a}} [\tilde{\mathcal{A}}_1 p'(\hat{\rho}) \hat{\rho}  \mathcal{D} \hat{\vp}]
+ \tilde{\mathcal{A}}_1 p'(\hat{\rho}) \hat{\rho} \partial^{\bm{a}} \mathcal{D} \hat{\vp} 
\\
& + \partial^{\bm{a}} \Big\{\mu(\mathcal{A}_2\partial_{y_2}+\mathcal{A}_3\partial_{y_3}) \hat{\frac{d}{dt}} \hat{\vp} - \mu \tilde{\mathcal{A}}_1 \partial_{y_1} \hat{\rho} \hat{\div} \hat{\psi} - \tilde{\mathcal{A}}_1 \hat{\rho}^2 ( \sum_{j=1}^3 \mathcal{A}_j \hat{\psi}_{jt} + \sum_{j=1}^3 \mathcal{A}_j \hat{u} \cdot \hat{\nabla} \hat{\psi}_j ) 
\\
& \qquad \qquad  + \tilde{\mathcal{A}}_1 \hat{\rho}\sum_{1 \leq |\bm{b}|\leq 2, \ b_1\neq2, \ j=1,2,3} a_{\bm{b}} \partial^{\bm{b}}_y\hat{\psi}_j + \mu \tilde{\mathcal{A}}_1 \partial_1 \hat{f} + \tilde{\mathcal{A}}_1 \hat{\rho} \sum_{j=1}^3 \mathcal{A}_j \hat{g}_j \Big\} . \\
\end{split}
\end{equation*}
Then using similar methods as in the proof of Lemma \ref{lm4-hat}, we arrive at \eqref{nowe-ee0-hat}.
\end{proof}

\subsection{Cattabriga estimates}  \label{ss-nowe-CattabrigaEst}

As in Subsection \ref{ss-CattabrigaEst}, we control the good contribution of spatial-derivatives 
using the Cattabriga estimate in Lemma \ref{CattabrigaEst}, which has crucial dependence on $\Omega$. Firstly we have the following lemma, which is parallel to Lemma \ref{lm1-C}.

\begin{lemma} \label{nowe-lm1-C}
Under the same assumption as in Proposition \ref{nowe-apriori1},
it holds, for $k=0, 1, 2$, 
\begin{equation}
\| \nabla^{k+2} \psi \|^2 + \| \nabla^{k+1} \vp \|^2  \lesssim_\Omega  \| \psi_t \|^2_{H^{k}} 
+ \| \nabla \psi \|^2_{H^{k}} 
+ \left\| \frac{d}{dt} \vp \right\|^2_{H^{k+1}} 
 + (N (T) + \delta ) D_{3, 0} . 
\label{nowe-ef0-C}
\end{equation}
\end{lemma}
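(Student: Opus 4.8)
The plan is to mimic the proof of Lemma \ref{lm1-C} almost verbatim, the only structural difference being that the inhomogeneous terms $F$ and $G$ are now absent (since $(\rho^s,u^s)$ is an exact stationary solution) and that the roles previously played by $\rt\circ\tilde M$ and $U$ are now played by the full stationary solution $(\rho^s,u^s)$, about which we know from Theorem \ref{th4} that $\|\Phi^s\|_{H^m}\lesssim_\Omega\delta$ for $m=3,4,5$. First I would rewrite the system \eqref{nowe-eq-pv1}--\eqref{nowe-eq-pv2} by moving all the non-constant-coefficient pieces to the right-hand side, so as to obtain a constant-coefficient Stokes boundary value problem for $\psi$:
\begin{equation*}
\rho_+\div\psi = V,\qquad -\mu_1\Delta\psi + p'(\rho_+)\nabla\vp = W,\qquad \psi|_{\partial\Omega}=0,\qquad \lim_{|x|\to\infty}|\psi|=0,
\end{equation*}
where, using $\frac{d}{dt}=\partial_t+u\cdot\nabla$ and $\mu=2\mu_1+\mu_2$,
\begin{equation*}
V := f - \tfrac{d}{dt}\vp - (\rho-\rho_+)\div\psi,\qquad
W := -\rho\{\psi_t+(u\cdot\nabla)\psi\} + (\mu_1+\mu_2)\rho_+^{-1}\nabla V + g - (p'(\rho)-p'(\rho_+))\nabla\vp.
\end{equation*}

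Next I would apply the Cattabriga estimate \eqref{Cattabriga} (Lemma \ref{CattabrigaEst}) to this problem, which yields
\begin{equation*}
\|\nabla^{k+2}\psi\|^2 + \|\nabla^{k+1}\vp\|^2 \lesssim_\Omega \|V\|_{H^{k+1}}^2 + \|W\|_{H^k}^2 + \|\nabla\psi\|^2
\end{equation*}
for $k=0,1,2$. The remaining work is to bound $\|V\|_{H^{k+1}}^2$ and $\|W\|_{H^k}^2$ by the right-hand side of \eqref{nowe-ef0-C}. The term $\frac{d}{dt}\vp$ in $V$ and the term $\psi_t$ in $W$ are already present on the right-hand side; the terms $(\rho-\rho_+)\div\psi$, $\rho(u\cdot\nabla)\psi$ and $(p'(\rho)-p'(\rho_+))\nabla\vp$ are handled by \eqref{nowe-bdd}, the bound $N(T)\ll1$, and Sobolev's inequalities \eqref{sobolev0} and \eqref{sobolev2}, producing factors of $N(T)$ in front of $D_{3,0}$. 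For the genuinely inhomogeneous contributions $f$ and $g$, I would note that $f = -\vp\div u^s - \nabla\rho^s\cdot\psi$ and $g = -\rho\psi\cdot\nabla u^s - \vp u^s\cdot\nabla u^s - (p'(\rho)-p'(\rho^s))\nabla\rho^s$, and estimate each factor involving $u^s$ or $\rho^s$ by splitting $u^s = (\ut+U)\circ\cdot + \psi^s$, $\rho^s = \rt\circ\tilde M + \vp^s$, then using \eqref{stdc1}, \eqref{ExBdry0} and Theorem \ref{th4} exactly as in the derivation of \eqref{ee13-hat}; this gives contributions of the form $\delta D_{3,0}$ plus the genuinely $\delta$-sized residual. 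Collecting everything and absorbing the small factors $N(T)+\delta$ into $D_{3,0}$ yields \eqref{nowe-ef0-C}.

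The main obstacle — if there is one — is purely bookkeeping: one must check that every term arising from $f$, $g$, and the commutator-type rearrangements of $L\psi$ and $\nabla\div\psi$ genuinely lands either in $D_{3,0}$ (with a small prefactor), or in the $\|\psi_t\|_{H^k}^2 + \|\nabla\psi\|_{H^k}^2 + \|\frac{d}{dt}\vp\|_{H^{k+1}}^2$ block, or in the clean $\delta$ term. Since the structure is identical to Lemma \ref{lm1-C} save for the absence of $F,G$ and the appearance of the already-controlled stationary perturbation $\Phi^s$ (bounded in $H^3,H^4,H^5$ by $C_0\delta$ via Theorem \ref{th4}), no new analytic difficulty arises; I would simply state that the estimation of $V$ and $W$ proceeds as in the proof of Lemma \ref{lm1-C}, using \eqref{stdc1}, \eqref{ExBdry0}, Theorem \ref{th4}, Hardy's inequality \eqref{hardy} and Sobolev's inequalities \eqref{sobolev0} and \eqref{sobolev2}, and conclude \eqref{nowe-ef0-C}.
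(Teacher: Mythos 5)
Your proposal is correct and follows essentially the same route as the paper: the paper's proof sets up exactly the same Stokes boundary value problem with the same $V$ and $W$, applies the Cattabriga estimate \eqref{Cattabriga}, and bounds $V$ and $W$ as in Lemma \ref{lm1-C}. The only cosmetic difference is that, since $F$ and $G$ are absent here, every term of $f$ and $g$ carries a factor of $\Phi$, so there is no standalone $\delta$ residual in \eqref{nowe-ef0-C} — your passing mention of a ``$\delta$-sized residual'' is superfluous but harmless.
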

\begin{proof}
The proof is almost the same as in that of Lemma \ref{lm1-C}.
From \eqref{nowe-eq-pv}, we have a boundary value problem of the Stokes equation:
\begin{equation}\label{nowe-ef7-C}
 \rho_+\div\psi = V , \quad
 - \mu_1 \Delta \psi + p'(\rho_+)\nabla \vp = W, %\quad \text{in $\Omega$}, 
\quad
 \psi|_{\partial \Omega} =0 , \quad \lim_{|x| \rightarrow \infty} |\psi| =0,
\end{equation}
where
\begin{align*}
V : =& f - \frac{d}{dt}\vp - (\rho-\rho_+)\div \psi , \\
W :% =& - \rho \{ \psi_t + ( u \cdot \nabla ) \psi \} + (\mu_1 + \mu_2) \nabla \div \psi + g + G - (p'(\rho)-p'(\rho_+)) \nabla \vp
%\\
=& - \rho \{ \psi_t + ( u \cdot \nabla ) \psi \} + (\mu_1 + \mu_2) \rho_+^{-1} \nabla V + g  - (p'(\rho)-p'(\rho_+)) \nabla \vp.
\end{align*}
The application of the Cattabriga estimate \eqref{Cattabriga} 
to this boundary value problem gives \eqref{nowe-ef0-C}.
\end{proof}

As in Subsection \ref{ss-CattabrigaEst}, 
we show similar estimates for $(\hat{\vp},\hat{\psi})(t,y)$, where $y \in \mathbb R^3_+$.
We use the same differential operator $\check{\partial}_{y_j}$ defined in \eqref{checkOp}. 
%Recall that
%\begin{equation*}
%\check{\partial}_{y_j}: = \sum_{i=1}^3 (A^{-1}(x'))_{ij} \partial_{x_i}. 
%\end{equation*}
%where $A$ is defined in \eqref{CV2}; $(A^{-1}(x'))_{ij}$ means the $(i,j)$-component of $A^{-1}(x')$;
%$\check{\partial}_{y_j} \vp (t,\Gamma(y)) = \partial_{y_j} \hat{\vp}(t,y)$ holds.
Furthermore, $\check{\nabla}^l_{y'}$ means the totality of all $l$-times tangential derivatives 
$\check{\partial}_{y_j}$ only for $j=2,3$.
Applying $\check{\nabla}^l_{y'}$ to \eqref{nowe-ef7-C}, 
we obtain a boundary value problem of the Stokes equation: 
\begin{equation}\label{nowe-checkEq}
\rho_+\div \check{\nabla}^l_{y'} \psi = \check{V}, \ \
- \mu_1 \Delta \check{\nabla}^l_{y'} \psi + p'(\rho_+)\nabla \check{\nabla}^l_{y'} \vp = \check{W}, \ \  %\quad \text{in $\Omega$}, 
\check{\nabla}^l_{y'} \psi|_{\partial \Omega} =0 , \ \ \lim_{|x| \rightarrow \infty} | \check{\nabla}^l_{y'} \psi| =0,
\end{equation}
where
\begin{align*}
\check{V} : = 
& \check{\nabla}^l_{y'} f - \check{\nabla}^l_{y'} \frac{d}{dt}\vp - \check{\nabla}^l_{y'} [(\rho-\rho_+)\div \psi ] + [\rho_+ \div \check{\nabla}^l_{y'} \psi -  \rho_+ \check{\nabla}^l_{y'} \div \psi] , \\
\check{W} : = 
&- \check{\nabla}^l_{y'} [ \rho \{ \psi_t + ( u \cdot \nabla ) \psi \} ] + (\mu_1 + \mu_2) \check{\nabla}^l_{y'}  \nabla \div \psi + \check{\nabla}^l_{y'} g \\
& - \check{\nabla}^l_{y'} [ (p'(\rho)-p'(\rho_+)) \nabla \vp ]  + \mu_1 (\check{\nabla}^l_{y'} \Delta \psi - \Delta \check{\nabla}^l_{y'} \psi) 
+ p'(\rho_+) ( \nabla \check{\nabla}^l_{y'} \vp - \check{\nabla}^l_{y'} \nabla \vp ).  
\end{align*}
Then we have the following estimate parallel to Lemma \ref{lm2-C}.    
    
\begin{lemma} \label{nowe-lm2-C}
Under the same assumption as in Proposition \ref{nowe-apriori1},
it holds, for $k=0, 1$, $k+l =  1,  2$, 
\begin{align}
&\| \nabla^{k+2}  \nabla^l_{y'} \hat{\psi} \|_{L^2(\mathbb R^3_+)}^2 + \|\nabla^{k+1} \nabla^l_{y'} \hat{\vp} \|_{L^2(\mathbb R^3_+)}^2 
\notag\\
& \lesssim_\Omega 
\left\|{\nabla}^l_{y'} \hat{\frac{d}{dt}} \hat{\vp} \right\|^2_{H^{k+1} (\mathbb R^3_+)} 
+ \| \psi_t \|^2_{H^{k+l}} 
+ \| \nabla \psi \|^2_{H^{k+l}} 
+ \| \nabla \vp \|^2_{H^{k+l-1}}
 + (N (T) + \delta ) D_{3,0} . 
\label{nowe-eg0-C}
\end{align}
\end{lemma}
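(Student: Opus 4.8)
The plan is to mirror the proof of Lemma \ref{lm2-C} almost verbatim, replacing the half-space problem \eqref{checkEq} with the no-weight analogue \eqref{nowe-checkEq}, and replacing estimates of $\check V,\check W$ by the no-weight analogues from the proof of Lemma \ref{nowe-lm1-C}. Concretely, I would first apply the Cattabriga estimate \eqref{Cattabriga} to the Stokes boundary value problem \eqref{nowe-checkEq}, which immediately yields
\[
\| \nabla^{k+2} \check{\nabla}^l_{y'} \psi \|^2 + \|\nabla^{k+1} \check{\nabla}^l_{y'} \vp \|^2 \lesssim_\Omega \|\check{V} \|^2_{H^{k+1}} + \|\check{W}\|^2_{H^k} + \|\check{\nabla}^l_{y'} \psi \|^2 .
\]
This is the only place where the $\Omega$-dependence enters, exactly as remarked in Subsection \ref{ss-CattabrigaEst}.

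Next I would estimate the two source terms $\|\check V\|^2_{H^{k+1}}$ and $\|\check W\|^2_{H^k}$. These are handled exactly as in the proof of Lemma \ref{nowe-lm1-C}: the terms involving $f$ and $g$ are bounded by $(N(T)+\delta)D_{3,0}$ using the same Hardy/Sobolev bookkeeping that produced \eqref{ee13-hat} (now with $\Phi^s$ in place of $U$, via Theorem \ref{th4} and \eqref{stdc1}); the term $\check\nabla^l_{y'} \frac{d}{dt}\vp$ contributes $\|{\nabla}^l_{y'} \frac{d}{dt}\vp\|^2_{H^{k+1}}$; the commutator terms $[\rho_+\div\check\nabla^l_{y'}\psi - \rho_+\check\nabla^l_{y'}\div\psi]$, $\mu_1(\check\nabla^l_{y'}\Delta\psi - \Delta\check\nabla^l_{y'}\psi)$, and $p'(\rho_+)(\nabla\check\nabla^l_{y'}\vp - \check\nabla^l_{y'}\nabla\vp)$ involve only derivatives of the coefficients of $A^{-1}(x')$ times lower-order derivatives of $\psi,\vp$, and are therefore absorbed into $\|\nabla\psi\|^2_{H^{k+l}}$, $\|\nabla\vp\|^2_{H^{k+l-1}}$ plus terms controlled by $D_{3,0}$; and the remaining terms $\rho\psi_t$, $(u\cdot\nabla)\psi$, $(p'(\rho)-p'(\rho_+))\nabla\vp$, $(\rho-\rho_+)\div\psi$ are bounded by $\|\psi_t\|^2_{H^{k+l}}+\|\nabla\psi\|^2_{H^{k+l}}+(N(T)+\delta)D_{3,0}$ using the uniform bounds \eqref{nowe-bdd} and Sobolev's inequalities \eqref{sobolev0}, \eqref{sobolev2}. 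Also $\|\check\nabla^l_{y'}\psi\|^2 \lesssim \|\nabla\psi\|^2_{H^{l-1}}$ which is $\lesssim \|\nabla\psi\|^2_{H^{k+l}}$.

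Combining these gives the estimate in the $x$-variable,
\[
\| \nabla^{k+2} \check{\nabla}^l_{y'} \psi \|^2 + \|\nabla^{k+1} \check{\nabla}^l_{y'} \vp \|^2 \lesssim_\Omega \left\|\check{\nabla}^l_{y'} \frac{d}{dt}\vp\right\|^2_{H^{k+1}} + \| \psi_t \|^2_{H^{k+l}} + \| \nabla \psi \|^2_{H^{k+l}} + \| \nabla \vp \|^2_{H^{k+l-1}} + (N(T)+\delta)D_{3,0}+\delta,
\]
and then I would change variables $x\in\Omega \mapsto y\in\mathbb R^3_+$ on the left-hand side, using $\check\partial_{y_j}\vp(t,\Gamma(y)) = \partial_{y_j}\hat\vp(t,y)$ (and the analogous identity for $\psi$), to obtain \eqref{nowe-eg0-C}. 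I expect no genuine obstacle here: the only subtlety is making sure that, after the no-weight reformulation, every occurrence of the inhomogeneous terms $F,G$ (present in the weighted setting of Lemma \ref{lm2-C}) is correctly replaced — either it disappears, or it is reabsorbed into the $\Phi^s$-dependent coefficients and controlled by $\delta$ through Theorem \ref{th4} and \eqref{stdc1}; tracking this bookkeeping faithfully is the main (but entirely routine) point.
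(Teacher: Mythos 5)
Your proposal is correct and takes essentially the same approach as the paper, which simply applies the Cattabriga estimate \eqref{Cattabriga} to \eqref{nowe-checkEq} and then repeats the argument of Lemma \ref{lm2-C} with the source terms handled as in Lemma \ref{nowe-lm1-C}. The only slip is the stray $+\delta$ in your combined display: since $F$ and $G$ are absent from $\check{V}$ and $\check{W}$ in the no-weight reformulation (as you yourself note in the final paragraph), that term does not arise, consistent with the statement of \eqref{nowe-eg0-C}.
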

\begin{proof}

Applying the Cattabriga estimate \eqref{Cattabriga} 
to problem \eqref{nowe-checkEq}, one can have \eqref{nowe-eg0-C}.
The proof is essentially the same as the one for Lemma \ref{lm2-C} so we omit it.
\end{proof}

Using Lemmas \ref{nowe-lm2-hat}--\ref{nowe-lm2-C},
we complete the derivation of the dissipative terms for the spatial derivatives.

\begin{lemma} \label{nowe-lm3-comp}
Under the same assumption as in Proposition \ref{nowe-apriori1},
it holds, for $p=0,  1,  2$, 
\begin{align}
&   \| \nabla^{p+1}  \vp (t) \|^2 + \int^t_0    \|( \nabla^{p+1} \vp, \nabla^{p+2} \psi )(\tau)\|^2   \, d \tau  + \int^t_0  \left\| \nabla^{p+1} \frac{d}{dt} \vp (\tau) \right\|^2 \, d \tau   
\notag\\
& \lesssim_\Omega \| \Phi_0\|^2_{H^3} +   \| \vp (t) \|_{H^p}^2 + \int^t_0  \|\psi_t(\tau) \|^2_{H^p} \, d \tau  
+ \int^t_0   D_{p, 0} (\tau) \, d \tau   
\notag\\
& \qquad + (N (T) + \delta ) \int^t_0  D_{3, 0} (\tau) \, d \tau . 
\label{nowe-em0-comp}
\end{align}
\end{lemma}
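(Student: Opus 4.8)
The plan is to argue exactly as in the proofs of Lemmas \ref{lm1-comp} and \ref{lm3-comp}, the only simplification being that no exponential weights (neither $e^{\beta x_1}$ nor $e^{\zeta t}$) enter here. All computations are carried out in the flattened variables $y\in\mathbb{R}^3_+$ of Subsection \ref{ss-nowe-spatial-deriv} and transported back to $x\in\Omega$ at the end.

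\emph{Step 1: an intermediate estimate by induction on the number of normal derivatives.} With the multi-index notation $\bm{a}=(a_1,a_2,a_3)$, $\partial^{\bm{a}}=\partial^{a_1}_{y_1}\partial^{a_2}_{y_2}\partial^{a_3}_{y_3}$ of Lemma \ref{nowe-lm4-hat}, I would show that, for $p=0,1,2$ and each $j=1,\dots,p+1$,
\begin{equation*}
\sum_{|\bm{a}|=p+1,\ a_1\le j}\Bigl(\|\partial^{\bm{a}}\hat{\vp}(t)\|_{L^2(\mathbb{R}^3_+)}^2+\int_0^t\bigl\|\partial^{\bm{a}}\hat{\frac{d}{dt}}\hat{\vp}(\tau)\bigr\|_{L^2(\mathbb{R}^3_+)}^2\,d\tau\Bigr)\lesssim_\Omega \mathcal{R}_{p,\epsilon},
\end{equation*}
where $\mathcal{R}_{p,\epsilon}$ denotes the right-hand side built from $\|\Phi_0\|_{H^3}^2$, $\int_0^t\|\psi_t\|_{H^p}^2\,d\tau$, $\epsilon^{-1}\int_0^t D_{p,0}\,d\tau$, the absorbable term $\epsilon\int_0^t(\|\nabla\vp\|_{H^p}^2+\|\nabla\psi\|_{H^{p+1}}^2)\,d\tau$, and $(N(T)+\delta)\int_0^t D_{3,0}\,d\tau$, in analogy with \eqref{ek0}. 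The base case $j=1$ follows by adding \eqref{nowe-ec0-hat} with $l=p+1$ to \eqref{nowe-ee0-hat} with $a_1=0$ and $a_2+a_3=p$, and then using \eqref{nowe-ec0-hat} once more to absorb the terms $\partial^{\bm{a}}\nabla_{y'}\hat{\frac{d}{dt}}\hat{\vp}$ and $\partial^{\bm{a}}\nabla_y\nabla_{y'}\hat{\psi}$ occurring on the right of \eqref{nowe-ee0-hat}. For the inductive step $j=q\to q+1$, one applies the Stokes (Cattabriga) estimate \eqref{nowe-eg0-C} with $l=p+1-q$, $k=q-1$ (so that $k+l=p\le2$, within the admissible range) together with the case $j=q$ to bound $\|\nabla^{q+1}\nabla^{p+1-q}_{y'}\hat{\psi}\|^2+\|\nabla^{q}\nabla^{p+1-q}_{y'}\hat{\vp}\|^2$, and then feeds this back into \eqref{nowe-ee0-hat} with $a_1=q$, $a_2+a_3=p-q$; this closes the induction, just as in the proof of Lemma \ref{lm1-comp}.

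\emph{Step 2: return to $\Omega$ and couple with the full Cattabriga estimate.} Taking $j=p+1$ in Step 1 and performing the change of variables $y\mapsto x$ on the left-hand side — which turns $\sum_{|\bm{a}|=p+1}\partial^{\bm{a}}\hat{\vp}$ into all order-$(p+1)$ $x$-derivatives of $\vp$, modulo lower-order terms with coefficients depending on $M$ — yields the analogue of \eqref{el0-comp}, namely control of $\|\nabla^{p+1}\vp(t)\|^2$ and $\int_0^t\|\nabla^{p+1}\frac{d}{dt}\vp(\tau)\|^2\,d\tau$ by $\|\vp(t)\|_{H^p}^2$, $\int_0^t D_{p,0}\,d\tau$, the absorbable $\epsilon$-terms, and $(N(T)+\delta)\int_0^t D_{3,0}\,d\tau$. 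Adding the time integral of \eqref{nowe-ef0-C} with $k=p$ — whose right-hand side involves only $\|\psi_t\|_{H^p}^2$, $\|\nabla\psi\|_{H^p}^2$, $\|\frac{d}{dt}\vp\|_{H^{p+1}}^2$ and $(N(T)+\delta)D_{3,0}$, the top-order part $\int_0^t\|\nabla^{p+1}\frac{d}{dt}\vp\|^2\,d\tau$ being exactly what Step 1 supplies and the remainder lying inside $\int_0^t D_{p,0}\,d\tau$ — and then choosing $\epsilon$ small enough to absorb the $\epsilon$-terms into the left-hand side, precisely as in the proof of Lemma \ref{lm3-comp}, gives \eqref{nowe-em0-comp}.

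\emph{Main obstacle.} No genuinely new analytic difficulty arises, since all the needed ingredients are already provided by Lemmas \ref{nowe-lm2-hat}, \ref{nowe-lm4-hat}, \ref{nowe-lm2-C} and \ref{nowe-lm1-C}; the one delicate point is the bookkeeping of the induction on the number $j$ of normal derivatives, in particular carrying it out in the order above so that the indices $k,l$ appearing in the Stokes estimate \eqref{nowe-eg0-C} always stay within the admissible range $k+l\le2$.
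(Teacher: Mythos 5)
Your proposal is correct and follows essentially the same route as the paper, which proves this lemma exactly by repeating the induction of Lemma \ref{lm1-comp} and the absorption step of Lemma \ref{lm3-comp} with the unweighted auxiliary estimates \eqref{nowe-ec0-hat}, \eqref{nowe-ee0-hat}, \eqref{nowe-eg0-C}, and \eqref{nowe-ef0-C} in place of their weighted counterparts. Your bookkeeping of the indices $(k,l)$ in the Stokes estimate is consistent with the admissible range (the inductive step is never invoked for $p=0$, so $k+l\in\{1,2\}$ always holds when it is used).
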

\begin{proof}
It is straightforward to show this 
by following the proofs of Lemmas \ref{lm1-comp} and \ref{lm3-comp}
with the aid of Lemmas \ref{nowe-lm2-hat}--\ref{nowe-lm2-C}.
\end{proof}

\subsection{Elliptic estimates} \label{ss-nowe-EllipticEst}
Similarly as in Subsection \ref{EllipticEst}, we use the elliptic estimate (Lemma \ref{ellipticEst}) to rewrite some terms for the time-derivatives into terms for the spatial-derivatives.%, as stated in the following lemmas.
\begin{lemma} \label{nowe-lm1-E}
Under the same assumption as in Proposition \ref{nowe-apriori1},
it holds that  
\begin{gather}
\| \nabla^{k+2}   \psi \|
  \lesssim  \| \psi_t \|_{H^{k}} + E_{k+1,0}, \quad k=0,1,
\label{nowe-eh0-E}
\\
\| \nabla^{2}   \psi_t \|
  \lesssim  \|\psi_{tt} \| +D_{2,0}.%+\|\Phi_t \|_{H^1} + \| \nabla \Phi\|_{H^1}.  
\label{nowe-ej0-E}
\end{gather}
\end{lemma}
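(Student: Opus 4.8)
The plan is to transcribe the proof of Lemma~\ref{lm1-E}, deleting every contribution of the inhomogeneous term $G$ and of the exponential weight, which do not appear in the present setting. First I would recast the momentum equation \eqref{nowe-eq-pv2} as a second-order elliptic system (of Lam\'e type) for $\psi$ with homogeneous Dirichlet data, by freezing the coefficients at the far-field constant state $\rho_+$ and collecting the remainder on the right-hand side:
\begin{equation*}
-\mu_1 \Delta \psi - (\mu_1+\mu_2)\nabla\div\psi = -\rho_+\psi_t - p'(\rho_+)\nabla\vp + H, \qquad \psi|_{\partial\Omega}=0, \qquad \lim_{|x|\to\infty}\psi=0,
\end{equation*}
where $H := -(\rho-\rho_+)\psi_t - (p'(\rho)-p'(\rho_+))\nabla\vp - \rho(u\cdot\nabla)\psi + g$. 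Applying the elliptic regularity estimate of Lemma~\ref{ellipticEst} then yields, for $k=0,1$,
\begin{equation*}
\|\nabla^{k+2}\psi\| \lesssim \|\psi_t\|_{H^k} + \|\nabla\vp\|_{H^k} + \|H\|_{H^k} + \|\psi\|.
\end{equation*}

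Next I would bound $\|H\|_{H^k}$ term by term. The quadratic pieces $(\rho-\rho_+)\psi_t$, $(p'(\rho)-p'(\rho_+))\nabla\vp$ and $\rho(u\cdot\nabla)\psi$ are handled by the uniform bounds \eqref{nowe-bdd}, Sobolev's inequalities \eqref{sobolev0}--\eqref{sobolev2}, and the smallness $N(T)+\delta\ll1$, so they are dominated by $E_{k+1,0}$. The term $g$ is estimated exactly as in the derivation of \eqref{ee13-hat} and \eqref{nowe-ee0-hat}: writing $u^s=(\ut+U)+\psi^s$ and $\rho^s=\rt(\tilde{M})+\vp^s$, its coefficient factors $\nabla u^s$, $\nabla^2 u^s$, $\nabla\rho^s$ split into a planar part decaying like $e^{-\alpha x_1}$ by \eqref{stdc1} (absorbed via Hardy's inequality \eqref{hardy}, which is available even in the unweighted space), a $U$-part controlled by \eqref{ExBdry0}, and a stationary-perturbation part controlled by Theorem~\ref{th4}; altogether $\|g\|_{H^k}\lesssim (N(T)+\delta)\|\nabla\Phi\|_{H^k}+C\delta$, which is again dominated by the right-hand side of \eqref{nowe-eh0-E} after using $\delta\le\ep_0$ to absorb the constant term. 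This gives \eqref{nowe-eh0-E}.

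For \eqref{nowe-ej0-E} I would differentiate the elliptic system in $t$, obtaining
\begin{equation*}
-\mu_1\Delta\psi_t - (\mu_1+\mu_2)\nabla\div\psi_t = -\rho_+\psi_{tt} - p'(\rho_+)\nabla\vp_t + H_t, \qquad \psi_t|_{\partial\Omega}=0, \qquad \lim_{|x|\to\infty}\psi_t=0,
\end{equation*}
and apply Lemma~\ref{ellipticEst} with $k=0$ to get $\|\nabla^2\psi_t\|\lesssim\|\psi_{tt}\|+\|\nabla\vp_t\|+\|H_t\|+\|\psi_t\|$. The point is that every term on the right already belongs to $D_{2,0}$: the norms $\|\nabla\vp_t\|$ and $\|\psi_t\|$ are controlled by the $\partial_t$-norms in \eqref{Dkbeta-def} (using also \eqref{nowe-eq-pv1} to replace $\vp_t$ by $\tfrac{d}{dt}\vp$ and $\nabla\psi$), while $\|H_t\|$ --- whose most delicate term $\rho(u\cdot\nabla)\psi_t$ is linear in $\nabla\psi_t$ --- is absorbed using that $\|\partial_t\psi\|_{H^1}^2$ is one of the terms in $D_{2,0}$ together with the smallness of $N(T)+\delta$; the remaining pieces of $H_t$ involve only derivatives of $\psi^s$, $\ut$, $\rho^s$ and are controlled via \eqref{stdc1}, Theorem~\ref{th4}, \eqref{hardy}, and Sobolev's inequalities.

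The routine but slightly delicate point --- the one I expect to require the most care --- is the bookkeeping for $H$ and $H_t$: one must check that no term genuinely of order $\nabla^{k+2}\psi$, respectively $\nabla^2\psi_t$, resurfaces on the right-hand side, which would make the estimate circular, and that every factor of the form ``derivative of $u^s$ or $\rho^s$ times the perturbation'' is either exponentially decaying in $x_1$, so that Hardy's inequality applies in the unweighted space, or small in norm by Theorem~\ref{th4}. Apart from this, the argument is a direct transcription of the computation in Lemma~\ref{lm1-E} with the $G$- and weight-dependent terms deleted.
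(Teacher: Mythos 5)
Your proposal is correct and follows essentially the same route as the paper: recast \eqref{nowe-eq-pv2} as the elliptic (Lam\'e-type) boundary value problem \eqref{nowe-eh1-E} with the same remainder $H$, apply Lemma \ref{ellipticEst}, and then differentiate in $t$ and apply it again for \eqref{nowe-ej0-E}. One small correction to your bookkeeping: in the Section \ref{S6} reformulation $g$ is homogeneous in $\Phi$ (every term carries a factor of $\vp$ or $\psi$), so $\|g\|_{H^k}\lesssim (N(T)+\delta)\|\nabla\Phi\|_{H^k}$ holds without the extra $+C\delta$ --- which is just as well, since a free constant multiple of $\delta$ could not actually be ``absorbed'' into the $\delta$-free right-hand side of \eqref{nowe-eh0-E}.
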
 
\begin{proof}
The proof is essentially the same as the one for Lemma \ref{lm1-E}. 
From \eqref{nowe-eq-pv}, we have the elliptic boundary value problem:
\begin{equation} \label{nowe-eh1-E}
-\mu_1 \Delta \psi -(\mu_1 + \mu_2) \nabla \div \psi = - \rho_+ \psi_t - p'(\rho_+)\nabla \vp + H, \quad 
\psi|_{\partial\Omega}=0, \quad \lim_{|x|\to\infty}\psi=0,
\end{equation}
where
\begin{equation*}
H := -(\rho - \rho_+) \psi_t - (p'(\rho) -p'(\rho_+)) \nabla \vp -\rho (u \cdot \nabla) \psi +g. 
\end{equation*}
Applying Lemma \ref{ellipticEst} to this problem, we conclude \eqref{nowe-eh0-E}. 
We next apply $\partial_t$ to \eqref{nowe-eh1-E} to obtain
\begin{equation*}
-\mu_1 \Delta \psi_t - (\mu_1 + \mu_2) \nabla \div \psi_t 
= - \rho_+\psi_{tt} -p'(\rho_+)\nabla \vp_t + H_t, \quad 
\psi_t |_{\partial \Omega} =0, \quad \lim_{|x|\rightarrow \infty} \psi_t =0. 
\end{equation*}
Then applying Lemma \ref{ellipticEst} again to this boundary value problem gives \eqref{nowe-ej0-E}.
\end{proof}

\subsection{Completion of the a priori estimate} %\label{ss-nowe-comp-apriori}

Now we can complete the a priori estimate.

\begin{proof}[Proof of Proposition \ref{nowe-apriori1}]
The a priori estimate can be shown in the same way as in the proof of Proposition \ref{apriori1}
in Subsection \ref{ss-comp-apriori}.
Indeed the proof is complete just by 
replacing \eqref{ea0}, \eqref{ec0}, \eqref{ed0} and \eqref{em0-comp}--\eqref{ej0-E}
by \eqref{nowe-ea0}, \eqref{nowe-ec0}, \eqref{nowe-ed0} and \eqref{nowe-em0-comp}--\eqref{nowe-ej0-E}, respectively.
\end{proof}

We discuss briefly the proof of Corollary \ref{cor}.

\begin{proof}[Proof of Corollary \ref{cor}]
If $\|M\|_{H^{9}(\mathbb R^2)} \leq \kappa$ holds for $\kappa$ being in Lemma \ref{CattabrigaEst},
we can replace $\lesssim_{\Omega}$ by $\lesssim$ 
in inequalities \eqref{nowe-ef0-C}, \eqref{nowe-eg0-C}, and \eqref{nowe-em0-comp}.
Then following the proof of Proposition \ref{nowe-apriori1} with these improved inequalities, 
we conclude Corollary \ref{cor}.
\end{proof}

\begin{appendix}
\section{General inequalities}\label{Appenx1}

We discuss some basic inequalities and estimates that are frequently used throughout the paper. 
The following lemmas cover the case $M\equiv 0$, that is, $\Omega=\mathbb R^3_+$.

\begin{lemma}\textbf{(Hardy's Inequality)}
Let $\alpha>0$. For $f \in H^1(\Omega)$, it holds that
\begin{equation}
\int_{\Omega} e^{-\alpha x_1}|f(t)|^2 \, d x
\lesssim
\lt{\nabla f(t)}^2
+ \|{f(t,M(\cdot),\cdot)}\|_{L^2(\mathbb R^2)}^2.
\label{hardy}
\end{equation}
\end{lemma}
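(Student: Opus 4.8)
The plan is to reduce the estimate on the perturbed half-space $\Omega$ to the flat case and then prove the flat case by a one-dimensional integration-by-parts argument in the normal direction. First I would apply the change of variables $\Gamma$ from \eqref{CV1} (with inverse $\hat\Gamma$ from \eqref{CV11}), setting $\hat f(y):=f(\Gamma(y))$, so that $\Omega$ becomes the flat half-space $\mathbb R^3_+=\{y_1>0\}$ and $x_1=y_1+M(y')$. Since the Jacobian of $\Gamma$ is $1$, the weighted $L^2$ norm transforms as $\int_\Omega e^{-\alpha x_1}|f|^2\,dx=\int_{\mathbb R^3_+}e^{-\alpha(y_1+M(y'))}|\hat f|^2\,dy$, and because $M\in H^9(\mathbb R^2)\hookrightarrow L^\infty$ the factor $e^{-\alpha M(y')}$ is bounded above and below by positive constants depending only on $\|M\|_{H^9}$; hence it suffices to bound $\int_{\mathbb R^3_+}e^{-\alpha y_1}|\hat f(y)|^2\,dy$. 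Also $\|f(t,M(\cdot),\cdot)\|_{L^2(\mathbb R^2)}=\|\hat f(t,0,\cdot)\|_{L^2(\mathbb R^2)}$, and $\nabla_y\hat f = (A^\top\nabla_x f)\circ\Gamma$ for the matrix $A$ in \eqref{CV2}, so $\|\nabla_y\hat f\|_{L^2(\mathbb R^3_+)}\lesssim\|\nabla f\|_{L^2(\Omega)}$ with constant depending on $\|M\|_{H^9}$; thus it is enough to prove the inequality on $\mathbb R^3_+$ for $\hat f$.

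Next I would prove the flat-half-space statement: for $g\in H^1(\mathbb R^3_+)$,
\[
\int_{\mathbb R^3_+} e^{-\alpha y_1}|g(y)|^2\,dy
\lesssim \|\partial_{y_1} g\|_{L^2(\mathbb R^3_+)}^2 + \|g(0,\cdot)\|_{L^2(\mathbb R^2)}^2 .
\]
By density it suffices to treat $g\in C_c^\infty(\overline{\mathbb R^3_+})$, and by Fubini it suffices to prove the one-dimensional inequality $\int_0^\infty e^{-\alpha s}|h(s)|^2\,ds\lesssim \int_0^\infty|h'(s)|^2\,ds + |h(0)|^2$ for each fixed $y'$, with $h(s)=g(s,y')$, and then integrate over $y'\in\mathbb R^2$. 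For the $1$D inequality I would write $h(s)=h(0)+\int_0^s h'(\sigma)\,d\sigma$, so $|h(s)|^2\le 2|h(0)|^2 + 2\left(\int_0^s|h'(\sigma)|\,d\sigma\right)^2$. Multiplying by $e^{-\alpha s}$ and integrating in $s$, the first term contributes $2\alpha^{-1}|h(0)|^2$. For the second term I would apply Cauchy--Schwarz in the form $\left(\int_0^s|h'|\,d\sigma\right)^2\le \left(\int_0^s e^{-\alpha\sigma/2}\,d\sigma\right)\left(\int_0^s e^{\alpha\sigma/2}|h'(\sigma)|^2\,d\sigma\right)\le \tfrac{2}{\alpha}\int_0^s e^{\alpha\sigma/2}|h'(\sigma)|^2\,d\sigma$; then
\[
\int_0^\infty e^{-\alpha s}\!\!\left(\int_0^s|h'|\,d\sigma\right)^2 ds
\le \frac{2}{\alpha}\int_0^\infty\!\!\int_0^\infty e^{-\alpha s}e^{\alpha\sigma/2}\mathbf 1_{\{\sigma<s\}}|h'(\sigma)|^2\,d\sigma\,ds
= \frac{2}{\alpha}\int_0^\infty|h'(\sigma)|^2 e^{\alpha\sigma/2}\!\!\int_\sigma^\infty e^{-\alpha s}\,ds\,d\sigma,
\]
and $\int_\sigma^\infty e^{-\alpha s}\,ds=\alpha^{-1}e^{-\alpha\sigma}$, giving a bound $\tfrac{2}{\alpha^2}\int_0^\infty e^{-\alpha\sigma/2}|h'(\sigma)|^2\,d\sigma\le \tfrac{2}{\alpha^2}\int_0^\infty|h'(\sigma)|^2\,d\sigma$. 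Combining the two contributions yields the $1$D inequality with constant depending only on $\alpha$; integrating over $y'$ and using $|\partial_{y_1}g|\le|\nabla_y g|$ gives the flat case.

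Finally I would reassemble: apply the flat-case inequality to $g=\hat f$, then undo the change of variables using the two-sided bounds on $e^{-\alpha M}$, the Jacobian equal to $1$, and $\|\nabla_y\hat f\|_{L^2(\mathbb R^3_+)}\lesssim\|\nabla f\|_{L^2(\Omega)}$, together with $\hat f(0,y')=f(M(y'),y')$. All constants produced depend only on $\alpha$ and $\|M\|_{H^9(\mathbb R^2)}$, consistent with the conventions of Subsection \ref{ss-Notation}. The only mild subtlety — not really an obstacle — is making sure the change of variables is applied at fixed $t$ (the inequality is stated for each $t$) and that the trace $f(t,M(\cdot),\cdot)$ is well defined, which it is since $f(t)\in H^1(\Omega)$ and $\partial\Omega$ is a Lipschitz (indeed $H^9$) graph; the density argument reducing to smooth $g$ handles all the integrations by parts rigorously.
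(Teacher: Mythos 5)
Your proof is correct, and it is essentially the argument the paper intends: the paper's proof is just a citation to \cite{knz03}, whose Hardy-type inequality is proved by exactly the one-dimensional fundamental-theorem-of-calculus plus weighted Cauchy--Schwarz computation you carry out, and the flattening via $\Gamma$ (unit Jacobian, $e^{-\alpha M}$ bounded above and below since $M\in H^9(\mathbb R^2)\hookrightarrow L^\infty$) is the natural reduction to that case. All steps check out, including the constant's dependence only on $\alpha$ and $\|M\|_{H^9(\mathbb R^2)}$.
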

\begin{proof}
This can be proved in the same way as in \cite{knz03}.
%We first see that
%\begin{equation*}
%|f(t,x)|
%\le
%\sqrt{x_1} \| f_{x_1}(t,\cdot,x') \|_{L^2_{x_1}}
%+|f(t,0,x')|,
%%\label{poin-pt}
%\end{equation*}
%which is proved in the same way as in \cite{knz03}.
%By multiplying this inequality by $e^{-\alpha x_1/2}$ and taking the $L^2$-norm,
%we obtain the inequality (\ref{hardy}).
\end{proof}

\begin{lemma}\textbf{(Sobolev's Inequalities)}
For $f \in H^1(\Omega)$ and $g \in H^2(\Omega)$, it holds that
\begin{align}
\|f\|_{L^p(\Omega)} &\lesssim \|f\|_{H^1(\Omega)}, \quad 2\leq p <6,
\label{sobolev0}\\
\|f\|_{L^6(\Omega)} &\lesssim \|\nabla f\|_{L^2(\Omega)},
\label{sobolev1}\\
\|g\|_{L^\infty(\Omega)} &\lesssim \|g\|_{H^2(\Omega)}.
\label{sobolev2}
\end{align}
\end{lemma}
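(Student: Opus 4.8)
The plan is to reduce all three inequalities to their classical counterparts on the flat half-space $\mathbb{R}^3_+$ and then transport them to $\Omega$ through the change of variables $\Gamma$ of \eqref{CV1}. First I would record the flat versions: for $\hat f \in H^1(\mathbb{R}^3_+)$ one has $\|\hat f\|_{L^p(\mathbb{R}^3_+)} \lesssim \|\hat f\|_{H^1(\mathbb{R}^3_+)}$ for $2\le p<6$ and $\|\hat f\|_{L^6(\mathbb{R}^3_+)} \lesssim \|\nabla_y \hat f\|_{L^2(\mathbb{R}^3_+)}$, and for $\hat g\in H^2(\mathbb{R}^3_+)$ one has $\|\hat g\|_{L^\infty(\mathbb{R}^3_+)}\lesssim \|\hat g\|_{H^2(\mathbb{R}^3_+)}$. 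These follow from the whole-space Sobolev and Gagliardo--Nirenberg inequalities composed with a bounded Sobolev extension operator $H^k(\mathbb{R}^3_+)\to H^k(\mathbb{R}^3)$ (e.g. the higher-order reflection extension); in the $L^6$ case the purely homogeneous bound $\|v\|_{L^6(\mathbb{R}^3)}\lesssim\|\nabla v\|_{L^2(\mathbb{R}^3)}$ is preserved because the gradient of the reflection extension of a half-space function is controlled by the gradient of the original function.

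Next I would transfer these to $\Omega$. Given $f$ on $\Omega$, set $\hat f := f\circ\Gamma$, where $\Gamma:\mathbb{R}^3_+\to\Omega$ is the diffeomorphism of \eqref{CV1}; since $\det D\Gamma\equiv 1$, one has $\|f\|_{L^p(\Omega)}=\|\hat f\|_{L^p(\mathbb{R}^3_+)}$ for every $p$. By \eqref{CV3}, $(\nabla_x f)\circ\Gamma = A\,\nabla_y\hat f$ with $A$ as in \eqref{CV2}, and the embedding $H^9(\mathbb{R}^2)\hookrightarrow W^{2,\infty}(\mathbb{R}^2)$ shows that $A$, $A^{-1}$, $\nabla M$, and $\nabla^2 M$ are bounded with norms controlled by $\|M\|_{H^9(\mathbb{R}^2)}$. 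Hence $|\nabla_y\hat f|\lesssim |\nabla_x f|\circ\Gamma$ pointwise and conversely, and differentiating once more expresses $(\nabla_x^2 f)\circ\Gamma$ as a combination of $\nabla_y^2\hat f$ and $(\nabla^2 M)\,\nabla_y\hat f$ with bounded coefficients. Together with the Jacobian-one identity for $L^2$ norms this yields the two-sided equivalences $\|\hat f\|_{H^1(\mathbb{R}^3_+)}\simeq\|f\|_{H^1(\Omega)}$, $\|\nabla_y\hat f\|_{L^2(\mathbb{R}^3_+)}\simeq\|\nabla_x f\|_{L^2(\Omega)}$, and $\|\hat g\|_{H^2(\mathbb{R}^3_+)}\simeq\|g\|_{H^2(\Omega)}$.

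Finally, applying the flat half-space inequalities to $\hat f$ and $\hat g$ and reading the conclusions back on $\Omega$ via these equivalences gives \eqref{sobolev0}--\eqref{sobolev2}; in particular the homogeneous form of \eqref{sobolev1} survives because only $\|\nabla_y\hat f\|_{L^2}$, not the full $H^1$ norm of $\hat f$, enters the argument. There is no genuine obstacle here: the one point deserving a line of care is that $\Gamma$ is bi-Lipschitz (indeed smoother) with all constants depending on $M$ only through $\|M\|_{H^9(\mathbb{R}^2)}$, which is precisely what keeps the generic constants in \eqref{sobolev0}--\eqref{sobolev2} consistent with the convention fixed in Subsection \ref{ss-Notation}.
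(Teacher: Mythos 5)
Your proof is correct and follows essentially the same route as the paper's: both reduce \eqref{sobolev0}--\eqref{sobolev2} to the whole-space Sobolev/Gagliardo--Nirenberg inequalities via an extension to $\mathbb{R}^3$ whose homogeneous gradient norm is controlled by $\|\nabla f\|_{L^2(\Omega)}$, which is exactly what preserves the scale-invariant form of \eqref{sobolev1}. The only difference is that you construct that extension explicitly (flatten by $\Gamma$, then reflect, tracking constants through $\|M\|_{H^9(\mathbb{R}^2)}$), whereas the paper simply invokes a standard extension operator on $\Omega$ with those properties.
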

\begin{proof}
It is straightforward to show \eqref{sobolev0} and \eqref{sobolev2}.
We show only \eqref{sobolev1}.
Let us introduce a standard extension operator $E$ 
from $H^1(\Omega)$ to $H^1(\mathbb R^3)$ with
\begin{gather*}
Eh(x)=h(x) \quad \text{for $x\in \Omega$},
\quad
\|Eh\|_{H^1(\mathbb R^3)} \lesssim \|h\|_{H^1(\Omega)},
\quad
%\|Eh\|_{L^{\infty}(\mathbb R^3)} \lesssim  \|h\|_{L^{\infty}(\Omega)}, \quad
\|\nabla Eh\|_{L^{2}(\mathbb R^3)} \lesssim \|\nabla h\|_{L^{2}(\Omega)}.
\end{gather*}
Furthermore, we know that for $\tilde{f} \in H^{1}(\mathbb R^3)$,
\[
\|\tilde{f}\|_{L^6(\mathbb{R}^3)} \lesssim \|\nabla \tilde{f}\|_{L^2(\mathbb{R}^3)}. 
\]
Then putting $\tilde{f}=Ef$ gives
\[
\|Ef\|_{L^6(\mathbb{R}^3)} \lesssim \|\nabla Ef\|_{L^2(\mathbb{R}^3)} \lesssim \|\nabla f\|_{L^2(\Omega)}, 
\]
where we have used the properties of the extension operator in deriving the last inequality. 
This together with $\|f\|_{L^6 (\Omega)} \leq \|Ef\|_{L^6(\mathbb{R}^3)}$ gives \eqref{sobolev1}. 
\end{proof}

\begin{lemma}
\textbf{(Gagliardo-Nirenberg Inequality)}
Let $k=2,3,4 \cdots$.
For $f \in H^{k} (\Omega)$, there holds that
\begin{equation} \label{GN1}
\|f\|_{H^{k-1}(\Omega)} \lesssim \|f\|_{H^{k}(\Omega)}^{1-1/k}\|f\|_{L^2(\Omega)}^{1/k}.
\end{equation}
\end{lemma}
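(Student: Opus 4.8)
The plan is to reduce \eqref{GN1} to the corresponding interpolation inequality on the whole space $\mathbb R^3$ by means of an extension operator, and then to prove the whole–space inequality by Plancherel's theorem combined with Hölder's inequality.

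First I would fix an extension operator $E$ from $\Omega$ to $\mathbb R^3$ which is bounded $E:H^j(\Omega)\to H^j(\mathbb R^3)$ for every $0\le j\le k$, with operator norm depending only on $k$ and $\|M\|_{H^9(\mathbb R^2)}$; in particular $\|Ef\|_{L^2(\mathbb R^3)}\lesssim\|f\|_{L^2(\Omega)}$ and $\|Ef\|_{H^k(\mathbb R^3)}\lesssim\|f\|_{H^k(\Omega)}$. Such an operator can be obtained exactly as in the proof of \eqref{sobolev1}: one flattens the boundary of $\Omega$ with the change of variables $\Gamma$ in \eqref{CV1} (which is an $H^9$–diffeomorphism, so that Sobolev norms up to order $k$ are preserved up to constants depending on $\|M\|_{H^9}$), applies a higher–order (Seeley/Hestenes–type) reflection across $\{y_1=0\}$ to extend from $\mathbb R^3_+$ to $\mathbb R^3$, and then returns to $\Omega$; alternatively one may invoke Stein's extension theorem, which is valid simultaneously for all orders on the Lipschitz domain $\Omega$.

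Next I would prove the whole–space estimate $\|g\|_{H^{k-1}(\mathbb R^3)}\lesssim\|g\|_{H^k(\mathbb R^3)}^{1-1/k}\|g\|_{L^2(\mathbb R^3)}^{1/k}$ for $g\in H^k(\mathbb R^3)$. By Plancherel, the $H^s(\mathbb R^3)$–norm is equivalent, with constants depending only on $s$, to $(\int_{\mathbb R^3}(1+|\xi|^2)^s|\widehat g(\xi)|^2\,d\xi)^{1/2}$. Writing $(1+|\xi|^2)^{k-1}=\bigl[(1+|\xi|^2)^k\bigr]^{(k-1)/k}$ and applying Hölder's inequality with conjugate exponents $k/(k-1)$ and $k$,
\begin{equation*}
\int_{\mathbb R^3}(1+|\xi|^2)^{k-1}|\widehat g|^2\,d\xi
=\int_{\mathbb R^3}\bigl[(1+|\xi|^2)^k|\widehat g|^2\bigr]^{\frac{k-1}{k}}\,\bigl[\,|\widehat g|^2\,\bigr]^{\frac1k}\,d\xi
\le\Bigl(\int_{\mathbb R^3}(1+|\xi|^2)^k|\widehat g|^2\,d\xi\Bigr)^{\frac{k-1}{k}}\Bigl(\int_{\mathbb R^3}|\widehat g|^2\,d\xi\Bigr)^{\frac1k},
\end{equation*}
and taking square roots yields the whole–space inequality.

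Finally, I would combine the two steps: for $f\in H^k(\Omega)$ one has
\begin{equation*}
\|f\|_{H^{k-1}(\Omega)}\le\|Ef\|_{H^{k-1}(\mathbb R^3)}\lesssim\|Ef\|_{H^k(\mathbb R^3)}^{1-1/k}\|Ef\|_{L^2(\mathbb R^3)}^{1/k}\lesssim\|f\|_{H^k(\Omega)}^{1-1/k}\|f\|_{L^2(\Omega)}^{1/k},
\end{equation*}
which is \eqref{GN1}. The one point requiring care is the construction of the extension operator that is simultaneously bounded on $L^2(\Omega)$ and on $H^k(\Omega)$ with a constant controlled by $\|M\|_{H^9}$; this is the main (and only mildly technical) obstacle, but it is entirely standard for graph domains, while the rest is a single application of Hölder's inequality in frequency.
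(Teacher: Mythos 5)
Your proposal is correct and follows essentially the same route as the paper, which proves the lemma by remarking that it "can be shown in much the same way as in Sobolev's inequality," i.e., by extending to $\mathbb R^3$ and invoking the whole-space interpolation inequality. You have simply filled in the two standard ingredients explicitly: a single extension operator bounded simultaneously on $L^2(\Omega)$ and $H^k(\Omega)$ (Stein-type, valid for this graph domain), and the Plancherel--H\"older proof of the whole-space estimate.
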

\begin{proof}
This can be shown in much the same way as in Sobolev's inequality.
\end{proof}

\begin{lemma} \label{CommEst}
\textbf{(Commutator Estimate)}
Let $k=0,1,2,\cdots$.
For $f, g, \nabla f \in H^{k} (\Omega) \cap L^\infty (\Omega)$, we have
\begin{equation} \label{CommEst-1}
\|[\nabla^{k+1},f]g\|_{L^2(\Omega)}\lesssim  \|\nabla f\|_{L^\infty(\Omega)}\|g\|_{H^{k}(\Omega)}+\|\nabla f\|_{H^{k}(\Omega)}\|g\|_{L^\infty(\Omega)}
\end{equation}
\begin{equation} \label{CommEst-2}
\|[\nabla^{k+1},\nabla M]g\|_{L^2(\Omega)} \lesssim \|g\|_{H^{k}(\Omega)}.
\end{equation} 
\end{lemma}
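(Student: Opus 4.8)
The plan is to prove both inequalities by the standard Leibniz-expansion (Moser-type) argument, doing \eqref{CommEst-1} first and then exploiting the two-dimensional structure of $\nabla M$ for \eqref{CommEst-2}.

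For \eqref{CommEst-1} I would start from
\[
[\nabla^{k+1},f]g = \nabla^{k+1}(fg) - f\,\nabla^{k+1}g = \sum_{j=1}^{k+1}\binom{k+1}{j}\,\nabla^{j}f\cdot\nabla^{k+1-j}g,
\]
so every term carries at least one derivative on $f$. Writing $\nabla^{j}f=\nabla^{j-1}(\nabla f)$ with $0\le j-1\le k$ and applying Hölder in $\Omega$ with the conjugate exponents $2k/(j-1)$ and $2k/(k+1-j)$ (note $(j-1)+(k+1-j)=k$), the estimate reduces to the Gagliardo--Nirenberg interpolation inequalities
\[
\|\nabla^{j-1}(\nabla f)\|_{L^{2k/(j-1)}(\Omega)}\lesssim \|\nabla f\|_{L^{\infty}(\Omega)}^{1-(j-1)/k}\,\|\nabla f\|_{H^{k}(\Omega)}^{(j-1)/k},\qquad \|\nabla^{k+1-j}g\|_{L^{2k/(k+1-j)}(\Omega)}\lesssim \|g\|_{L^{\infty}(\Omega)}^{(j-1)/k}\,\|g\|_{H^{k}(\Omega)}^{(k+1-j)/k},
\]
the endpoint cases $j=1$ and $j=k+1$ being trivial (they already have the desired form without interpolation). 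Multiplying these and using Young's inequality with exponent $\theta=(j-1)/k\in[0,1]$ bounds each term by $\|\nabla f\|_{L^{\infty}(\Omega)}\|g\|_{H^{k}(\Omega)}+\|\nabla f\|_{H^{k}(\Omega)}\|g\|_{L^{\infty}(\Omega)}$; summing over $j$ gives \eqref{CommEst-1}. The Gagliardo--Nirenberg inequalities on the curved domain $\Omega$ are obtained exactly as the Sobolev inequality \eqref{sobolev1}: flatten the boundary by the change of variables \eqref{CV1} and use a reflection extension operator $E$ from the half-space to $\mathbb R^{3}$ which preserves the $L^{\infty}$ norm exactly and is bounded on $H^{k}$ with a constant depending on $\|M\|_{H^{9}(\mathbb R^{2})}$, so that the classical $\mathbb R^{3}$ interpolation inequalities applied to $Ef$ restrict back to $\Omega$.

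For \eqref{CommEst-2} the key observation is that \eqref{CommEst-1} cannot be applied directly with $f=\nabla M$, because $\nabla(\nabla M)$ is independent of $x_{1}$ and hence fails to lie in $L^{2}(\Omega)$; it does, however, lie in $L^{\infty}(\Omega)$ together with all its tangential derivatives, and that is all one needs. Precisely, since $\nabla M=\nabla_{x'}M$ depends only on $x'$ we have $\partial_{1}(\nabla M)=0$, so
\[
[\nabla^{k+1},\nabla M]g = \sum_{j=1}^{k+1}\binom{k+1}{j}\,\nabla_{x'}^{\,j+1}M\cdot\nabla^{k+1-j}g,
\]
and each coefficient $\nabla_{x'}^{\,j+1}M$ is a function on $\mathbb R^{2}$ belonging to $H^{8-j}(\mathbb R^{2})\hookrightarrow L^{\infty}(\mathbb R^{2})$ by the two-dimensional Sobolev embedding, valid whenever $8-j>1$ --- which holds for every $j\le k+1$ in the range of $k$ actually used in the paper (indeed $k\le 5$). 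Pulling this $L^{\infty}$ bound out of the $x_{1}$-integral,
\[
\|\nabla_{x'}^{\,j+1}M\cdot\nabla^{k+1-j}g\|_{L^{2}(\Omega)}\le \|\nabla_{x'}^{\,j+1}M\|_{L^{\infty}(\mathbb R^{2})}\,\|\nabla^{k+1-j}g\|_{L^{2}(\Omega)}\lesssim \|M\|_{H^{9}(\mathbb R^{2})}\,\|g\|_{H^{k}(\Omega)}
\]
since $k+1-j\le k$; summing over $j$ yields \eqref{CommEst-2} with constant depending on $\|M\|_{H^{9}(\mathbb R^{2})}$.

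I do not expect a genuine obstacle here; the argument is essentially bookkeeping with interpolation exponents. The one point that needs a little care is justifying the full family of Gagliardo--Nirenberg interpolation inequalities on the non-flat domain $\Omega$ with constants controlled only by $\|M\|_{H^{9}(\mathbb R^{2})}$ --- handled as in the proof of \eqref{sobolev1} by flattening and reflection extension --- and verifying that the Hölder exponents $2k/(j-1)$, $2k/(k+1-j)$ are conjugate and the interpolation parameters match, which forces the endpoints $j=1,k+1$ to be treated separately (they are immediate).
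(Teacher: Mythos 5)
Your proof is correct. It differs from the paper's only in where the black box sits: the paper cites the commutator estimate on $\mathbb{R}^3$ (Lemma~4.9 of Racke's book), extends $f$ and $g$ from $\Omega$ to $\mathbb{R}^3$, applies that lemma, and restricts back --- the same extension device used for \eqref{sobolev1} --- whereas you open the black box and reprove the $\mathbb{R}^3$ estimate from scratch via the Leibniz expansion, H\"older with exponents $2k/(j-1)$ and $2k/(k+1-j)$, Gagliardo--Nirenberg interpolation, and Young's inequality, pushing the extension argument down to the level of the interpolation inequalities on $\Omega$. Your exponent bookkeeping checks out (the endpoints $j=1,k+1$ are indeed the only cases needing separate, trivial treatment), so the two routes are equivalent in content; yours is self-contained at the cost of length, the paper's is shorter but leans on the reference. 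For \eqref{CommEst-2} your argument is essentially the paper's ("direct expansion and Sobolev's inequality"): since $\nabla M$ depends only on $x'$, every coefficient $\nabla_{x'}^{j+1}M$ is bounded via the two-dimensional embedding $H^{8-j}(\mathbb{R}^2)\hookrightarrow L^\infty(\mathbb{R}^2)$. Your remark that this requires $8-j>1$, hence $k\lesssim 6$ given only $M\in H^9(\mathbb{R}^2)$, is a fair observation: the lemma's ``$k=0,1,2,\cdots$'' is slightly overstated as written, though harmless since the paper only invokes it for small $k$.
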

\begin{proof}
Lemma 4.9 in \cite{Rac} claims that for $\tilde{f}, \tilde{g}, \nabla \tilde{f} \in H^{k}(\mathbb R^3) \cap L^\infty (\mathbb R^3)$,
\[
\|[\nabla^{k+1},\tilde{f}]\tilde{g} \|_{L^2(\mathbb R^3)} \lesssim 
\|\nabla \tilde{f}\|_{L^\infty(\mathbb R^3)}\|\tilde{g}\|_{H^{k}(\mathbb R^3)}+\|\nabla \tilde{f}\|_{H^{k}(\mathbb R^3)}\|\tilde{g}\|_{L^\infty(\mathbb R^3)}.
\]
Then one can show \eqref{CommEst-1} similarly to the proof of Sobolev's inequality.
Furthermore, \eqref{CommEst-2} can be shown by direct expansion and Sobolev's inequality.
\end{proof}

\begin{lemma} \label{CattabrigaEst}
\textbf{(Cattabriga Estimate)}
Consider the following Stokes system 
\begin{equation*} 
\bar{\rho} \div u = h , \quad
- \hat{\mu} \Delta u + \hat{p} \nabla p = g , \quad
u|_{\partial \Omega} =0, \quad
\ \lim_{|x| \rightarrow \infty}u =0
\end{equation*}
with $\bar{\rho}$, $\hat{\mu}$, $\hat{p}$ being constants. For $k= 0, 1, \cdots, 4$ and 
$(h,g)\in H^{k+1}(\Omega)\times H^{k}(\Omega)$, 
if $(u,p) \in H^{k+2}(\Omega)\times H^{k+1}(\Omega)$ is a solution to the Stokes system, 
then it holds that
\begin{equation}\label{Cattabriga}
\| \nabla^{k+2} u \|_{L^2(\Omega)}^2 + \|\nabla^{k+1} p \|_{L^2(\Omega)}^2 
\leq C_0 (\|h \|^2_{H^{k+1}(\Omega)} + \| g \|^2_{H^k(\Omega)} + \|\nabla u \|_{L^2(\Omega)}^2),
\end{equation}
where $C_0=C_0(\Omega)$ is a positive constant depending on $\Omega$.
Furthermore, there exists a positive constant $\kappa$ such that 
if $\|M\|_{H^{9}(\mathbb R^2)} \leq \kappa$, 
then \eqref{Cattabriga} holds with $C_0$ independent of $\Omega$.
\end{lemma}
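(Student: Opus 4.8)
The plan is to reduce the estimate to the classical Cattabriga--Solonnikov $L^2$-regularity theory for the stationary Stokes system in a half-space, and then transport it to the perturbed half-space $\Omega$ via the change of variables $\Gamma$ in \eqref{CV1}. First I would recall that after rescaling the constants $\bar\rho,\hat\mu,\hat p$ (which only changes the constant $C_0$), it suffices to treat $\div u = h$, $-\Delta u + \nabla p = g$ with $u|_{\partial\Omega}=0$ and $u\to 0$ at infinity. For $\Omega=\mathbb R^3_+$ this is exactly the classical interior-plus-boundary $L^2$ estimate for the Stokes operator with Dirichlet data: one has $\|\nabla^{k+2}u\|_{L^2}+\|\nabla^{k+1}p\|_{L^2}\lesssim \|h\|_{H^{k+1}}+\|g\|_{H^k}+\|\nabla u\|_{L^2}$, where the $\|\nabla u\|_{L^2}$ term absorbs the low-order contribution and no pressure norm is needed on the right because the pressure is determined up to a constant by the equation and the $\nabla^{k+1}p$ bound follows by taking divergence. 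I would cite Cattabriga's original paper (or Galdi's monograph) for this half-space statement, being careful that the unbounded domain requires the decay condition $u\to0$ to fix the solution.

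Next I would pull the estimate back to $\Omega$ through $\hat\Gamma$ in \eqref{CV11}. Writing $\hat u(y)=u(\Gamma(y))$, $\hat p(y)=p(\Gamma(y))$, the system on $\Omega$ becomes a Stokes-type system on $\mathbb R^3_+$ with variable coefficients depending on $\nabla M, \nabla^2 M$: the Laplacian becomes $\hat\Delta$ and the divergence becomes $\hat\div$ as in \eqref{CV5}, \eqref{CV4}. I would split the operator as (constant-coefficient Stokes) $+$ (a perturbation whose coefficients are controlled by $\|M\|_{H^9(\mathbb R^2)}$ via Sobolev embedding, so that products of the coefficient functions with derivatives of $\hat u,\hat p$ stay in the right Sobolev spaces). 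Moving the perturbation to the right-hand side and applying the half-space estimate gives
\[
\|\nabla^{k+2}\hat u\|_{L^2(\mathbb R^3_+)}^2+\|\nabla^{k+1}\hat p\|_{L^2(\mathbb R^3_+)}^2
\lesssim \|\hat h\|_{H^{k+1}}^2+\|\hat g\|_{H^k}^2+\|\nabla\hat u\|_{L^2}^2
+ C(\|M\|_{H^9})\big(\|\nabla^{k+2}\hat u\|_{L^2}^2+\|\nabla^{k+1}\hat p\|_{L^2}^2+\text{l.o.t.}\big),
\]
where the lower-order terms on the far right involve only $\le k+1$ derivatives of $\hat u$ and $\le k$ of $\hat p$; these I would handle by interpolation (Gagliardo--Nirenberg) and absorb, with the interpolation constant being harmless because the top-order terms on the far right carry a coefficient $C(\|M\|_{H^9})$, which is a fixed constant but need not be small in general — hence the final $C_0=C_0(\Omega)$. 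Changing variables back from $y$ to $x$ (the Jacobian of $\Gamma$ is $1$) converts these $\mathbb R^3_+$ norms into the corresponding norms on $\Omega$ up to constants depending on $\|M\|_{H^9}$, yielding \eqref{Cattabriga}.

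For the last assertion — that $C_0$ can be taken independent of $\Omega$ when $\|M\|_{H^9(\mathbb R^2)}\le\kappa$ — I would observe that all the perturbation coefficients $a_{\bm b}(\nabla M,\nabla^2 M)$ and the factors in $\hat\Delta,\hat\div$ are smooth functions of $\nabla M$ vanishing (to the appropriate order) when $\nabla M=0$, so their $H^k\cap L^\infty$ norms are bounded by $C\|M\|_{H^9}\le C\kappa$. Choosing $\kappa$ small enough that $C\kappa$ times the half-space Stokes constant is $\le 1/2$, the top-order perturbation terms can be absorbed into the left side with an absolute constant, and the interpolation of the lower-order terms likewise costs only an absolute constant (using Young's inequality with an absolute small parameter since the remaining $\|M\|_{H^9}$-dependent prefactors are $\le C\kappa$). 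This gives \eqref{Cattabriga} with $C_0$ independent of $\Omega$, completing the proof.

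\medskip
The main obstacle I anticipate is bookkeeping the perturbation terms carefully enough: one must verify that every product of a coefficient function (built from $\nabla M\in H^8$, $\nabla^2 M\in H^7$) with a derivative of $\hat u$ or $\hat p$ actually lands in $H^{k+1}$ resp.\ $H^k$ for $k\le4$ — this needs $M\in H^9(\mathbb R^2)$ precisely so that $\nabla M, \nabla^2 M$ lie in $H^s$ with $s$ large enough to be a multiplier on the relevant Sobolev spaces (via the trace/extension to $\mathbb R^3_+$ and the algebra property of $H^s$ for $s>3/2$). The rest is a routine, if lengthy, "freeze-the-coefficients and absorb" argument, and I would not grind through it here.
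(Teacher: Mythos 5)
Your argument for the final assertion (the case $\|M\|_{H^{9}(\mathbb R^2)}\leq\kappa$ with $\kappa$ small) is essentially the paper's: flatten the boundary globally via $\Gamma$, treat the resulting variable-coefficient terms as a perturbation of the constant-coefficient half-space Stokes system, and absorb them because their coefficients are $O(\kappa)$. However, your proof of the main statement — the estimate with $C_0=C_0(\Omega)$ for a general graph $M\in H^9(\mathbb R^2)$ — has a genuine gap at the absorption step. After moving the perturbation to the right-hand side you obtain top-order terms $C(\|M\|_{H^9})\bigl(\|\nabla^{k+2}\hat u\|^2+\|\nabla^{k+1}\hat p\|^2\bigr)$, and you acknowledge that $C(\|M\|_{H^9})$ "need not be small in general," yet you still conclude the estimate holds with $C_0=C_0(\Omega)$. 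That inference is invalid: absorption into the left-hand side requires the coefficient of the top-order terms to be strictly less than $1$, and when $\|\nabla M\|_{L^\infty}$ is of order one the perturbation is of the same size as the principal part, so the "freeze-and-absorb" scheme simply fails. No choice of interpolation parameters rescues this, because the problematic terms are already at top order.

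The paper circumvents exactly this obstruction by splitting $\Omega$ into a bounded piece $\Omega_R$ and a far-field piece $\Omega\setminus\Omega_R$. On the bounded piece it invokes the local Stokes regularity theory for bounded domains with $C^2$ boundary (Galdi, Theorem IV.5.1), which needs no smallness of the boundary but produces a constant depending on $\Omega$; this step also requires a separate $L^2$ bound on the normalized pressure $p_{\tilde\Omega_{R'}}$, obtained by a duality argument with an auxiliary Neumann problem — a point your sketch glosses over when you claim no pressure norm is needed on the right. On the far-field piece it replaces $M$ by $M_{R/8}=(1-\chi_{R/8})M$, whose $W^{1,\infty}$ norm is as small as desired for $R$ large because $M\in H^9(\mathbb R^2)$ decays at infinity; only there does it flatten the boundary and run your perturbation/absorption argument, now legitimately, since the coefficients are genuinely small. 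If you want to repair your proposal, you need this (or some equivalent) localization: your global flattening argument proves only the second half of the lemma.
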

\begin{proof}
We may suppose that $\bar{\rho}=\hat{\mu}=\hat{p}=1$.
Indeed, suitable change of variables enables us to have this. 
%We introduce some notations. 
Let us also set 
\[
\Omega_{R'}:=\Omega \cap B(0,R') \quad \text{for any $R'>1$}
\]
and then take a bounded domain $\tilde{\Omega}_{R'}$ 
whose boundary is $C^2$ such that 
\[
\Omega_{R'} \subset \tilde{\Omega}_{R'} \subset \Omega. 
\]
For any $\phi \in C_0^\infty(\tilde{\Omega}_{R'})$, define $\phi_{\tilde{\Omega}_{R'}}$ by
\[
\phi_{\tilde{\Omega}_{R'}}(x) 
:= \phi(x)-|\tilde{\Omega}_{R'}|^{-1}\int_{\tilde{\Omega}_{R'}} \phi(x) \,dx
\]
and $\psi$ by solving the following problem:
\begin{gather*}
\Delta \psi = \phi_{\Omega_{R'}} \quad \text{in $\tilde{\Omega}_{R'}$}, \quad
\nabla \psi \cdot {n}|_{\tilde{\Omega}_{R'}}=0, \quad
\int_{\tilde{\Omega}_{R'}} \psi \,dx=0,
\end{gather*}
where $n$ is the unit outer normal vector on $\tilde{\Omega}_{R'}$.
The paper \cite[Section 15]{ADN59} ensures that $\psi$ is well-defined, and that the following estimate holds:
\begin{equation}\label{PhiEs1}
\|\psi\|_{H^2(\tilde{\Omega}_{R'})} 
\lesssim_{\tilde{\Omega}_{R'}} \|\phi\|_{L^2(\tilde{\Omega}_{R'})}.
\end{equation}

From now on we show the Cattabriga estimate for $k=0$. 
Let us first show that
\begin{equation}\label{pEs1}
\|p_{\tilde{\Omega}_{R'}}\|_{L^2(\tilde{\Omega}_{R'})}
\lesssim_{{R'}} (\epsilon \|\nabla^2 u\|_{L^2(\Omega)}+C(\epsilon) \|\nabla u\|_{L^2(\Omega)}+\|g\|_{L^2(\Omega)}), \quad R'>1, \quad \epsilon\in(0,1),
%\quad \text{for any $R'>1$},
\end{equation}
where $p_{\tilde{\Omega}_{R'}}:=p-|\tilde{\Omega}_{R'}|^{-1}\int_{\tilde{\Omega}_{R'}} p\,dx$.
We observe from $\int_{\tilde{\Omega}_{R'}} p_{\tilde{\Omega}_{R'}}\,dx=0$ 
and the definition of $\psi$ that
\begin{align*}
&\int_{\tilde{\Omega}_{R'}} p_{\tilde{\Omega}_{R'}} \phi \,dx
\\
&=\int_{\tilde{\Omega}_{R'}} p_{\tilde{\Omega}_{R'}} \phi_{\tilde{\Omega}_{R'}} \,dx
=\int_{\tilde{\Omega}_{R'}} p_{\tilde{\Omega}_{R'}} \Delta \psi \,dx 
%=-\int_{\tilde{\Omega}_{R'}} (\nabla p_{\tilde{\Omega}_{R'}}) \cdot (\nabla \psi) \,dx
=-\int_{\tilde{\Omega}_{R'}} (\nabla p) \cdot (\nabla \psi) \,dx 
=-\int_{\tilde{\Omega}_{R'}} (\Delta u +g) \cdot (\nabla \psi) \,dx
\\
&=-\sum_{j=1}^3\left\{\int_{\partial\tilde{\Omega}_{R'}}  (\partial_j \psi)\{(\nabla u_j)\cdot{n}\} \,dS 
-\int_{\tilde{\Omega}_{R'}} (\nabla u_j) \cdot (\nabla \partial_j \psi) \,dx \right\}
-\int_{\tilde{\Omega}_{R'}} g \cdot (\nabla \psi) \,dx.
\end{align*}
Then estimating the right hand side by \eqref{PhiEs1} leads to
\begin{align*}
&\int_{\tilde{\Omega}_{R'}} p_{\tilde{\Omega}_{R'}} \phi \,dx
\\
& \lesssim (\epsilon\|\nabla^2 u\|_{L^2(\Omega)} + C(\epsilon)\|\nabla u\|_{L^2(\Omega)})\|\psi\|_{H^2(\tilde{\Omega}_{R'})}
+\|\nabla u\|_{L^2(\Omega)}\|\psi\|_{H^2(\tilde{\Omega}_{R'})}+\|g\|_{L^2(\Omega)}\|\psi\|_{H^2(\tilde{\Omega}_{R'})}
\\
& \lesssim_{\tilde{\Omega}_{R'}} (\epsilon\|\nabla^2 u\|_{L^2(\Omega)} + C(\epsilon)\|\nabla u\|_{L^2(\Omega)}+\|g\|_{L^2(\Omega)})\|\phi\|_{L^2(\tilde{\Omega}_{R'})}.
\end{align*}
From the arbitrariness of $\phi \in C_0^\infty(\tilde{\Omega}_{R'})$,
we conclude \eqref{pEs1}.

We next show that 
\begin{equation}\label{LocalEs1}
\|\nabla^2 u\|_{L^2(\Omega_R)}+\|\nabla p\|_{L^2(\Omega_R)}
\lesssim_{\Omega_R} \|h\|_{H^1(\Omega)}+\|g\|_{L^2(\Omega)}+ \epsilon\|\nabla^2 u\|_{L^2(\Omega)} + C(\epsilon)\|\nabla u\|_{L^2(\Omega)}, \quad R>1,
%\quad \text{for any $R>1$}.
\end{equation}
Noting that $\nabla p = \nabla p_{\tilde{\Omega}_{4R}}$, we have
\begin{equation*} 
\div u = h , \quad
-\Delta u + \nabla( p_{\tilde{\Omega}_{4R}} )=  g , \quad
u|_{\partial \Omega} =0 , \quad \lim_{|x| \rightarrow \infty} u =0.
\end{equation*}
Applying Theorem~IV.5.1 (see also Exercise~IV.5.2) in \cite{galdi} to the above problem,
%with $\Omega'=\Omega_R$ and $\Omega_0=\Omega_{2R}$, 
we have 
\begin{align*}
\|\nabla^2 u\|_{L^2(\Omega_R)}+\|\nabla p\|_{L^2(\Omega_R)}
&\lesssim_{\Omega_R} \|h\|_{H^1(\Omega)} + \|g\|_{L^2(\Omega)} + \|u\|_{H^1(\Omega_{2R})} 
+ \|p_{\tilde{\Omega}_{4R}}\|_{L^2(\Omega_{2R})}
\\
&\lesssim_{\Omega_R} \|h\|_{H^1(\Omega)} + \|g\|_{L^2(\Omega)} + \epsilon\|\nabla^2 u\|_{L^2(\Omega)} + C_\epsilon\|\nabla u\|_{L^2(\Omega)},
\end{align*}
where we have also used the H\"older inequality, \eqref{sobolev1}, and \eqref{pEs1}
in deriving the last inequality. Hence, we conclude \eqref{LocalEs1}.

We complete the Cattabriga estimate for $k=0$ by deriving
an estimate over the domain $\Omega\backslash\Omega_R$.
To do this, we use the cut-off function $\chi_R(\cdot)=\chi(|\cdot|/R) \in C^\infty_0$, where $R>0$ and
\begin{equation}\label{cut-off1}
 \chi(s):= \left\{\begin{array}{ll}
  1 & \text{if $s\leq 1$},
  \\ 
  0 & \text{if $s\geq 2$}.
\end{array}\right. 
\end{equation}
Let us set
\[
M_{R/8}(x'):=(1-\chi_{R/8}(x'))M(x').
\]
For any $\delta \in (0,1)$, there exists $R_0=R_0(\delta)>1$ such that if $R/8>R_0$, then
\begin{equation}\label{smallness1}
 \|M_{R/8}\|_{W^{1,\infty}(\mathbb R^2)} < \delta, 
\quad
 M_{R/8}(x')=M(x') \ \ \text{for $(x,x') \in \Omega \backslash\Omega_{R/3}$}.
% {\rm supp} M_{R/8} \subset \partial \Omega_{R/3} \cap \partial \Omega. 
\end{equation}
We show that
\begin{multline}\label{farawayEs1}
\|\nabla^2 u\|_{L^2(\Omega\backslash\Omega_R)}+\|\nabla p\|_{L^2(\Omega\backslash\Omega_R)}
\\
\lesssim_{\Omega_R}\|h\|_{H^1(\Omega)}+\|g\|_{L^2(\Omega)}+ \epsilon\|\nabla^2 u\|_{L^2(\Omega)} + C_\epsilon\|\nabla u\|_{L^2(\Omega)}, \quad R> R_0(\delta_0),
\end{multline}
where $\delta_0$ is a constant to be determined later.
%Noting that $\nabla p = \nabla p_{\tilde{\Omega}_{4R}}$ and
Multiplying the Stokes equation by the cut-off function $(1-\chi_{R/2}(x))$ %we have
%\begin{equation*} 
%\begin{split}
%&  \div ((1-\chi_{R/2}) u) = (1-\chi_{R/2}) h + (\nabla \chi_{R/2})\cdot u, \\
%& -\Delta ((1-\chi_{R/2}) u) + \nabla( (1-\chi_{R/2}) p_{\tilde{\Omega}_{4R}} ) \\
%& \qquad = (1-\chi_{R/2}) g  -(\Delta \chi_{R/2}) u-\nabla u \nabla \chi_{R/2} + p_{\tilde{\Omega}_{4R}} \nabla \chi_{R/2} \quad \text{in $\Omega$}, \\
%& (1-\chi_{R/2})u|_{\partial \Omega} =0 , 
%\ \lim_{|x| \rightarrow \infty} (1-\chi_{R/2}) u =0 . \\ 
%\end{split}
%\end{equation*}
and using the zero extension of  $(1-\chi_{R/2}) u$ and $(1-\chi_{R/2}) p_{\tilde{\Omega}_{4R}}$ 
on $\Omega'_{R/8}:=\{x_1>M_{R/8}(x')\}$, we see that
\begin{gather*} 
\div ((1-\chi_{R/2}) u) = h^*, \quad
-\Delta ((1-\chi_{R/2}) u) + \nabla( (1-\chi_{R/2}) p_{\tilde{\Omega}_{4R}} ) = g^* \quad 
\text{in $\Omega'_{R/8}$}, 
\\
 (1-\chi_{R/2})u|_{\partial \Omega'_{R/8}} =0, \quad 
\lim_{|x| \rightarrow \infty} (1-\chi_{R/2}) u =0, 
\end{gather*} 
where 
\begin{align*}
h^* (x)& := (1-\chi_{R/2}) h + (\nabla \chi_{R/2})\cdot u, \\
g^* (x)& := (1-\chi_{R/2}) g  -(\Delta \chi_{R/2}) u-\nabla u \nabla \chi_{R/2} + p_{\tilde{\Omega}_{4R}} \nabla \chi_{R/2}.
\end{align*}
Furthermore, using the change of variables
\[
 x_1 = y_1 + M_{R/8}(y_2, y_3), \quad
 x_2 = y_2, \quad
 x_3 = y_3,
 \]
%\begin{eqnarray*}
%\Gamma_{R/4} : 
%\begin{cases}
%& x_1 = y_1 + M_{R/8}(y_2, y_3), \\
%& x_2 = y_2, \\
%& x_3 = y_3.  \\
%\end{cases}
%\end{eqnarray*} 
we have the problem
\begin{gather*} 
 {\div}_y ((1-\chi_{R/2}) u) = h^* + \{\partial_{y_1} ((1-\chi_{R/2}) u)\}\cdot \nabla_y M_{R/8}, \\
\begin{aligned}
& -\Delta_y ((1-\chi_{R/2}) u) + \nabla_y( (1-\chi_{R/2}) p_{\tilde{\Omega}_{4R}} ) \\
& \quad =g^* \!+ \sum_{j=2}^3\!\left[((\partial_{y_j}M_{R/8})\partial_{y_1}-\partial_{y_j})\{(\partial_{y_j}M_{R/8})\partial_{y_1} ((1-\chi_{R/2}) u)\}%\right.\\
%& \qquad \qquad \qquad \left. 
\!-\!(\partial_{y_j}M_{R/8})\{\partial_{y_1y_j} ((1-\chi_{R/2}) u)\} \!\right] \\
& \quad \qquad  + \{\partial_{y_1} ((1-\chi_{R/2}) p_{\tilde{\Omega}_{4R}})\}\nabla_y M_{R/8}  \quad \text{in $\mathbb R_+^3$},
\end{aligned}
\\
(1-\chi_{R/2})u|_{\partial \mathbb R_+^3} =0, \quad
\lim_{|y| \rightarrow \infty} (1-\chi_{R/2}) u =0.
\end{gather*} 
Applying Theorem~IV.3.2 in \cite{galdi} with \eqref{smallness1} to the above problem, we have
\begin{align*}
&\|\nabla^2 ((1-\chi_{R/2}) u)\|_{L^2(\mathbb R_+^3)}
+\|\nabla ((1-\chi_{R/2}) p_{\tilde{\Omega}_{4R}})\|_{L^2(\mathbb R_+^3)}
%\\
%&\lesssim \|h^*\|_{H^1(\mathbb R_+^3)} 
%+\|\{\partial_{y_1} ((1-\chi_{R/2}) u)\}\cdot \nabla_y M_{R/8}\|_{H^1(\mathbb R_+^3)}
%\\
%& \qquad +\sum_{j=2}^3\left\|\left[((\partial_{y_j}M_{R/8})\partial_{y_1}-\partial_{y_j})\{(\partial_{y_j}M_{R/8})\partial_{y_1} ((1-\chi_{R/2}) u)\}\right.\right.\\
%& \qquad \qquad \qquad \left.\left. -(\partial_{y_j}M_{R/8})\{\partial_{y_1y_j} ((1-\chi_{R/2}) u)\} \right]\right\|_{L^2(\mathbb R_+^3)}
%\\
%& \qquad + \|g^*\|_{L^2(\mathbb R_+^3)}
%+ \|\{\partial_{y_1} ((1-\chi_{R/2}) p_{\tilde{\Omega}_{4R}})\}\nabla_y M_{R/8}\|_{L^2(\mathbb R_+^3)}
\\
& \lesssim 
\delta\|\nabla^2 ((1-\chi_{R/2}) u)\|_{L^2(\mathbb R_+^3)}+\delta\|\nabla ((1-\chi_{R/2}) p_{\tilde{\Omega}_{4R}})\|_{L^2(\mathbb R_+^3)}
\\
&\qquad + \|h^*\|_{H^1({\rm supp}(1-\chi_{R/2}))} + \|g^*\|_{L^2({\rm supp}(1-\chi_{R/2}))}
%\\
%&\qquad 
+\|u\|_{L^2({\rm supp}\nabla\chi_{R/2})}+\|\nabla u\|_{L^2({\rm supp}(1-\chi_{R/2}))}.
%\quad (\because \text{\eqref{smallness1}})
\end{align*}
Let us now take $\delta_0$ so small that
\begin{align*}
&\|\nabla^2 ((1-\chi_{R/2}) u)\|_{L^2(\mathbb R_+^3)}
+\|\nabla ((1-\chi_{R/2}) p_{\tilde{\Omega}_{4R}})\|_{L^2(\mathbb R_+^3)}
\\
& \lesssim \|h^*\|_{H^1({\rm supp}(1-\chi_{R/2}))} + \|g^*\|_{L^2({\rm supp}(1-\chi_{R/2}))}
+\|u\|_{L^2({\rm supp}\nabla\chi_{R/2})}+\|\nabla u\|_{L^2({\rm supp}(1-\chi_{R/2}))}.
\end{align*}
Then changing the coordinate $y \in \mathbb R_+^3$ to the coordinate $x \in \Omega_{R/8}'$ and noting that $ {\rm supp}(1-\chi_{R/2}) \subset \Omega$ and $(1-\chi_{R/2})(x)=1$ hold for 
$x \in \Omega\backslash\Omega_{R} \subset \Omega_{R/8}'$, we have 
\begin{align*}
&\|\nabla^2 u\|_{L^2(\Omega\backslash\Omega_R)}+\|\nabla p\|_{L^2(\Omega\backslash\Omega_R)}
%&\|\nabla^2 ((1-\chi_{R/2}) u)\|_{L^2(\Omega_{R/8}')}
%+\|\nabla ((1-\chi_{R/2}) p_{\tilde{\Omega}_{4R}})\|_{L^2(\Omega_{R/8}')}
%\\
%& 
\lesssim_{\Omega}  
\|h^*\|_{H^1(\Omega)} + \|g^*\|_{L^2(\Omega)}+
\|u\|_{L^2({\rm supp}\nabla\chi_{R/2})}+\|\nabla u\|_{L^2(\Omega)}.
\end{align*}
%This together with the facts that $(1-\chi_{R/2})(x)=1$ for 
%$x \in \Omega\backslash\Omega_{R} \subset \Omega_{R/8}'$,
%and that $\Omega_{R/8}'$ and $\tilde{\Omega}_{R'}$ depend only on $\Omega$ 
Then estimating the right hand side by \eqref{sobolev1}, \eqref{pEs1}, and Poincar\'e inequality,
we obtain 
\begin{align*}
\|\nabla^2 u\|_{L^2(\Omega\backslash\Omega_R)}+\|\nabla p\|_{L^2(\Omega\backslash\Omega_R)}
%&\lesssim_{\Omega}  \|h^*\|_{H^1(\Omega)} + \|g^*\|_{L^2(\Omega)}+
%\|u\|_{L^2({\rm supp}\nabla\chi_{R/2})}+\|\nabla u\|_{L^2(\Omega)}
%\\
&\lesssim_{\Omega}  \|h\|_{H^1(\Omega)} + \|g\|_{L^2(\Omega)}+
\|(u,p_{\tilde{\Omega}_{4R}})\|_{L^2({\rm supp}\nabla\chi_{R/2})}+\|\nabla u\|_{L^2(\Omega)}
\\
&\lesssim_{\Omega} \|h\|_{H^1(\Omega)} + \|g\|_{L^2(\Omega)} + \epsilon\|\nabla^2 u\|_{L^2(\Omega)} + C_\epsilon\|\nabla u\|_{L^2(\Omega)}.
\end{align*}
%where we have also used \eqref{sobolev1}, the H\"older and Sobolev inequalities $(\|\cdot\|_{L^6(\Omega)}\lesssim_{\|M\|_{H^3}} \|\nabla \cdot\|)$ for $\|u\|_{L^2({\rm supp}\nabla\chi_{R/2})}$
%and \eqref{pEs1} for $\|p_{\tilde{\Omega}_{4R}}\|_{L^2({\rm supp}\nabla\chi_{R/2})}$
%in deriving the last inequality. 
Hence, we conclude \eqref{farawayEs1}.

From \eqref{LocalEs1} and \eqref{farawayEs1}, 
we have \eqref{Cattabriga} with $k=0$ by taking $\epsilon$ small enough. 
Furthermore, one can show inductively for the case $k=1, 2$ with aid of 
Theorem~IV.3.2 and Theorem~IV.5.1 in \cite{galdi} which discusses the estimate of higher order derivatives.

\medskip
 
We next discuss the case $\|M\|_{H^{9}(\mathbb R^2)}\ll1$.
%In order to flatten the boundary, we introduce the following change of variables:
%\begin{eqnarray*}
%\Gamma : 
%\begin{cases}
%& x_1 = y_1 + M(x_2, x_3), \\
%& x_2 = y_2, \\
%& x_3 = y_3,  \\
%\end{cases}
%\end{eqnarray*} 
%and its inverse 
%\begin{eqnarray*}
%\tilde{\Gamma} : 
%\begin{cases}
%& y_1 = x_1 - M(x_2, x_3), \\
%& y_2 = x_2, \\
%& y_3 = x_3.  \\
%\end{cases}
%\end{eqnarray*} 
%and let $y' =(y_2, y_3)$.
Using \eqref{CV1}, we have the following problem:
\begin{gather*} 
{\div}_y u = h  + \partial_{y_1}u\cdot \nabla_y M, \\
\begin{aligned}
-\Delta_y u + \nabla_y p  
&=g + \sum_{j=2}^3\left[\{(\partial_{y_j}M)\partial_{y_1}-\partial_{y_j}\}\{(\partial_{y_j}M)\partial_{y_1}u\} -(\partial_{y_j}M)\partial_{y_1y_j}  u \right] %\\
%& \qquad \qquad \qquad \qquad 
+ \partial_{y_1}  p \nabla_y M \quad \text{in $\mathbb R_+^3$},
\end{aligned}
\\
u|_{\partial \mathbb R_+^3} =0 , \quad
\lim_{|y| \rightarrow \infty} u =0 . 
\end{gather*} 
Applying Theorem~IV.3.2 in \cite{galdi} with $\|M\|_{H^{9}(\mathbb R^2)} \leq \kappa$ to the above problem, we have
\begin{align*}
&\|\nabla^2 u\|_{L^2(\mathbb R_+^3)}
+\|\nabla p \|_{L^2(\mathbb R_+^3)}
\\
%&\lesssim \|h\|_{H^1(\mathbb R_+^3)} 
%+\|\partial_{y_1} u\cdot \nabla_y M\|_{H^1(\mathbb R_+^3)}
%\\
%& \qquad +\sum_{j=2}^3\left\|\left[\{(\partial_{y_j}M)\partial_{y_1}-\partial_{y_j}\}\{(\partial_{y_j}M)\partial_{y_1} u\}
% -(\partial_{y_j}M)\{\partial_{y_1y_j}u\} \right]\right\|_{L^2(\mathbb R_+^3)}
%\\
%& \qquad + \|g\|_{L^2(\mathbb R_+^3)} + \|\partial_{y_1} p \nabla_y M\|_{L^2(\mathbb R_+^3)}
%\\
& \lesssim 
\kappa\|\nabla^2 u\|_{L^2(\mathbb R_+^3)}+\kappa\|\nabla p \|_{L^2(\mathbb R_+^3)}
+ \|h\|_{H^1(\mathbb R_+^3)} + \|g\|_{L^2(\mathbb R_+^3)}
+\|\nabla u\|_{L^2(\mathbb R_+^3)}. 
%\\
%& \quad (\because \|M\|_{H^{4}} \leq \kappa)
\end{align*}
Let us take $\kappa$ so small that
\begin{align*}
&\|\nabla^2 u\|_{L^2(\mathbb R_+^3)}
+\|\nabla p \|_{L^2(\mathbb R_+^3)}
\lesssim \|h\|_{H^1(\mathbb R_+^3)} + \|g\|_{L^2(\mathbb R_+^3)}
+\|\nabla u\|_{L^2(\mathbb R_+^3)}.
\end{align*}
Then changing the coordinate $y \in \mathbb R_+^3$ to the coordinate $x \in \Omega$ 
we conclude that 
\begin{align*}
\|\nabla^2 u\|_{L^2(\Omega)}+\|\nabla p\|_{L^2(\Omega)}
\lesssim \|h\|_{H^1(\Omega)} + \|g\|_{L^2(\Omega)} + \|\nabla u\|_{L^2(\Omega)}.
\end{align*}
Furthermore, one can show inductively for the case $k=1, 2$ with aid of 
Theorem~IV.5.1 in \cite{galdi} which discusses the estimate of higher order derivatives.
\end{proof}

\begin{lemma} \label{ellipticEst}
\textbf{(Elliptic Estimate)}
Consider the following elliptic system 
\begin{equation*} 
\begin{split}
 - \hat{\mu} \Delta u - \hat{\nu} \nabla \div u = f, \quad  
 u|_{\partial \Omega} =0 , \quad \lim_{|x| \rightarrow \infty} u =0 .  
\end{split}
\end{equation*}
with $\hat{\mu}$ and $\hat{\nu}$ being positive constants. For $k= 0,1,2$ and $f \in H^{k}(\Omega)$,
if $u \in H^{k+2}(\Omega)$ is a solution to the elliptic system, then it holds that
\begin{equation}\label{Elliptic}
\| \nabla^{k+2} u \|_{L^2(\Omega)}  \lesssim \|f \|_{H^{k}(\Omega)} + \|  u \|_{L^2(\Omega)}  .  
\end{equation}
\end{lemma}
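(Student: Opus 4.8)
The plan is to establish \eqref{Elliptic} by the same localization procedure as in the proof of Lemma \ref{CattabrigaEst}, using that $-\hat\mu\Delta-\hat\nu\nabla\div$ is a constant-coefficient elliptic system (the Lam\'e operator) which, together with the Dirichlet condition $u|_{\partial\Omega}=0$, satisfies the Agmon--Douglis--Nirenberg complementing condition \cite{ADN59}. The argument is in fact lighter than the one for Lemma \ref{CattabrigaEst}, since here there is no pressure unknown and no divergence constraint, and I would argue by induction on $k=0,1,2$.

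First I would record the energy identity obtained by multiplying the system by $u$, integrating over $\Omega$ and integrating by parts, using $u|_{\partial\Omega}=0$ and the decay of $u$ at infinity:
\[
\hat\mu\|\nabla u\|^2+\hat\nu\|\div u\|^2=\int_\Omega f\cdot u\,dx\le\|f\|\,\|u\|,
\]
so that $\|\nabla u\|\lesssim\|f\|+\|u\|$; this controls the lowest-order term appearing on the right-hand sides below. Next I would take the cut-off $\chi_R$ of \eqref{cut-off1} and split $\Omega=\Omega_R\cup(\Omega\setminus\Omega_R)$ exactly as in Lemma \ref{CattabrigaEst}. On the bounded piece $\Omega_R$, whose boundary is a smooth graph because $M\in H^9(\mathbb R^2)\hookrightarrow C^{7}(\mathbb R^2)$, the classical ADN interior-and-boundary regularity estimate for the Lam\'e system gives $\|u\|_{H^{k+2}(\Omega_R)}\lesssim_{\Omega}\|f\|_{H^k(\Omega)}+\|u\|_{L^2(\Omega)}$. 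On the far piece I would multiply the system by $1-\chi_{R/2}$, flatten the boundary by $x_1=y_1+M_{R/8}(y')$ with $\|M_{R/8}\|_{W^{1,\infty}(\mathbb R^2)}$ as small as desired (choosing $R$ large, as in \eqref{smallness1}), and observe that the flattened equation is the constant-coefficient Lam\'e system on $\mathbb R^3_+$ whose right-hand side, besides $\hat f$ and the commutators with $\chi_{R/2}$ (supported in $\Omega_R$), contains after applying $\nabla^k$ only the term $\nabla^{k+2}u$ with a coefficient of size $O(\|\nabla M_{R/8}\|_{L^\infty})$, terms of order at most $k+1$ in $u$ with coefficients built from derivatives of $M_{R/8}$ up to order $k+2$ (bounded, since $M\in H^9\hookrightarrow C^{7}$), and $\nabla^{\le k}\hat f$. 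Applying the half-space elliptic estimate for the Lam\'e system — obtained e.g. by odd reflection across $\{y_1=0\}$ together with the Calder\'on--Zygmund estimate, or from \cite{ADN59} — absorbing the small second-order term, and changing variables back, I would get
\[
\|\nabla^{k+2}u\|_{L^2(\Omega\setminus\Omega_R)}\lesssim_{\Omega}\|f\|_{H^k(\Omega)}+\|u\|_{H^{k+1}(\Omega)}.
\]

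Adding the two estimates gives $\|\nabla^{k+2}u\|_{L^2(\Omega)}\lesssim_{\Omega}\|f\|_{H^k}+\|u\|_{H^{k+1}(\Omega)}$, and it remains to dispose of $\|u\|_{H^{k+1}(\Omega)}=\|u\|_{L^2}+\sum_{1\le j\le k+1}\|\nabla^j u\|$: the term $\|\nabla u\|$ is bounded by the energy inequality above, the terms $\|\nabla^j u\|$ with $2\le j\le k+1$ are bounded by $\|f\|_{H^{j-2}}+\|u\|_{L^2}\le\|f\|_{H^k}+\|u\|_{L^2}$ by the inductive hypothesis (cases $0,\dots,k-1$), and $\|u\|_{L^2}$ is already on the right-hand side. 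This closes the induction and yields \eqref{Elliptic}. The one genuinely delicate point is, as in Lemma \ref{CattabrigaEst}, that the graph $M$ is not compactly supported, so that the boundary must be treated simultaneously by smooth bounded-domain regularity on $\Omega_R$ and by the small-perturbation half-space estimate on $\Omega\setminus\Omega_R$; once this splitting is set up, everything else is a routine perturbation and bookkeeping argument, which is why the proof proceeds essentially as in the proof of Lemma \ref{CattabrigaEst}.
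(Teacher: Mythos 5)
Your argument is correct, but it is not the route the paper takes: the paper disposes of this lemma with a one-line citation, asserting that the estimate follows ``in much the same way as Theorems 4 and 5 in Section 6.3 of \cite{Ev}'', i.e.\ by Evans' difference-quotient boundary-regularity argument (flatten the boundary, gain tangential derivatives by difference quotients, recover normal derivatives from the equation). Your proof instead transplants the localization scheme from the paper's own proof of Lemma \ref{CattabrigaEst}: energy identity for $\|\nabla u\|$, ADN regularity on the bounded piece $\Omega_R$, flattening plus a small-perturbation absorption on $\Omega\setminus\Omega_R$ using $\|\nabla M_{R/8}\|_{L^\infty}\ll 1$, and an induction on $k$ to remove the intermediate norms. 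What your version buys is an explicit treatment of the two issues the Evans citation glosses over — the unboundedness of $\Omega$ and the fact that the graph $M$ is not compactly supported — at the cost of being considerably longer; the paper's citation is shorter but leaves the adaptation from bounded domains to the reader. One small caveat: the parenthetical suggestion that the half-space estimate for the Lam\'e operator follows by ``odd reflection'' is not quite right, since odd reflection of all components does not commute with $\nabla\div$ (the reflected field no longer solves the same system); your fallback to \cite{ADN59} (or to Theorem~IV.3.2 of \cite{galdi} as used elsewhere in the appendix) is the correct justification, so this does not affect the validity of the argument.
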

\begin{proof}
This can be shown in much the same way as Theorems 4 and 5 in Section 6.3 in \cite{Ev}.
\end{proof}

\section{Initial data}\label{B}

We find a certain initial data $\Phi_0^{\#}$ 
which satisfies the compatibility conditions \eqref{cmpa0}.

\begin{lemma}\label{CompatibilityCond}
There exists $\psi_0^{\#}\in H^5(\Omega)$ such that 
$\Phi_0^{\#}=(0,\psi_0^{\#})$ satisfies \eqref{cmpa0} and
$\|\Phi_0^{\#}\|_{\lteasp{\beta}}+\|\Phi_0^{\#}\|_{H^5} \lesssim \delta$.
\end{lemma}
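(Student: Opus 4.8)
The plan is to construct $\psi_0^{\#}$ directly as a function supported near the boundary, chosen so that it and its relevant normal derivatives vanish fast enough to kill all the compatibility obstructions while keeping the $H^5$-norm controlled by $\delta$. Recall that we are free to take $\vp_0^{\#}\equiv 0$; with this choice, the compatibility conditions \eqref{cmpa0} simplify considerably. The order-$0$ condition is just $\psi_0^{\#}|_{x_1=M(x')}=0$. The order-$1$ condition reads
\[
\left\{-L\psi_0^{\#}-(g+G)|_{t=0}\right\}\big|_{x_1=M(x')}=0,
\]
since $\rho_0(u_0\cdot\nabla)\psi_0^{\#}$ and $p'(\rho_0)\nabla\vp_0^{\#}$ vanish on the boundary once $\psi_0^{\#}|_{\partial\Omega}=0$ and $\vp_0^{\#}\equiv0$; similarly the order-$2$ condition \eqref{cmpa2}, after rewriting $\partial_t$ in terms of spatial derivatives via \eqref{eq-pv} (as the text indicates), becomes a pointwise constraint on $\psi_0^{\#}$ and its spatial derivatives on $\partial\Omega$.

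First I would flatten the boundary by the change of variables $\Gamma$ from \eqref{CV1}, reducing the problem to constructing $\hat{\psi}_0^{\#}(y)$ on $\overline{\mathbb{R}^3_+}$ with prescribed traces of $\hat{\psi}_0^{\#}$, $\partial_{y_1}\hat{\psi}_0^{\#}$, and $\partial_{y_1}^2\hat{\psi}_0^{\#}$ on $\{y_1=0\}$; this is a standard trace-extension (Borel/Whitney-type) problem. The boundary data are determined recursively: the order-$0$ condition fixes the trace of $\hat{\psi}_0^{\#}$ to be zero; the order-$1$ condition, once we observe that $L$ (equivalently $\hat{L}$) applied to $\hat{\psi}_0^{\#}$ involves $\partial_{y_1}^2\hat{\psi}_0^{\#}$ as its only second-normal-derivative term with an invertible coefficient matrix (precisely the structure exploited via $\mathcal{A}_j$ in \eqref{Aj-def}), lets us solve for the trace of $\partial_{y_1}^2\hat{\psi}_0^{\#}$ in terms of $\hat{G}|_{y_1=0}$ and lower-order tangential data; and the order-$2$ condition likewise determines the trace of a higher normal derivative. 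The trace of $\partial_{y_1}\hat{\psi}_0^{\#}$ is not constrained, so I would simply set it to zero. Each prescribed trace is a function on $\mathbb{R}^2$ built from $\hat{G}$, $M$, $\tilde\rho$, $\tilde u$ and their derivatives; by \eqref{h1}, \eqref{stdc1} and $M\in H^9$, each such trace has a Sobolev norm on $\mathbb{R}^2$ bounded by $C\delta$.

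Then I would build the extension $\hat{\psi}_0^{\#}(y)=\sum_{j=0}^{2}\frac{y_1^j}{j!}\,\chi(y_1)\,b_j(y')$ where $b_0,b_1,b_2$ are the prescribed traces and $\chi$ is a smooth cutoff equal to $1$ near $y_1=0$ and supported in $\{0\le y_1\le 1\}$; the compact support in $y_1$ gives $\hat{\psi}_0^{\#}\in\lteasp{\beta}$ automatically with the same bound, and translating back by $\Gamma$ preserves this. Standard trace theory ($H^5(\mathbb{R}^3_+)$ requires the $j$-th normal trace to lie in $H^{5-j-1/2}(\mathbb{R}^2)$, i.e. $H^{9/2}$, $H^{7/2}$, $H^{5/2}$ for $j=0,1,2$) together with the estimate $\|b_j\|_{H^{s_j}(\mathbb{R}^2)}\lesssim\delta$ — which holds because $b_j$ is a smooth algebraic expression in quantities controlled in $H^9$, $H^5$ etc. — yields $\|\hat{\psi}_0^{\#}\|_{H^5(\mathbb{R}^3_+)}\lesssim\delta$, hence $\|\Phi_0^{\#}\|_{H^5(\Omega)}+\|\Phi_0^{\#}\|_{\lteasp{\beta}}\lesssim\delta$. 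The main obstacle is bookkeeping: one must verify carefully that solving the order-$1$ and order-$2$ conditions for the higher normal-derivative traces is actually possible (i.e. the coefficient of the top normal derivative is invertible, which is exactly the algebraic fact behind \eqref{Aj-def} and \eqref{ee4-hat}), and that rewriting the time-derivative compatibility condition \eqref{cmpa2} via the equations \eqref{eq-pv} does not secretly reintroduce a constraint incompatible with $\vp_0^{\#}\equiv0$ — checking this, and tracking that every term that appears is $O(\delta)$ in the appropriate boundary Sobolev norm, is the delicate part; the extension step itself is routine.
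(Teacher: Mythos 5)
Your overall strategy is the same as the paper's: pass to the flattened domain via $\Gamma$, take $\vp_0^{\#}\equiv 0$, determine the normal-derivative traces of $\hat{\psi}_0^{\#}$ on $\{y_1=0\}$ recursively from the compatibility conditions using the invertibility of the coefficient matrix of $\partial_{y_1}^2$ in $\hat{L}$ (the matrix $\mathcal{A}$ of \eqref{cmpa6}), lift the traces, and bound everything by $\delta$ through $\hat{G}$. However, there are two concrete gaps in the execution.

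First, your extension $\hat{\psi}_0^{\#}=\sum_{j=0}^{2}\frac{y_1^j}{j!}\chi(y_1)b_j(y')$ only realizes the traces of the normal derivatives up to order $2$, so it does not satisfy the order-$2$ compatibility condition \eqref{cmpa2}. Once $\partial_t$ is eliminated via \eqref{eq-pv}, the term $\partial_t(L\psi)|_{t=0}$ produces $\hat{L}$ applied to $\hat{L}\hat{\psi}_0$ (up to lower order), i.e.\ a genuine fourth-order normal derivative of $\hat{\psi}_0$ at the boundary. You correctly note that the order-$2$ condition ``determines the trace of a higher normal derivative,'' but that trace never enters your formula: with $\chi\equiv 1$ near $y_1=0$ your function has $\partial_{y_1}^3\hat{\psi}_0^{\#}|_{y_1=0}=\partial_{y_1}^4\hat{\psi}_0^{\#}|_{y_1=0}=0$, which is not the value forced by \eqref{cmpa3rd}. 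The paper prescribes five traces: the $0$th, $1$st and $3$rd are set to zero, the $2$nd is $(\mathcal{A}^{-1}\hat{G})|_{y_1=0}$, and the $4$th is the expression \eqref{4thCond}; you must include $j=3,4$ in the lifting.

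Second, the explicit tensor-product lifting does not give the claimed $H^5$ bound. Taking five tangential derivatives of $\frac{y_1^j}{j!}\chi(y_1)b_j(y')$ shows that this function lies in $H^5(\mathbb{R}^3_+)$ only if $b_j\in H^5(\mathbb{R}^2)$, whereas the traces you construct lie only in $H^{5-j-1/2}(\mathbb{R}^2)$ (e.g.\ $b_2$, being the boundary restriction of $\hat{G}\in H^5$, is only in $H^{9/2}$). You therefore lose half a derivative. This is repaired by using a genuine trace-lifting/extension operator rather than the polynomial ansatz — the paper invokes H\"ormander's extension theorem \cite[Theorem~2.5.7]{He} for exactly this purpose — and your own citation of the sharp trace exponents $H^{5-j-1/2}$ suggests you intend this, but the construction as written does not deliver it.
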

\begin{proof}
Note that problem \eqref{eq-pv} over $\Omega$ is equivalent
to problem \eqref{eq-pv-hat} over ${\mathbb R^3_+}$.
To complete the proof, let us consider problem \eqref{eq-pv-hat}.
It suffices to find
the data $\hat{\Phi}_0^{\#}(y)=(0,\chi(y_1)\hat{\psi}_0(y))\in H^5({\mathbb R^3_+})$
of which $\hat{\psi}_0$ satisfies 
\begin{subequations} \label{cmpa4}
\begin{gather}
\hat{\psi}_0|_{y_1=0} = 0,
\\
\left\{
\hat{\rho}_0 (\hat{u}_0 \cdot \hat{\nabla}) \hat{\psi}_0
- \hat{L} \hat{\psi}_0
%+ p'(\hat{\rho}_0) \hat{\nabla} \hat{\vp}_0
- (\hat{g}+\hat{G})|_{t=0}
\right\}|_{y_1 = 0}
= 0,
\label{cmpa2nd}\\
\left[\partial_t\left\{
\hat{\rho} (\hat{u} \cdot \hat{\nabla}) \hat{\psi}
- \hat{L} \hat{\psi}
+ p'(\hat{\rho}) \hat{\nabla} \hat{\vp}
- \hat{g}
\right\}|_{t=0}\right]_{y_1 = 0}
= 0,
\label{cmpa3rd}\\
\|\hat{\psi}_0\|_{H^5} \lesssim \delta,
\end{gather}
\end{subequations}
where the cut-off function $\chi$ is defined in \eqref{cut-off1}.
Indeed, we see from the first three conditions that $\hat{\Phi}_0^{\#}$ satisfies \eqref{cmpa3}.
The last condition implies $\|\Phi_0^{\#}\|_{\lteasp{\beta}}+\|\Phi_0^{\#}\|_{H^5} \lesssim \delta$.

We will apply an extension theorem \cite[Theorem~2.5.7]{He}.
To do so, let us first fix the boundary values of the zeroth, first, and third derivatives 
with respect to $y_1$ of $\hat{\psi}_0$ as
\begin{equation}\label{0thCond}
\hat{\psi}_0|_{y_1=0} 
= (\partial_{y_1}\hat{\psi}_0)|_{y_1=0}
= (\partial_{y_1}^3\hat{\psi}_0)|_{y_1=0}
= 0. 
\end{equation}
Next we determine the boundary value of $\partial_{y_1}^2\hat{\psi}_0$
from the compatibility condition of order 2.
Using \eqref{0thCond}, we simplify \eqref{cmpa2nd} as
\begin{gather}
\left\{
- {\cal A}\partial_{y_1}^2\hat{\psi}_0
- \hat{G}
\right\}|_{y_1 = 0}
= 0, 
\label{cmpa6}\\
{\cal A}(y_2,y_3):=\mu_1(1+|\nabla M|^2)I+
(\mu_1+\mu_2){\cal B}, \quad
{\cal B}(y_2,y_3):=
\begin{bmatrix}
1 & M_{y_2} & M_{y_3}
\\
M_{y_2} & (M_{y_2})^2 & M_{y_2}M_{y_3}
\\
M_{y_3} & M_{y_2}M_{y_3} & (M_{y_3})^2
\end{bmatrix}.
\notag 
\end{gather}
Since $\cal A$ is nonsingular, we see from \eqref{cmpa6} that the boundary value of
$\partial_{y_1}^2\hat{\psi}_0$ must be 
\begin{equation}\label{2ndCond}
(\partial_{y_1}^2\hat{\psi}_0)|_{y_1 = 0}
= ({\cal A}^{-1} {\hat{G}}) |_{y_1 = 0}.
\end{equation}

Now let us determine the boundary value of $\partial_{y_1}^4\hat{\psi}_0$.
Using $\hat{\vp}_t|_{t=0, \, y_1 = 0}=0$ which comes from
\eqref{eq-pv1-hat} and \eqref{0thCond}, we simplify  \eqref{cmpa3rd} as
\begin{gather}
\left[\left\{
\hat{\rho}_0 \{u_b\cdot \nabla(x_1-M) \}\partial_{y_1} \hat{\psi}_{t}
- {\cal A} \partial_{y_1}^2 \hat{\psi}_t
+ p'(\hat{\rho}_0) \hat{\nabla} \hat{\vp}_t
\right\}|_{t=0}\right]_{y_1 = 0}
= 0.
\label{cmpa5}
\end{gather}
We compute necessary conditions for 
$(\hat{\nabla}\hat{\vp}_t)|_{t=0,\,y_1=0}$,
$(\partial_{y_1}\hat{\psi}_t)|_{t=0,\,y_1=0}$,
and $(\partial_{y_1}^2\hat{\psi}_t)|_{t=0,\,y_1=0}$.
Applying $\hat{\nabla}$ to \eqref{eq-pv1-hat} gives
\[
(\hat{\nabla}\hat{\vp}_t)|_{t=0,\,y_1=0}
=-(\hat{\rho}_0  {\cal B} \partial_{y_1}^2 \hat{\psi}_0) |_{y_1 = 0}
=-(\hat{\rho}_0  {\cal B} {\cal A}^{-1} {\hat{G}}) |_{y_1 = 0}.
\]
Furthermore, applying $\partial_{y_1}$ to \eqref{eq-pv2-hat} leads to
\begin{align*}
(\partial_{y_1}\hat{\psi}_t)|_{t=0,\,y_1=0}
&=\left[ -\{u_b\cdot \nabla(x_1-M) \} \partial_{y_1}^2 \hat{\psi}_0 
+\hat{\rho}_0^{-1} \hat{L}\partial_{y_1}\hat{\psi}_0
+\hat{\rho}_0^{-1} \partial_{y_1}\hat{G}\right]|_{y_1 = 0}.
%\\
%&=\left\{ -u_b\hat{\rho}_0  {\cal A}^{-1} {\hat{G}}
%+\hat{L}\partial_{y_1}\hat{\psi}_0
%+\partial_{y_1}\hat{G}\right\}|_{y_1 = 0}.
\end{align*}
%We notice that the right hand side  
%can be expressed by a linear combination of the function $\hat{G}$ and its derivatives
%with some coefficients given by the smooth functions 
%$\hat{\rho}_0^{-1}, {\cal A}^{-1}, \nabla M, \nabla^2 M$ by using \eqref{0thCond} and \eqref{2ndCond}. 
Apply $\partial_{y_1}^2$ to \eqref{eq-pv2-hat} and 
use \eqref{0thCond} and $(\tilde{\rho} \tilde{u}_1)_{y_1}=0$ to obtain
\begin{align*}
&(\partial_{y_1}^2\hat{\psi}_t)|_{t=0,\,y_1=0}
-(\hat{\rho}_0^{-1}{\cal A}\partial_{y_1}^4\hat{\psi})|_{y_1 = 0}
\\
&=\biggl\{-\sum_{i=2}^3 u_{bi}\partial_{y_i}\partial_{y_1}^2\hat{\psi}_0
-2\hat{\rho}_0^{-1}\{ \partial_{y_1} (\rho_0 U) \cdot \nabla(x_1-M) \} \partial_{y_1}^2 \hat{\psi}_0
\\
&\qquad  +\hat{\rho}_0^{-1} (\hat{L}-{\cal A}\partial_{y_1}^2)\partial_{y_1}^2\hat{\psi}_0
+\hat{\rho}_0^{-1} \partial_{y_1}^2(\hat{g}+\hat{G})\biggr\}\biggr|_{y_1 = 0}.
\end{align*}
Note that $\hat{L}-{\cal A}\partial_{y_1}^2$ does not have the second derivative operator $\partial_{y_1}^2$.
Plugging these into \eqref{cmpa5}, we see that the boundary value of $\partial_{y_1}^4\hat{\psi}_0$ must be 
\begin{align}\label{4thCond}
&(\partial_{y_1}^4\hat{\psi}_0)|_{y_1 = 0}
\notag\\
&=({\cal A}^{-1})^2\biggl[
\{-\hat{\rho}_0^2p'(\hat{\rho}_0){\cal B} {\cal A}^{-1} {\hat{G}}\}|_{y_1 = 0}
\notag\\
& \qquad \qquad \quad \left.+\hat{\rho}_0 \{u_b\cdot \nabla(x_1-M) \}(-\hat{\rho}_0\{u_b\cdot \nabla(x_1-M) \}\partial_{y_1}^2 \hat{\psi}_0 
+\hat{L}\partial_{y_1}\hat{\psi}_0
+\partial_{y_1}\hat{G})\right\}|_{y_1 = 0} 
\notag \\
& \qquad \qquad \quad 
+{\cal A}\biggl\{\sum_{i=2}^3 \hat{\rho}_0u_{bi}\partial_{y_i}\partial_{y_1}^2\hat{\psi}_0
+2\{ \partial_{y_1} (\rho_0 U) \cdot \nabla(x_1-M) \} \partial_{y_1}^2 \hat{\psi}_0
\notag \\
& \qquad \qquad \qquad \qquad 
-(\hat{L}-{\cal A}\partial_{y_1}^2)\partial_{y_1}^2\hat{\psi}_0
-\partial_{y_1}^2(\hat{g}+\hat{G})\biggr\}\biggr|_{y_1=0} \biggr].
\end{align}
We notice that the right hand side  
can be expressed by a linear combination of $\hat{G}$ and its derivatives
with some coefficients given by the smooth functions 
$\hat{\rho}_0(=\rt)$, $u_b$, ${\cal A}$, ${\cal B}$, ${\cal A}^{-1}$, $\nabla M$, $\nabla^2 M$, $\partial_{y_1}(\hat{\rho}_0U)$
if we write explicitly $\partial_{y_1} \hat{\psi}_0|_{y_1 = 0}$, 
$\partial_{y_1}^2 \hat{\psi}_0|_{y_1 = 0}$,
and $\partial_{y_1}^3 \hat{\psi}_0|_{y_1 = 0}$ by using \eqref{0thCond} and \eqref{2ndCond}.

Using an extension theorem \cite[Theorem~2.5.7]{He} with \eqref{0thCond}, \eqref{2ndCond}, and \eqref{4thCond}, we have a function $\hat{\psi}_0$ satisfies  \eqref{cmpa4}.
Indeed, the first three lines in \eqref{cmpa4} obviously follow from the above computations
of the compatibility conditions.
The last line in \eqref{cmpa4} can be also obtained by using the fact that
all derivatives with respect to $y_1$ of $\hat{\psi}_0$ are linear combinations of $\hat{G}$ 
and its derivatives whose Sobolev norms are estimated by $C \delta$.
The proof is complete.
\end{proof}

\end{appendix}

%% references
%%

%\small 

%\nocite{mn-94,lmn98,matu-01,kg06-loc,m-n83,oleinik,matsu81,kagei05}

%\bibliographystyle{amsplain}
%\bibliography{refs}

\providecommand{\bysame}{\leavevmode\hbox to3em{\hrulefill}\thinspace}
\providecommand{\MR}{\relax\ifhmode\unskip\space\fi MR }

\end{document}